\documentclass[letterpaper, 11pt,  reqno]{amsart}
\usepackage[margin=1.2in,marginparwidth=1.5cm, marginparsep=0.5cm]{geometry}

\usepackage{amsmath,amssymb,amscd,amsthm,amsxtra, esint, xcolor}

\usepackage[implicit=true]{hyperref}

\usepackage[shortlabels]{enumitem}

\allowdisplaybreaks[2]

\usepackage{color}

\definecolor{gr}{rgb}   {0.,   0.69,   0.23 }
\definecolor{bl}{rgb}   {0.,   0.5,   1. }
\definecolor{mg}{rgb}   {0.85,  0.,    0.85}
\definecolor{yl}{rgb}   {0.8,  0.7,   0.}
\definecolor{or}{rgb}  {0.7,0.2,0.2}

\newtheorem{theorem}{Theorem} [section]

\newtheorem{lemma}[theorem]{Lemma}
\newtheorem{proposition}[theorem]{Proposition}
\newtheorem{remark}[theorem]{Remark}

\newtheorem{definition}[theorem]{Definition}

\DeclareMathOperator*{\supp}{supp}

\newcommand{\I}{\hspace{0.5mm}\text{I}\hspace{0.5mm}}

\newcommand{\II}{\text{I \hspace{-2.8mm} I} }
\newcommand{\III}{\text{I \hspace{-2.9mm} I \hspace{-2.9mm} I}}

\newcommand{\IV}{\text{I \hspace{-2.8mm} V} }

\newcommand{\noi}{\noindent}
\newcommand{\Z}{\mathbb{Z}}
\newcommand{\R}{\mathbb{R}}
\newcommand{\C}{\mathbb{C}}
\newcommand{\T}{\mathbb{T}}
\newcommand{\bul}{\bullet}

\newcommand{\hi}{\textup{hi}}
\newcommand{\HI}{\textup{HI}}
\newcommand{\lo}{\textup{lo}}
\newcommand{\LO}{\textup{LO}}

\let\Im=\undefined\DeclareMathOperator*{\Im}{Im}

\let\P= \undefined
\newcommand{\P}{\mathbf{P}}

\newcommand{\Q}{\mathbf{Q}}

\newcommand{\GG}{\mathcal{G}}

\newcommand{\F}{\mathcal{F}}

\newcommand{\al}{\alpha}
\newcommand{\be}{\beta}
\newcommand{\dl}{\delta}

\newcommand{\eps}{\varepsilon}
\newcommand{\kk}{\kappa}
\newcommand{\g}{\gamma}

\newcommand{\ld}{\lambda}

\newcommand{\s}{\sigma}

\newcommand{\ft}{\widehat}
\newcommand{\Ft}{{\mathcal{F}}}
\newcommand{\wt}{\widetilde}
\newcommand{\cj}{\overline}
\newcommand{\dx}{\partial_x}

\newcommand{\dt}{\partial_t}

\newcommand{\embeds}{\hookrightarrow}

\newcommand{\ta}{\theta}

\newcommand{\les}{\lesssim}
\newcommand{\ges}{\gtrsim}

\newcommand{\jb}[1]
{\langle #1 \rangle}

\newcommand{\ind}{\mathbf 1}

\renewcommand{\S}{\mathcal{S}}

\newcommand{\M}{\mathcal{M}}

\newcommand{\N}{\mathbb{N}}
\newcommand{\NN}{\mathcal{N}}

\newtheorem*{ackno}{Acknowledgements}

\renewcommand{\H}{\mathcal{H}}

\def\sgn{\textup{sgn}}

\newcommand{\Pbhi}{\mathbf{P}_{\textup{hi}} }
\newcommand{\Pblo}{\mathbf{P}_{\textup{lo}} }
\newcommand{\Pbhip}{\mathbf{P}_{\textup{+,hi}} }
\newcommand{\PbHIp}{\mathbf{P}_{\textup{+,HI}} }
\newcommand{\PbHI}{\mathbf{P}_{\textup{HI}}}
\newcommand{\PbLO}{\mathbf{P}_{\textup{LO}}}

\newcommand{\Id}{\textup{Id}}

\newcommand{\QQ}{\mathcal{Q}}

\newcommand{\pos}{\text{pos}}
\newcommand{\negg}{\text{neg}}
\newcommand{\Y}{\mathcal{Y}}
\newcommand{\Hf}{\mathfrak{H}}

\newcommand{\Pih}{\Pi_{+,h}}

\newcommand{\lax}{\mathcal{L}}
\newcommand{\peter}{\mathcal{P}}
\newcommand{\op}{\textup{op}}
\newcommand{\TT}{\mathcal{T}}
\newcommand{\tr}{\operatorname{tr}}
\newcommand{\hs}{\mathfrak{I}_2}
\newcommand{\tc}{\mathfrak{I}_1}
\numberwithin{equation}{section}
\numberwithin{theorem}{section}

\makeatletter
\@namedef{subjclassname@2020}{\textup{2020} Mathematics Subject Classification}
\makeatother

\begin{document}
\baselineskip = 14pt

\title[Sub-critical well-posedness for INLS]{Sub-critical well-posedness for the intermediate nonlinear Schr\"{o}dinger equation on the line}

\author[A.~Chapouto, J.~Forlano, T.~Laurens]
{Andreia Chapouto, Justin Forlano, Thierry Laurens}

\address{Andreia Chapouto,
CNRS, and School of Mathematics, Monash University, Clayton, VIC 3800, Australia}

\email{andreia.chapouto@monash.edu}

\address{Justin Forlano,
School of Mathematics, Monash University, Clayton, VIC 3800, Australia}

\email{justin.forlano@monash.edu}

\address{Thierry Laurens,
Department of Mathematics, University of Michigan, MI, 48109, USA}
\email{tlaurens@umich.edu}

\subjclass[2020]{35Q53, 35A02, 76B55}

\keywords{Intermediate nonlinear Schr\"{o}dinger equation, Calogero-Moser equation, well-posedness, gauge transform}

\begin{abstract}
We continue our study of the well-posedness theory for the intermediate nonlinear Schr\"{o}dinger equation (INLS).
Firstly, we prove that INLS is locally well-posed in $H^s (\R)$ for any $s>0$. This improves on our previous result of local well-posedness for any $s>\frac 14$, and covers the full scaling-subcritical range for INLS. In particular, we also obtain the local well-posedness for the continuum Calogero-Moser equation without chirality assumption in the full scaling-subcritical range. Our method relies on a gauge transformation, the derivation of a closed system for four auxiliary variables, and nonlinear smoothing estimates, but not on the completely integrable nature of these equations. 

Secondly, for the integrable models, we prove global well-posedness for $0<s<\frac12$ for initial data with small $L^2$-norm.  Moreover, we show that our global well-posedness result applies whenever $L^2$-equicontinuous sets are preserved by the flow, and so the small-data restriction would be removed by an a-priori equicontinuity result.  Our argument relies on a novel family of conserved quantities based upon the Lax pair we discovered in our prior work \cite{CFL1}.
\end{abstract}

\maketitle

\tableofcontents

\section{Introduction}
We consider the Cauchy problem for the intermediate nonlinear Schr\"{o}dinger equation~(INLS):
\begin{equation}
\left\{
\begin{aligned}
  & \dt u+i\dx^2 u = \be u(1+i\mathcal{T}_{h})\dx(|u|^2)+i\g |u|^2 u,\\
   & u|_{t=0}=u_0,
\end{aligned}
 \right. \label{INLS}
\end{equation}
where $0<h<\infty$,  $u:\R\times \R\to \C$, $\be, \g \in \R$, and $\mathcal{T}_{h}$ is the convolution operator 
\begin{align}
\mathcal{T}_{h}f (x) = \frac{1}{2h} \text{p.v.} \int_{-\infty}^{\infty} \coth\bigg( \frac{\pi(x-y)}{2h} \bigg) f(y) dx. \label{tilberty}
\end{align}
The INLS equation arises in the study of quasi-harmonic internal waves in a two fluid system, with bottom fluid of depth $h>0$. The model \eqref{INLS} with $\g\neq 0$ was derived by Pelinovksy-Grimshaw \cite{Pel2} by adding higher order effects to the original derivation of Pelinovsky~\cite{Pel1}, which obtained the $\g=0$ equation. Interestingly, \eqref{INLS} with $\g=0$ is completely integrable, as first observed in \cite{Pel3} through the development of an inverse scattering transform, and later in \cite{CFL1} via a Lax pair.

The equation \eqref{INLS} admits a useful rewriting connecting it to another integrable system, which we now discuss.
 Given a set $A\subseteq \R$, we write $\ind_{A}$ to be the characteristic function of the set $A$. Then, we define $\P_{\pm}$ as the Fourier multiplier operator with symbol $\ind_{\{ \pm \xi>0\}}$, which satisfy
 \begin{align}
\P_{+}+\P_{-} =\text{Id}. 
\notag
% \label{pm1}
\end{align}
Moreover, we have $\H= -i \P_{+}+i\P_{-}$ and thus $1+i\H = 2\P_+$, where $\H$ denotes the Hilbert transform, with Fourier multiplier $\ft{\H}(\xi) = -i \sgn(\xi)$. 
Thus, \eqref{INLS} becomes:
\begin{align}
\dt u+i\dx^2 u = 2\be u \P_{+}\dx(|u|^2)+u \QQ_{h}(|u|^2), \label{INLS2}
\end{align}
where we defined 
\begin{align}
\QQ_{h}&: =  -i\be \GG_{h}+i\g \text{Id}, \label{Qh} \\ 
\mathcal{G}_{h}& : = (\H- \mathcal{T}_{h})\dx.  
%\label{Gh intro}
\end{align}
As $\GG_{h}$ is $L^p\to L^p$ bounded for any $1<p<\infty$ (see Lemma~\ref{LEM:GGh}), it is clear that the same is true for $\QQ_{h}$. 
 See also Remark~\ref{RMK:assumptions} for more general assumptions we can impose on \eqref{Qh} and on the additional nonlinear term in \eqref{INLS} with the factor $\g$.
 
 In the form \eqref{INLS2}, we see the that the fundamental nonlinear term is $\be u \P_{+}\dx(|u|^2)$. Indeed, by formally setting $\QQ_{h}\equiv 0$,\footnote{Alternatively, one may put $\g=0$ and formally take $h\to +\infty$ so that $\QQ_{h}\to 0$. This limit is known as the infinite depth limit and has been studied in \cite{Pel1, CFL1, CFL2}.} \eqref{INLS2} reduces to the continuum Calogero-Moser equation (CCM)
 \begin{align}
\dt u+i\dx^2 u = 2\be u \P_{+}\dx(|u|^2), \label{CCM}
\end{align}
 with $\be>0$ denoting the defocusing CCM and $\be<0$ the focusing CCM. The CCM equation has attracted a lot of attention in recent years as it is completely integrable \cite{GL} and preserves the Hardy space $L^2_{+}(\R)$
 defined as 
\begin{align}
% \label{hardy}
\notag
   L^2_+ (\R) 
    =
    \big\{
    f \in L^2(\R) : \ \supp \ft f \subset [0,\infty)
    \big\} .
\end{align}
 For later use, we will also define the Hardy-Sobolev spaces $H^{s}_+(\R): = H^{s}(\R)\cap L^2_{+}(\R)$ and 
\begin{align*}
H^{s}_{-}(\R) : = \{ f\in H^{s}(\R) \, :\, f=\cj{g} \quad \text{for some} \quad g\in H^{s}_+ (\R)\}.
\end{align*}

Our main goal is to study the well-posedness theory for \eqref{INLS} in $H^{s}(\R)$. This choice of space for the initial data in \eqref{INLS} is motivated by the infinite number of (polynomial) conservation laws for \eqref{INLS} with $\g=0$ and for \eqref{CCM}; see \cite{Rana1, KLV, KKK2, CFL1, KMV}. In particular, \eqref{INLS} preserves the $L^2$-norm and it is natural to study well-posedness here as this turns out to be the scaling critical Sobolev space for \eqref{INLS} in the following sense:
Given $\ld \geq 1$, and $u$ a smooth solution to \eqref{INLS2} on $\R$, we consider the $L^2$-invariant rescaling
\begin{align}
u_{\ld}(t,x) = \ld^{-\frac12} u(\ld^{-2} t,\ld^{-1}x) \label{scale}
\end{align}
which satisfies
 \begin{align}
\dt u_{\ld} +i\dx^2 u_{\ld} = 2\be u_{\ld} \P_{+}\dx( |u_{\ld}|^2) -i\be u_{\ld}\GG_{h\ld}(|u_{\ld}|^2) +i\ld^{-1}\g |u_{\ld}|^2 u_{\ld}, 
\label{scaleINLS}
\end{align}
with rescaled initial data $u_{\ld}(0) = \ld^{-\frac 12} u_0 (\ld^{-1}x)$.
When $\g=0$ and $h=+\infty$, the scaling  \eqref{scale} is an exact symmetry. When $\g=0$ and $0<h<+\infty$, the scaling is no longer an exact symmetry; rather, the family of equations \eqref{INLS2} with depth parameters $0<h<+\infty$ remains invariant under \eqref{scale}. Finally, when $\g\neq 0$, the extra factor $\ld^{-1}$ in the pure cubic term in \eqref{scaleINLS} reflects the sub-critical nature of this term.  

Let us now mention previous well-posedness results. In \cite{demoura1}, de Moura proved local well-posedness of INLS \eqref{INLS} for small initial data in $H^s(\R)$ with $s\geq 1$. Via a gauge transformation, de Moura-Pilod \cite{PMP} extended this to $s>\frac 12$ and removed the small data restriction. See \cite{GL} for the same result for CCM \eqref{CCM} in $H^{s}_{+}(\R)$ for $s>\frac 12$ and \cite{BdMS} for small data in the Besov space $B^{\frac 12}_{2,1}(\R)$. 
Recently, in \cite{CFL1}, we extended these results beyond the $H^{\frac 12}$-barrier and proved local well-posedness for all $s>\frac 14$ and, using the complete integrability, global well-posedness for data with small $L^2$-norm.  
Very recently, \cite{Hadama2} proved global well-posedness for \eqref{INLS} in $L^2(\R)$ for small initial data.
See Remark~\ref{RMK:Hadama} below for a further discussion.

For CCM \eqref{CCM} in the Hardy space, well-posedness in the scaling critical space $L^2_{+}(\R)$ has been established by exploiting the complete integrability through either an explicit formula for solutions~\cite{KLV} or by the method of commuting flows \cite{KMV}. Note that the restriction to the Hardy space for INLS \eqref{INLS} is not  natural as, formally, this assumption is not preserved by the evolution. Moreover, this assumption does not appear in the physical derivation \cite{Pel1,Pel2} of INLS.

We also mention  results on the well-posedness theory for CCM \eqref{CCM} in the Hardy space on the circle $\T$ \cite{Rana1}, and for \eqref{INLS} in $H^{\frac 12}(\T)$ by the authors \cite{CFL2}, as well as well-posedness results for \eqref{INLS} on $\R$ with non-decaying initial data \cite{Chen, ABIK}.
Moreover, we refer to \cite{CFL1} for a further discussion on the long-time behaviour of solutions to CCM \eqref{CCM} on $\R$, which depends heavily on the focusing/defocusing nature. See for example \cite{KKK1, ChenLenz, Chen2}.

Our goal in this paper is to establish the local well-posedness for INLS \eqref{INLS} in the full scaling sub-critical range. This dramatically improves upon our previous result in \cite{CFL1}.

 \begin{theorem}\label{THM:LWP}
Let $\be,\g\in\R$ and $s>0$. Then, \eqref{INLS} is locally well-posed in $H^{s}(\R)$. More precisely, given any $u_0\in H^{s}(\R)$ and $\dl>0$ sufficiently small, there exist $T=T(\|u_0\|_{H^{s}(\R)})>0$ and a unique solution $u\in C([0,T];H^{s}(\R))$ such that:\\
\textup{(i)} $u$ satisfies the Duhamel formulation of \eqref{INLS}:
\begin{align}
u(t) = e^{it \dx^2}u_0 + \int_{0}^{t} e^{i(t-t')\dx^2} \big[ \be u(1+i\mathcal{T}_{h})\dx(|u|^2)+i\g|u|^2 u\big](t') dt' \label{Duhamel}
\end{align}
in $X^{-3,b}_{T}$ with $b:=\frac 12+\dl$, where $X^{s,b}_{T}$ denotes the time-restricted Fourier restriction norm space defined in Section~\ref{SEC:Xsb}. See \eqref{XsbT}. \\
\textup{(ii)} The function $u$ admits the decomposition
\begin{align}
u= e^{-i\be F[v+y+z]}( v+\Y^{\textup{pos}}[v+y+z]+\Y^{\textup{neg}}[v+y+z,w] + z) + w  \label{udecomp-}
\end{align}
where $F[f]:=\dx^{-1}(|f|^2)$ is the primitive defined in \eqref{Fdef},
\begin{align}
\begin{split}
v&:= \P_{+,\textup{hi}}[e^{i\be F[u]}u]\in X^{s,b}_{T}\cap C([0,T]; H^{s}_{+}(\R)), \\
y&:=\P_{-,\textup{hi}}[e^{i\be F[u]}u]\in X^{s-11\dl,b}_{T}\cap C([0,T];H^{s-11\dl}_{-}(\R)), \\
z:=\P_{\textup{lo}}&[e^{i\be F[u]}u]\in X^{\infty,b}_{T}, \quad w:=\P_{-,\textup{hi}}u\in X^{s,b}_{T}\cap C([0,T];H^{s}_{-}(\R)),
\end{split}\label{Xsbproperties}
\end{align}
with $\Y^{\textup{pos}}$ and $\Y^{\textup{neg}}$ two operators defined in \eqref{Ysharp} and \eqref{Yw}, respectively, and $y$ satisfies
\begin{align}
y= \Y^{\textup{pos}}[v+y+z]+\Y^{\textup{neg}}[v+y+z,w]  + e^{i\be F[v+y+z]}w. \label{ymanifold}
\end{align}  
Furthermore, the variables $(v,y,z,w)$ solve the system of equations \eqref{system} with initial data 
\begin{align}
(v,y,z,w)\vert_{t=0} = ( \P_{+,\textup{hi}}[e^{i\be F[u_0]}u_0], \P_{-,\textup{hi}}[e^{i\be F[u_0]}u_0], 
\P_{\textup{lo}}[e^{i\be F[u_0]}u_0],
\P_{-,\textup{hi}}u_0), 
\label{systdata}
\end{align}
with projectors as in \eqref{projs}.
\\
\textup{(iii)} The map $u_0\longmapsto u$ is locally Lipschitz continuous from $H^s(\R)$ into  $C([0,T];H^{s}(\R))$.\\
\textup{(iv)} The variables $v$ and $w$ enjoy nonlinear smoothing: there exists $\al=\al(s)>0$ such that
 \begin{align}
 \begin{split}
\| v- e^{it\dx^2} v_0\|_{C_{T}H^{s+\al}_x}
+ \| w- e^{it\dx^2}w_0\|_{C_{T}H^{s+\al}_x}  \leq C(T, \|u_0\|_{H^{s}} ) .
\end{split} \label{nonlinsmooth0}
\end{align}
 \end{theorem}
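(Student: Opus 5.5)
The plan is to argue in the gauge-transformed variables, prove the estimates there, and then return to $u$. For smooth solutions $u$ of \eqref{INLS2}, I first compute the equation satisfied by the gauge $V:=e^{i\be F[u]}u$, with $F[u]=\dx^{-1}(|u|^2)$. The phase $e^{i\be F[u]}$ cancels the worst derivative interaction $2\be u\P_+\dx(|u|^2)$ when the derivative falls on a low-frequency $|u|^2$ and $u$ is at high positive frequency. What remains is a nonlinearity that is cubic and quintic in $(u,\cj u)$ and carries no bad high-low derivative loss in the positive-frequency part.

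I then split the gauged function $V$ into $v=\P_{+,\hi}V$, $y=\P_{-,\hi}V$ and $z=\P_{\lo}V$, and separately keep $w=\P_{-,\hi}u$. The gauge does not help the negative-frequency part of $u$, so $w$ is handled directly from \eqref{INLS2}. There the interaction $u\P_+\dx(|u|^2)$, projected to negative high frequencies, is nonresonant enough to be treated in $X^{s,b}$. To close the system, I express $u$ through \eqref{udecomp-} in terms of $(v,y,z,w)$. The component $y$ is not evolved as an independent unknown. Instead it is recovered from the implicit relation \eqref{ymanifold}: applying $\P_{-,\hi}$ to $e^{i\be F}u$ with $u$ given by \eqref{udecomp-} produces the operators $\Y^{\pos}$ and $\Y^{\negg}$. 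These are paraproduct-type and gain smoothness from the primitive $F$, except for a small loss. That loss accounts for the $11\dl$ in \eqref{Xsbproperties}.

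The system \eqref{system} for $(v,z,w)$, with $y$ solved as a fixed point of \eqref{ymanifold}, is closed by a contraction argument. I work in $X^{s,b}_T\times X^{\infty,b}_T\times X^{s,b}_T$ for $(v,z,w)$ and in $X^{s-11\dl,b}_T$ for $y$, with $b=\frac12+\dl$. The time $T$ is small depending on $\|u_0\|_{H^s}$, and the $\P_{\lo}$ cutoff can be chosen large depending on the data, so that the low-frequency and high-frequency smallness combine. The key ingredients are the following.
\begin{enumerate}[(a)]
\item Trilinear and quintilinear $X^{s,b}$ estimates for the nonlinearities in the $v$ and $w$ equations. They use the modulation gain from the resonance function $\xi^2-\xi_1^2+\xi_2^2-\xi_3^2$. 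When the output and its highest input have the same sign, this function is of size $|\xi_{\max}||\xi_{\text{med}}|$. These estimates yield a gain of $\al>0$ derivatives, which is the nonlinear smoothing \eqref{nonlinsmooth0} in (iv).
\item Estimates showing that $\Y^{\pos}$ and $\Y^{\negg}$ map $X^{s,b}$-type spaces into $X^{s-11\dl,b}$ with a small constant on short time intervals.
\item The boundedness of $\QQ_h$ from Lemma~\ref{LEM:GGh}, which makes the $\QQ_h$ terms harmless lower-order perturbations.
\end{enumerate}

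After the contraction, I reconstruct $u$ via \eqref{udecomp-}. Continuity in $H^s$ follows because $e^{-i\be F}$ acts boundedly on $H^s$ for $0<s<\frac12$ given $L^2$ control; the $L^2$ norm is in turn controlled by the solution. The Duhamel identity (i) in the weak space $X^{-3,b}_T$ is verified by approximating with smooth data, since all the identities hold for smooth solutions and pass to the limit. Uniqueness holds in the stated class. Lipschitz dependence (iii) follows from the difference estimates in the contraction together with the Lipschitz bounds of the gauge map and its inverse.

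I expect the main obstacle to be the multilinear estimates at low regularity $s\to0^+$. These include the $y$-equation via $\Y^{\negg}$, which couples $e^{i\be F}$ with the unsmoothed $w$, and the high$\times$high$\to$low interactions in the $v$ equation. Here the resonance gain barely compensates the derivative, so the $\dl$-losses must be tracked carefully. I would first try to handle them with bilinear Strichartz refinements and a dyadic case analysis on the relative sizes and signs of the frequencies.
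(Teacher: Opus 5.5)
There is a genuine gap, located in ingredient (b) and in your decision not to evolve $y$. Your $(v,z,w)$ estimates (bilinear Strichartz, $L^4$/$L^6$ Strichartz, local smoothing) need every input, including $y$, to lie in $X^{\sigma,b}_T$ with $\sigma:=s-11\dl$ and $b=\frac12+\dl$. If $y$ is obtained as a fixed point of \eqref{ymanifold}, you must therefore put $\Y^{\pos}[f]$, $\Y^{\negg}[f,w]$ and, above all, $e^{i\be F[f]}w$ into $X^{\sigma,b}_T$ with a contraction constant, for an arbitrary iterate $f$. That is exactly the obstruction the paper avoids: exponential factors are hard to control in Fourier restriction norms. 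Concretely, $(\dt+i\dx^2)(e^{i\be F[f]}w)$ contains two bad pieces:
\begin{itemize}
\item the term $-2\be e^{i\be F}|f|^2\dx w$, which is the high$\times$low$\times$low$\to$high derivative interaction that the gauge was meant to remove, now reintroduced on negative frequencies;
\item the factor $\dt F[f]$, for which no identity is available when $f$ is not a solution.
\end{itemize}
Your outline gives no way to estimate either. Moreover, these terms are not Duhamel integrals, so no factor $T^{\dl}$ is available. By $X^{\sigma,b}_T\embeds C_TH^{\sigma}$, their norms and Lipschitz constants are bounded below by their values at $t=0$, which depend on the data and not on $T$. So ``small constant on short time intervals'' is false; any smallness would have to come from your data-dependent cutoff, and the $X^{s,b}$ bounds would still be missing. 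Your regularity bookkeeping is also inconsistent. If $\Y^{\pos}$ and $\Y^{\negg}$ only landed in $X^{s-11\dl,b}$, then \eqref{udecomp-} would give $u\in C_TH^{s-11\dl}$, not $C_TH^s$.

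The missing idea is that $y$ is an independent unknown of \eqref{system}, evolved by its own equation, so $y\in X^{\sigma,b}_T$ comes directly from the Duhamel estimate. The relation \eqref{ymanifold} is inserted only into the third slot of the single bad term $\NN_{2}(\cdot,\cdot,y)$ of the $y$-equation, and each resulting piece is handled without putting an exponential in an $X^{s,b}$ norm:
\begin{itemize}
\item $\NN_2(\cdot,\cdot,\Y^{\negg})=0$ by frequency support.
\item $\NN_2^{\pos}$ is closed by placing $\Y^{\pos}$ in the Lebesgue norm $L^{3/2}_{T,x}$, where $\P_N\Y^{\pos}$ gains $N^{-\frac32-\min(s,\frac12)+}$ (Lemma~\ref{LEM:ysharp}).
\item $\NN_2(\cdot,\cdot,e^{i\be F}w)$ is bounded by two bilinear Strichartz estimates, with $e^{i\be F}$ placed in $L^\infty_{T,x}$ or $L^3_{T,x}$.
\end{itemize}
The $11\dl$ originates here and in $\NN_1$. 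The $N^{10\dl}$ losses from bilinear Strichartz with the dual function in $X^{0,\frac12-2\dl}$ are paid for by letting $y$ sit $11\dl$ below $v$ and $w$. The $v,z,w$ equations still close at level $s$ thanks to their nonlinear smoothing. The $11\dl$ is not a loss in $\Y^{\pos}$ or $\Y^{\negg}$. On the contrary, $\Y^{\pos}$ gains derivatives in $L^\infty_TH^{\sigma+\al}$ by \eqref{YposLTHs1}, and $\Y^{\negg}[\cdot,w]\in L^\infty_TH^s$ by \eqref{YnegLTHs1}; this is precisely what upgrades $u$ from $C_TH^{\sigma}$ to $C_TH^s$.

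Separately, passing to the limit from smooth data needs a lifespan bound uniform in $j$, since there is no global smooth theory in the focusing case. The paper obtains it from a persistence-of-regularity estimate at $H^{\frac12+}$ combined with the blow-up criterion.
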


Theorem~\ref{THM:LWP} thus completes the local well-posedness for \eqref{INLS} in the full scaling sub-critical range. Moreover, the result allows for large initial data in both the defocusing and focusing cases. In view of part (iii) in Theorem~\ref{THM:LWP}, our solutions are the unique limits of $H^{\infty}(\R)$-solutions and thus, restricting to \eqref{CCM} in the Hardy space $H^{s}_{+}(\R)$, they agree with those constructed using complete integrability \cite{KLV, KMV}. Moreover, it seems difficult using these latter methods to establish that solutions satisfy the Duhamel formula \eqref{Duhamel} and the local Lipschitz dependence of the solution map in (iii). In fact, our method proves that the solution map in (iii) is smooth; see Remark~\ref{RMK:smoothness}. Up to the $L^2$-endpoint, this shows that the results of our method are sharp since whenever $s<0$,  the solution map, if it exists, cannot be $C^3$ at the origin \cite{PdM}.
 
Interestingly, using \eqref{udecomp-} and \eqref{ymanifold}, we actually have the $L^2$-orthogonal decomposition
\begin{align}
e^{i\be F[u]}u = v \, \oplus y \, \oplus z. \label{orthog}
\end{align}
Combining \eqref{orthog} with $L^2$-conservation implies that the $L^2$-norms of $v,y,$ and $z$ are also uniformly bounded in time depending only on $u_0$. It would be of interest to investigate if the decomposition \eqref{orthog} may shed new insights on a priori bounds for \eqref{INLS}.

Our next results address global well-posedness in the completely integrable cases of \eqref{INLS}.  Setting $\g=0$, \eqref{INLS} becomes
\begin{align}
\dt u + i\dx^2 u =  2\be u \Pih\dx(|u|^2), 
\label{INLSG}
\end{align}
where we define 
\begin{align}
\Pi_{\pm,h} := \tfrac{1}{2}(1\pm i\mathcal{T}_{h}) .
\label{Pi+h}
\end{align}
We will also allow $h=\infty$ with the convention that $\TT_{\infty}= \H$, so that the family of equations \eqref{INLSG} includes CCM \eqref{CCM}.
Now, we may also assume that $\be\in \{\pm 1\}$ without loss of generality, where $\be=1$ corresponds to the defocusing equation and $\be=-1$ to the focusing.

To properly describe our results, we first introduce the notion of equicontinuity in $L^2(\R)$.
\begin{definition}[Equicontinuity]\rm
A bounded set $Q\subseteq L^2(\R)$ is said to be \emph{equicontinuous} in $L^2(\R)$ if
\begin{equation}
\sup_{u\in Q}\, \sup_{|y|<\delta} \| u(\cdot+y) - u(\cdot) \|_{L^2} \to 0 \quad\text{as }\delta\to 0.
\label{equicty 1}
\end{equation}
\end{definition}

When the $L^2$-norm in \eqref{equicty 1} is replaced by the supremum norm, we recover the familiar notion of equicontinuity from the Arzel\`a--Ascoli theorem.  Likewise, a subset of $L^2(\R)$ is precompact if and only if it is bounded, tight, and equicontinuous in the sense of \eqref{equicty 1}.  By Plancherel's theorem, this is equivalent to the condition that the family of Fourier transformations is tight:
\begin{equation}
\sup_{u\in Q}\, \int_{|\xi|\geq \kappa} |\ft{u}(\xi)|^2\,d\xi \to 0 \quad\text{as }\kappa\to\infty .
\label{equicty 2}
\end{equation}

Our next theorem is an a-priori result regarding the equicontinuity of solutions to \eqref{INLSG}.
\begin{theorem}[Equicontinuity for small data]\label{t:equicty}
Fix $0<h\leq \infty$.  There exists $\delta>0$ so that if $Q\subseteq H^\infty (\R) \cap\langle x\rangle^{-1}L^2(\R)$ is equicontinuous in $L^2(\R)$ and satisfies
\begin{equation}
\sup_{u\in Q} \|u\|_{L^2} \leq \delta ,
% \label{r}
\notag
\end{equation}
then the set of orbits
\begin{equation}
Q^* := \{ u(t) : u(0)\in Q,\ t\in\R \}
\label{Q*}
% \notag
\end{equation}
reached by the focusing or defocusing equation \eqref{INLSG} is bounded and equicontinuous in $L^2(\R)$.
For the definition of $\jb{x}^{-1}L^2(\R)$, see \eqref{xL2}.
\end{theorem}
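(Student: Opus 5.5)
The proof will follow the Killip--Vişan--Zhang strategy: build from the Lax pair of \cite{CFL1} a one-parameter family of conserved quantities $\beta(\kappa;u)$ that, for small $L^2$-norm, are comparable to the high-frequency mass
\[
\int \frac{|\ft u(\xi)|^2\,\xi^2}{\xi^2+\kappa^2}\,d\xi \quad\text{or}\quad \int \frac{|\xi|\,|\ft u(\xi)|^2}{|\xi|+\kappa}\,d\xi .
\]
The latter quantity controls $\int_{|\xi|\ge\kappa}|\ft u|^2$ from above. For fixed $u$ it also tends to zero as $\kappa\to\infty$, and it does so uniformly over a set exactly when that set is equicontinuous in the sense of \eqref{equicty 2}.

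The first step is the construction. Take the Lax operator $\lax = -i\dx + \be\, u\, \Pi\, \cj{u}$, acting on the Hardy-type space adapted to $\Pi_{+,h}$ from \cite{CFL1}. Its perturbation determinant is
\[
\beta(\kappa;u) := \log\det\big(1 + \be(-i\dx+\kappa)^{-1/2} u\,\Pi\,\cj u(-i\dx+\kappa)^{-1/2}\big),
\]
or equivalently its regularized series expansion, for $\kappa\ge1$. The operator $A=(\kappa-i\dx)^{-1/2}u\,\Pi\,\cj u(\kappa-i\dx)^{-1/2}$ is trace class on the relevant Hardy space; I will verify this directly by Hilbert--Schmidt factorization, using the $L^p$ bounds on $\GG_h$ from Lemma~\ref{LEM:GGh} to handle the finite-depth part of $\Pi_{+,h}$.

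The second step is conservation. For $u\in H^\infty\cap\jb{x}^{-1}L^2$, the Lax equation $\dt\lax=[\peter,\lax]$ gives $\frac{d}{dt}\beta=0$, by the standard argument: compute the derivative of $\log\det$ as a trace and use cyclicity. The weight $\jb{x}^{-1}L^2$ makes the traces rigorous, because $u\,\cdot$ composed with a resolvent is Hilbert--Schmidt.

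The third step, which I expect to be the main obstacle, is the comparison. Expand
\[
\beta = \sum_{\ell\ge1}\frac{(-1)^{\ell-1}}{\ell}\tr(A^\ell).
\]
The higher terms satisfy $|\tr A^\ell|\le\|A\|_{\hs}^2\|A\|_{\op}^{\ell-2}$. The leading term is an explicit quadratic form in $\ft u$: roughly
\[
\tr A \approx \int |\ft u(\xi)|^2\,m_\kappa(\xi)\,d\xi, \qquad m_\kappa(\xi)\sim\log\Big(1+\frac{|\xi|}{\kappa}\Big),
\]
possibly after subtracting the conserved mass. The difficulty is twofold. First, this symbol must be shown to be positive and comparable to a high-frequency weight. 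Second, the quadratic term $\tr A^2$, and the finite-depth corrections coming from $\Pi_{+,h}-\P_+$, must be bounded by $\delta$ times the same weighted quantity. The smallness $\|u\|_{L^2}\le\delta$ enters exactly here, since $\|A\|_{\op}\lesssim\|u\|_{L^2}^2$ fails only at a logarithmic scale. I would first try to dominate the error terms by $\|u\|_{L^2}^2\cdot\tr A$, via Cauchy--Schwarz in frequency. If that fails, I would instead use the gauge $e^{i\be F[u]}$ to remove the worst interaction. Boundedness of $Q^*$ is immediate from $L^2$-conservation.

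Finally I conclude. By conservation and the comparison, $\sup_t\beta(\kappa;u(t))\lesssim\sup_{Q}\beta(\kappa;u_0)$. The right-hand side tends to $0$ as $\kappa\to\infty$ uniformly on $Q$, by equicontinuity and dominated convergence applied to $m_\kappa(\xi)$. Since $\beta(\kappa;u)\gtrsim\int_{|\xi|\ge\kappa'}|\ft u|^2$ for some choice $\kappa'\ge C\kappa$, condition \eqref{equicty 2} holds for $Q^*$.
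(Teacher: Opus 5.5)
Your overall strategy matches the paper's in spirit: a Lax-pair conserved quantity whose quadratic part is a high-frequency weight, with $L^2$-smallness controlling the higher-order terms, and boundedness of $Q^*$ from mass conservation. However, both the construction and the comparison step fail as written.

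\textbf{The conserved quantity.} Your $\beta(\kappa;u)$ is not defined in the setting of the theorem.

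The data need not be chiral, so $\lax_{u;h}$ must act on all of $L^2(\R)$. It does not preserve $L^2_+$ unless $u\in L^2_+$. For $h<\infty$, $\Pih$ (symbol $\frac12(1+\coth(h\xi))$) is not a projection, so there is no ``Hardy-type space adapted to $\Pih$''. On $L^2(\R)$, $-i\dx+\kappa$ with real $\kappa$ is not invertible, since its symbol vanishes at $\xi=-\kappa$. Also $\lax_{u;h}$ is not semibounded, so the spectral parameter must leave the real axis.

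Independently of this, $(\kappa-i\dx)^{-1/2}u$ is not Hilbert--Schmidt in one dimension, because $\int|\xi+i\kappa|^{-1}\,d\xi=\infty$. The formal trace of your $A$ diverges logarithmically, so $\log\det(1+\be A)$ does not exist. Your symbol $m_\kappa\sim\log(1+|\xi|/\kappa)$ is what remains after subtracting an infinite multiple of the mass.

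This also breaks your final step. Since $m_\kappa$ is unbounded, $L^2$-equicontinuity of $Q$ does not make $\sup_{u\in Q}\int m_\kappa|\ft u|^2\,d\xi$ small, or even finite.

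The rigorous object is the spectral-parameter derivative of your formal determinant. That is the resolvent-difference trace \eqref{A series}, with the parameter $i\kappa$ off the real axis. The paper then uses $\|u\|_{L^2}^2+2\pi\be\kappa\Im A(\kappa;u,h)$. Its quadratic part is $\int\frac{\xi^2}{\xi^2+\kappa^2}|\ft u|^2\,d\xi+O(h^{-1}\kappa^{-1}\|u\|_{L^2}^2)$ for either sign of $\be$.

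Lemma~\ref{LEM:GGh} also cannot handle the finite-depth part. $\Pih-\P_+$ has symbol $\frac12(\coth(h\xi)-\sgn\xi)\sim(2h\xi)^{-1}$ near $\xi=0$, whereas the lemma bounds $(\H-\TT_h)\dx$, derivative included. The paper instead splits $\Pih=J_h+K_h$ as in \eqref{Pihdecomp}. It bounds $uJ_hv$ in $\hs$ through the kernel $\frac{i}{4h}\sgn(x-y)$. It defines the quadratic term by the explicit formula \eqref{A F}, since that term is not known to be trace class for $u\in L^2$.

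\textbf{The comparison step.} This is the real content of the theorem, and you leave it as a plan. The single-scale domination you aim for is not what the available estimates give.

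The terms with $n\geq2$ are bounded by $\|u\|_{L^2}^2\big(h^{-1}\kappa^{-1}\|u\|_{L^2}^2+\|\cj u(\kappa-\dx)^{-1}u\|_{\op}\big)$. The operator norm is only controlled, as in \eqref{equicty 3}, by $\sqrt\eta\,\|u\|_{L^2}^2+\|u_{>\eta\kappa}\|_{L^2}^2$. This bound sees mass down to frequency $\eta\kappa$, below the threshold $\kappa$ detected by the quadratic part. It also contains a piece $\sqrt\eta\,\|u\|_{L^2}^4$ that the quadratic part does not control. So you cannot deduce $\sup_t\beta(\kappa;u(t))\lesssim\sup_Q\beta(\kappa;u_0)$ at a fixed $\kappa$.

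The missing idea is a multiscale iteration. Set $E(\kappa):=\sup_{u\in Q^*}\|u_{>\kappa}\|_{L^2}^2$. For $\delta$ small, conservation and the bounds above give, for each $\eps>0$, $E(\eta^{-1}\kappa)\le\eps+\frac14E(\kappa)$ for all $\kappa\geq\kappa_0(\eps,Q)$. Iterating yields $\limsup_{\kappa\to\infty}E(\kappa)\le2\eps$.

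Smallness of $\delta$ is used in that contraction. It is also used to make the series converge uniformly on $Q^*$, via $\|\cj u(\kappa-\dx)^{-1}u\|_{\op}\le\|u\|_{L^2}^2$, before equicontinuity of $Q^*$ is known. It is not used to dominate the errors by the quadratic term at the same scale.
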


As we will see in Theorem~\ref{t:a-priori}, equicontinuity at the scaling-critical regularity is an effective tool in establishing global well-posedness.  In particular, by \eqref{equicty 1}, the equicontinuity of orbits precludes the possibility of solutions concentrating in space, and so rules out the possibility of blow-up.  Concretely, in the case of CCM \eqref{CCM} on $L^2_+(\R)$, the authors of \cite[Lemma~2.4]{KLV2} proved that any smooth solution that is $L^2$-equicontinuous on a time interval $[0,T)$ can be extended, and so finite-time blowup necessarily implies a loss of equicontinuity.  With this in mind, we introduce the following mass threshold for equicontinuity, inspired by~\cite{KLV2,KNV}:

\begin{definition}\rm
Fix $0<h\leq \infty$ and consider the corresponding focusing or defocusing equation \eqref{INLSG}.  Let $M^* \in [0,\infty]$ denote the maximal constant so that for any set $Q\subseteq H^\infty (\R) \cap\langle x\rangle^{-1}L^2 (\R)$ that is bounded and equicontinuous in $L^2(\R)$ and satisfies
\begin{equation}
\sup_{u\in Q} \|u\|_{L^2}^2 < M^* ,
\label{M*}
\end{equation}
the set of partial orbits 
\begin{equation}
Q^*_T := \{ u(t) : u(0)\in Q,\ |t|<T\}
\label{Q*T}
\end{equation}
is equicontinuous in $L^2(\R)$ for each $T>0$.
\end{definition}

We note that the regularity and decay imposed by the condition $Q\subseteq H^\infty (\R)\cap\langle x\rangle^{-1}L^2 (\R)$ is ultimately inconsequential.  The important feature is that this space of initial data is rich enough to witness any counterexamples.  Indeed, the Schwartz class would already suffice, as it is dense in any $H^s(\R)$ space and it includes the initial data for the blowup solutions to the focusing CCM equation constructed in \cite{KKK2}.  We choose this particular space because we are able to prove that it is invariant under the dynamics of \eqref{INLSG}, and that certain key quantities are conserved for such data (see Proposition~\ref{t:A dot} for details).

The results below will establish that the solutions we consider exist globally in time.  We do not require that global existence be known a-priori in the above definition, but as we noted previously, this makes little difference since finite-time blowup will coincide with a loss of equicontinuity.  Nevertheless, it is important that we define $M^*$ in terms of $Q^*_T$ in \eqref{Q*T} rather than $Q^*$ in \eqref{Q*}.  For example, the authors of \cite{GL} demonstrated that two-soliton solutions to the focusing CCM equation blow up in infinite time.  Closer inspection reveals that this is driven by mass migrating to high frequencies in a linear-in-time fashion, and so if we take $Q$ to consist of such a profile, then $Q^*$ fails to be equicontinuous in $L^2(\R)$. By comparison, $Q^*_T$ is equicontinuous for any $T>0$ and so this example would place no restriction on $M^*$.  Strictly speaking, these profiles are excluded in the above definition because they lie outside of $\jb{x}^{-1}L^2(\R)$ due to their slow spatial decay, as do single solitons.  However, the work \cite{HoganKowalski} constructs Schwartz initial data that exhibits similar behavior, and so is captured by our definition.  (This spatial decay is not advertised in the statement of their main results, but is evident within the proof; e.g., see \cite[Lemma~4.2]{HoganKowalski}.)

Theorem~\ref{t:equicty} proves that $M^* > 0$ in both the focusing and defocusing cases.  In the defocusing case we conjecture that $M^* = \infty$, as global well-posedness for large data holds at the higher regularity $H^1(\R)$ \cite{BdMS,CFL2,demoura1}.  On the other hand, for the focusing CCM equation we know that $M^*\leq 2\pi$ by combining the blowup construction in \cite{KKK2} with \cite[Lemma~2.4]{KLV2}.  However, it is unclear yet if $M^* = 2\pi$ for the focusing CCM equation, as the equicontinuity results of \cite{KLV2} are restricted to the Hardy space $L^2_+(\R)$.

With our mass threshold for equicontinuity in hand, we now present our global well-posedness result in full generality.
\begin{theorem}[Global well-posedness]\label{t:a-priori}
Fix $0<s<\frac12$ and $0<h\leq\infty$.  For both the focusing and defocusing equation \eqref{INLSG}, if $Q\subseteq H^\infty (\R)\cap\langle x\rangle^{-1}L^2 (\R)$ is bounded in $H^s(\R)$ and satisfies \eqref{M*}, then the set $Q^*_T$ in \eqref{Q*T} is also bounded in $H^s(\R)$ for any $T>0$.  Consequently, equation \eqref{INLSG} is globally well-posed in the space
\[
B_{M^*}^s := \{ u \in H^s(\R) : \|u\|_{L^2}^2 < M^* \}
\]
endowed with the $H^s$-topology.
\end{theorem}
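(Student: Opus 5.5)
The plan is to reduce global well-posedness to an a priori $H^s$ bound on $Q^*_T$, and to prove that bound by combining $L^2$-equicontinuity with a family of conserved quantities built from the Lax pair of \cite{CFL1}.

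\textbf{Step 1: uniform-in-$\kappa$ control of high frequencies.} For $\kappa\geq 1$ large, introduce a renormalized perturbation determinant built from the Lax operator $\lax$ of \cite{CFL1}, for example
\[
\alpha(\kappa;u)=\sum_{\ell\geq 1}\frac{(\mp1)^{\ell-1}}{\ell}\tr\big\{\big[(\kappa-i\dx)^{-1/2}\,u\,\Pi\,\cj u\,(\kappa-i\dx)^{-1/2}\big]^\ell\big\},
\]
where $\Pi$ is the appropriate projection. I would rely on Proposition~\ref{t:A dot} to show that $\alpha(\kappa;u(t))$ is conserved for data in $H^\infty\cap\jb{x}^{-1}L^2$. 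The leading term is quadratic,
\[
\alpha_2(\kappa;u)\approx\int\frac{|\ft u(\xi)|^2}{\xi+\kappa}\,d\xi\qquad\text{(restricted to the relevant frequency half-line)}.
\]
The higher terms are bounded by $\|u\|_{L^2}^2$ times Hilbert--Schmidt norms such as $\|(\kappa-i\dx)^{-1/2}u\|_{\hs}^2\les\int\frac{|\ft u|^2}{\jb{\xi}+\kappa}$, up to $\kappa^{-1/2}$ corrections.

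\textbf{Step 2: smallness from equicontinuity.} Write
\[
\int\frac{|\ft u|^2}{|\xi|+\kappa}\,d\xi\leq \kappa^{-1}\|u\|_{L^2}^2\cdot\tfrac{K}{1}+\int_{|\xi|>K}|\ft u|^2 .
\]
Equicontinuity of $Q^*_T$ makes the second term uniformly small, so the series converges and $\alpha\approx\alpha_2$ for $\kappa\geq\kappa_0(Q^*_T)$. This is where the hypothesis $\sup_Q\|u\|_{L^2}^2<M^*$ enters.

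\textbf{Step 3: recovering the $H^s$ norm.} Since $0<s<\frac12$, the identity $\jb{\xi}^{2s}\sim\int_1^\infty \kappa^{2s-1}\frac{|\xi|}{|\xi|+\kappa}\,d\kappa$ gives
\[
\|u\|_{H^s}^2\approx \int_{\kappa_0}^\infty \kappa^{2s}\big(\|u\|_{L^2}^2/\kappa-\alpha_2(\kappa)\big)\,d\kappa+O(\kappa_0^{2s}\|u\|_{L^2}^2).
\]
The integrand is conserved up to the cubic-and-higher errors. The main obstacle is bounding those errors, of the form $\kappa^{2s}\|u\|_{L^2}^2\cdot\kappa^{-1}\cdot(\text{small})$ times $H^s$-type quantities, by $\eps\|u\|_{H^s}^2$ so they can be absorbed. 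This requires a refined estimate $\|(\kappa-i\dx)^{-1/2}u\|_{\hs}^2\les\kappa^{-2s}\|u\|_{H^s}^2$-type splittings and uses $s<\frac12$ for $\kappa$-integrability. I would try to handle the negative-frequency part (via $\Pi_{+,h}$ and $\QQ_h$, the non-Hardy part) by treating $\P_-$ and $\P_+$ separately, using the two conjugate determinants.

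\textbf{Step 4: conclusion.} These steps give $\sup_{Q^*_T}\|u\|_{H^s}\les C(\sup_Q\|u\|_{H^s},\kappa_0)$. Theorem~\ref{THM:LWP}, whose existence time depends only on the $H^s$ norm, then iterates to any $T$. Density of $H^\infty\cap\jb{x}^{-1}L^2$ and the Lipschitz dependence of Theorem~\ref{THM:LWP}(iii) extend the result to all data in $B^s_{M^*}$. A single datum forms a compact, hence equicontinuous, set, so it qualifies, and continuity of the flow on $B^s_{M^*}$ follows.
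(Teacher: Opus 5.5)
Your overall architecture matches the paper's: a Lax-pair conserved quantity whose quadratic part, weighted by $\kappa^{2s-1}$ and integrated in the spectral parameter, measures $\|u\|_{H^s}^2$; equicontinuity of $Q^*_T$, coming from the $M^*$ hypothesis, to control the higher-order terms; then iteration of Theorem~\ref{THM:LWP}. Your Step 4 is fine. The gap is that the quantity in Step 1 only makes sense in the chiral, infinite-depth setting of \cite{KLV2}, and the estimates you propose for it are false.

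\textbf{The quantity in Step 1 is undefined here.} On $L^2(\R)$ the operator $-i\dx$ has spectrum $\R$ and $\lax_{u;h}$ is not semi-bounded, so a real spectral parameter is not admissible. Concretely, $\kappa-i\dx$ has symbol $\kappa+\xi$, which vanishes at $\xi=-\kappa$. Hence $(\kappa-i\dx)^{-1/2}$, your determinant, and your quadratic term $\int|\ft u|^2/(\xi+\kappa)\,d\xi$ (singular at $\xi=-\kappa$ and negative beyond it) are all undefined. Splitting into $\P_\pm$ with ``two conjugate determinants'' cannot repair this. The operator $u\Pih\cj u$ couples both frequency half-lines, and for $h<\infty$ the symbol $\frac12(1+\coth(h\xi)-\frac{1}{h\xi})$ of $\Pih$ takes values in $(0,1)$, so $\Pih$ is not even a projection.

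The paper handles exactly this obstruction by placing the spectral parameter at $i\kappa$. It takes $A(\kappa;u,h)$ to be the trace of $(\lax_{u;h}+i\kappa)^{-1}-(\lax_0+i\kappa)^{-1}$, expanded in powers of $(\kappa-\dx)^{-1}u\Pih\cj u$. The $n=1$ term is defined on the Fourier side by \eqref{A F}, since because of $J_h$ it is not known to be trace class for $u\in L^2$. The conserved combination is $\alpha=\|u\|_{L^2}^2+2\pi\beta\kappa\Im A$, whose quadratic part
\[
\int\tfrac{\xi^2}{\kappa^2+\xi^2}|\ft u(\xi)|^2\,d\xi+O\big(h^{-1}\kappa^{-1}\|u\|_{L^2}^2\big)
\]
is nonnegative and symmetric in $\xi$. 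Proposition~\ref{t:A dot} asserts conservation of this $A$; it says nothing about your real-parameter determinant.

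\textbf{Your error control also fails.} The Hilbert--Schmidt norm of $m(-i\dx)\,u$ factorizes as $(2\pi)^{-1/2}\|m\|_{L^2}\|u\|_{L^2}$. So $(\kappa-i\dx)^{-1/2}u$, or even $(\kappa-\dx)^{-1/2}u$, is never Hilbert--Schmidt in one dimension. Meanwhile $\|(\kappa-\dx)^{-1}u\|_{\hs}\sim\kappa^{-1/2}\|u\|_{L^2}$ sees nothing of the frequency distribution of $u$, so your bound $\les\int\frac{|\ft u|^2}{\jb\xi+\kappa}$ is false. The problem is $L^2$-critical, so the ratio of the quartic-and-higher terms to the quadratic term is scale-invariant. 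No bound built from $\|u\|_{L^2}$ and $\kappa$ can make it small; smallness must come from frequency-localized operator-norm bounds. Two ingredients are missing from your plan:
\begin{itemize}
\item the estimate \eqref{op 6}, giving \eqref{equicty 3}: $\|\cj u(\kappa-\dx)^{-1}u\|_{\op}\les\sqrt\eta\|u\|_{L^2}^2+\|u_{>\eta\kappa}\|_{L^2}^2$, which is where equicontinuity of $Q^*_T$ is actually used;
\item after splitting $\Pih=J_h+K_h$, the bounds \eqref{hs 2}, \eqref{op 2}, \eqref{op 3}, which drive the five-region dyadic analysis of the $n=2$ term containing only $K_h$.
\end{itemize}
Finally, the resulting error is not of the form $\eps\|u\|_{H^s}^2$. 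To make the $\varkappa$-integral converge, the $n\geq3$ terms need two operator-norm factors each bounded by $\varkappa^{-2s}\|u\|_{H^s}^2$. This produces a term $\kappa^{-2s}R\|u_{>\eta\kappa}\|_{L^2}\big(\alpha_s^{[2]}\big)^2$, quadratic in $\alpha_s^{[2]}$, which cannot be absorbed linearly. The paper closes it with a bootstrap in $t$, using continuity of $t\mapsto\alpha_s^{[2]}(\kappa;u(t),h)$ and choosing $\kappa$ so that $\sup_{Q^*_T}\|u_{>\eta\kappa}\|_{L^2}$ is small relative to the $L^2$ bound $R$.
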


Combining Theorems~\ref{t:equicty} and \ref{t:a-priori}, given $s>0$, we obtain global well-posedness in $H^s(\R)$ for data that is small in $L^2(\R)$.  However, Theorem~\ref{t:a-priori} is not inherently a small-data result.  Indeed, this formulation ensures that our result applies as soon as the corresponding equicontinuity result is established.

\subsection{Overview of the proofs}

We now discuss the new ideas involved in establishing Theorems~\ref{THM:LWP}, \ref{t:equicty}, and \ref{t:a-priori}, beginning with Theorem~\ref{THM:LWP}.

In our previous work \cite{CFL1}, following \cite{PMP}, which was in turn motivated by \cite{TAO04}, we used the same pair of gauge variables $(v,w)$ defined by
\begin{align*}
v= \P_{+,\text{hi}}[e^{i\be F[u]}u] \quad \text{and}\quad w=\P_{-,\text{hi}}u,
\end{align*}
recalling that $F[f] = \dx^{-1} (|f|^2)$.
From \eqref{INLS}, one can derive a system of equations for $(v,w)$, which is not closed, forcing us to work with the triplet $(u,v,w)$.
Then, the main contribution in the nonlinearity for, say, $v$ involves the term
\begin{align}
\P_{+,\text{hi}}[ v \P_{-}\dx( |u|^2)], \label{vnonlin1}
\end{align}
for which we needed to prove a multilinear estimate in $X^{s,b}$-spaces (see \eqref{Xsb} for a definition). The structure of the frequency projections $\P_{\pm}$ appearing in \eqref{vnonlin1} cancels out the problematic low$\times$low$\times$high$\to$high interactions which are present in $u\P_{+}\dx(|u|^2)$ in \eqref{INLS2}. An inspection of the proof of the main trilinear estimate in \cite[Proposition 4.1]{CFL1} reveals that the worst interactions are the resonant ones where the frequencies of the factors $|u|^2=\cj{u}u$ are the largest and similar.

To estimate \eqref{vnonlin1}, we argued by duality which
allowed us to recover almost $1/2$ of the derivative by using the bilinear Strichartz estimate in \cite{BO98, OT} (Lemma~\ref{LEM:bilin}). Moreover, the sign conditions imposed that the frequency of $v$ also controls the derivative $\dx$. Consequently, controlling the remaining $1/2$ derivative imposed the condition $2s>\frac 12$ forcing $s>\frac 14$.
An improvement here seems clear: we want to also apply the bilinear Strichartz estimate to the factor $|u|^2$ which would (almost) completely overcome the derivative! Unfortunately, this was not possible in \cite{CFL1}, where the gauged variables $(v,w)$ did not satisfy a closed system, thus requiring a bootstrap argument to even control the factors of $u$. In placing these factors into Fourier restriction norm spaces, there was a significant derivative loss, cancelling out the gain from bilinear Strichartz.

 A way to overcome this issue would be to re-inject the gauge variable into the term $|u|^2$, so as to see more factors of $v$ which do belong to $X^{s,\frac 12+}$. Such a step has been effective for studying the low-regularity well-posedness of the Benjamin-Ono equation \cite{BP, IK} and the modified Benjamin-Ono equation \cite{GLM}. Unfortunately, unlike the BO cases, simply injecting the recovery formula \cite[(3.17), p.19]{CFL1} does not seem to work in the full scaling sub-critical range for INLS \eqref{INLS2} as it introduces error terms involving the exponential factors $e^{i\be F[u]}$, which are difficult to control in Fourier restriction norms. 
 
 The key new observation in this work, which is crucial to the proof of Theorem~\ref{THM:LWP}, is the \textit{closed} expression for $u$ in terms of the \textit{quartet} of variables $(v,y,z,w)$ given in \eqref{udecomp-} and with $y$ satisfying \eqref{ymanifold}. In particular, \eqref{udecomp-} allows us to make the trivial observation that 
 \begin{align*}
|u|^2= | v+y+z|^2.
\end{align*}
 Namely, for the main nonlinear terms, such as \eqref{vnonlin1}, we can replace the factor $\dx( |u|^2)$ by $\dx(|v+y+z|^2)$, which not only removes the $u$ dependence, and hence a closed system, but also does not involve the problematic factors $e^{i\be F[v+y+z]}$. The same is true for the equations for $y,z,$ and $w$, which leads us from \eqref{INLS} to a system of four nonlinear Schr\"{o}dinger equations. See Lemma~\ref{LEM:gaugeeqns}. By \textit{positing} that $v,y,z,w\in X^{s,\frac 12+}$, we may fulfill our hope of applying the bilinear Strichartz estimate twice and obtaining the full gain.
 
 Unfortunately, the nonlinearity for the $y$-equation does not have the nice sign structure of \eqref{vnonlin1} and still contains the fatal low$\times$low$\times$high$\to$high interactions, which preclude us from establishing that $y\in X^{s,\frac 12+}$. This is where the further decomposition of $y$ in \eqref{ymanifold} plays a role: as the worst interaction here is due to a single $y$-factor of high frequency with all the other inputs of low frequency; see \eqref{NN2}, we insert the decomposition \eqref{ymanifold} which further develops the equation for $y$. The key point here is that due to favourable frequency projections $\P_{\pm}$, the operators $\mathcal{Y}^{\text{pos}}$ and $\mathcal{Y}^{\text{neg}}$ in \eqref{ymanifold} are \textit{smoother} in space. More precisely, by further using the bilinear Strichartz estimate, these terms gain more spatial regularity when measured in Lebesgue $L^p_{t,x}$-spaces; see Lemma~\ref{LEM:ysharp}. This extra smoothness allows us to close the estimates for the aforementioned bad interactions. 
 
With the Hardy space assumption (for which $w\equiv 0$ in \eqref{Xsbproperties}), this completes the system and we can run a contraction mapping argument to construct $(v,y,z,w)$ locally-in-time. We then reconstruct $u$ via \eqref{udecomp-}, show that it satisfies \eqref{Duhamel}, and prove the local Lipschitz dependence in Theorem~\ref{THM:LWP} (iii); see Section~\ref{SEC:LWP}. 

Outside of the Hardy space, $w$ in \eqref{Xsbproperties} no longer vanishes, and we have a further issue due to the last term $e^{i\be F[u]}w$ in \eqref{ymanifold}: this term enjoys no additional spatial smoothing. 
Our next key observation is that $(v,z,w)$ enjoy a nonlinear smoothing phenomenon as in \eqref{nonlinsmooth0}. For more on the nonlinear smoothing and its applications to dispersive PDEs, see \cite{ET} and the references therein.
Thus, by \textit{reducing} the spatial regularity assumptions on $y$ (see \eqref{Xsbproperties}), we can simultaneously close the estimates for this piece with $e^{i\be F[u]}w$, which has \textit{higher regularity}, and still close the trilinear estimates for the $(v,z,w)$-equations with lower regularity input functions. We then establish that the solution $u$ to \eqref{INLS}, reconstructed via \eqref{udecomp-}, still belongs to $C_{T}H^s(\R)$, and not just $C_{T}H^{s-11\dl}(\R)$ by using \eqref{ymanifold} and the smoothing properties of $\mathcal{Y}^{\text{pos}}$ and $\mathcal{Y}^{\text{neg}}$.
 
 Thus, the setup of this modified closed system of four equations in \eqref{system} is the main new novelty in this work regarding the local well-posedness and reveals the special algebraic structure inherent in \eqref{INLS} and \eqref{CCM}, allowing us to establish well-posedness results which almost encompass those exploiting the complete integrability of \eqref{CCM} in $L^2_{+}(\R)$.

Next, we turn our attention to the proofs of Theorems~\ref{t:equicty} and \ref{t:a-priori}.
Our argument is based on the Lax pair from \cite[Proposition~6.1]{CFL1}: for any $0<h\leq \infty$, $u(t)$ solves~\eqref{INLSG} on the line if and only if the operators
\begin{equation}
\lax_{u;h} = -i\partial_x +\be u\Pih\cj{u} 
\quad\text{and}\quad
\peter_{u;h} = -i\partial_x^2  +2\be u\partial_x\Pih \cj{u} 
% \label{Lax}
\notag
\end{equation}
on $L^2(\R)$ satisfy
\begin{equation}
\tfrac{d}{dt}\lax_{u;h} = [\peter_{u;h},\lax_{u;h}] .
\label{lax}
\end{equation}
\noi 
However, 
after this point, our argument diverges sharply from that in our prior works \cite{CFL1,CFL2}.  Both of these works rely on conserved quantities of the form
\[
\big\langle u , (\lax_{u;h}^2 + \kappa^2)^s u \big \rangle ,
\]
which are well-defined for each $u\in H^{\frac14}$, for all $\kappa>0$ sufficiently large.  The operator $(\lax_{u;h}^2 + \kappa^2)^{s}$ here was introduced as a replacement for $(\lax_{u;\infty} + \kappa)^{2s}$ which originally appeared in \cite[Section~6]{KLV2} in the context of CCM \eqref{CCM} on the Hardy space:  When $u\in L^2_+$, both $\lax_0$ and $\lax_{u;\infty}$ are operators on $L^2_+$ that are semi-bounded from below, and so $\lax_{u;\infty} + \kappa$ is positive definite for all $\kappa$ sufficiently large.  Working with $(\lax_{u;h}^2 + \kappa^2)^{s}$ has the advantage that it continues to be positive definite for $u\notin L^2_+$ and $h<\infty$, and yields a relatively short proof of a-priori estimates in $H^s(\R)$ for $\frac14 \leq s \leq 1$.  However, $\lax_{u;h}^2$ contains a term that is quartic in $u$, and the works \cite{CFL1,CFL2} leverage that $u\in H^{\frac14}\subseteq L^4$ in order to make sense of $\lax_{u;h}^2$ as a quadratic form.  Consequently, it is unclear how to make sense of these quantities outside~$H^{\frac14}$.  

Instead, Theorems~\ref{t:equicty} and \ref{t:a-priori} rely on a new family of conserved quantities $A(\kappa;u,h)$.
For sufficiently regular $u$ and $\kappa>0$ sufficiently large, this quantity corresponds to
\begin{align}
\tr\big\{ &(\lax_{u;h}+i\kappa)^{-1} - (\lax_{0}+i\kappa)^{-1} \big\} 
\nonumber \\
&= -i \sum_{n\geq 1} (i\beta)^n \tr\big\{ \big[ (\kappa-\dx)^{-1} u\Pih\cj{u} \big]^n (\kappa-\dx)^{-1} \big\} .
\label{A series}
\end{align}
Formally, the Lax pair~\eqref{lax} implies that the spectrum of $\lax_{u;h}$ is preserved, and so this trace should be a conserved quantity whenever it is finite.  When $h=\infty$, $u$ is in $L^2_+$, and with $i\kappa$ replaced by $\kappa>0$, this corresponds to the conserved quantity introduced in \cite[Lemma~2.7]{KLV2} for CCM on the Hardy space.  However, as noted above, the Lax operator is no longer semi-bounded from below for our purposes, and so we are forced to take the spectral parameter off of the real axis.  Ultimately, we do not find the analysis in \cite{KLV2} for the special case $h=\infty$ helpful in proving the convergence of the series~\eqref{A series}, as it heavily relies upon the assumption that $u\in L^2_+(\R)$ and the resulting frequency cancellations.  

In Section~\ref{s:consv}, we develop new estimates for the operators appearing in \eqref{A series}, which we then employ in Proposition~\ref{t:A series} to prove the convergence of the series for $n\geq 2$ and arbitrarily large $u\in L^2(\R)$.  This leaves the $n=1$ term, which we separate from the series because it is unclear if the operator in question is trace class for merely $u\in L^2(\R)$. Indeed, the source of our difficulty here is precisely due to the fact that $h<\infty$ and the singular nature of the operator $\mathcal{T}_{h}$ in \eqref{tilberty}.
Nevertheless, we present an alternative formulation of this term in \eqref{A F} that continues to make sense for $u\in L^2(\R)$, and we demonstrate that it agrees with the $n=1$ term in \eqref{A series} on a dense subset of $L^2(\R)$.  

Combining these two pieces, we then show that the resulting quantity is conserved under the flow of \eqref{INLSG}.  Again, we are unable to follow the elegant argument from \cite[Lemma~2.7]{KLV2} in the special case $h=\infty$ and $u\in L^2_+(\R)$; instead, we differentiate the series \eqref{A series} term-by-term and exhibit all of the necessary cancellations, much in the spirit of \cite{KVZ}.

Finally, in Section~\ref{s:a-priori}, we present the proofs of Theorems~\ref{t:equicty} and \ref{t:a-priori}.  For each argument, we craft a linear combination of the mass $\|u\|_{L^2}^2$ and the quantities $A(\kappa;u,h)$ for $\kappa > 0$, which is thus still conserved.
The particular weights are selected so that the quadratic-in-$u$ part resembles \eqref{equicty 2} for our equicontinuity result and $\|u\|_{H^s}^2$ for our a-priori estimates in $H^s(\R)$.  From our analysis in the preceding section, one may readily verify that the full quantity can be bounded above by its quadratic part.  However, to complete the proof, we must show the opposite direction: that the quadratic part is controlled by the entire series.  For the special case of $u\in L^2_+(\R)$ in \cite[Section~3]{KLV2}, this quickly followed from an upper/lower bound on the quadratic form associated to $\lax_{u;\infty}$ on $L^2_+(\R)$ in the focusing/defocusing cases, respectively.  As we were forced to move the spectral parameter in LHS\eqref{A series} off of the real axis however, these inequalities lose their efficacy.  In lieu of this, we carefully bound the contributions made by each frequency of $u$ to the quartic-and-higher order terms of \eqref{A series}.

We close this introduction with a few remarks.

\begin{remark}\rm \label{RMK:assumptions}
Our method for Theorem~\ref{THM:LWP} does not rely on the precise structure of $\QQ_{h}$ in~\eqref{Qh} nor on the pure cubic nonlinear term in \eqref{INLS} with $\g$. Indeed, our result carries over with minor modifications 
to other $L^2$-critical perturbations, such as replacing $\g |u|^2 u$ by $\g |u|^4 u$.
Moreover, if $\QQ_{h}=\QQ_{h}\P_{+}$, then we only require that $\QQ_{h}$ satisfies 
\begin{align*}
\sup_{\xi\in \R} \frac{| \ft \QQ_{h}(\xi)|}{ \jb{\xi}^{1-\ta}} <\infty
\end{align*}
for some $\ta>0$. For example, this includes the filtered nonlinearities $u \, \P_{+}(|u|^{2k})$, $k=1,2$ and the variant with derivative $u\, \P_{+}|\dx|^{1-\ta}(|u|^{2})$. Indeed, for the latter term, it is amenable to a trilinear estimate at the $L^2$-endpoint:
\begin{align}
\| u_{1} \P_{+}|\dx|^{1-\ta} (\cj{u_2}u_3)\|_{X^{s,-\frac 12+2\eps}} \les \prod_{j=1}^{3} \| u_j\|_{X^{s,\frac 12+\eps}} \label{trilinta}
\end{align}
for all $\eps>0$ sufficiently small and any $s\geq 0$. The main point is to apply the bilinear Strichartz estimate (Lemma~\ref{LEM:bilin}) twice when the derivative is large, and use that $\ta>0$ compensates for 
the $O(\eps)$-loss from applying this with the dual function. In particular, this strategy for establishing \eqref{trilinta} provides an alternative argument for the well-posedness results in \cite{Hadama1}. 
\end{remark}
 
 \begin{remark}\label{RMK:Hadama} \rm
During the preparation of this manuscript, the very recent preprint \cite{Hadama2} appeared which 
establishes local well-posedness for \eqref{INLS} in the critical space $L^2(\R)$ for small initial data. We point out that our method and the method in \cite{Hadama2} are completely different: whilst we employ gauge transformations in conjunction with the Fourier restriction norm method and our solutions satisfy the Duhamel formula \eqref{Duhamel}, the method in \cite{Hadama2} is based on constructing a propagator $S_{u}(t,\tau)$ for the Schr\"{o}dinger operator with the rough time-dependent potential $-\dx^2 + \be \dx(|u|^2)$ and constructing solutions to \eqref{INLS} (say for $\g=0$, for simplicity) formally satisfying $u(t)  = S_{u}(t,0)u_0$. Interestingly, both approaches heavily exploit the bilinear Strichartz estimates (Lemma~\ref{LEM:bilin}) and do not rely on complete integrability.

We also remark that our solutions agree with those in \cite{Hadama2}, which is not a-priori clear given that it is not clear if the solutions in \cite{Hadama2} satisfy the Duhamel formulation.
By using the decomposition \eqref{udecomp-} and the $X^{s,b}$-properties in \eqref{Xsbproperties}, we see that the solutions $u$ in Theorem~\ref{THM:LWP} belong to the Strichartz space $L^{4}_{T}L^{\infty}_{x}$. Moreover, by the bilinear Strichartz estimate \eqref{bilin1} we  also have that $|u|^2 \in L^{2}_{T}\dot{H}^{\frac 12}_x$. By applying the uniqueness statement in \cite[Theorem 1.1 (iii)]{Hadama2}, we see that our solutions in Theorem~\ref{THM:LWP} agree with those constructed in~\cite{Hadama2}.
\end{remark}

\section{Preliminaries}\label{SEC:Prelim}

\subsection{Notation}

In this subsection, we introduce relevant notation, projections, and function spaces, which will be used throughout.

We use $A\les B$ to denote $A\leq C B$ for some constant $C>0$,
$A\ll B$ if there is a small $c>0$ such that $A\le cB$, and $A\sim B$ if both $A\les B$ and $B\les A$ hold. 
The notation $a-$ refers to $a-\eps$ for any $\eps>0$. 
Also, $a\land b$ and $a\lor b$ denote the minimum and the maximum between $a$ and $b$, respectively.
Given dyadic numbers $N_0,N_1,N_2,N_3$, we let $N_{(1)}\geq N_{(2)}\geq N_{(3)}\geq N_{(4)}$ denote the decreasing rearrangement of $\{N_0,N_1,N_2,N_3\}$. 

Given a function $f$ on $\R$, we use $\F f$ and $\ft f$ to denote its Fourier transform
\begin{align}
\ft f(\xi) = \frac{1}{\sqrt{2\pi}} \int_\R f(x) e^{-i\xi x} dx. 
    \notag
\end{align}
For space-time functions $u:\R\times\R \to \C$, we may use the notation $\F_t u$ and $\F_x u$ to indicate the Fourier transform with respect to the time and space variables, respectively. We omit this indexing, when clear from context.

Let $s\in \R$ and $1 \le p \le \infty$. We define the $L^p$-based Sobolev spaces $W^{s, p}(\R)$ by the norm:
\begin{align*}
    \| f\|_{W^{s, p}} 
    = \| J^s f \|_{L^p}
    = \big\| \Ft^{-1} \big( \jb{\xi}^s \ft f(\xi) \big) \big\|_{L^p}, 
\end{align*}
where $J^s$ denotes the Bessel potential with Fourier multiplier $\jb{\xi}^s$, where $\jb{x} = (1+|x|^2)^\frac12$ and $\Ft^{-1}$ stands for the inverse Fourier transform. 
We also use $\dot{W}^{s,p}(\R)$ for the homogeneous Sobolev spaces with norm
\begin{align*}
\| f\|_{\dot{W}^{s, p}} 
    = \| D^s f \|_{L^p}
    = \big\| \Ft^{-1} \big( |\xi|^s \ft f(\xi) \big) \big\|_{L^p}
    ,
\end{align*}
where $D^s$ is the Riesz potential, with Fourier multiplier $|\xi|^s$.
When $p=2$, we write $W^{s,2}(\R) = H^s(\R)$ for the $L^2$-based Sobolev spaces, with norm
\begin{align*}
    \| f \|_{H^s} = \| \jb{\xi}^s \ft f (\xi) \|_{L^2_\xi}. 
\end{align*}
On the spatial side, we also define
\begin{equation}
\jb{x}^{-1}L^2 = \{ f\in L^2 : \jb{x}f\in L^2 \} .
\label{xL2}
\end{equation}

Given $1\leq p\leq \infty$ and an operator $R:L^p(\R) \to L^{p}(\R)$, we use $\|R\|_{L^p \to L^p}$ to denote its operator norm on $L^p$.
When $p=2$, we will use the shorthand notation $\|R\|_{\op}$. 
When working with space-time functions, given $T>0$, we often use the shorthand notation $L^p_T W^{s, q}_x$ for $L^p([0,T]; W^{s,q}(\R))$ and $L^p_T L^q_x$ for $L^p([0,T]; L^q(\R))$.

For operators on the Hibert space $L^2(\R)$, we use $\mathfrak{I}_p$ for $1\leq p \leq \infty$ to denote the Schatten class of operators whose singular values are $\ell^p$-summable, and $\| R \|_{\mathfrak{I}_p}$ to denote the corresponding norm.  When $p=\infty$ this corresponds to the operator norm, and $\mathfrak{I}_\infty$ to the closed subspace of compact operators.  When $p=2$ this reproduces the class of Hilbert--Schmidt operators, and for an integral operator $(Kf)(x) = \int k(x,y)f(y)\,dy$ we have $\| K \|_{\hs} = \| k \|_{L^2_{x,y}}$.

The space $\tc$ corresponds to the trace class.  The trace is a bounded linear functional on this space, with 
\[
|\tr(R)| \leq \|R\|_{\tc} .
\]
For an integral operator $K\in\tc$ with a sufficiently regular kernel $k(x,y)$, one recovers the formula $\tr(K) = \int k(x,x)\,dx$.  Moreover, whenever $\frac{1}{p} + \frac{1}{p'} = 1$ we have
\[
\| RS \|_{\tc} \leq \|R\|_{\mathfrak{I}_p} \|S\|_{\mathfrak{I}_{p'}} .
\]
This is simply an application of H\"older's inequality to the singular values of these operators.  In particular, if $R$ is Hilbert--Schmidt then $R^*R$ is trace class.  In fact, we have the following equivalent definition of the $\hs$-norm: 
\[
\| R\|_{\hs}^2 = \tr(R^*R) .
\]

For any $1\leq p\leq\infty$, $\mathfrak{I}_p$ forms a two-sided ideal in the space of bounded operators, due to the inequality
\[
\| RST \|_{\mathfrak{I}_{p}} \leq \|R\|_{\op} \|S\|_{\mathfrak{I}_p} \|T\|_{\op} .
\]
For a thorough introduction to such trace ideals, we recommend the book \cite{Simon}.

We now introduce notation to perform Litlewood-Paley decompositions. 
Let $\eta:\R\to [0,1]$ be a smooth function supported on $[-2,2]$ and equal to $1$ on $[-1,1]$.
Given $N\in 2^{\Z}$, let $\eta_{N}(\xi)=\eta(\frac{\xi}{N})$ and $\psi_N(\xi)=\eta(\frac{\xi}{N})-\eta(\frac{2\xi}{N})$. 
Note that
\begin{align*}
\sum_{N\geq 1} \psi_{N}(\xi) = 1- \eta_{\frac 12}(\xi)
\quad \text{when } \xi \in \R\setminus\{0\}
.
\end{align*}
Moreover, we use $\P_{\le N}$ and $\P_N$ to denote
 the Littlewood-Paley projectors defined by
 \begin{align*}
  \F \,(\P_{\leq N} f)  &= \eta_{N} \ft f 
, 
\\
\F(\P_1 f ) 
&= 
\eta_1  \ft f 
\quad
\text{and}
\quad
\F\, \P_N f 
= \F \,\P_{\leq N} f -\F\, 
\P_{\leq \frac{N}{2}} f 
=\psi_N \ft f, \quad \text{when } N\ge 2 ,
 \end{align*}
 and $\P_{>N}:= 1-\P_{\leq N}$. Note that $\sum_{N\ge 1} \P_N f = f ,$
which we will often use in our estimates, where by abuse of notation, we assume to sum over dyadic numbers in $2^{\Z_{\ge0}}$. 
 We use $\wt\P_{N}$ for the wider projector with multiplier $\wt{\psi}_{N}(\xi) =\psi_{N}(\frac{\xi}{2}) + \psi_{N}(\xi) +\psi_{N}(2\xi)$.
 We then set
 \begin{equation}
\begin{alignedat}{3}
 \F\,( \P_{+} f)(\xi) &= \ind_{\xi> 0} \ft f(\xi), 
 &
 \qquad   \F\, ( \P_{-} f)(\xi)  &= \ind_{\xi< 0} \ft f(\xi),
 \\
\Pbhi & = \sum_{N\geq 2}\P_{N},  
&
\quad 
\mathbf{P}_{\text{HI}} & = \sum_{N\geq 2^{3}}\P_N, 
\\
\Pblo &= \text{Id}-\Pbhi, 
& \quad 
\mathbf{P}_{\text{LO}} &=\text{Id} - \mathbf{P}_{\text{HI}}.
\end{alignedat}
\label{projs}
% \notag
\end{equation}
We also define the shorthand $\P_{\pm, \text{hi}}=\P_{\pm}\Pbhi$ and similarly for $\P_\HI, \P_\lo, \P_\LO$. 

For space-time functions $u: \R\times \R \to \C$, we define frequency projectors on the space-time Fourier variables $(\tau, \xi)$: given $K\in 2^{\N}$, we set 
\begin{align}
\label{Qpro}
\begin{split}
\mathcal{F}_{t,x}\{ \Q_{\ll K}u\}(\tau,\xi) &= \eta_{10^{-10}K}(\tau -\xi^2)\ft u(\tau,\xi),\\
 \mathcal{F}_{t,x}\{ \Q_{\ges K}u\}(\tau,\xi) &=(1- \eta_{10^{-10}K})(\tau -\xi^2)\ft u(\tau,\xi).
\end{split}
\end{align}

\subsection{Product estimates}

In this section, we discuss some useful product-type estimates.

\begin{lemma}[Fractional Leibniz rule]
\label{LEM:leib}
Let $s>0$ and $1<p_j,q_j,r\leq \infty$ 
such that $\frac1r=\frac{1}{p_j}+\frac{1}{q_j}$, $j=1,2$,. 
Then, we have
\begin{align*}
  \| J^s(fg)\|_{L^r (\R)} \les \|J^s f\|_{L^{p_1}(\R)} \|g\|_{L^{q_1}(\R)}  + \|f\|_{L^{p_2}(\R)} \| J^s  g\|_{L^{q_2}(\R)}.
\end{align*}
In particular, the endpoint $r=p_j=q_j=\infty$, $j=1,2$ is allowed.
\end{lemma}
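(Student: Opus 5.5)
The plan is to prove the fractional Leibniz rule by the standard Littlewood--Paley paraproduct decomposition combined with Coifman--Meyer multiplier bounds. The endpoint case $r=p_j=q_j=\infty$ is handled separately by the Bourgain--Li / Grafakos--Oh endpoint argument.

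We split the product as
\[
fg=\sum_{N}\P_N f\,\P_{\ll N}g+\sum_N \P_{\ll N}f\,\P_N g+\sum_{N\sim M}\P_N f\,\P_M g,
\]
which gives high--low, low--high and high--high paraproducts.

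\emph{High--low term.} The frequency support of $\P_Nf\,\P_{\ll N}g$ is at scale $\sim N$. Hence
\[
J^s\big(\P_Nf\,\P_{\ll N}g\big)=\wt\P_N J^s\big(\P_Nf\,\P_{\ll N}g\big),
\]
and $J^s$ acts like $N^s$ on this piece. We write
\[
\sum_N J^s(\P_N f\,\P_{\ll N} g)=T_\sigma(J^sf,g),
\]
where $\sigma(\xi,\eta)=\sum_N\frac{\jb{\xi+\eta}^s}{\jb{\xi}^s}\psi_N(\xi)\eta_{N/100}(\eta)$. This symbol is Coifman--Meyer: it is smooth away from the origin and satisfies $|\partial^\alpha_\xi\partial^\beta_\eta\sigma|\les(|\xi|+|\eta|)^{-|\alpha|-|\beta|}$. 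The Coifman--Meyer theorem then gives the bound
\[
\|J^sf\|_{L^{p_1}}\|g\|_{L^{q_1}}
\]
for $1<p_1,q_1\le\infty$ and $r\ge\frac12$; here $r>1$ anyway, although $r=\infty$ needs care. The low--high term is symmetric and is bounded by $\|f\|_{L^{p_2}}\|J^sg\|_{L^{q_2}}$.

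\emph{High--high term.} The output frequency is $\les N$, so $J^s$ applied to the output costs at most $N^s$, which we move onto the $f$ factor. Here $s>0$ is essential: it makes the symbol
\[
\sum_{N\sim M}\frac{\jb{\xi+\eta}^s}{\jb\xi^s}\psi_N(\xi)\psi_M(\eta)
\]
a Coifman--Meyer symbol after summing dyadic pieces with the geometric gain $(\text{output}/N)^s$. Alternatively, we decompose the output dyadically into $\P_K$ with $K\les N$ and sum $K^sN^{-s}$ over $K\le N$, using square-function estimates. This is bounded by either of the two right-hand terms.

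\emph{Endpoint $r=p_j=q_j=\infty$.} This is the main obstacle, since Coifman--Meyer bilinear multipliers are not bounded $L^\infty\times L^\infty\to L^\infty$ in general. We follow Bourgain--Li. For the high--low piece, we exploit the explicit structure: the low-frequency factor $\P_{\ll N}g$ is essentially constant at scale $N^{-1}$. The operator $J^s\wt\P_N$ composed with multiplication by this factor can be written as a kernel estimate with an $L^1$-integrable kernel uniformly in $N$, and the $N$-sum is controlled via the decay of $\jb{\xi+\eta}^s\jb\xi^{-s}-1$. In practice, since this lemma is classical, I would simply cite Kato--Ponce and Grafakos--Oh for $1<r<\infty$, and Bourgain--Li for the $L^\infty$ endpoint. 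The only genuinely delicate point to verify is the endpoint case, where one uses $J^s=\jb{\xi}^s$ (inhomogeneous) together with $s>0$ to sum the high--high interactions.
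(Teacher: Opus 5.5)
Your proposal ends up where the paper does: the paper gives no argument of its own and simply cites Kato--Ponce, Christ--Weinstein, Grafakos--Oh and Bourgain--Li (the last for the $L^\infty$ endpoint). If you ever write out your sketch, two corrections. First, for $0<s<1$ the high--high symbol is not Coifman--Meyer: its $\xi$-derivative has size $N^{-s}\gg N^{-1}$ where $|\xi|\sim|\eta|\sim N$ and $|\xi+\eta|\sim 1$, so for $r<\infty$ only your square-function route works. Second, your $r=\infty$ heuristic never handles the main term $\sum_N (J^s\P_N f)\,\P_{\ll N}g$; as a bilinear operator in $(J^s f,g)$ this term is not bounded $L^\infty\times L^\infty\to L^\infty$, which is why the Bourgain--Li citation is needed.
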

\noi
For a proof of Lemma~\ref{LEM:leib}, see \cite{KP, CW,GO,BL}.
\begin{remark}\rm

It is clear that $e^{i\be F}\in L^{\infty}(\R)$ but that $e^{i\be F} \notin L^2(\R)$. Consequently, $e^{i\be F}$ is merely a tempered distribution. Nonetheless, as $u\in L^2(\R)$, $\dx e^{i\be F}\in L^1(\R)$, and we have that for almost every $\xi\in\R$, 
\begin{align*}
\mathcal{F}\{e^{i\be F}\}(\xi) = \frac{1}{i\xi} \int_{\R} e^{-ix\xi} \dx(e^{i\be F}) dx.
\end{align*}
Whilst $\Pbhi e^{i\be F}$, $\PbHI e^{i\be F}$, and $\Pbhip e^{i\be F}$ are well-defined and belong to $L^2(\R)$, due to the non-integrable singularity at the origin, the expressions $\P_{\pm}(e^{i\be F})$ are ill-defined. Moreover, we need to carefully define $\Pblo e^{i\be F}$ and $\PbLO e^{i\be F}$. Here, we define these as:
\begin{align*}
\Pblo e^{i\be F} : = e^{i\be F} -\Pbhi e^{i\be F} \quad \text{and} \quad \PbLO e^{i\be F} : = e^{i\be F} -\PbHI e^{i\be F}.
\end{align*}
It follows from these definitions that $\PbHI \Pblo(e^{i\be F})=0$ and $\dx \Pblo(e^{i\be F}) = \Pblo \dx e^{i\be F}$.
For $0<\al\leq 1$, it holds that
\begin{align}
\| D^{\al}_{x} \Pblo e^{i\be F}\|_{L^{\infty}_{x}} & \les 1, \label{DeLinfty}\\
\| D^{\al}_{x} \Pblo [e^{i F_1}-e^{iF_2}]\|_{L^{\infty}_{x}}& \les \|e^{iF_1}-e^{iF_2}\|_{L^{\infty}_x}. \label{DeLinfty2}
\end{align}
See \cite[Section 2.2]{CFL1}.
\end{remark}

\begin{lemma}\label{LEM:JsL2}
Let $0\leq s < 1$. Assume that $F_j$, $j=1,2$ are two real-valued functions such that $\dx F_j= |f_j|^2$.
Then, 
\begin{align}
\| J^{s} ( e^{iF_1}g)\|_{L^{2}_{x}} &\les   (1 + \|f_1\|_{H^{\max(s-\frac12+,0)}} \big)^4 \|g\|_{H^s},
  \label{L2F1} \\
\| J^{s} ( (e^{iF_1}-e^{iF_2})g)\|_{L^{2}_{x}} & \les  (1 + \|f_1\|_{H^{\max(s-\frac12+,0)}} + \|f_2\|_{H^{\max(s-\frac12+,0)}} \big)^5 \|f_1-f_2\|_{L^2}\|g\|_{H^{s}}.
 \label{L2F2}
\end{align}
\end{lemma}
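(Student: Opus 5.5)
We prove \eqref{L2F1} first. For $s=0$ it is immediate, since $|e^{iF_1}|=1$ gives $\|e^{iF_1}g\|_{L^2}=\|g\|_{L^2}$; we therefore take $0<s<1$. The plan is to avoid applying a fractional Leibniz rule to $e^{iF_1}$ itself. That function lies in $L^\infty$ but not in any Sobolev space, and only its derivative $\dx e^{iF_1}=i|f_1|^2e^{iF_1}$ is integrable. We therefore work with the characterization of $H^s$, $0<s<1$, through the Gagliardo difference quotient:
\[
\|h\|_{\dot H^s}^2\sim \int_\R \frac{\|h(\cdot+y)-h(\cdot)\|_{L^2}^2}{|y|^{1+2s}}\,dy .
\]
Set $h=e^{iF_1}g$ and write
\[
h(x+y)-h(x)=e^{iF_1(x+y)}\big(g(x+y)-g(x)\big)+\big(e^{iF_1(x+y)}-e^{iF_1(x)}\big)g(x).
\]
The first term contributes exactly $\|g\|_{\dot H^s}^2$, because the phase has modulus one. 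Everything reduces to bounding the commutator-type term
\[
I=\int_\R\int_\R\frac{|e^{iF_1(x+y)}-e^{iF_1(x)}|^2|g(x)|^2}{|y|^{1+2s}}\,dx\,dy .
\]

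To control $I$ we split in $|y|$. For $|y|\ge1$ we use $|e^{iF_1(x+y)}-e^{iF_1(x)}|\le2$, which gives a bound $\les\|g\|_{L^2}^2$. For $|y|<1$ we use the bound
\[
|e^{iF_1(x+y)}-e^{iF_1(x)}|\le \min\Big(2,\ \int_x^{x+y}|f_1|^2\Big).
\]
In the case $s<\frac12$ the weight $|y|^{-1-2s}$ is integrable against $|y|^{2\theta}$ whenever $\theta>s$. Interpolating the minimum as $\le 2^{1-\theta}\big(|y|M(|f_1|^2)(x)\big)^\theta$, where $M$ is the Hardy--Littlewood maximal function, yields a bound of the form $\int (M|f_1|^2)^{2\theta}|g|^2$. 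Take $\theta\in(s,\frac12)$ so that $2\theta<1$. Then H\"older's inequality together with $\|M|f_1|^2\|_{L^{p}}\les\|f_1\|_{L^{2p}}^2$, the Sobolev embedding for $g$ ($g\in L^{2/(1-2s)}$), and an $L^{q}$ bound on $f_1$ controlled by $\|f_1\|_{H^{0+}}$ or $\|f_1\|_{L^2}$ give $I\les(1+\|f_1\|_{H^{0+}})^{C}\|g\|_{H^s}^2$. In the case $s\ge\frac12$ we take $\theta=1$ on a small portion and need $|f_1|^2\in L^{p}$ with $p$ large; this is where the hypothesis $f_1\in H^{s-\frac12+}$ enters, through $H^{s-\frac12+}\subset L^{2/(2-2s)-}$ paired with the integrability of $g$ supplied by $H^s$. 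Choosing the exponents so that the final power is at most $4$ then reproduces the stated polynomial dependence.

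The proof of \eqref{L2F2} follows the same scheme with $e^{iF_1}-e^{iF_2}$ in place of $e^{iF_1}$. The $L^2$ part uses $\|e^{iF_1}-e^{iF_2}\|_{L^\infty}\le\|F_1-F_2\|_{L^\infty}\le\||f_1|^2-|f_2|^2\|_{L^1}\le\|f_1-f_2\|_{L^2}(\|f_1\|_{L^2}+\|f_2\|_{L^2})$, which is compatible with the normalization $F_j(-\infty)=0$ implicit in $\dx^{-1}$. In the difference quotient, the increment of $e^{iF_1}-e^{iF_2}$ is bounded by
\[
\min\Big(\|e^{iF_1}-e^{iF_2}\|_{L^\infty}\,,\ \int_x^{x+y}\big(\big||f_1|^2-|f_2|^2\big|+|f_2|^2\,|F_1-F_2|\big)\Big).
\]
Running the same interpolation as before produces one additional factor of $(1+\|f_1\|+\|f_2\|)$, which accounts for the fifth power.

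The main obstacle is the exponent bookkeeping in $I$ near $s\to1$. Since $g$ is then only barely better than $L^\infty$, the regularity of $f_1$ must make up the difference. If the maximal-function route loses too much there, we would instead use a paraproduct decomposition $e^{iF_1}g=\sum_N \P_{\le N/8}(e^{iF_1})\P_N g+\sum_N\P_N(e^{iF_1})\P_{\lesssim N}g$. In the second sum we would write $\P_N e^{iF_1}=N^{-1}\P_N\dx e^{iF_1}$ and bound it by $N^{-1}\|\P_N(|f_1|^2e^{iF_1})\|$; the derivative gain is then traded for regularity of $|f_1|^2$, using the bound \eqref{DeLinfty} for the low-frequency part.
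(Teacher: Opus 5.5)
\paragraph{Where the main route falls short.} Your difference-quotient framework differs from the paper's proof. The paper splits off $\Pblo e^{iF_1}$, handles it with fractional Leibniz and \eqref{DeLinfty}, and rewrites $D^s\Pbhi e^{iF_1}$ as $D^{s-1}\Pbhi(|f_1|^2e^{iF_1})$ before applying Leibniz and Sobolev; your fallback paraproduct is close in spirit to this. However, the step you actually carry out does not give the stated inequalities. That step is the pointwise bound by $\min(2,|y|\,M(|f_1|^2)(x))$, interpolated as $A^{1-\theta}B^\theta$ with $\theta>s$ and strong-type maximal estimates.

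\paragraph{Gap for $0<s<\frac12$.} Here $\max(s-\frac12+,0)=0$, so the statement allows only $\|f_1\|_{L^2}$. Your H\"older step instead needs $M|f_1|^2\in L^{\theta/s}$ with $\theta/s>1$, i.e.\ $f_1\in L^{2\theta/s}$ with $2\theta/s>2$. Your own conclusion, $(1+\|f_1\|_{H^{0+}})^C$, is therefore strictly weaker than the claim. An endpoint device is needed. For example, use $\min(2,\mu(J_{x,y}))^2\le 2\mu(J_{x,y})$ with $\mu=|f_1|^2dx$, then Fubini, then the fractional Hardy inequality $\sup_z\int|g(x)|^2|x-z|^{-2s}dx\les\|g\|_{\dot H^s}^2$. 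This gives $I\les\|f_1\|_{L^2}^2\|g\|_{\dot H^s}^2$. The same argument yields \eqref{L2F2} for $s<\frac12$, because $e^{iF_1}-e^{iF_2}$ has sup norm and total variation both $\les(1+\|f_1\|_{L^2}+\|f_2\|_{L^2})^3\|f_1-f_2\|_{L^2}$.

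\paragraph{Gap in \eqref{L2F2} for $s>\frac12$.} This is the more serious problem: \emph{running the same interpolation} does not produce the required linear factor $\|f_1-f_2\|_{L^2}$. The increment contains $\int_x^{x+y}||f_1|^2-|f_2|^2|$, so even with $g\in L^\infty$ you must control $\||f_1|^2-|f_2|^2\|_{L^{2\theta}}$ with $\theta>s$.
\begin{itemize}
\item Keeping $f_1-f_2$ in $L^2$ forces $|f_1|+|f_2|\in L^r$ with $\frac1r=\frac1{2\theta}-\frac12<\frac{1-s}{2s}$, i.e.\ $r>\frac{2s}{1-s}$.
\item But $H^{s-\frac12+}$ only embeds into $L^{\frac1{1-s}+}$, and $\frac{2s}{1-s}>\frac1{1-s}$ for $s>\frac12$.
\item Placing $f_1-f_2$ in a higher Lebesgue space instead destroys linearity in $\|f_1-f_2\|_{L^2}$.
\item Integrating out $y$ exactly only reaches $\theta=s$, which does not help.
\end{itemize}
The loss is intrinsic to the pointwise maximal-function bound. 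What is needed is a global Sobolev estimate on the primitive. Set $\phi=e^{iF_1}-e^{iF_2}$ and bound the commutator term by $\|g\|_{L^\infty}^2[\phi]_{\dot H^s}^2$, where $[\cdot]_{\dot H^s}$ is the Gagliardo seminorm. Then use $[\phi]_{\dot H^s}\sim\|D^{s-1}\dx\phi\|_{L^2}\les\|\dx\phi\|_{L^{2/(3-2s)}}$ (Hardy--Littlewood--Sobolev). By H\"older this is at most $\|f_1-f_2\|_{L^2}\||f_1|+|f_2|\|_{L^{1/(1-s)}}+\|F_1-F_2\|_{L^\infty}\|f_2\|_{L^{4/(3-2s)}}^2$, which is linear in $\|f_1-f_2\|_{L^2}$. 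The endpoint $s=\frac12$ needs separate care. This nonlocal Sobolev step on $D^{-1}\dx e^{iF}$ is exactly what the paper's Littlewood--Paley formulation encodes.

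\paragraph{A misplaced obstacle.} The difficulty you flagged, the single-function estimate as $s\to1$, is not where the trouble lies. For \eqref{L2F1} with $s\ge\frac12$ your exponents do close, since $f_1\in L^{4\theta}$ with $\theta\downarrow s$ is available because $4s(1-s)\le 1$.
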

\begin{proof}
The estimate \eqref{L2F1} is obvious when $s=0$ or $e^{iF}g$ is replaced by $\Pblo(e^{iF_1}g)$. Thus, we only estimate the contribution from $\Pbhi(e^{iF_1}g)$ when $s>0$.
Writing 
\begin{align*}
\| J^{s}\Pbhi ( e^{iF_1}g)\|_{L^{2}_{x}} & \leq \| D^{s} \Pbhi ( e^{iF_1}g)\|_{L^{2}_{x}}  \leq \| D^{s} ( \Pbhi(e^{iF_1})g)\|_{L^{2}_{x}} + \| D^{s} ( \Pblo(e^{iF_1})g)\|_{L^{2}_{x}},
\end{align*}
we have by the fractional Leibniz rule and \eqref{DeLinfty} that 
\begin{align*}
\| D^{s} ( \Pblo(e^{iF})g)\|_{L^{2}_{x}} \les \|D^{s}g\|_{L^{2}_x}, 
\end{align*}
when $0< s < 1$. 
For the $\Pbhi e^{iF_1}$ contribution we consider two cases: (i) $0 < s < \frac12$ and (ii) $\frac12 \le s \le 1$.
If (i) holds, 
 by the fractional Leibniz rule and Sobolev inequality, we have
\begin{align*}
\| D^{s} ( \Pbhi(e^{iF_1})g)\|_{L^{2}} 
& \les  \| D^{s-1}\Pbhi( |f_1|^2 e^{i\be F_1})\|_{L^{\frac{1}{s}}} \|g\|_{L^{\frac{2}{1-2s}}} + \|\Pbhi e^{i\be F_1}\|_{L^{\infty}} \|D^{s}g\|_{L^2} \\
& \les \| \Pbhi( |f_1|^2 e^{i\be F_1})\|_{L^{1}} \|g\|_{H^{s}} + \|g\|_{H^s} \\
&\les (1+\| f_1\|_{L^{2} }^{2}) \|g\|_{H^s}.
\end{align*}
If (ii) holds, we instead have
\begin{align}
\| D^{s} ( \Pbhi(e^{iF_1})g)\|_{L^{2}}^2
& \les  \| D^{s-1 +}\Pbhi( |f_1|^2 e^{i\be F_1})\|_{L^{2}} \|g\|_{L^{\infty-}} + \|\Pbhi e^{i\be F_1}\|_{L^{\infty}} \|D^{s}g\|_{L^2} 
\notag
\\
& \les \| D^{s-\frac12 + }\Pbhi( |f_1|^2 e^{i\be F_1})\|_{L^{1}} \|g\|_{H^{s}} + \|g\|_{H^s} , 
\label{aux000}
\end{align}
where for the first contribution on the RHS, by fractional Leibniz, \eqref{DeLinfty}, Sobolev inequality, and H\"older's inequality,  we get
\begin{align*}
    &
    \| D^{s-\frac12 + }\Pbhi( |f_1|^2 e^{i\be F_1})\|_{L^{1}}
    \\
    &
    \les
    \|  f_1 \|_{L^2}^2 \| D^{s-\frac12 + }\P_\lo e^{i\be F_1}\|_{L^{\infty}} 
    +
    \|  f_1 \|_{L^{2}}^2 \| D^{s-\frac12 + }\P_\hi e^{i\be F_1}\|_{L^{\infty}} 
    + 
    \| f_1\|_{L^2} \|f_1\|_{H^{s-\frac12+}} 
    \\
    &
    \les \|f_1\|_{L^2} ( 1+  \|f_1\|_{H^{s-\frac12+}} )
    + 
    \|  f_1 \|_{L^{2}}^2 
    \| \dx \P_\hi ( e^{i\be F_1}  ) \|_{L^{\frac{2}{3-2s}+}}
    \\
    &
    \les \|f_1\|_{L^2} ( 1+  \|f_1\|_{H^{s-\frac12+}} ) + \| f_1\|^3_{L^2} \| f_1\|_{L^{\frac{1}{1-s}+}}
    \\
    &
    \les ( 1 + \|f_1\|_{L^2} )^3 \|f_1\|_{H^{s-\frac12+}} 
\end{align*}
which together with \eqref{aux000} completes the proof in Case (ii). 

The difference estimate \eqref{L2F2} follows similarly by using \eqref{DeLinfty2} and \eqref{Fdef} which imply
\begin{align}
\|F_1-F_2\|_{L^{\infty}} \les (\|f_1\|_{L^2}+\|f_2\|_{L^2})\|f_1-f_2\|_{L^2}.
\label{Fdiff}
\end{align}
We omit details. 
\end{proof}

We recall the following from \cite[Lemma 2.4]{CFL1}.

\begin{lemma}\label{LEM:GGh}
Let $0<h<\infty$ and $1<p<\infty$.  Then, $\mathcal{G}_{h}$ is $L^p(\R)\to L^p(\R)$ bounded and 
\begin{align}
    \| \GG_{h}\|_{L^{p}(\R)\to L^p(\R)} \les \tfrac{1}{h}
    ,
    % \label{GGconv}
    \notag
\end{align}
where the implicit constant is uniform in $1\leq h<\infty$.
\end{lemma}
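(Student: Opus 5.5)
The plan is to prove the bound through the Fourier multiplier of $\GG_h=(\H-\TT_h)\dx$. First compute the symbol of $\TT_h$. The principal-value kernel $\frac{1}{2h}\coth(\frac{\pi x}{2h})$ has Fourier multiplier $-i\coth(h\xi)$, up to the normalization $\ft f$ used here; this is a standard contour/residue computation, which I will take from \cite[Lemma 2.4]{CFL1} or verify directly. Since $\ft\H(\xi)=-i\sgn(\xi)$, the symbol of $\GG_h$ is
\[
m_h(\xi) = -i\big(\sgn(\xi)-\coth(h\xi)\big)(i\xi) = \xi\big(\sgn(\xi)-\coth(h\xi)\big)
= -\frac{\xi\,\sgn(\xi)\,2e^{-2h|\xi|}}{1-e^{-2h|\xi|}} = -\frac{2|\xi|}{e^{2h|\xi|}-1}.
\]
This symbol is smooth away from $0$ and continuous at $0$ with value $-\frac1h$. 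It is an even function of $\xi$, and it decays exponentially.

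Next, exploit scaling: $m_h(\xi)=\frac1h\,m_1(h\xi)$ with $m_1(\eta)=-\frac{2|\eta|}{e^{2|\eta|}-1}$. Dilations preserve $L^p$ operator norms of Fourier multipliers, so $\|\GG_h\|_{L^p\to L^p}=\frac1h\|\GG_1\|_{L^p\to L^p}$. It then suffices to show that $m_1$ is an $L^p$ multiplier for $1<p<\infty$. In fact it is better than that: the function $g(\eta)=\frac{2\eta}{e^{2\eta}-1}$ is real-analytic on $\R$ (removable singularity at $0$) and even, so $m_1(\eta)=-g(\eta)$ is smooth. Moreover all its derivatives decay exponentially, so $m_1\in\S(\R)$. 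Hence $\F^{-1}m_1\in\S\subset L^1$, and $\GG_1$ is convolution with an $L^1$ kernel. Young's inequality then gives boundedness on every $L^p$, $1\le p\le\infty$, with a constant independent of $h$. This yields the bound $\frac1h$ uniformly in all $0<h<\infty$, in particular uniformly for $h\ge1$.

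The main obstacle is justifying the symbol of $\TT_h$, since the kernel is a singular principal-value distribution that does not decay to zero (it tends to $\pm\frac1{2h}$). I would split it as $\frac{1}{2h}\coth(\frac{\pi x}{2h})=\frac{1}{\pi x}+k_h(x)$, where $k_h$ is smooth, odd and bounded with limits $\pm\frac{1}{2h}$. Equivalently, I would directly show that the kernel of $\GG_h$, which is $\dx$ of $\frac1\pi$p.v.$\frac1x-\frac1{2h}\coth(\frac{\pi x}{2h})$, is a Schwartz-class function. This follows from the expansion $\coth z-\frac1z=\sum_{n\neq0}\frac{z}{z^2+n^2\pi^2}\cdot$ (Mittag-Leffler), whose derivative decays like $x^{-2}$ and is smooth. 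Identifying the multiplier of this kernel is then a routine residue computation.
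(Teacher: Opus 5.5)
Your overall route is sound: explicit symbol, the exact scaling $m_h(\xi)=\frac1h m_1(h\xi)$, then an $L^1$ kernel and Young's inequality. But the step that supplies the $L^1$ kernel is false as written. $m_1$ is not smooth, so $m_1\notin\S(\R)$, and the kernel of $\GG_h$ is not Schwartz.

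The reason is that $g(\eta)=\frac{2\eta}{e^{2\eta}-1}$ is not even. One has $g(-\eta)=g(\eta)+2\eta$; it is $g(\eta)+\eta=\eta\coth\eta$ that is even. Hence $m_1(\eta)=-g(|\eta|)$ cannot be rewritten as $-g(\eta)$. Directly from $m_h(\xi)=|\xi|-\xi\coth(h\xi)$, in which only the second term is smooth, you get
\[
m_1(\eta)=-1+|\eta|-\tfrac13\eta^2+O(\eta^4),
\]
which has a corner at $\eta=0$. Dually, the kernel has only an $|x|^{-2}$ tail. Writing $\phi_h(x)=\frac{1}{\pi x}-\frac{1}{2h}\coth\big(\frac{\pi x}{2h}\big)$, the convolution kernel of $\GG_h$ is
\[
\phi_h'(x)=\frac{\pi}{4h^2}\Big[\Big(\sinh\frac{\pi x}{2h}\Big)^{-2}-\Big(\frac{\pi x}{2h}\Big)^{-2}\Big]=-\frac{1}{\pi x^2}+O\big(h^{-2}e^{-\pi|x|/h}\big)\quad\text{as }|x|\to\infty .
\]
Your last paragraph actually records this $x^{-2}$ decay, which contradicts the Schwartz claim in the same sentence.

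The repair is easy, because you never needed Schwartz regularity, only $\F^{-1}m_1\in L^1$. Since $(\sinh z)^{-2}=z^{-2}-\frac13+O(z^2)$, the function $\phi_h'$ is smooth, and with its $O(x^{-2})$ tail it is integrable. Alternatively, $m_1$ is Lipschitz with $m_1$ and $m_1'$ exponentially decaying, so $m_1\in H^1(\R)$ and
\[
\|\F^{-1}m_1\|_{L^1}\le\|\jb{x}^{-1}\|_{L^2}\|\jb{x}\F^{-1}m_1\|_{L^2}\les\|m_1\|_{H^1}.
\]
Better still, $|\sinh z|>|z|$ for $z\neq0$ gives $\phi_h'<0$ everywhere. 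Hence
\[
\|\phi_h'\|_{L^1}=\big|\textstyle\int\phi_h'\,dx\big|=\phi_h(-\infty)-\phi_h(+\infty)=\tfrac1h,
\]
so $\|\GG_h\|_{L^p\to L^p}\le\frac1h$ with constant one, for every $1\le p\le\infty$ and every $0<h<\infty$. With this fix your argument is a complete proof. The paper does not prove the lemma itself; it only quotes it from \cite[Lemma 2.4]{CFL1}. Your scaling argument gives more than the stated uniformity for $h\ge1$ and $1<p<\infty$.
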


\section{The gauge transform and properties of solutions}
\subsection{Fourier restriction norm spaces}\label{SEC:Xsb}

For $s,b\in \R$, we consider the Fourier restriction norm spaces $X^{s,b}(\R\times \R)$ as the completion of $\S(\R\times \R)$ under the norm:
\noi
\begin{align}
\| u\|_{X^{s, b}(\R\times \R)} &= \big\| \jb{\tau-\xi^2}^{b}\jb{\xi}^s \ft u(\tau, \xi)\big\|_{L^2_{\tau,\xi}}, 
\label{Xsb}
% \notag
\end{align}
Given a time interval $I \subset \R$, we define localised in time versions of these spaces as follows: if $u: I \times \R \to \C$, then 
\begin{align}
\|u\|_{X^{s,b}_{I}}: =\inf\{ \|\wt{u}\|_{X^{s,b}} \, : \, \wt{u}:\R\times \M \to \C, \,\, \wt{u}\vert_{I\times \M} = u\}. 
\label{XsbT}
\end{align}
When $I=[0,T]$ for some $T>0$, we denote the spaces $X^{s,b}_{I}=X^{s,b}_{T}$.
For any $b>\frac 12$, the following embedding holds:
\begin{align}
X^{s,b}_{T} \embeds C([0,T];H^{s}(\R)). \label{YsCTHs}
\end{align}

We recall the following linear estimates related to the Fourier restriction norm spaces. See~\cite{MP}.

\begin{lemma}\label{LEM:linXsb}
Let $s\in \R$ and $0<\dl<\frac 12$. Then, the following estimates hold\\
\textup{(i)} 
\begin{align}
\|  S(t)f\|_{X_{T}^{s,b}} \les \|f\|_{H^{s}} \quad \textup{and}\quad \bigg\|  \int_{0}^{t} S(t-t') g(t')dt' \bigg\|_{X_{T}^{s,\frac 12+\dl}}  \les \|g\|_{X^{s, -\frac 12 +\dl}_{T}}
\notag
\end{align}
\noi
\textup{(ii)} For any $T>0$ and $-\frac 12 < b'\leq b<\frac 12$, it holds that 
\begin{align}
\| u\|_{X^{s,b'}_{T}} \les T^{b-b'}\|u\|_{X^{s,b}_{T}}.  \label{timeloc}
\end{align}
\textup{(iii)} For any $b>\frac 12$ and $2< p\leq 6$, it holds that 
\begin{align}
\|u\|_{L^{p}_{T,x}}& \les \|u\|_{\wt{L^{p}_{T,x}}} \les   \|u\|_{X_{T}^{0,3b (\frac{1}{2}-\frac{1}{p})}}.
\label{L4}
\end{align}
Moreover, for any $N\in \N$ and for any $6<p\leq \infty$, we have
 \begin{align}
\| \P_{N} u\|_{L^{p}_{T,x}} \les N^{\frac{1}{2}-\frac{3}{p}} \|\P_{N}u\|_{X^{0,\frac 12+}_{T}} .
\label{L6hi}
\end{align}
\end{lemma}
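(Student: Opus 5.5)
This lemma collects standard linear estimates for Fourier restriction spaces, and I would prove each part by reducing to the free Schrödinger group and the known facts about $X^{s,b}$ on $\R\times\R$, then passing to the time-restricted norm \eqref{XsbT} by choosing near-optimal extensions.

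\textbf{Part (i).} I would take a smooth cutoff $\eta(t)$ equal to $1$ on $[0,T]$, $T\le 1$. For the homogeneous bound, $\eta(t)S(t)f$ extends $S(t)f$, and its space-time Fourier transform is $\ft\eta(\tau-\xi^2)\ft f(\xi)$, so $\|\eta S(t)f\|_{X^{s,b}}=\|\jb{\cdot}^b\ft\eta\|_{L^2}\|f\|_{H^s}$. For the Duhamel term, I take an extension $\wt g$ of $g$ with $\|\wt g\|_{X^{s,-\frac12+\dl}}\le 2\|g\|_{X^{s,-\frac12+\dl}_T}$ and work with $\eta(t)\int_0^t S(t-t')\wt g(t')\,dt'$. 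Conjugating by $S(-t)$ reduces this to the scalar estimate
\[
\Big\|\eta(t)\int_0^t F(t')\,dt'\Big\|_{H^{b}_t}\les \|F\|_{H^{b-1}_t}, \qquad b=\tfrac12+\dl,
\]
applied for each fixed $\xi$. I would prove it in the standard way: write $\int_0^t F=\int \ft F(\tau)\frac{e^{it\tau}-1}{i\tau}\,d\tau$ and split into $|\tau|\le1$, where I Taylor expand and use $\sum |t|^k/k!$ against $\eta$, and $|\tau|>1$, where the $e^{it\tau}$ part is handled by Plancherel and the $-1$ part by Cauchy–Schwarz using $b-1<-\frac12$.

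\textbf{Part (ii).} On $\R$ this is the time-localization estimate $\|\eta(t/T)u\|_{X^{s,b'}}\les T^{b-b'}\|u\|_{X^{s,b}}$ for $-\frac12<b'\le b<\frac12$. After conjugation it reduces to the one-dimensional inequality $\|\eta(t/T)\phi\|_{H^{b'}_t}\les T^{b-b'}\|\phi\|_{H^b_t}$. I would handle it by interpolating between $b'=b$, which holds uniformly in $T$ since multiplication by $\eta(t/T)$ is bounded on $H^b$ for $|b|<\frac12$, and the case $b'\le 0\le b$. The latter follows from Hölder and Sobolev: $\|\eta(t/T)\phi\|_{L^2}\les T^{b}\|\phi\|_{L^{2/(1-2b)}}\les T^b\|\phi\|_{H^b}$, together with duality for negative indices. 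Restricting to $[0,T]$ then transfers the estimate to $X^{s,b}_T$.

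\textbf{Part (iii).} I would interpolate the Strichartz estimate $\|u\|_{L^6_{t,x}}\les\|u\|_{X^{0,b}}$ for $b>\frac12$, obtained from $\|S(t)f\|_{L^6}\les\|f\|_{L^2}$ via the transference principle, with the trivial identity $L^2=X^{0,0}$. The resulting exponent is linear in $\frac1p$, namely $3b(\frac12-\frac1p)$. The middle norm $\wt{L^p_{T,x}}$, presumably the restriction-type norm, sits between the two by definition of the infimum. For \eqref{L6hi} with $6<p\le\infty$, I would use Bernstein in $x$ on $\P_N u$: $\|\P_N u\|_{L^p_tL^p_x}\les N^{\frac16-\frac1p}\|\P_Nu\|_{L^p_tL^6_x}$. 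The remaining time exponent comes from interpolating $L^6_{t,x}$ with $L^\infty_tL^2_x\supset X^{0,\frac12+}$ followed by Bernstein $L^2\to L^\infty$, which gives a gain of $N^{\frac12}$ at $p=\infty$. Alternatively, transference of $\|S(t)P_Nf\|_{L^p_{t,x}}\les N^{\frac12-\frac3p}\|f\|_{L^2}$, which is Strichartz plus Bernstein, suffices directly.

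The only mildly delicate point is the endpoint behaviour and uniformity in $T$ in (i)–(ii). Since these are all classical, I would ultimately cite the paper's reference \cite{MP} for the details.
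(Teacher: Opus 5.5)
Your proposal is correct and is the same standard argument the paper relies on: the paper quotes (i)--(ii) from \cite{MP} and obtains \eqref{L4} exactly as you do, from the $L^6_{t,x}$ Strichartz estimate plus transference, interpolated with the trivial case $p=2$. Three small fixes are needed. First, the Cauchy--Schwarz step in (i) requires $b>\frac12$, i.e.\ $b-1>-\frac12$, not $b-1<-\frac12$. Second, your first route to \eqref{L6hi} mixes two Bernstein steps; it is cleaner to use H\"older between $\|\P_N u\|_{L^6_{t,x}}$ and $\|\P_N u\|_{L^\infty_{t,x}}\les N^{\frac12}\|\P_N u\|_{L^\infty_t L^2_x}$, which gives exactly $N^{\frac12-\frac3p}$. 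Third, the tilde norm is presumably the $L^p$-norm of $\mathcal{F}^{-1}_{t,x}|\mathcal{F}_{t,x}u|$ (cf.\ its use in the proof of Lemma~\ref{LEM:trilin}) rather than a restriction norm; your argument still bounds it by the $X^{0,b}$-norm, because that norm depends only on $|\mathcal{F}_{t,x}u|$.
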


When $p=6$, \eqref{L4}  follows from the $L^6_{t,x}$ Strichartz estimate on $\R$ and transference. Then, we get the range $2\leq p\leq 6$ by interpolation with the trivial estimate for $p=2$.

We will also use the local smoothing and maximal function properties of the group $\{e^{it\dx^2}\}$ adapted to the Fourier restriction norm spaces. See \cite[Proposition 2.5]{MR}.

\begin{lemma}\label{LEM:locsmoothing}
For any $b>\frac 12$ and $N\in 2^{\N}$, it holds that
\begin{align}
\| \P_{N}u\|_{L^{\infty}_{x}L^{2}_{t}} \les N^{-\frac 12}\| \P_{N}u\|_{X^{0,b}}.
 \label{localsmooth0}
\end{align}
and for any $2\leq p\leq 4$, 
\begin{align}
\| \P_{N}u\|_{L^{p}_{x}L^{\infty}_{t}} \les N^{\frac 1p+}\| \P_{N}u\|_{X^{0,b}}.
 \label{maximal}
\end{align}
\end{lemma}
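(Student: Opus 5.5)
The plan is to prove both estimates first for free solutions $e^{it\dx^2}\P_N f$, and then transfer them to $X^{0,b}$ by the standard superposition argument, which is where $b>\frac12$ enters.

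\emph{Transference.} For $u$ with $\|\P_N u\|_{X^{0,b}}<\infty$, I write
\[
\P_N u(t,x)=\int_\R e^{it\ld}\,\big(e^{it\dx^2}f_\ld\big)(x)\,d\ld,
\qquad
\ft{f_\ld}(\xi)=\ft{\P_N u}(\ld+\xi^2,\xi),
\]
so that $\int\jb{\ld}^{2b}\|f_\ld\|_{L^2}^2\,d\ld=\|\P_N u\|_{X^{0,b}}^2$. The norms $L^\infty_xL^2_t$ and $L^p_xL^\infty_t$ are insensitive to the unimodular factor $e^{it\ld}$. Minkowski's inequality then bounds the left-hand side by $\int\|e^{it\dx^2}f_\ld\|\,d\ld$. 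Inserting the free estimate and applying Cauchy--Schwarz in $\ld$ with $\int\jb{\ld}^{-2b}d\ld<\infty$ gives the claim.

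\emph{Local smoothing \eqref{localsmooth0}.} This is the Kato smoothing estimate $\|D^{1/2}e^{it\dx^2}f\|_{L^\infty_xL^2_t}\les\|f\|_{L^2}$. It follows directly by fixing $x$, changing variables $\eta=\xi^2$ on each half-line $\xi\gtrless0$, and using Plancherel in $t$: the Jacobian $|\xi|^{-1/2}$ is exactly cancelled by $D^{1/2}$. On the support of $\psi_N$ we have $|\xi|\sim N$, which gives the factor $N^{-1/2}$.

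\emph{Maximal estimate \eqref{maximal}.} For $p=4$ I use the Kenig--Ponce--Vega estimate $\|e^{it\dx^2}f\|_{L^4_xL^\infty_t}\les\|D^{1/4}f\|_{L^2}$, which yields $N^{1/4}$ on frequency-localized data. The $p=2$ endpoint is the delicate step, and I expect it to be the main obstacle. A global-in-time $L^2_xL^\infty_t$ bound fails for a modulated Gaussian: at frequency $N$ the packet reaches $x$ at time $t\approx x/N$ with $|e^{it\dx^2}f(x)|^2\sim N/x$, which is not integrable. I would therefore work with time-localized versions. Since the lemma is only used on $[0,T]$, I take $\|\cdot\|_{X^{0,b}_T}$-extensions multiplied by a smooth time cutoff. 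I then invoke the local maximal estimate $\big\|\sup_{|t|\le1}|e^{it\dx^2}f|\big\|_{L^2_x}\les\|f\|_{H^{\frac12+}}$ (Kenig--Ponce--Vega / Vega), which gives $N^{\frac12+}$ for $\P_Nf$. Covering a longer interval by unit intervals costs only a constant depending on $T$.

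With the endpoints $p=2$ (bound $N^{\frac12+}$) and $p=4$ (bound $N^{\frac14}$) in hand, complex interpolation of the linear map $f\mapsto e^{it\dx^2}\P_Nf$ between $L^2_xL^\infty_t$ and $L^4_xL^\infty_t$ gives $N^{\frac1p+}$ for $2\le p\le4$. Transference as above then finishes the proof.
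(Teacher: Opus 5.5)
There is a genuine gap in the range $2\le p<4$ of \eqref{maximal}. Your arguments for \eqref{localsmooth0} and for $p=4$ are fine: Kato smoothing and the Kenig--Ponce--Vega estimate $\|e^{it\dx^2}f\|_{L^4_xL^\infty_t}\les\|D^{1/4}f\|_{L^2}$ hold globally in time, so transference applies directly. The paper gives no proof of its own; it simply cites \cite[Proposition 2.5]{MR}.

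The problem is what you prove for $p<4$. The lemma bounds $\|\P_Nu\|_{L^p_xL^\infty_t}$ with the supremum over all $t\in\R$ and the global $X^{0,b}$ norm on the right. You only prove a version with $\sup_{|t|\le T}$ and a constant that grows with $T$. Your reason, that the global free $L^2_xL^\infty_t$ estimate fails, is correct. But it only shows that transference from a global free estimate cannot work; it does not show that the $X^{0,b}$ statement fails. In fact the statement is true as written. For a unit-width wave packet $f$ at frequency $N$ and $u=\eta(t/T)e^{it\dx^2}f$, the left-hand side is only of order $(N\log T)^{1/2}\|f\|_{L^2}$ for large $T$, while $\|u\|_{X^{0,b}}\sim T^{1/2}\|f\|_{L^2}$. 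The $X^{0,b}$ norm itself grows with the length of the time interval.

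The missing idea is to localize in time inside $X^{0,b}$ rather than on the free flow.

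\begin{enumerate}
\item Take a smooth partition of unity $\eta_k(t)=\eta_0(t-k)$ with $\supp\eta_0\subset[-1,1]$. Bounded overlap gives $\sup_t|\P_Nu(t,x)|\les(\sum_k\sup_t|\eta_k\P_Nu(t,x)|^2)^{1/2}$. Hence $\|\P_Nu\|_{L^2_xL^\infty_t}^2\les\sum_k\|\eta_k\P_Nu\|_{L^2_xL^\infty_t}^2$.
\item Each $\eta_k\P_Nu$ is supported in a unit time interval. Your unit-time maximal estimate plus transference then gives $\|\eta_k\P_Nu\|_{L^2_xL^\infty_t}\les N^{\frac12+}\|\eta_k\P_Nu\|_{X^{0,b}}$. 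This is uniform in $k$, since $e^{ik\dx^2}$ is unitary and commutes with $\P_N$.
\item Finally, $\sum_k\|\eta_kv\|_{X^{0,b}}^2\les\|v\|_{X^{0,b}}^2$. Conjugating by the linear flow identifies $X^{0,b}$ with $L^2_\xi H^b_t$, and this conjugation commutes with multiplication by $\eta_k(t)$. The inequality $\sum_k\|\eta_kg\|_{H^b}^2\les\|g\|_{H^b}^2$ follows by interpolating the elementary cases $b=0$ and $b=1$.
\end{enumerate}

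This gives the $p=2$ bound on all of $\R$ with no dependence on any time length. The range $2<p<4$ then follows from H\"older in $x$ applied to $G=\sup_t|\P_Nu|$, namely $\|G\|_{L^p}\le\|G\|_{L^2}^{1-\theta}\|G\|_{L^4}^{\theta}$, so no operator interpolation is needed.

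Your local version would in fact suffice for every use of the lemma in the paper. There $0<T\le1$ and the dual function is cut off to $[0,T]$. But it does not prove the lemma as stated.
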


\begin{lemma}[Bilinear Strichartz estimate \cite{BO98, OT}] \label{LEM:bilin}
For $N,N_1,N_2\in 2^{\N}$ such that $N_1 \gg N_2$ and $0\leq \dl \ll 1$ sufficiently small, it holds that
\begin{align}
\| \P_{N}[ u \cdot \cj{v}]\|_{L^{2}_{t,x}}& \les N^{-\frac 12+10\dl} \|u\|_{X^{0,\frac 12-2\dl}}\|v\|_{X^{0,\frac 12-2\dl}}, \label{bilin1} \\
 \| \P_{N_1} u \cdot \P_{N_2}v\|_{L^{2}_{t,x}} &\les N_1^{-\frac 12+10\dl} \|\P_{N_1}u\|_{X^{0,\frac 12-2\dl}}\|\P_{N_2}v\|_{X^{0,\frac 12-2\dl}}. \label{bilin3}
\end{align}
\end{lemma}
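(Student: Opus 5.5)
This is the standard bilinear Strichartz estimate, so I would derive it from the free case and transfer to $X^{s,b}$ spaces. I would first prove \eqref{bilin3} and \eqref{bilin1} with $X^{0,\frac12+}$ norms on the right-hand side and a loss $N^{-\frac12}$ or $N_1^{-\frac12}$ exactly. I would then interpolate with a trivial bound to reach $b=\frac12-2\dl$, at the cost of the small power $N^{10\dl}$.

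\emph{Step 1 (free solutions).} Take $u=e^{it\dx^2}f$ and $v=e^{it\dx^2}g$, and consider \eqref{bilin1}. The space-time Fourier transform of $u\cj v$ at $(\tau,\xi)$ is
\[
\int \ft f(\xi_1)\cj{\ft g(\xi_1-\xi)}\,\dl\big(\tau+\xi_1^2-(\xi_1-\xi)^2\big)\,d\xi_1 .
\]
The argument of the delta equals $\tau+2\xi\xi_1-\xi^2$, which is linear in $\xi_1$ with slope $2\xi$. By Cauchy--Schwarz in $\xi_1$, the integral squared is bounded by $|\xi|^{-1}\int|\ft f(\xi_1)|^2|\ft g(\xi_1-\xi)|^2\dl(\cdots)d\xi_1$. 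Integrating in $\tau$ and $\xi$ then gives $\|\P_N(u\cj v)\|_{L^2_{t,x}}\les N^{-\frac12}\|f\|_{L^2}\|g\|_{L^2}$, since $|\xi|\sim N$ on the support.

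For \eqref{bilin3}, the product $uv$ gives a delta in $\tau-\xi_1^2-(\xi-\xi_1)^2$. Its derivative in $\xi_1$ is $2(\xi-2\xi_1)$, which equals $-2(\xi_1-\xi_2)$ and has size $\sim N_1$ because $N_1\gg N_2$. The same computation gives the factor $N_1^{-\frac12}$.

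\emph{Step 2 (transference).} A function $u\in X^{0,b}$ with $b>\frac12$ can be written as $u=\int e^{it\lambda}e^{it\dx^2}f_\lambda\,d\lambda$ with $\int\|f_\lambda\|_{L^2}d\lambda\les\|u\|_{X^{0,b}}$, using Cauchy--Schwarz and $\jb{\lambda}^{-b}\in L^2$. Applying Step 1 to each pair of modulations and using Minkowski's inequality gives both estimates with $X^{0,\frac12+}$ norms and losses $N^{-\frac12}$ and $N_1^{-\frac12}$.

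\emph{Step 3 (lowering $b$).} I expect this step to be the main obstacle, since it is where the $\dl$-losses come from and their bookkeeping must be checked. The endpoint to interpolate with is a bound with $b=\frac14+$ on each factor and essentially no gain. By H\"older and \eqref{L4} with $p=4$, one has $\|uv\|_{L^2}\le\|u\|_{L^4}\|v\|_{L^4}\les\|u\|_{X^{0,\frac38+}}\|v\|_{X^{0,\frac38+}}$. This alone is not enough. To reach $b$ close to $\frac14$, I would instead use Sobolev in $t$ together with Bernstein in $x$ on the frequency-localised pieces, or the $L^4_tL^\infty_x$ Strichartz estimate, to get a bound like $N^{\frac12}\|u\|_{X^{0,\frac14+}}\|v\|_{X^{0,\frac14+}}$, or at least one with only a fixed polynomial loss in $N$.

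Bilinear complex interpolation between the $b=\frac12+$ estimate (gain $N^{-\frac12}$) and this endpoint then yields $b=\frac12-2\dl$. The power of $N$ degrades from $-\frac12$ by $O(\dl)$, and with constants tracked this should fit within $N^{-\frac12+10\dl}$ for $\dl\ll1$. The case $\dl=0$ in the statement should be read as the $\frac12+$ endpoint, up to the harmless modifications made in \cite{BO98, OT}.
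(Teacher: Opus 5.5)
Your outline is correct and follows the standard route. The paper does not prove this lemma; it cites \cite{BO98, OT}. However, its proof of the trilinear analogue, Lemma~\ref{LEM:trilin}, uses exactly your three steps: a free-wave Cauchy--Schwarz bound, transference to $X^{0,\frac12+}$, and interpolation with a lower-modulation bound.

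The one thing to correct is your claim that the $L^4$ endpoint is ``not enough''. It is precisely the endpoint the paper interpolates with in \eqref{trilin03}.
\begin{itemize}
\item Take $b_0=\frac12+\eps$ (gain $N^{-\frac12}$) and $b_1=\frac38+\eps'$ with $\eps'\le\eps$ (no gain, by \eqref{L4} with $p=4$).
\item The interpolation parameter is then $\ta=(2\dl+\eps)/(\frac18+\eps-\eps')\le 8(2\dl+\eps)$.
\item So the exponent is $-\frac12+\frac{\ta}{2}\le-\frac12+8\dl+4\eps\le-\frac12+10\dl$ once $\eps\le\frac{\dl}{2}$.
\end{itemize}

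Your $b=\frac14$ endpoint also works. For \eqref{bilin1}, Bernstein from $L^1_x$ to $L^2_x$ on $\P_N$ together with $H^{\frac14}_t\subset L^4_t$ gives $\|\P_N(u\cj v)\|_{L^2_{t,x}}\les N^{\frac12}\|u\|_{X^{0,\frac14}}\|v\|_{X^{0,\frac14}}$. Bernstein on $\P_{N_2}v$ gives the analogue for \eqref{bilin3}; the $L^4_tL^\infty_x$ Strichartz estimate itself is not available below $b=\frac12$. Interpolation then again produces the exponent $-\frac12+8\dl+4\eps$.

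However, a general loss $N^c$ at $b=\frac14$ costs roughly $(4+8c)\dl$. So ``a fixed polynomial loss'' only fits the stated $10\dl$ when $c\le\frac34$.

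Finally, if $N=1$ is admitted, $\P_1$ contains frequencies $|\xi|\ll1$. There your Step~1 factor $|\xi|^{-1}$ is not $\les N^{-1}$, and the free-wave estimate genuinely fails. In that case \eqref{bilin1} follows directly from the $L^4$ bound.
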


Local-in-time versions of \eqref{bilin1} and \eqref{bilin3} also hold.
We also make use of a trilinear Strichartz estimate.

\begin{lemma}\label{LEM:trilin}
Let $f_1,f_2,f_3\in L^2(\R)$ and $N_1,N_2,N_3, N\in 2^{\N}$ be such that $N_1 \gg N\vee N_3$. 
Let $m:\R^{3}\to \mathbb{C}$ be measurable and bounded and define the trilinear operator $\mathcal{A}_{m}$ via
\begin{align*}
\mathcal{F}_{x}\{\mathcal{A}_{m}(u_1,u_2,u_3) \}(\xi) : =\int_{ \xi=-\xi_1-\xi_2+\xi_3 } m(\xi_1,\xi_2,\xi_3)  \cj{\mathcal{F}_{x}\{u_1\}(\xi_1)} \cj{\mathcal{F}_{x}\{u_2\}(\xi_2)} \mathcal{F}_{x}\{u_3\}(\xi_3)  d\xi_1d\xi_3
\end{align*}
Then,
\begin{align}
\Big\| \P_{N}\mathcal{A}_{m}(\P_{N_1}u_1,\P_{N_2}u_2, \P_{N_3}u_3) \Big\|_{L^{2}_{t,x}} 
\les&   N_{3}^{15\dl }\bigg( \frac{N_3}{N_1}\bigg)^{\frac 12-10\dl} \|m\|_{L^{\infty}(\R^3)} \label{trilin} \\
& \times \| \P_{N_1}u_1\|_{X^{0,\frac 12-2\dl}}  \| \P_{N_2}u_2\|_{X^{0,\frac 12-2\dl}}   \| \P_{N_3}u_3\|_{X^{0,\frac 12+\dl}} \notag
\end{align} 
for any $\dl>0$ sufficiently small, with implicit constant uniform in $N_1,N_2,N_3,N$.
\end{lemma}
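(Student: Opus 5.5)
The plan is to reduce \eqref{trilin} to the bilinear Strichartz estimate \eqref{bilin3}, applied to the pair $(u_1,u_3)$, which has separated frequencies since $N_1\gg N_3$. Two preliminary reductions come first.

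\emph{Removing the multiplier.} Since $m$ is only bounded and measurable, we cannot split it as a product of one-variable symbols. Instead we argue by duality and on the Fourier side. The output is localised to $|\xi|\sim N$ and $\xi_3$ to $|\xi_3|\sim N_3$. Because $N_1\gg N\vee N_3$, the relation $\xi=-\xi_1-\xi_2+\xi_3$ forces $|\xi_2|\sim N_1$, so $N_2\sim N_1$ (otherwise the expression vanishes). We may replace $\ft u_j$ by $|\ft u_j|$ and $m$ by $\|m\|_{L^\infty}$ in the space-time Fourier integral, since the $X^{0,b}$ norms depend only on $|\ft u_j|$. Pairing with $g\in L^2_{t,x}$ with $\|g\|_{L^2}\le 1$ and $\ft g\ge0$ supported on $|\xi|\sim N$ then reduces matters to bounding a quadrilinear form with nonnegative Fourier transforms and constant symbol:
\[
\Big|\int \cj{u_1}\,\cj{u_2}\, u_3\, \cj{g}\,dx\,dt\Big|.
\]

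\emph{Bounding the form.} We pair $\cj{u_1}$ with $u_3$, and $\cj{u_2}$ with $\cj{g}$. The first pair has frequencies $N_1\gg N_3$, so \eqref{bilin3} (conjugation is harmless) gives
\[
\|\P_{N_1}u_1\,\P_{N_3}u_3\|_{L^2_{t,x}}\les N_1^{-\frac12+10\dl}\,\|u_1\|_{X^{0,\frac12-2\dl}}\|u_3\|_{X^{0,\frac12-2\dl}}.
\]
For the second pair, $g$ is only in $L^2_{t,x}$, so instead we put $u_2$ in $L^\infty_tL^2_x$. This is not available from $X^{0,\frac12-2\dl}$, so we must reshuffle. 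The workable choice is $\|u_1u_3\|_{L^2}\|u_2\|_{L^\infty_{t,x}}\|g\|_{L^2}$, but $L^\infty$ on $u_2$ costs $N_1^{1/2}$, which is too much.

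So we redistribute instead. Put $u_3\in X^{0,\frac12+\dl}$ into $L^\infty_{t,x}$, at a cost of $N_3^{1/2}$ by Bernstein and \eqref{YsCTHs}. Then pair $u_1$ with $u_2$ in $L^4_{t,x}\times L^4_{t,x}$, or better, apply \eqref{bilin1} to $\P_N[u_2\cdot\ldots]$. This second route gives only $N_3^{1/2}N^{-1/2}$, which is not the target.

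The version we commit to has three steps. First, apply \eqref{bilin3} to $u_1\,u_3$, giving the gain $N_1^{-\frac12+10\dl}$. Second, treat $u_2 g$ by Bernstein in $\xi$: since $g$ lives on the short interval $|\xi|\lesssim N$ and $u_3$ on $|\xi|\lesssim N_3$, we instead estimate $\|u_2 u_3\|$ via the modulation of $u_3$ at $b=\frac12+\dl$. Third, obtain the $N_3^{1/2}$ factor by Cauchy–Schwarz over the $\xi_3$-interval of length $N_3$ using $X^{0,\frac12+\dl}\subset L^\infty_t L^2_x$. The $N_3^{15\dl}$ loss absorbs the $\dl$-adjustments of the modulation exponents.

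The main obstacle is rigorously handling the non-product symbol $m$ while still extracting the bilinear gain. I would do this by writing the whole estimate on the Fourier side, fixing $\xi_3$, using the $L^\infty_tL^2_x$ control of the $u_3$-slice (the source of $N_3^{1/2}$), and proving a bilinear estimate for $u_1,u_2$ with $\xi_3$ frozen. In that bilinear estimate the transversality $|\xi_1+\xi_2|$ vs.\ $N_1$ yields $N_1^{-1/2}$ exactly as in the proof of \eqref{bilin3}, uniformly in the frozen variable and in $m$.
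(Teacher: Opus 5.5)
\textbf{Verdict: there is a genuine gap.} Your committed argument pairs the wrong functions, and the handling of the sub-$\frac12$ modulation exponents is missing.

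\textbf{Wrong pairing.} Your positivity reduction (replacing $\ft u_j$ by $|\ft u_j|$ and $m$ by $\|m\|_{L^\infty}$) is correct, as is the observation $N_2\sim N_1$. But the argument you commit to cannot close. If you pair $\cj{u_1}$ with $u_3$ via \eqref{bilin3}, then $u_2$, which lives at frequency $\sim N_1$, has two options:
\begin{itemize}
\item it goes in $L^\infty_{t,x}$ at cost $N_1^{1/2}$, which cancels the entire gain; or
\item it is paired with the dual function $g\in L^2_{t,x}$, for which no bilinear estimate exists, since $g$ has no modulation control.
\end{itemize}
Your ``three steps'' are also not a consistent H\"older splitting: $u_3$ is used in both of the first two steps, and the second step is not an estimate.

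The pairing that works is $(u_1,u_2)$. Since $\xi_1+\xi_2=\xi_3-\xi$ has size $\les N\vee N_3\ll N_1$, the frequencies $\xi_1,\xi_2$ have opposite signs and $|\xi_1-\xi_2|\sim N_1$. This quantity, not $|\xi_1+\xi_2|$ (which is the small one), is the transversality for $\cj{u_1}\,\cj{u_2}$. Note that \eqref{bilin3} cannot be quoted for this pair, since it requires $N_1\gg N_2$.

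The paper carries this out directly for free waves. For fixed $(\tau,\xi,\xi_3)$, the phase $\Phi(\xi_1)=\tau-\xi_1^2-(\xi+\xi_1-\xi_3)^2+\xi_3^2$ satisfies $|\Phi'(\xi_1)|=2|2\xi_1+\xi-\xi_3|\gtrsim N_1$. Cauchy--Schwarz and the substitution $y=\Phi(\xi_1)$ give a factor $N_1^{-1}$, and integrating over $|\xi_3|\sim N_3$ gives $N_3$. The result is \eqref{trilin0} with constant $\|m\|_{L^\infty}(N_3/N_1)^{\frac12}$. Your last paragraph (freeze $\xi_3$, then a bilinear estimate in $(u_1,u_2)$) has the right structure, but it misidentifies the transversal quantity and does not carry out the estimate.

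\textbf{Modulation exponents.} The statement puts $u_1,u_2$ in $X^{0,\frac12-2\dl}$, below the range where transference from free waves applies. Nothing in your proposal produces these exponents, and saying that the $N_3^{15\dl}$ loss ``absorbs the $\dl$-adjustments'' is not an argument. The paper proceeds in three steps:
\begin{itemize}
\item It transfers the free-wave bound to \eqref{trilin02}, with all three functions in $X^{0,\frac12+\dl}$.
\item It proves the crude bound \eqref{trilin03}: take moduli of the Fourier transforms, use $L^4_{t,x}\times L^4_{t,x}\times L^\infty_{t,x}$, apply \eqref{L4} at $b=\frac38+\frac{\dl}{2}$ for $u_1,u_2$, and use Bernstein with \eqref{YsCTHs} to get $N_3^{\frac12}$ for $u_3$.
\item It interpolates the two with $\ta=\frac{24\dl}{1+4\dl}$. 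This is exactly the source of the $N_3^{15\dl}$ loss and of the exponent $\frac12-10\dl$.
\end{itemize}

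A repair closer to your outline is possible. After reducing to $m\equiv 1$ with nonnegative space-time Fourier transforms $\ft U_j$, use $|\xi_1+\xi_2|=|\xi_3-\xi|\le 2N+2N_3$ to bound the left side of \eqref{trilin} by $\|m\|_{L^\infty}\|\P_{\le 4(N\vee N_3)}(U_1U_2)\|_{L^2_{t,x}}\|\P_{N_3}U_3\|_{L^\infty_{t,x}}$. You would then need to prove a high$\times$high$\to$low bilinear bound $N_1^{-\frac12+O(\dl)}$ for the non-conjugated product at $X^{0,\frac12-2\dl}$, using the same transference-plus-interpolation-with-$L^4$ scheme. That bilinear estimate is precisely the piece your proposal is missing.
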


We remark that the additional loss in $N_{3}^{15\dl}$ is due to the stronger assumption on the right-hand side of $X^{s,b'}$ with $b'<\frac 12$. Whilst this loss could be removed, say by working in the $U^2, V^{2}$ functional framework, the loss in the exponent for the ratio $\frac{N_3}{N_1}$ seems necessary in order to reduce the modulations below $1/2$.

\begin{proof}
We first prove a version of \eqref{trilin} for linear Schr\"{o}dinger waves:
\begin{align}
\Big\| \P_{N}\mathcal{A}_{m}( S(t)\P_{N_1}f_1, S(t)\P_{N_2}f_2, S(t)\P_{N_3}f_3) \Big\|_{L^{2}_{t,x}} \les \|m\|_{L^{\infty}(\R^3)} \bigg( \frac{N_3}{N_1}\bigg)^{\frac 12} \prod_{j=1}^{3}\|\P_{N_j}f_j\|_{L^2_x}. \label{trilin0}
\end{align}
Let $\eta$ be a real-valued smooth cut-off function such that $\eta(t)=1$ on $[-1,1]$ and $\supp \eta \subset [-2,2]$, and for $T\geq 1$ set $\eta_{T}(t):= \eta(t/T)$. We show that 
\begin{align}
\Big\| \P_{N}\mathcal{A}_{m}( \eta_T S(t)\P_{N_1}f_1, \eta_T S(t)\P_{N_2}f_2,\eta_T S(t)\P_{N_3}f_3) \Big\|_{L^{2}_{t,x}}  \les \|m\|_{L^{\infty}(\R^3)}  
\bigg( \frac{N_3}{N_1}\bigg)^{\frac 12} \prod_{j=1}^{3}\|\P_{N_j}f_j\|_{L^2_x} 
,
\label{trilin00}
\end{align}
with implicit constant uniform in $T\geq 1$. Then, \eqref{trilin0} follows by the monotone convergence theorem taking $T\to\infty$. Taking the space-time Fourier transform, we have
\begin{align*}
 &\mathcal{F}_{t,x} \bigg\{\P_{N}\mathcal{A}_{m}( \eta_T  S(t)\P_{N_1}f_1, \eta_T  S(t)\P_{N_2}f_2, \eta_T S(t)\P_{N_3}f_3) \bigg\}(\tau,\xi) \\
   & = \int_{\R^2} \psi_{N}(\xi) {\psi}_{N_1}(\xi_1)  {\psi}_{N_2}(-\xi - \xi_1 + \xi_3) {\psi}_{N_3}(\xi_3) m(\xi_1,-\xi - \xi_1 + \xi_3,\xi_3) \\
 & \hphantom{XXXXXXXX} \times \cj{\ft f_1(\xi_1)} \cj{\ft f_2(-\xi - \xi_1 + \xi_3)} \ft f_3(\xi_3)  \mathcal{F}_{t}\{ \eta_{T}^{3}\}(\tau - \xi_1^2 - (\xi+\xi_1-\xi_3)^2 + \xi_3^2)d\xi_1 d\xi_3.
\end{align*} 
By a Cauchy-Schwarz argument, we find
\begin{align*}
\text{LHS} \eqref{trilin00} & \les \|m\|_{L^{\infty}(\R^3)} M_{N_1,N_2,N_3,N}^{\frac 12} \prod_{j=1}^{3} \|\P_{N_j}f_j\|_{L^2_x},
\end{align*}
where 
\begin{align*}
M_{N_1,N_2,N_3,N}  : = \sup_{\tau\in \R, |\xi|\sim N} \int_{\R} |{\psi}_{N_3}(\xi_3)|^2 \int_{\R} & |{\psi}_{N_1}(\xi_1)  {\psi}_{N_2}(-\xi - \xi_1 + \xi_3)|^2 \\
&\times  |\mathcal{F}_{t}\{ \eta_{T}^{3}\}(\tau -\xi_1^2 -(\xi+\xi_1-\xi_3)^2 +\xi_3^2)| d\xi_1 d\xi_3.
\end{align*}
For fixed $(\tau, \xi,\xi_3)$, we define $\Phi(\xi_1) : = \tau -\xi_1^2 -(\xi+\xi_1-\xi_3)^2 +\xi_3^2$ and note that 
\begin{align*}
|\Phi'(\xi_1)| = 2 | 2\xi_1 + (\xi-\xi_3)| \ges N_1
\end{align*}
where we used that $N_1 \gg N\vee N_3$. Therefore, with the change of variables $\xi_1\mapsto y=\Phi(\xi_1)$, we have
\begin{align*}
\int_{\R} |\wt{\phi}_{N_1}(\xi_1)  &  \wt{\phi}_{N_2}( -\xi-\xi_1+\xi_3)|^2
  |\mathcal{F}_{t}\{ \eta_{T}^{3}\}(\tau -\xi_1^2 -(\xi+\xi_1-\xi_3)^2 +\xi_3^2)|^2 d\xi_1 \\
  & \les \frac{1}{N_1} \int_{\R} |\mathcal{F}_{t}\{ \eta_{T}^{3}\}(y)|^2 dy \les \frac{1}{N_1}
\end{align*}
uniformly in $T\geq 1$, and $(\tau,\xi,\xi_3)$. Inserting this back into $M_{N_1,N_2,N_3,N}$ and integrating over $|\xi_3|\sim N_3$ shows that $M_{N_1,N_2,N_3,N} \les N_3 N_{1}^{-1}$ which proves \eqref{trilin00}. 

To obtain \eqref{trilin}, we first note that by transference, \eqref{trilin0} implies 
\begin{align}
\bigg\| \P_{N}\mathcal{A}_{m}(\P_{N_1}u_1,\P_{N_2}u_2, \P_{N_3}u_3) \bigg\|_{L^{2}_{t,x}} \les  \|m\|_{L^{\infty}(\R^3)} \bigg( \frac{N_3}{N_1}\bigg)^{\frac 12}  \prod_{j=1}^{3} \| \P_{N_j}u_j\|_{X^{s,\frac 12+\dl}}.
\label{trilin02}
\end{align} 

We want to reduce the modulation exponent to below $\frac 12$ and we will do this by interpolating \eqref{trilin02} with another inequality. 
By Plancherel's theorem, H\"{o}lder and Bernstein inequality, \eqref{L6hi}, and \eqref{YsCTHs}, we have 
\begin{align}
\begin{split}
&\bigg\| \P_{N}\mathcal{A}_{m}(\P_{N_1}u_1,\P_{N_2}u_2, \P_{N_3}u_3) \bigg\|_{L^{2}_{t,x}}  \\
& \les \|m\|_{L^{\infty}(\R^3)} \| \mathcal{F}^{-1}_{x}\{ | \ft{\P_{N_1}u_1}|\} \mathcal{F}^{-1}_{x}\{ | \ft{\P_{N_2}u_2}|\}  \mathcal{F}^{-1}_{x}\{ | \ft{\P_{N_3}u_3}|\}  \|_{L^{2}_{t,x}} \\
& \les \|m\|_{L^{\infty}(\R^3)} \| \mathcal{F}^{-1}_{x}\{ | \ft{\P_{N_1}u_1}|\}\|_{L^{4}_{t,x}} \| \mathcal{F}^{-1}_{x}\{ | \ft{\P_{N_2}u_2}|\}\|_{L^{4}_{t,x}}  \| \mathcal{F}^{-1}_{x}\{ | \ft{\P_{N_3}u_3}|\}\|_{L^{\infty}_{t,x}}  \\
&\les  \|m\|_{L^{\infty}(\R^3)} N_{3}^{\frac 12} \|\P_{N_1}u_1\|_{X^{0,\frac 38+\frac{\dl}{2}}}\|\P_{N_2}u_2\|_{X^{0,\frac 38+\frac{\dl}{2}}} \|\P_{N_3}u_3\|_{X^{0,\frac 12+\dl}}. 
\label{trilin03}
\end{split}
\end{align}
Now \eqref{trilin} follows from interpolating\footnote{We want $\frac 12-2\dl = \ta (\frac 38+\frac{\dl}{2}) + (1-\ta)(\frac 12+\dl)$ which gives $\ta=\frac{24\dl}{1+4\dl}$. So we lose $N_{3}^{\frac{12\dl}{1+4\dl}}\leq N_{3}^{15\dl}$ and $(N_3/N_1)^{\frac 12- \frac{12\dl}{1+4\dl}}\leq (N_3/N_1)^{\frac 12- 10\dl}  $  by making $\dl>0$ sufficiently small. }
 \eqref{trilin02} and \eqref{trilin03}.
\end{proof}

\subsection{Gauge transformation and the new decomposition} \label{SEC:Gauge}

We construct a gauge transformation as in \cite{PMP}. 
We define
\begin{align}
F=F[u] = \dx^{-1}(|u|^2) = \int_{-\infty}^{\bul} |u(y,t)|^2 dy \label{Fdef}
\end{align}
and note that 
\begin{align*}
\dx F = |u|^2.
\end{align*}
We define the gauged variable
\begin{align}
v := \P_{+,\text{hi}}[ e^{i\be F[u]} u] .
\label{gauge}
\end{align}

We now detail a decomposition which is one of the key ideas in improving upon our previous results.
This decomposition is motivated by a similar decomposition in \cite{GLM} for the modified Benjamin-Ono equation. The decomposition is distinct from the usual recovery formula. %in \eqref{recovery}.
We write 
\begin{align}
u= e^{-i\be F} e^{i\be F} u = e^{-i\be F}[ v+ \P_{-,\text{hi}}(e^{i\be F} u) + \Pblo(e^{i\be F}u)] \label{ue}
\end{align}
 We define 
\begin{align}
y:= \P_{-,\text{hi}}[e^{i\be F}u] \quad \text{and} \quad z: = \Pblo[ e^{i\be F}  u] \label{YZ}
\end{align}
so that \eqref{ue} becomes
\begin{align}
u=  e^{-i\be F[u]}(v+y+z)= e^{-i\be F[v+y+z]}(v+y+z) \label{ue2}
\end{align}
If $u$ does not belong to the Hardy space $L^2_{+}(\R)$, then we need another auxiliary variable 
\begin{align}
w: = \P_{-,\text{hi}}u. \label{w}
\end{align}
If $u$ does belong to the Hardy space, then $w\equiv 0$.
The decomposition \eqref{ue2} combined with the observation that 
\begin{align}
|u|^2 = |v+y+z|^2 \label{u^2}
\end{align}
motivates us to study the quartet $(v,y,z, w)$, rather than the original variable $u$. In particular, \eqref{ue2} is a closed expression relating $(v,y,z,w)$ to $u$.

\begin{lemma}\label{LEM:gaugeeqns}
Let $u \in C([0,T];H^{\infty}(\R))$ be a smooth solution of \eqref{INLS2}.
Then, the variable $v$ defined in \eqref{gauge}, the variables $y,z$ defined in \eqref{YZ} and $w$ defined in \eqref{w} satisfy the following equations:
\begin{align}
&\dt v + i\dx^2 v = \mathfrak{N}_{v}[v,v+y+z]:=-2\be \Pbhip\big[ v \P_{-}\dx(  |v+y+z|^2)\big]  \notag \\
&\hphantom{XXXXXXXXXXXXXXXX} + \P_{+, \hi}[ (v+y+z) \QQ_{h}(|v+y+z|^2)],\label{veq} \\
&\dt y + i\dx^2 y  = \mathfrak{N}^{(0)}_{y}[v+y+z]=-2\be \P_{-,\textup{hi}}\big[ (v+y+z) \P_{-}\dx(  |v+y+z|^2)\big]  \notag\\
& \hphantom{XXXXXXXXX}+ \P_{-,\textup{hi}}[ (v+y+z) \QQ_{h}(|v+y+z|^2)] ,\label{Yeq} \\
&\dt z + i\dx^2 z = \mathfrak{N}_{z}[v+y+z] :=-2\be \P_{\textup{lo}}\big[ (v+y+z) \P_{-}\dx(  |v+y+z|^2)\big] \notag \\
& \hphantom{XXXXXXXXXXXXXXXX}+ \P_{\textup{lo}}[ (v+y+z) \QQ_{h}(|v+y+z|^2)] \label{Zeq},\\
&\dt w + i\dx^2 w =\mathfrak{N}_{w}[w, v+y+z]:=-2\be \P_{-,\textup{hi}}\big[ w \P_{+}\dx(  |v+y+z|^2)\big]  \notag \\
&\hphantom{XXXXXXXXXXXXXXXX}+ \P_{-, \hi}[ e^{-i\be F[v+y+z]} (v+y+z) \QQ_{h}(|v+y+z|^2)]\label{weq}.
\end{align}
\end{lemma}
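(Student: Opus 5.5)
The plan is a direct computation: derive the equation for the full gauged function $V:=e^{i\be F}u$, then apply the projections $\Pbhip$, $\P_{-,\hi}$, $\P_\lo$. The $w$-equation is obtained separately, by projecting \eqref{INLS2}. Throughout, $u$ is smooth and decays, so $F=\dx^{-1}(|u|^2)$ is well defined and bounded, and all manipulations are justified.

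\emph{Step 1 (time derivative of $F$).} Using \eqref{INLS2}, compute $\dt |u|^2 = 2\Re(\cj u\,\dt u)$. The linear part gives $\dt|u|^2 = -i\dx(\cj u\dx u - u\dx\cj u)+\dots$. In the nonlinear part, $\cj u u\QQ_h(|u|^2)$ contributes $2|u|^2\Re\QQ_h(|u|^2)$. Since $\QQ_h=-i\be\GG_h+i\g$ and $\GG_h$ maps real functions to real functions, $\Re\QQ_h(|u|^2)=0$. The term $2\be|u|^2\P_+\dx(|u|^2)$ contributes $2\be|u|^2\cdot 2\Re\P_+\dx(|u|^2)=2\be|u|^2\dx(|u|^2)$, because $2\Re\P_+ g=g$ for real $g$ (recall $2\P_+=1+i\H$). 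Integrating in $x$ gives
\[
\dt F = -i(\cj u\dx u-u\dx\cj u)+\be|u|^4 .
\]

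\emph{Step 2 (equation for $V$).} Next compute $(\dt+i\dx^2)V = e^{i\be F}\big[(\dt+i\dx^2)u + i\be u\,\dt F -2\be\dx F\,\dx u + i\be u\,\dx^2F\cdot i\dots\big]$, expanding $\dx^2(e^{i\be F}u)$ by the product rule. This is the standard Tao/de Moura--Pilod computation. The terms $\pm\be\,\cj u\,u\,\dx u$ cancel, the quartic terms $\be^2|u|^4u$ cancel, and one obtains
\[
(\dt+i\dx^2)V = 2\be V\,\P_+\dx(|u|^2) - 2\be V\dx(|u|^2)\cdot(\text{correct sign}) + V\QQ_h(|u|^2) = -2\be V\,\P_-\dx(|u|^2)+V\QQ_h(|u|^2),
\]
using $\P_+ - 1 = -\P_-$. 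This sign bookkeeping is the main place an error could creep in. I would verify it against the known $v$-equation of \cite{CFL1,PMP}, which has the $\P_-$ structure.

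\emph{Step 3 (projections).} Write $V=v+y+z$ and use \eqref{u^2}, $|u|^2=|v+y+z|^2$. Apply $\P_{-,\hi}$ and $\P_\lo$; these commute with $\dt+i\dx^2$, which gives \eqref{Yeq} and \eqref{Zeq} directly. For \eqref{veq}, apply $\Pbhip$ and observe that $\Pbhip[(y+z)\P_-\dx(\cdot)]=0$. Indeed, $y+z$ has Fourier support in $(-\infty,2]$ and $\P_-\dx(\cdot)$ has support in $(-\infty,0)$, so the product has support in $(-\infty,2)$. Here I need the high-frequency cutoff of $\Pbhip$ to vanish there, and I must check this against the precise supports of $\psi_N$ and $\eta$. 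This frequency-support check is the delicate point; with the cutoff at $\sim 2$, it may require noting that $z$ has support in $[-2,2]$ while $\Pbhi$ kills $|\xi|\le 1$ only. If the cancellation is not exact, the statement as written relies on the conventions in \eqref{projs}, and I would adjust the argument to those conventions. Only $v\P_-\dx(\cdot)$ then survives.

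\emph{Step 4 ($w$-equation).} Apply $\P_{-,\hi}$ to \eqref{INLS2}. Split $u=\P_+u+\P_-u$ inside $2\be\P_{-,\hi}[u\P_+\dx(|u|^2)]$. The product $\P_+u\cdot\P_+\dx(\cdot)$ has positive frequencies and is killed, leaving $\P_-u\,\P_+\dx(\cdot)$. Replacing $\P_-u$ by $w$ changes only $\P_{-,\lo}u$ times a positive-frequency factor, whose output lies in $(-2,\infty)$. This is again killed modulo the same cutoff check, now with the sign $+2\be$; I would confirm that $-2\be$ in \eqref{weq} matches the convention of moving the term to the right-hand side, as in \eqref{INLS2}. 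Finally, the $\QQ_h$ term becomes $\P_{-,\hi}[e^{-i\be F}(v+y+z)\QQ_h(|v+y+z|^2)]$ via \eqref{ue2}.
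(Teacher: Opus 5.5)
Your route is the paper's route. You derive $(\dt+i\dx^2)(e^{i\be F}u)=-2\be e^{i\be F}u\,\P_-\dx(|u|^2)+e^{i\be F}u\,\QQ_h(|u|^2)$, substitute \eqref{ue2} and \eqref{u^2}, project, and obtain the $w$-equation by projecting \eqref{INLS2}. Steps 1--2 reach the correct formula. The paper keeps the bookkeeping clean by first recording $\dt F+i\dx^2F-\be(\dx F)^2=2iu\dx\cj u$ and then using $u\dx(|u|^2)=u^2\dx\cj u+|u|^2\dx u$; doing the same would remove your ``correct sign'' placeholder. With that formula, \eqref{Yeq} and \eqref{Zeq} follow immediately.

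The support check you flagged is a genuine gap, and it fails rather than closes. With the cutoffs in \eqref{projs}, $\Pblo$ has symbol $\eta_1$, supported in $[-2,2]$, while $\Pbhip$ has symbol $\ind_{\xi>0}(1-\eta_1(\xi))$. Both symbols are nonzero on the nonempty set $\{1<\xi<2:\,0<\eta_1(\xi)<1\}$.
\begin{itemize}
\item The $y$-part does vanish: $\Pbhip[y\,\P_-\dx(\cdot)]=0$, since the output frequencies are negative. This is all that the paper's appeal to $\P_+\P_-=0$ and $\P_+[\P_-f\,\P_-g]=0$ covers.
\item The $z$-part does not: $\Pbhip[z\,\P_-\dx(|v+y+z|^2)]$ is in general nonzero, at output frequencies in $(1,2)$.
\item In Step 4, $\P_{-,\hi}[\P_{-,\lo}u\cdot\P_+\dx(|u|^2)]$ likewise survives at frequencies in $(-2,-1)$. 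So the identity $\P_{-,\hi}[\P_{-,\lo}f\cdot\P_+g]=0$ used in the paper is false for these smooth cutoffs.
\end{itemize}

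Thus \eqref{veq} and \eqref{weq} hold verbatim only with sharp cutoffs at $|\xi|=1$. Otherwise you must keep the additional terms $-2\be\Pbhip[z\,\P_-\dx(|v+y+z|^2)]$ and $2\be\P_{-,\hi}[\P_{-,\lo}(e^{-i\be F[v+y+z]}(v+y+z))\,\P_+\dx(|v+y+z|^2)]$. These are harmless later on. Their output frequency is bounded, and the derivative falls on frequencies in $(-1,0)$, resp.\ $(0,1)$. So they can be treated like contribution (ii) in the proof of Lemma~\ref{LEM:N0}, via \eqref{Gh}.

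Your sign concern is also correct. Projecting \eqref{INLS2} gives $+2\be\,\P_{-,\hi}[w\,\P_+\dx(|v+y+z|^2)]$, and no rearrangement changes this. The $-2\be$ in \eqref{weq} is therefore a typo; it is harmless for the estimates, which only involve $|\be|$ and $\jb{\be}$.
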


\begin{proof}
Following \cite{MP, CFL1}, we compute
\begin{align*}
\dt F+ i\dx^2 F - \be (\dx F)^2 = 2i u \dx \cj{u}
\end{align*} 
and 
\begin{align*}
(\dt +i\dx^2)(e^{i\be F}u)& = e^{i\be F}  \big\{ i \be  (\dt F +i\dx^2 F - \be (\dx F)^2)u +(\dt u +i\dx^2 u) -2\be (\dx F)\dx u \big\}.
\end{align*}
Inserting \eqref{INLS2} and using that $u \dx(|u|^2) = u^2 \dx \cj{u}+|u|^2 \dx u$, we find
\begin{align*}
(\dt +i\dx^2)(e^{i\be F}u)& = -2\be e^{i\be F} u \P_{-}\dx (|u|^2) +e^{i\be F} u \QQ_{h}(|u|^2).
\end{align*}
Replacing the factors $e^{i\be F} u$ by \eqref{ue2} and the factors $|u|^2$ by \eqref{u^2} and then taking either $\P_{-,\text{hi}}$ or $\Pblo$ of both sides yields \eqref{Yeq} and \eqref{Zeq}. 
Taking $\Pbhip$ of both sides gives
\begin{align*}
(\dt +i\dx^2)v & = -2\be \Pbhip[ (v+y+z) \P_{-}\dx (|v+y+z|^2)] + \Pbhip[ (v+y+z)  \QQ_{h}(|v+y+z|^2)].
\end{align*}
Using that $\P_+ \P_{-}=0$ and $\P_{+}[\P_{-}f \P_{-}g]=0$,  we see that  
\begin{align*}
-2\be \Pbhip[ (v+y+z) \P_{-}\dx (|v+y+z|^2)] = -2\be \Pbhip[ v \P_{-}\dx (|v+y+z|^2)]
\end{align*}
which establishes \eqref{veq}. Lastly applying $\P_{-,\text{hi}}$ to both sides of \eqref{INLS2} and using $\P_{-,\text{hi}}[ \P_{-,\text{lo}}f \cdot \P_{+}g]=0$ gives \eqref{weq}.
\end{proof}

 We further study the $y$ term in \eqref{YZ}, by writing
\begin{align}
 y &= \P_{-,\text{hi}}[ e^{i\be F} (\Id -\P_{-,\text{hi}})u]  + \P_{-,\text{hi}}[ e^{i\be F} w] \notag   \\
 & = \P_{-,\text{hi}}[ e^{i\be F} (\Id -\P_{-,\text{hi}}) ( e^{-i\be F}(v+y+z))   ]  -\Pblo [e^{i\be F}w] - \Pbhip[e^{i\be F}w] + e^{i\be F} w    \notag \\
& = \mathcal{Y}^{\text{pos}}[v+y+z] +\mathcal{Y}^{\text{neg}}[v+y+z,w]+ e^{i\be F}w, \label{Ydecomp}
\end{align} 
where 
\begin{align}
 \Y^{\text{pos}} [f]& : = \P_{-,\text{hi}}[ e^{i\be F[f]} (\Id -\P_{-,\text{hi}}) ( e^{-i\be F[f]}f)   ], \label{Ysharp}\\
 \Y_{\textup{lo}}^{\text{neg}}[f,g] & : = -\Pblo [e^{i\be F[f]}\P_{-}g], 
 \label{Yneglo} 
 \\
\Y_{\textup{hi}}^{\text{neg}}[f,g]& :=- \Pbhip[e^{i\be F[f]}\P_{-}g], \label{Yneghi}\\
\Y^{\text{neg}}[f,g]&:= \Y_{\textup{lo}}^{\text{neg}}[f,g]+\Y_{\textup{hi}}^{\text{neg}}[f,g]. \label{Yw}
\end{align}
Notice that if $u$ belongs to the Hardy space, then \eqref{Ydecomp} simplifies to 
\begin{align}
y= \Y^{\text{pos}}[v+y+z] \quad \text{if} \quad u\in L^2_{+}(\R). 
\notag 
% \label{Hardyy}
\end{align}

Due to the frequency interactions, the terms $\Y^{\pos}[\cdot]$ and $\Y^{\negg}[\cdot]$ enjoy extra spatial smoothing when measured in $L^{\infty}_{T}H^{s}_x$ and $L^{\frac{3}{2}}_{T,x}$.  We defer the proof to the next section.

\begin{lemma}\label{LEM:ysharp}
Let $0 \le s < 1$, $N\in 2^{\N}$ such that $N\gg 1$, $0<T\leq 1$, and $b>\frac 12$. 
Then, for any $\s>0$ and $0 \le \al < \frac12 + \s$, it holds that
\begin{align}
\| \Y^{\textup{pos}}[f]\|_{L^{\infty}_{T}H^{\s+\al}_x} & \les  
  (1+\| f\|_{L^\infty_T L^2_x})^2 \| f\|^3_{L^\infty_T H^\s_x},
\label{YposLTHs1} \\
\|\Y_{\textup{lo}}^{\textup{neg}}[f,g]\|_{L^{\infty}_{T}H^{s}_x}+\|\Y_{\textup{hi}}^{\textup{neg}}[f,g]\|_{L^{\infty}_{T}H^{s}_x} &\les  (1+\|f\|_{L^\infty_T H_x^{\max(s-\frac12+,0)}})^4 \|g\|_{L^\infty_T H^{s}_x} \label{YnegLTHs1}.
\end{align}
Moreover, there exists a non-negative polynomial $C:\R^{2}\to \R_{+}$ such that for $A_{N}=N^{-\frac{3}{2}-\min(s,\frac 12)+}$, we have
\begin{align}
\| \P_{N}\Y^{\textup{pos}}[f]\|_{L^{\frac 32}_{T,x}} &\les |\be|  \jb{\be}^2 A_{N} (1+ \|f\|_{X^{0,b}_{T}})^{6} \|f\|_{X^{s,b}_{T}} , 
\label{YPOS1}
\\ 
\| \P_{N}\big(\Y^{\textup{pos}}[f_1]-\Y^{\textup{pos}}[f_2]\big) \|_{L^{\frac 32}_{T,x}} & \les \jb{\be}^{2} A_{N}  C(\|f_1\|_{X^{s,b}_{T}}, \|f_2\|_{X^{s,b}_{T}}) \|f_1-f_2\|_{X^{s,b}_{T}},   \label{YPOS2} \\
\| \P_{N}\Y^{\textup{neg}}_{\textup{hi}}[f,g]\|_{L^{\frac{3}{2}}_{T,x}} &\les  \jb{\be} A_{N}(1+ \|f\|_{X^{s,b}_{T}})^{6} \|g\|_{X^{0,b}_{T}} , \label{YNEG21}\\
\| \P_{N}\big(\Y^{\textup{neg}}_{\textup{hi}}[f_1,g]-\Y^{\textup{neg}}_{\textup{hi}}[f_2,g]\big)\|_{L^{\frac{3}{2}}_{T,x}}
&\les \jb{\be}^2 A_{N}  C(\|f_1\|_{X^{s,b}_{T}}, \|f_2\|_{X^{s,b}_{T}}) \notag\\
& \hphantom{XXXXXXXXXX}\times \|f_1-f_2\|_{X^{s,b}_{T}} \|g\|_{X^{0,b}_{T}}. \label{YNEG22}
\end{align}
\end{lemma}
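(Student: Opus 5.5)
Write $F=F[f]$, $E=e^{i\be F}$, $g_0:=e^{-i\be F}f$. The mechanism is algebraic. Since $(\Id-\P_{-,\hi})g_0 = \P_{+}g_0 + \P_{-,\lo}g_0$, and $\P_{-,\hi}$ of (anything)$\times$(positive-frequency) only sees the negative-frequency part of the first factor, we get
\[
\Y^{\pos}[f]=\P_{-,\hi}\big[(\P_{-}E)\,\P_{+}g_0\big]+\P_{-,\hi}\big[E\,\P_{-,\lo}g_0\big].
\]
Here $\P_-E$ is understood as $\P_{-,\hi}E$ plus a low-frequency remainder. In the first term the output at frequency $-N$ forces $\P_{-}E$ to carry frequency $\gtrsim N$. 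In the second term, a high output with low-frequency $g_0$ again forces $E$ to be at frequency $\sim N$. So in every case $E$ sits at high frequency, and
\[
\P_{N}E=\P_N\dx^{-1}\big(i\be |f|^2 E\big)\sim N^{-1}\P_N(|f|^2E).
\]
This trades one derivative for a quadratic factor $|f|^2$. That is the source of the $\s+\al$ gain in \eqref{YposLTHs1}, and of the factor $|\be|$ and the cubic dependence on $f$.

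\textbf{Proof of \eqref{YposLTHs1}.} I work at fixed time and decompose dyadically: output frequency $N$, $E$ at frequency $\sim N$ or higher, $g_0$ at frequency $M$. In the regime $M\ll N$ I estimate
\[
N^{\s+\al}\|\P_N[\P_{\sim N}E\cdot \P_M g_0]\|_{L^2}\les N^{\s+\al-1}\|\P_{\sim N}(|f|^2E)\|_{L^2}\|\P_Mg_0\|_{L^\infty}.
\]
For the first factor I use Bernstein and Leibniz, $\|\P_{\sim N}(|f|^2E)\|_{L^2}\les N^{\frac12-\s}\|f\|_{H^\s}\|f\|_{L^2}$ up to harmless factors. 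For $\|\P_Mg_0\|_{L^\infty}$ I use Bernstein, $M^{\frac12-\s}\|g_0\|_{H^\s}$, together with Lemma~\ref{LEM:JsL2} to get $\|g_0\|_{H^\s}\les(1+\|f\|_{L^2})^4\|f\|_{H^\s}$. The exponent bookkeeping gives $N^{\al-\frac12-\s}\cdot(\text{sum over }M)$, which is summable precisely when $\al<\frac12+\s$. The regime $M\gtrsim N$ is handled similarly, placing the derivative gain on $E$ and using Lemma~\ref{LEM:JsL2}. The powers of $\|f\|_{L^2}$ are then collected; I expect a slight adjustment of exponents beyond the stated $(1+\|f\|_{L^2})^2$ may be needed.

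\textbf{Proof of \eqref{YnegLTHs1}.} This is immediate. For the low part, $\Pblo$ is bounded. For the high part, use $\|\Pbhip[E\P_-g]\|_{H^s}\les\|J^s(E\P_-g)\|_{L^2}$ and apply \eqref{L2F1}.

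\textbf{Space-time bounds \eqref{YPOS1}--\eqref{YNEG22}.} I use the same expansion $\P_N E\approx N^{-1}\P_N(\be|f|^2E)$, now in $L^{3/2}_{T,x}$. By H\"older, $L^{3/2}=L^2\cdot L^6$: the bilinear piece $|f|^2$ restricted to frequency $\sim N$ is placed in $L^2_{T,x}$ via the bilinear Strichartz estimate \eqref{bilin1}, which gains $N^{-\frac12+}$. The remaining factor ($g_0$, or $\P_-g$) goes into $L^6_{T,x}$ via \eqref{L4}. When the frequencies of $f$ are unbalanced, I use \eqref{bilin3} instead, and this yields the extra $N^{-\min(s,\frac12)}$ from the $X^{s,b}$ norm. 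Together this gives $A_N=N^{-1}\cdot N^{-\frac12+}\cdot N^{-\min(s,\frac12)}$. The factor $E$ remaining inside $|f|^2E$ must also be handled: split $E=\P_{\lo}E+\P_{\hi}E$ and iterate the identity once more on the high part. Controlling $g_0=e^{-i\be F}f$ in $X^{0,b}$-type norms is delicate, since exponentials are not well behaved in $X^{s,b}$. I will avoid this by only using $L^p_{T,x}$ norms of $g_0$, bounded pointwise by $|f|$, after peeling frequency projections through the same iteration. I expect this bookkeeping to be the main obstacle. The difference estimates follow by telescoping and \eqref{Fdiff}, $\|e^{i\be F_1}-e^{i\be F_2}\|_{L^\infty}\les\jb{\be}(\|f_1\|_{L^2}+\|f_2\|_{L^2})\|f_1-f_2\|_{L^2}$, combined with \eqref{DeLinfty2}.
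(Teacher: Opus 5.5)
Your overall scheme matches the paper's proof. The exponential must carry the top frequency, and $\P_{N}e^{i\be F}=\be\,\dx^{-1}\P_{N}(|f|^2e^{i\be F})$ trades a derivative for $|f|^2$. For \eqref{YnegLTHs1} you simply apply \eqref{L2F1}, as the paper does. For \eqref{YPOS1}--\eqref{YNEG22} the paper likewise:
\begin{itemize}
\item uses H\"older in the form $L^{3/2}_{T,x}=L^2\cdot L^6$;
\item puts $\P_{\gtrsim N}(|f|^2)$ in $L^2$ via \eqref{highbilin};
\item treats $\P_{\ll N}(|f|^2)\,\wt{\P}_N e^{i\be F}$ by iterating once more (this is \eqref{L2pos2}, with the factors in $L^6$ and $L^3$);
\item places $e^{-i\be F}f$ only in $L^6_{T,x}$.
\end{itemize}

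The gap is the exponent count for \eqref{YposLTHs1}. With your bounds, $N^{\s+\al-1}\cdot N^{\frac12-\s}=N^{\al-\frac12}$, not $N^{\al-\frac12-\s}$. Moreover, for $\s<\frac12$ the sum $\sum_{M\ll N}M^{\frac12-\s}\sim N^{\frac12-\s}$ is not harmless. You therefore end with $N^{\al-\s}$, which only allows $\al<\s$. The paper's two pieces actually give the sharper bound $\|\P_{N}(|f|^2e^{i\be F})\|_{L^2}\les\jb{\be}N^{\frac12-2\s+}(1+\|f\|_{L^2})^2\|f\|_{H^\s}^2$, using H\"older in $L^{\frac{2}{1-2\s}}\times L^{\frac1\s}$ plus Sobolev. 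That improves your count to $N^{\al-2\s+}$, i.e.\ $\al<2\s$.

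No argument can do better: for $\s<\frac12$ the claimed range $\al<\frac12+\s$ is false. Take $f=f_++f_-$ with $f_\pm=\eps K^{\frac12-\s}\phi(Kx)e^{\pm iKx}$, where $\phi$ is real and $\hat\phi$ is supported near $0$.
\begin{itemize}
\item The part of $\Y^{\pos}[f]$ linear in $\be$ at frequencies near $-K$ is $i\be\,\dx^{-1}(\cj{f_+}f_-)\,f_+$, whose $L^2$-norm is $\sim|\be|\eps^3K^{-3\s}$.
\item Hence $\|\Y^{\pos}[f]\|_{H^{\s+\al}}\gtrsim|\be|\eps^3K^{\al-2\s}-O_K(\eps^5)$, while the right side of \eqref{YposLTHs1} is $\sim\eps^3$.
\item Letting $\eps\to0$ and then $K\to\infty$ rules out every $\al>2\s$.
\end{itemize}
The paper reaches $N_1^{\al-\frac12-\s}$ only because its combined bound for $\P_{N_1}(e^{i\be F}|f|^2)$ is written with $(\frac12-\s)\land0$, whereas its two preceding estimates give $(\frac12-\s)\lor0$. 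Downstream, in \eqref{uHsup1}, only $\al=11\dl$ with $\s=s-11\dl\geq39\dl$ is used, which lies even inside your range $\al<\s$. So you should state and prove the restricted range.

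Two smaller points. First, you do not need Lemma~\ref{LEM:JsL2} for $g_0$. Since $|g_0|=|f|$ pointwise, Bernstein and Sobolev give $\|\P_Mg_0\|_{L^\infty}\les M^{\frac12-\s}\|f\|_{L^{\frac{2}{1-2\s}}}\les M^{\frac12-\s}\|f\|_{H^\s}$. This is how the paper keeps the factor $(1+\|f\|_{L^2})^2$, which resolves your worry about extra powers.

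Second, your frequency argument only covers outputs $N\gg1$. At output frequency $\sim1$ your second term contains the linear piece $\P_{-,\hi}\P_{-,\lo}f$, obtained from $e^{i\be F}\approx1$ and $g_0\approx f$. It is nonzero because the cutoffs are smooth. So no purely cubic right-hand side can hold there, and a term $\|f\|_{L^\infty_TL^2_x}$ must be added. This is harmless for the application, and the paper's proof skips it as well.
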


The main difficulty is to then obtain the trilinear estimate for the following term in the $y$-equation:
\begin{align*}
\P_{-,\textup{hi}}[ (v+y+z) \P_{-}\dx(|v+y+z|^2)] .
\end{align*}
We define 
\begin{align}
\begin{split}
\NN_{0}(f,g,h) : = -2\be \P_{-,\textup{hi}}\big[ f &\P_{-}\dx(  \cj{g} h)\big] 
+ 2\be \P_{-,\textup{HI}}\big[ f \P_{-,\textup{HI}}\dx( \cj{g}h)\big]
\end{split}\label{N0} 
\end{align} 
which is defined so that either the output frequency is of size one or the frequency for the derivative is bounded. For either of these reasons, we will be able to estimate $\NN_0$ using either two uses of the bilinear Strichartz estimate or the $L^4$-Strichartz estimate. 

By definining the trilinear operator
\begin{align}
\NN_{1}(f,g,h):=-2\be \P_{-,\textup{HI}}[f \P_{-,\text{HI}}\dx( \cj{g}\cdot h)] \label{N1}
\end{align}
we have reduced to the essential term
\begin{align}
-2 \be \P_{-,\textup{HI}}[ (v+y+z) \P_{-,\text{HI}}\dx(|v+y+z|^2)]=\NN_{1}(v+y+z), 
\notag
% \label{Yterm1}
\end{align}
namely, 
\begin{align*}
-2\be \P_{-,\textup{hi}}\big[ (v+y+z) \P_{-}\dx(  |v+y+z|^2)\big]  = \mathcal{N}_{0}(v+y+z) +\mathcal{N}_{1}(v+y+z).
\end{align*}
where we abused notation and defined $\NN_j( f):=\NN_0 (f,f,f)$, $j=0,1$.
The key difference with this trilinear form as compared to that in \eqref{vtrilin} is that the frequency projections do not in general imply that there are two input frequencies controlling the derivative. This means that we genuinely see high$\times$low $\to$ high interactions and thus we need a more careful analysis.
First, by impossible frequency interactions, we have 
\begin{align*}
\P_{-,\text{HI}}[ |z|^2 + \cj{y}v+\cj{y}z+\cj{z}v]=0,
\end{align*}
so that 
\begin{align*}
\NN_{1}(v+y+z)&= -2\be \P_{-,\textup{HI}}[ (v+y+z) \P_{-,\text{HI}}\dx( |v|^2 +|y|^2 +\cj{v}(y+z)+\cj{z}y  )]  \\
& = -2\be \P_{-,\textup{HI}}[ (v+y+z) \P_{-,\text{HI}}\dx( |v|^2 +\cj{v}z)]  \\
& \quad -2\be \P_{-,\textup{HI}}[ (v+y+z) \P_{-,\text{HI}}\dx( \cj{(v+y+z)}y)]  \\
& =  -2\be \P_{-,\textup{HI}}[ (v+y+z) \P_{-,\text{HI}}\dx( |v|^2 +\cj{v}z)]  \\
& \quad -2\be \sum_{ \substack{N_0, N_1,N_2, N_{23} \\ N_0\le 2^{10} (N_1\vee N_2) }}\P_{N_0} \P_{-,\textup{HI}}[\P_{N_1} (v+y+z) \P_{N_{23}}\P_{-,\text{HI}}\dx( \cj{\P_{N_2}(v+y+z)} y)] \\
& \quad -2\be \sum_{ \substack{N_0,  N_1,N_2, N_{23} \\ N_0  > 2^{10} ( N_1 \vee N_2)  }}\P_{N_0} \P_{-,\textup{HI}}[ \P_{N_1}(v+y+z) \P_{N_{23}}\P_{-,\text{HI}}\dx( \cj{\P_{N_2}(v+y+z)}y)]  \\
&=: \NN_{1}(v+y+z,v,v+z) + \NN_{2}^{\perp}(v+y+z,v+y+z,y) + \NN_{2}(v+y+z, v+y+z, y)
\end{align*}
where 
\begin{align}
 \NN_{2}^{\perp}(f,g,h)&:=-2\be \sum_{ \substack{N_0, N_1,N_2, N_{23}, \\  N_0 \le 2^{10} ( N_1\vee N_2) }}\P_{N_0} \P_{-,\textup{HI}}[\P_{N_1} f \P_{N_{23}}\P_{-,\text{HI}}\dx( \cj{\P_{N_2}g}\cdot h)] , 
 \label{NN2perp}
 \\
\NN_{2}(f,g,h)& := -2\be \sum_{ \substack{N_0,  N_1,N_2, N_{23} \\ N_0  > 2^{10} ( N_1 \vee N_2)  }}\P_{N_0} \P_{-,\textup{HI}}[ \P_{N_1}f \cdot  \P_{N_{23}} \P_{-,\text{HI}}\dx( \cj{\P_{N_2}g} \cdot  h)]. \label{NN2}
\end{align}

As it is, it seems to be impossible to control the nonlinear term $\NN_{2}$ due to the high$\times$low$\to$high interaction. To overcome this, we insert the decomposition \eqref{Ydecomp} into the last slot of $\NN_2$ to obtain
\begin{align*}
\NN_{2}(v+y+z, v+y+z, y) & = \NN_{2}(v+y+z, v+y+z, \Y^{\pos}[v+y+z]+\Y^{\negg}[v+y+z,w])  \\
&\quad + \NN_{2}(v+y+z, v+y+z, e^{i\be F}w).
\end{align*}
In fact, by definition of $\NN_{2}$ and $\Y^{\negg}$ in \eqref{Yw}, we have
\begin{align}
\NN_{2}(v+y+z, v+y+z, \Y^{\negg}[v+y+z,w]) 
&= 0.
 \notag 
 % \label{Ynegsimple}
\end{align}
To simplify the notation, we define
\begin{align}
\label{NN2pos}
 \NN_{2}^{\pos}(f,g,h)&: = \NN_{2}(f, g, \Y^{\pos}[h]) \quad \text{and} \quad  \NN_{2}^{\pos}(f): = \NN_{2}^{\pos}(f, f,f).
\end{align}

To summarise, we have the decomposition
\begin{align}
\begin{split}
-2\be& \P_{-,\textup{hi}}[ (v+y+z) \P_{-}\dx(|v+y+z|^2)]  \\
& = \NN_{0}(v+y+z)  +\NN_{1}(v+y+z,v,v+z) +\NN_{2}^{\perp}(v+y+z,v+y+z,y) \\
&  \quad +  \NN_{2}^{\pos}(v+y+z) 
+ \NN_{2}(v+y+z, v+y+z, e^{i\be F}w).
\end{split}
\notag 
% \label{ydecomp}
\end{align}
Thus, we arrive at the following modified equation for $y$:
\begin{align}
\dt y + i\dx^2 y & =\NN_{0}(v+y+z)  +\NN_{1}(v+y+z,v,v+z) +\NN_{2}^{\perp}(v+y+z,v+y+z,y)  \notag\\ 
& \hphantom{XXXXXXXX}  +\NN_{2}^{\pos}(v+y+z) + \P_{-,\textup{hi}}[ (v+y+z) \QQ_{h}(|v+y+z|^2)] \notag \\
& \hphantom{XXXXXXXX}
+ \NN_{2}(v+y+z, v+y+z, e^{i\be F}w)
\notag \\
&  = : \mathfrak{N}_{y}[v,y,z,w].
\label{Yeq2}
\end{align}
Note that when $u$ is in the Hardy space, the last term in \eqref{Yeq2} vanish.

\subsection{Estimating $\Y^{\pos}$ and $\Y^{\negg}$}

Before we proceed to the proof of Lemma~\ref{LEM:ysharp}, we need some preliminary results.
The first of these are variants of the bilinear Strichartz estimates in Lemma~\ref{LEM:bilin}.

\begin{lemma}\label{LEM:bilinvar}
Let $s\geq 0$ and $b>\frac 12$. Then, it holds that
\begin{align}
\| \P_{\ges N}[u \cj{v}]\|_{L^{2}_{T,x}} &\les  N^{-\frac 12}\|u\|_{X^{0,b}_{T}}\|v\|_{X^{0,b}_{T}},\label{highbilin} \\
\| \P_{N}[ u \cj{v}]\|_{L^{6}_{T,x}}&\les N^{\frac 12-s} \big( \| u\|_{X^{s,b}_{T}}\|v\|_{X^{0,b}_{T}}+ \| u\|_{X^{0,b}_{T}}\|v\|_{X^{s,b}_{T}}\big),\label{L6bilin} \\
\| \P_{\les N}[ u \cj{v}]\|_{L^{6}_{T,x}}&\les N^{\max(\frac 12-s,0+)} \big( \| u\|_{X^{s,b}_{T}}\|v\|_{X^{0,b}_{T}}+ \| u\|_{X^{0,b}_{T}}\|v\|_{X^{s,b}_{T}}\big).\label{L6bilin2}
\end{align}
\end{lemma}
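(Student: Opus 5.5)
We prove the three estimates of Lemma~\ref{LEM:bilinvar} separately. In each case we first reduce to global-in-time $X^{s,b}$ functions by taking extensions, and we use a dyadic decomposition of the two inputs.

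\textbf{The estimate \eqref{highbilin}.} We decompose $u=\sum_{N_1}\P_{N_1}u$ and $v=\sum_{N_2}\P_{N_2}v$. Since the output frequency is $\ges N$, every contributing pair has $N_1\vee N_2\ges N$. We treat two cases.

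\emph{Separated frequencies} ($N_1\gg N_2$ or $N_2\gg N_1$). Here we use the bilinear Strichartz estimate \eqref{bilin3} with $\dl=0$ and $b>\frac12$, which gives the bound $(N_1\vee N_2)^{-\frac12}$. We then sum in the smaller frequency by Cauchy--Schwarz. To avoid a logarithmic loss, it is better to use the transversality bound directly for the full low-frequency piece $\P_{\ll N_1}v$ rather than summing dyadically. After that, the sum in $N_1\ges N$ is handled by orthogonality in $\ell^2$.

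\emph{Comparable frequencies} ($N_1\sim N_2$). The output frequency is then $\ges N$ while the inputs have size $\sim N_1$. We apply \eqref{bilin1} with output frequency $M\ges N$ and $\dl=0$ to get $M^{-\frac12}$. Summing over $M\ges N$ yields a geometric factor $N^{-\frac12}$. The sum over $N_1$ is handled by Cauchy--Schwarz with almost orthogonality.

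Alternatively, a direct Cauchy--Schwarz computation in the spirit of the proof of Lemma~\ref{LEM:trilin} works. For $\xi=\xi_1-\xi_2$ with $|\xi|\ges N$, the phase $\xi_1^2-\xi_2^2$ has derivative $\sim|\xi|$ in the free variable, so the measure of the resonant set gains $|\xi|^{-1}\les N^{-1}$. Transference then gives \eqref{highbilin} in one stroke. I would present this version as the main proof, since it avoids the summation issues.

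\textbf{The estimate \eqref{L6bilin}.} By Bernstein's inequality in space, $\|\P_N f\|_{L^6_x}\les N^{\frac13}\|\P_N f\|_{L^2_x}$, so this costs $N^{\frac13}$. We still need $N^{\frac16-s}$. I would split the product by input frequencies.

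\emph{High--high to low} ($N_1\sim N_2\ges N$). Put one factor in $L^\infty_{T,x}$ via \eqref{L6hi} at cost $N_1^{\frac12}$ and the other in $L^6_{T,x}$ via \eqref{L4}. This loses too much, so instead: use Bernstein on the output, $\|\P_N[\cdot]\|_{L^6_x}\les N^{\frac12}\|\cdot\|_{L^{\frac32}_x}$, which is wasteful. A better choice is the crude bound $\|\P_N(fg)\|_{L^6}\les N^{\frac12}\|fg\|_{L^{3}_x\to}$. That route is messy, so the simplest route is: Bernstein from $L^2$ output to $L^6$ gives $N^{\frac13}$, then \eqref{highbilin}-type bilinear smoothing gives $N^{-\frac12}$, then Sobolev at the input level gives the $N_1^{-s}$. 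This must be combined with a time exponent; since $L^6_T$ is not available from $L^2_T$, interpolate with a trivial $L^\infty_{T,x}$ bound instead. Concretely, $\|\P_N(u\cj v)\|_{L^\infty_{T,x}}\les N\|u\|_{L^\infty_TL^2}\|v\|_{L^\infty_TL^2}$, and interpolating this with the $L^2_{T,x}$ bound $N^{-\frac12}$ at $\theta=\frac13$ gives $N^{-\frac13+\frac23}=N^{\frac13}$ with no $s$ gain.

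\emph{Low--high} ($N_1\sim N$, $N_2\les N$, and symmetrically). Put the high factor $\P_{N_1}u$ in $L^6_{T,x}$ by \eqref{L4}, gaining $N^{-s}$ from $u\in X^{s,b}$. Put the low factor in $L^\infty_{T,x}$ via \eqref{L6hi} and Bernstein, summing $N_2^{\frac12}$ over $N_2\les N$ to get $N^{\frac12}$. This yields $N^{\frac12-s}$.

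\emph{High--high}. Use the same Hölder scheme with both factors at frequency $N_1\ges N$: one in $L^6$ with weight $N_1^{-s}$, and the other bounded by Bernstein on the output, $\|\P_N(fg)\|_{L^6}\les N^{\frac12}\|fg\|_{L^3}$, then $L^6\cdot L^6$. This gives $N^{\frac12}N_1^{-2s}$, which is summable over $N_1\ges N$ when $s>0$. When $s=0$ we lose a logarithm; I would handle that by keeping a little decay from the $L^2$ bilinear estimate in the interpolation.

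\textbf{The estimate \eqref{L6bilin2}.} This follows by summing \eqref{L6bilin} over dyadic $M\le N$. For $s<\frac12$ the sum is geometric with the top term dominating, giving $N^{\frac12-s}$. For $s\ge\frac12$ the sum over $\log N$ terms yields $N^{0+}$.

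\textbf{Main obstacle.} The delicate step is the high--high part of \eqref{L6bilin}, where summability in $N_1$ at the endpoint $s=0$ needs care.
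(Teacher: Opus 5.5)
Your argument for \eqref{highbilin} is fine; the direct Cauchy--Schwarz/transference computation amounts to \eqref{bilin1} summed over output frequencies $M\ges N$, as in the paper. Your arguments for \eqref{L6bilin2} and for the low--high case of \eqref{L6bilin} are also fine. The gap is in the high--high case of \eqref{L6bilin}.

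The step $\|\P_N(fg)\|_{L^6_{T,x}}\les N^{\frac12}\|fg\|_{L^3_{T,x}}$, followed by placing both factors in $L^6_{T,x}$, does not work. Bernstein acts only in $x$, so it gives $\|\P_N F\|_{L^6_{T,x}}\les N^{\frac16}\|F\|_{L^6_TL^3_x}$; it cannot raise the time exponent from $3$ to $6$. A product of two $L^6_{T,x}$ functions lies only in $L^3_{T,x}$. In addition, the gain $N_1^{-2s}$ is not available, since only one factor carries $s$ derivatives on the right-hand side.

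This case can be repaired, and the logarithm at $s=0$ that you flag is not a real obstacle. Use $\|\P_N F\|_{L^6_{T,x}}\les N^{\frac12}\|F\|_{L^6_TL^{3/2}_x}$ together with $\|\P_{N_1}u\,\cj{\P_{N_2}v}\|_{L^6_TL^{3/2}_x}\le\|\P_{N_1}u\|_{L^\infty_TL^2_x}\|\P_{N_2}v\|_{L^6_{T,x}}$. Put the $s$ derivatives on one factor and sum over the diagonal $N_1\sim N_2\ges N$ by Cauchy--Schwarz in $\ell^2$. This gives $N^{\frac12-s}$ uniformly in $s\geq 0$.

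The interpolation you abandoned is exactly the paper's proof. It needs no decomposition of the inputs once you see that the factor $N^{-s}$ is present at both endpoints. Write $\P_N[u\cj v]=\P_N[\P_{\ges N}u\cdot\cj v]+\P_N[\P_{\ll N}u\cdot\cj{\P_{\ges N}v}]$. Bernstein ($L^1_x\to L^\infty_x$) and \eqref{YsCTHs} give an $L^\infty_{T,x}$ bound of $N^{1-s}$, and \eqref{bilin1} gives an $L^2_{T,x}$ bound of $N^{-\frac12-s}$. Both multiply $\|u\|_{X^{s,b}_T}\|v\|_{X^{0,b}_T}+\|u\|_{X^{0,b}_T}\|v\|_{X^{s,b}_T}$. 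Then $\|f\|_{L^6}\le\|f\|_{L^2}^{1/3}\|f\|_{L^\infty}^{2/3}$ yields $N^{-\frac16-\frac s3}N^{\frac23-\frac{2s}3}=N^{\frac12-s}$.

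Your computation at that point also had an arithmetic slip: $(N^{-\frac12})^{1/3}N^{2/3}=N^{\frac12}$, not $N^{\frac13}$. Without the $N^{-s}$ at the endpoints it does give no $s$-gain, which is why the interpolation seemed to fail.
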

\begin{proof}
The estimate \eqref{highbilin} follows from applying \eqref{bilin1} (with $\dl=0$) since 
\begin{align*}
\| \P_{\ges N}(u \cj{v})\|_{L^{2}_{T,x}}^{2} = \sum_{M\ges N} \|\P_{M}[ u \cj{v}]\|_{L^{2}_{T,x}}^{2} 
 \les \sum_{M\ges N} M^{-\frac 12} \|u\|_{X^{0,\frac 12+}_{T}}\|v\|_{X^{0,\frac 12+}_T } \les \text{RHS} \eqref{highbilin}.
\end{align*}
Next, \eqref{L6bilin} follows from interpolating the two estimates
\begin{align}
\| \P_{N}[ u \cj{v}]\|_{L^{\infty}_{T,x}} &\les N \big( \|\P_{\ges N} u\|_{L^{\infty}_{T}L^2_x} \|v\|_{L^{\infty}_{T}L^2_x}+\| u\|_{L^{\infty}_{T}L^2_x} \|\P_{\ges N}v\|_{L^{\infty}_{T}L^2_x} \big)\notag \\
& \les  N^{1-s}\big( \| u\|_{X^{s,b}_{T}}\|v\|_{X^{0,b}_{T}}+ \| u\|_{X^{0,b}_{T}}\|v\|_{X^{s,b}_{T}}\big), 
\notag
%\label{L6bilin2}
\\
\| \P_{N}[ u \cj{v}]\|_{L^{2}_{T,x}}
&
\les N^{-\frac12-s } \big( \| u\|_{X^{s,b}_{T}}\|v\|_{X^{0,b}_{T}}+ \| u\|_{X^{0,b}_{T}}\|v\|_{X^{s,b}_{T}}\big), 
\notag
\end{align}
where the latter follows from \eqref{bilin1} for $\dl=0$ with the decomposition $\P_{N}[u\cj{v}]= \P_{N}[ \P_{\ges N}u \cdot \cj{v}] +\P_{N}[ \P_{\ll N} u \cdot  \cj{ \P_{\ges N}v}]$. 
Lastly, \eqref{L6bilin2} follows from \eqref{L6bilin} in a similar way as we deduced \eqref{highbilin} from \eqref{bilin1}.
\end{proof}

Next, we give a smoothing estimate on the terms $e^{i\be F[f]}$ in the `weaker' space $L^{3}_{T,x}$.

\begin{lemma}
Let $s\geq 0$, $b>\frac 12$, $N\gg 1$, and assume that $F_{j}$, $j=1,2,$ are two real-valued functions such that $\dx F_{j}=|f_j|^2$.
Then, it holds that
\begin{align}
\| \P_{N}e^{i\be F_1}\|_{L^{3}_{T,x}} &\les
|\be| \jb{\be}  
N^{-2+\max(1-s,0)+} \|f_1\|_{X^{0,b}_{T}}(1+\|f_1\|_{X^{0,b}_{T}})^2 \|f_1\|_{X^{s,b}_{T}}, 
\label{L2pos2} \\
\| \P_{N}(e^{i\be F_1} - e^{i\be F_2})\|_{L^{3}_{T,x}} & \les  |\be|^2\jb{\be} N^{-2+\max(1-s,0)+} C_2( \|f_1\|_{X^{s,b}_{T}}, \|f_2\|_{X^{s,b}_{T}}) \|f_1-f_2\|_{X^{s,b}_{T}} \label{L2posdiff}
\end{align}
for some non-negative polynomial $C_2:\R^2\to \R_{+}$.
\end{lemma}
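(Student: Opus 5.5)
Since $N\gg 1$, the projector $\P_N$ annihilates constants, so I would work with the derivative of the exponential. Write
\[
\P_N e^{i\be F_1} = N^{-1}\cdot N\,\dx^{-1}\P_N\dx e^{i\be F_1}.
\]
Here $N\dx^{-1}\P_N$ is a multiplier with an $L^p$-bounded kernel, uniformly in $N$. Then
\[
\dx e^{i\be F_1} = i\be |f_1|^2 e^{i\be F_1},
\]
which reduces matters to bounding $N^{-1}|\be|\,\|\P_N(|f_1|^2 e^{i\be F_1})\|_{L^3_{T,x}}$. The product I split paraprodut-style. Let $g=|f_1|^2$ and $E=e^{i\be F_1}$, and write
\[
\P_N(gE) = \P_N\big(\P_{\ges N/8}g\cdot E\big) + \P_N\big(\P_{\ll N}g\cdot \P_{\sim N}E\big).
\]

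For the first piece, I use $|E|=1$ and the Littlewood--Paley square function over $M\ges N$. This reduces it to $\sum_{M\ges N}\|\P_M g\|_{L^3_{T,x}}$. Interpolating (H\"older in $L^p$) between \eqref{highbilin}-type $L^2$ bounds and the $L^6$ bound \eqref{L6bilin} gives
\[
\|\P_M g\|_{L^3_{T,x}}\les M^{\frac12\theta-\frac12(1-\theta)-s\theta}\cdots .
\]
Alternatively, and more simply, I would directly use \eqref{L6bilin} with the $L^2$ bound $M^{-\frac12-s}$ from the proof of Lemma~\ref{LEM:bilinvar} at $\theta=\frac12$. This yields $\|\P_M g\|_{L^3_{T,x}}\les M^{-s}\|f_1\|_{X^{0,b}_T}\|f_1\|_{X^{s,b}_T}$. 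Summing over $M\ges N$ costs an $N^{0+}$ if $s$ is near $0$. The total is then $N^{-1-s+}\le N^{-2+\max(1-s,0)+}$, giving the claimed power with a single factor of $|\be|$.

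For the second piece, $\P_{\sim N}E$ is again handled by one more integration by parts: $\P_{\sim N}E = N^{-1}\widetilde{\P}(i\be gE)$. I would place it in $L^{6}_{T,x}$ (or $L^\infty$ in time with a Bernstein estimate) and $\P_{\ll N}g$ in $L^6_{T,x}$. The bound \eqref{L6bilin2} gives $\|\P_{\les N}g\|_{L^6}\les N^{\max(\frac12-s,0+)}$. For $\|\P_{\sim N}(gE)\|_{L^6}$ I would recurse on the first piece in $L^6$ via \eqref{L6bilin}, which gives $N^{\frac12-s}$. The result is $N^{-2}\,|\be|^2 N^{\max(\frac12-s,0+)+\frac12-s}$. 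This is at most $N^{-2+\max(1-s,0)+}$ up to adjusting by $N^{0+}$: when $s\ge\frac12$ one has $\frac12-s+0\le \max(1-s,0)$. It carries $|\be|^2$ and four powers of $f_1$, consistent with $|\be|\jb{\be}\|f_1\|_{X^{0,b}}(1+\|f_1\|_{X^{0,b}})^2\|f_1\|_{X^{s,b}}$ once one $X^{s,b}$ factor is kept and the rest are put in $X^{0,b}$ using the symmetric form of \eqref{L6bilin}.

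The main obstacle is bookkeeping the powers so that exactly one $X^{s,b}$ norm appears. I would handle it by always assigning the $s$-derivative to the high-frequency factor in the bilinear estimates, and by controlling the remaining pieces of $E$ by $\|E\|_{L^\infty}=1$. For the difference estimate \eqref{L2posdiff}, I would write
\[
e^{i\be F_1}-e^{i\be F_2} = e^{i\be F_2}\big(e^{i\be(F_1-F_2)}-1\big)
\]
and differentiate. This gives $i\be(|f_1|^2-|f_2|^2)e^{i\be F_1} + i\be|f_2|^2(e^{i\be F_1}-e^{i\be F_2})$. The first term is treated exactly as above, with $|f_1|^2-|f_2|^2$ expanded bilinearly. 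For the second, $\|e^{i\be F_1}-e^{i\be F_2}\|_{L^\infty}\les |\be|\,\|F_1-F_2\|_{L^\infty}$ is bounded by \eqref{Fdiff} and \eqref{YsCTHs}, and the high-frequency part is re-expanded as before. This produces the polynomial $C_2$ and the factor $|\be|^2\jb{\be}$.
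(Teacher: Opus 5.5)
You follow the paper's setup: one integration by parts, the split $\P_N(gE)=\P_N(\P_{\ges N}g\cdot E)+\P_N(\P_{\ll N}g\cdot \wt{\P}_N E)$, and the same two-term identity for the difference. The gap is in the low--high piece when $0\le s<\frac12$, which is the regime where this lemma is mainly used.

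\emph{Where the argument fails.} Because you put $\P_{\ll N}g$ in $L^6_{T,x}$, you must bound $\wt{\P}_N E$, and hence $\wt{\P}_N(gE)$, in $L^6_{T,x}$. You justify $\|\wt{\P}_N(gE)\|_{L^6_{T,x}}\les N^{\frac12-s}\|f_1\|_{X^{0,b}_T}\|f_1\|_{X^{s,b}_T}$ by recursing on the first piece in $L^6$ via \eqref{L6bilin}. That first piece is only controlled by $\|\P_{\ges N}g\|_{L^6_{T,x}}\le\sum_{M\ges N}\|\P_M g\|_{L^6_{T,x}}\les\sum_{M\ges N}M^{\frac12-s}$, which diverges for $s\le\frac12$. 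Worse, for $s<\frac14$ the function $|f_1|^2$ need not lie in $L^6_{t,x}$ at all: that would require $f_1\in L^{12}_{t,x}$, which costs $\frac14$ derivative. So the bound $\|\wt{\P}_N(\P_{\ges N}g\cdot E)\|_{L^6}\le\|\P_{\ges N}g\|_{L^6}$ that your recursion relies on is vacuous. Any genuine $L^6$ bound on $\wt{\P}_N(gE)$ would have to exploit the frequency structure of $E$ in high--high$\to$low interactions, which you do not address.

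\emph{How the paper avoids it.} It uses a different H\"older pairing. It puts $\P_{\ll N}(|f_1|^2)$ in $L^\infty_{T,x}$, using Bernstein and Sobolev at fixed time together with \eqref{YsCTHs}: $\|\P_{\ll N}(|f_1|^2)\|_{L^\infty_x}\les N^{1-s}\|f_1\|_{L^2_x}\|f_1\|_{L^{2/(1-2s)}_x}\les N^{1-s}\|f_1\|_{L^2_x}\|f_1\|_{H^s_x}$ for $s<\frac12$. For $s\ge\frac12$ a dyadic decomposition gives $N^{\max(1-s,0)+}$ instead. It then puts $\wt{\P}_N E$ in $L^3_{T,x}$ with the one-step, non-recursive bound $\|\wt{\P}_N E\|_{L^3_{T,x}}\les|\be|N^{-1}\|f_1\|^2_{L^6_{T,x}}$. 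This gives $|\be|^2N^{-1-s}$ immediately.

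If you prefer to keep your $L^6\times L^6$ pairing, the crude interpolation $\|\wt{\P}_N(gE)\|_{L^6_{T,x}}\le\|\wt{\P}_N(gE)\|_{L^\infty_{T,x}}^{1/2}\|\wt{\P}_N(gE)\|_{L^3_{T,x}}^{1/2}\les N^{\frac12}\|f_1\|_{L^\infty_TL^2_x}\|f_1\|_{L^6_{T,x}}$ also suffices for $s\le\frac12$. It also keeps exactly one $X^{s,b}$ factor.

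\emph{Minor point in the first piece.} At $s=0$ the sum $\sum_{M\ges N}M^{-s}$ diverges; it does not merely cost $N^{0+}$. Skip the dyadic summation and bound $\|\P_{\ges N}(|f_1|^2)\|_{L^3_{T,x}}\les N^{-s}\|f_1\|_{L^6_{T,x}}\|J^s_xf_1\|_{L^6_{T,x}}$ directly from the $L^6$ Strichartz estimate, as the paper does.
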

\begin{proof}
We first show \eqref{L2pos2}, which follows from \eqref{L4} and \eqref{YsCTHs} once we prove the following bound:
\begin{align}
\| \P_{N} e^{i\be F_1} \|_{L^3_{T,x}}  & \les |\be|\jb{\be} N^{-2+\max(1-s,0)+}\|f_1\|_{L^{6}_{T,x}} \big\{  \|J^{s}_x f_1\|_{L^{6}_{T,x}}+ \|f_1\|_{L^{\infty}_{T}L^2_x}  \|f_1\|_{L^{6}_{T,x}}      \|f_1\|_{L^{\infty}_{T}H^{s}_x} \big\}.
 \label{EL31}
\end{align}
For $0\le s < \frac12$, using the following bound 
\begin{align}
\| \wt{\P}_{N} e^{i\be F_1}\|_{L^{3}_{T,x}}& \les |\be| N^{-1} \| \P_{N}( |f_1|^2 e^{i\be F_1})\|_{L^{3}_{T,x}} \les |\be| N^{-1} \|f_1\|_{L^{6}_{T,x}}^{2},  
\notag
% \label{Ebd1} 
\end{align}
together with  Bernstein's inequality, and Sobolev embedding, gives
\begin{align*}
N\| {\P}_{N} e^{i\be F_1}\|_{L^{3}_{T,x}} &\les 
|\be| \big\{  \| \P_{\ges N}(|f_1|^2)\|_{L^{3}_{T,x}} + \|\P_{\ll N}(|f_1|^2)\|_{L^{\infty}_{T,x}} \| \wt{\P}_{N}e^{i\be F_1}\|_{L^{3}_{T,x}} \big\} \\
&\les |\be| \jb{\be} N^{-s} \big\{ 
\|f_1\|_{L^{6}_{T,x}} \|J_x^{s}f\|_{L^{6}_{T,x}}  + \| f_1\|_{L^{\infty}_{T}L^{2}_x} \| f_1\|_{L^{\infty}_{T}L^{\frac{2}{1-2s}}_x} \|f_1\|_{L^{6}_{T,x}}^{2} 
\big\} \\
& \les |\be| \jb{\be} N^{-s} \|f_1\|_{L^{6}_{T,x}} \big\{  \|J^{s}_x f_1\|_{L^{6}_{T,x}}+ \|f_1\|_{L^{\infty}_{T}L^2_x}  \|f_1\|_{L^{6}_{T,x}}      \|f_1\|_{L^{\infty}_{T}H^{s}_x} \big\}, 
\end{align*} 
proving \eqref{EL31}.

If instead $s\geq \frac 12$, we estimate the factor $\|\P_{\ll N}(|f_1|^2)\|_{L^\infty_{T,x}}$ above differently. By dyadic decomposition, we have 
\begin{align*}
 \|\P_{\ll N}(|f_1|^2)\|_{L^{\infty}_{x}} & \les \sum_{M \ll N} \sum_{N_1,  N_2} \| \P_{M}( \cj{\P_{N_1}f_1} \cdot \P_{N_2}f_1)\|_{L^{\infty}_{x}}.
\end{align*}
Without loss of generality, we may assume that $N_1 \geq N_2$ and note that we always have $M\les N_1$. 
If $N_1 \sim N_2$, then this contribution is bounded by 
\begin{align*}
\sum_{M \ll N} \sum_{N_1 \sim N_2} M \| \cj{\P_{N_1}f_1}\cdot \P_{N_2}f_1]\|_{L^{1}_x}
& \les \sum_{M \ll N} M \sum_{N_1 \sim N_2}  \|\P_{N_1}f_1\|_{L^2_x} \|\P_{N_2}f_1\|_{L^2_x} \\
& \les \sum_{M \ll N} M \sum_{N_1 \sim N_2} N_1^{-s}  \|\P_{N_1}f_1\|_{H^s_x} \|\P_{N_2}f_1\|_{L^2_x}  \\
& \les \bigg(\sum_{M \ll N} M^{1-s} \bigg) \|f_1\|_{H^{s}_x} \|f_1\|_{L^2_x} \\
&\les N^{\max(1-s,0)+}  \|f_1\|_{H^{s}_x} \|f_1\|_{L^2_x}
.
\end{align*}
If instead $N_1\gg N_2$, then $N_1\sim M$ and we instead get
\begin{align*}
\sum_{M \ll N} \sum_{N_1 \gg N_2} M \| \P_{N_1}f_1\cdot \P_{N_2}f_1\|_{L^{1}_x}
&\les N^{\max(1-s,0)+}  \|f_1\|_{H^{s}_x} \|f_1\|_{L^2_x}.
\end{align*}
Combining both cases, we then obtain \eqref{EL31}.

As for the difference estimate in \eqref{L2posdiff}, it follows by similar arguments as we used for \eqref{EL31} and its improvement, with the addition
of the mean value theorem and \eqref{Fdef}.
In particular, we have
\begin{align}
    \| \wt{\P}_{N} (e^{i\be F_1}-e^{i\be F_2})\|_{L^{3}_{T,x}} &\les |\be| \jb{\be} N^{-1} \big\{ \big(  \|f_1\|_{L^{6}_{T,x}}+ \|f_2\|_{L^{6}_{T,x}} \big) \|f_1-f_2\|_{L^{6}_{T,x}} \notag \\
 & \quad  +  \|f_2\|_{L^{6}_{T,x}}^{2} (\|f_1\|_{L^{\infty}_{T}L^2_x} + \|f_2\|_{L^{\infty}_{T}L^2_x})\|f_1-f_2\|_{L^{\infty}_{T}L^2_x}
 \big\} , 
  \label{Ebd2} 
 \\
 \| e^{i\be F_1} -e^{i\be F_2}\|_{L^{\infty}_{T,x}} 
&\les|\be| (\|f_1\|_{L^{\infty}_{T}L^{2}_x} +\|f_2\|_{L^{\infty}_{T}L^2_x}) \|f_1- f_2\|_{L^{\infty}_{T}L^2_x} \notag \\
& \les |\be| ( \|f_1\|_{X^{0,b}_{T}}+\|f_2\|_{X^{0,b}_{T}})\|f_1-f_2\|_{X^{0,b}_{T}}. \label{Expdiffs}
\end{align}
Then, to upgrade \eqref{Ebd2}, we use that 
\begin{align*}
\P_{N}(e^{i\be F[f_1]}-e^{i\be F[f_2]}) & = \be \dx^{-1} \P_{N}( (|f_1|^2-|f_2|^2)e^{i\be F[f_1]}) + \be\dx^{-1} \P_{N}( |f_2|^2 (e^{i\be F[f_1]}-e^{i\be F[f_2]} )  )
\end{align*}
and repeat the arguments for the improvement of \eqref{EL31}, placing the difference $e^{i\be F_1}-e^{i\be F_2}$ into $L^{\infty}_{T,x}$ and using \eqref{Expdiffs} after obtaining any high frequency gain from this term. We omit the details.
\end{proof}

\begin{proof}[Proof of Lemma~\ref{LEM:ysharp}]

The estimate in \eqref{YnegLTHs1} follows from \eqref{Yneglo}, \eqref{Yneghi}, and \eqref{L2F1}, thus we first show \eqref{YposLTHs1}. From \eqref{Ysharp} with $F=F[f]$, we have
\begin{align}
    \| \Y^\pos [f] \|_{L^\infty_T H^{\s+\al}_x}
    &
    \les \sum_{N_1 \ges N \lor N_2} N^{\s+\al} \big\| \P_N \big[ \P_{N_1} e^{i\be F} \cdot \P_{N_2}(e^{-i\be F} f ) \big] \big\|_{L^\infty_T L^2_x} 
    \notag
    \\
    &
    \les |\be| \sum_{N_1 \ges N \lor N_2} N^{\s+\al} N_1^{-1} \| \P_{N_1} (e^{i\be F} |f|^2) \|_{L^\infty_T L^2_x} \| \P_{N_2} (e^{-i\be F } f ) \|_{L^\infty_{T,x}}.
    \label{NS1a}
\end{align}
For the second factor in \eqref{NS1a}, by H\"older's inequality and Sobolev embedding, we have
\begin{align*}
    \| \P_{N_2} (e^{-i\be F } f ) \|_{L^\infty_{T,x}}
    \les 
    N_2^{\frac1p+} \| \P_{N_2} (e^{-i\be F } f ) \|_{L^\infty_{T} L^p_x}
    &
    \les 
    N_2^{\frac1p +} \| f \|_{L^\infty_T H^{\frac12-\frac1p}_x}
    \les 
    N_2^{(\frac12-\s)\lor 0 +} \|f\|_{L^\infty_T H^\s_x },
\end{align*}
for $p=\frac{2}{(1-2\s)\lor 0}$. 
To estimate the first factor in \eqref{NS1a}, we write 
\begin{align*}
    \P_{N_1}(e^{i\be F} |f|^2) 
    &
    = \P_{N_1} \big(  e^{i\be F} \cdot \P_{\ges N_1} (|f|^2) \big)  + \P_{N_1} \big( \wt\P_{N_1} e^{i\be F} \cdot \P_{\ll N_1} (|f|^2) \big)
\end{align*}
and estimate each term: for $\s>0$, $p=\frac{2}{(1-2\s)\lor 0}$, and $\frac12 = \frac1p + \frac1q$, 
\begin{align*}
 \big\|  
  \P_{N_1} \big(  e^{i\be F} \cdot \P_{\ges N_1} (|f|^2) \big) 
 \big\|_{L^\infty_T L^2_x } 
 & 
 \les \| f \|_{L^\infty_T L^{p}_x} \|\P_{\ges N_1} f \|_{L^\infty_T L^{q}_x}
 \\
 &
 \les \| f \|_{L^\infty_T H^\s_x} \| \P_{\ges N_1} f \|_{L^\infty_T H^{\frac12- \frac1q}_x}
 \\
 &
 \les N_1^{(\frac12-\s)\lor 0 - \s} \| f \|^2_{L^\infty_T H^\s_x}, 
 \\
 \big\| 
 \P_{N_1} \big( \wt\P_{N_1} e^{i\be F} \cdot \P_{\ll N_1} (|f|^2) \big)
 \big\|_{L^\infty_T L^2_x } 
 & \les N_1^\frac12 \| \wt\P_{N_1} e^{i\be F} \|_{L^\infty_T L^1_x} \|\P_{\ll N_1} (|f|^2) \|_{L^\infty_{T,x}}
 \\
 &
 \les |\be| N_1^{\frac12 -1 + \frac2p +} \| \wt\P_{N_1} (e^{i\be F}|f|^2 )\|_{L^\infty_T L^1_x} \| |f|^2 \|_{L^\infty_T L^{\frac{p}{2}-}_x}
 \\
 &
 \les |\be| N_1^{-\frac12 + (1-2\s) \lor 0 + } \| f\|_{L^\infty_T L^2_x}^2 \| f\|^2_{L^\infty_T H^\s_x}, 
\end{align*}
which combined give
\begin{align*}
    \|  
  \P_{N_1}(e^{i\be F} |f|^2) 
 \|_{L^\infty_T L^2_x }  \les \jb{\be} N_1^{ - \s + (\frac12-\s)\land 0 +} (1+\| f\|_{L^\infty_T L^2_x})^2 \| f\|^2_{L^\infty_T H^\s_x}. 
\end{align*}
Then, we conclude that
\begin{align*}
    \| \Y^\pos [f] \|_{L^\infty_T H^s_x}
    &
    \les |\be| \jb{\be} (1+\| f\|_{L^\infty_T L^2_x})^2 \| f\|^3_{L^\infty_T H^\s_x}
    \sum_{N_1 \ges N \lor N_2} 
    N_1^{\al - \frac12 - \s } 
\end{align*}
with a negative power of $N_1$
given that $\al < \frac12 + \s$, which suffices to sum in the dyadics, completing the estimate.

Next, we obtain the time-averaged estimates for $\Y^{\pos}[f]$, as to illustrate the method in a simpler context. From \eqref{Ysharp}, we write 
\begin{align*}
\Y^{\pos}[f] = \P_{-,\text{hi}}[ e^{i\be F[f]} \Pbhip ( e^{-i\be F[f]}f)   ] + \P_{-,\text{hi}}[ e^{i\be F[f]} \Pblo ( e^{-i\be F[f]}f)   ] =: \Y_{1}^{\pos} [f]+\Y^{\pos}_{2}[f].
\end{align*}
By H\"{o}lder's inequality, \eqref{highbilin}, \eqref{L6bilin2}, and \eqref{L2pos2}, and writing $F=F[f]$, we have
\begin{align}
& \|\P_{N}\Y_{1}^{\pos} [f] \|_{L^{\frac 32}_{T,x}}\notag \\
&\les \sum_{N_1 \ges N\vee N_2}\| \P_{N_1}e^{i\be F} \cdot \P_{N_2}( e^{-i\be F}f)\|_{L^{\frac 32}_{T,x}}\notag \\
& \les |\be|  \sum_{N_1 \ges N\vee N_2} \Big\{  \| \dx^{-1}\P_{N_1}( \P_{\ll N_1}(|f|^2) \wt{\P}_{N_1}e^{i\be F})\|_{L^{2}_{T,x}} \notag  \\
 & \hphantom{XXXXXXXX} +  \|  \dx^{-1}\P_{N_1}( \P_{\ges N_1}(|f|^2) e^{i\be F})\|_{L^{2}_{T,x}} \Big\}  \|\P_{N_2}( e^{-i\be F}f)\|_{L^{6}_{T,x}}  \notag \\
  & \les |\be|  \|f\|_{L^{6}_{T,x}}\sum_{N_1 \ges N} N_{1}^{-1+}\Big\{ \|\P_{\ll N_1}(|f|^2)\|_{L^{6}_{T,x}}  \|\wt{\P}_{N_1}e^{i\be F}\|_{L^{3}_{T,x}} +\|\P_{\ges N_1}(|f|^2)\|_{L^2_{T,x}}  \Big\} \notag  \\
& \les |\be| \|f\|_{X^{0,b}_{T}}^2 \sum_{N_1 \ges N} N_{1}^{-1+}\Big\{ N_{1}^{\frac 12+} \|f\|_{X^{0,b}_{T}}  \|\wt{\P}_{N_1}e^{i\be F}\|_{L^{3}_{T,x}} +N_{1}^{-\frac 12}\|\P_{\ges N_1}f\|_{X^{0,b}_{T}} \Big\}\notag \\
& \les |\be| \jb{\be}^2\|f\|_{X^{0,b}_{T}}^{2}  (1+\|f\|_{X^{0,b}_{T}})^{4} \|f\|_{X^{s,b}_{T}}
\sum_{N_1 \ges N} N_{1}^{-\frac 32+}
\{ N_1^{-\min(s,1)+}+ N_1^{-s}\}  \notag \\
& \les |\be| \jb{\be}^2 \|f\|_{X^{0,b}_{T}}^{2} (1+\|f\|_{X^{0,b}_{T}})^{4}  \|f\|_{X^{s,b}_{T}} \sum_{N_1 \ges N} N_{1}^{-\frac 32-\min(s,1)+} \notag \\
& \les |\be| \jb{\be}^2 N^{-\frac 32-\min(s,1)+}\|f\|_{X^{0,b}_{T}}^{2} (1+\|f\|_{X^{0,b}_{T}})^{4}   \|f\|_{X^{s,b}_{T}}. \label{Yposbd1}
\end{align}
Applying $\P_{N}$ to $\Y_2^{\pos}[f]$ we see that the frequency projections imply 
\begin{align*}
\P_{N}\Y_{2}^{\pos}[f] =  \P_{N}\P_{-,\text{hi}}[ \wt{\P}_{N}e^{i\be F[f]} \cdot  \Pblo ( e^{-i\be F[f]}f)   ] 
\end{align*}
so that we can apply the same argument as for $\Y_{1}^{\pos}[f]$ in \eqref{Yposbd1}, since $N\sim N_1$ and $N_2 \les 1$. Thus, \eqref{YPOS1} follows.

In the following, we use the shorthand notation $F_j : = F[f_{j}]$, $j=1,2$, and $G_{12}:= e^{i\be F_1}-e^{i\be F_2}$. 
Now we consider the difference estimate for the operator $\Y^{\pos}_{1}$, which we further write~as
\begin{align*}
\Y_{1}^{\pos}[f_1]-\Y_{1}^{\pos}[f_2]& =  \P_{-,\text{hi}}\big[ G_{12}\Pbhip[ e^{-i\be F_1} f_1]\big] +\P_{-,\text{hi}}\big[e^{i\be F_2} \Pbhip[ \cj{G_{12}} f_1]\big]  \\
& \quad  + \P_{-,\text{hi}}\big[ e^{i\be F_2}\Pbhip[ e^{-i\be F_2} (f_1-f_2)]\big].
\end{align*}
It is straightforward from \eqref{Yposbd1} and \eqref{Expdiffs} to see that we have
\begin{align*}
\|\P_{N}\P_{-,\text{hi}} &\big[e^{i\be F_2} \Pbhip[ \cj{G_{12}} f_1]\big] \|_{L^{\frac 32}_{T,x}} \\
& \les  N^{-\frac 32-\min(s,1)+} \|f_1\|_{X^{0,b}_{T}} \|f_2\|_{X^{0,b}_{T}} (1+\|f_2\|_{X^{0,b}_{T}})^4 \|G_{12}\|_{L^{\infty}_{T,x}}  \|f_2\|_{X^{s,b}_{T}} \\
&\les |\be |N^{-\frac 32-\min(s,1)+}\|f_1\|_{X^{0,b}_{T}} \|f_2\|_{X^{0,b}_{T}}  (1+\|f_1\|_{X^{0,b}_{T}}+\|f_2\|_{X^{0,b}_{T}})^5 \|f_2\|_{X^{s,b}_{T}} \|f_1-f_2\|_{X^{0,b}_{T}}
\end{align*}
and similar for $ \P_{-,\text{hi}}\big[ e^{i\be F_2}\Pbhip[ e^{-i\be F_2} (f_1-f_2)]\big].$ An estimate for $  \P_{-,\text{hi}}\big[ G_{12}\Pbhip[ e^{-i\be F_1} f_1]\big]$ then follows from repeating the argument in \eqref{Yposbd1} but this time using \eqref{L2posdiff}. The same strategy applies to estimate $\Y_{2}^{\pos}[f_1]-\Y_{2}^{\pos}[f_2]$, and \eqref{YPOS2} follows.

From \eqref{Yw}, we have $\P_{N}\Y^{\negg}[f,g]= -\P_{N}\Pbhip[ e^{i\be F[f]} \P_{-} g]$, and we see that the exponential term carries the highest frequency and the arguments for $\Y_{1}^{\pos}$ apply. As for \eqref{YNEG21} and \eqref{YNEG22}, we apply the same arguments using that the frequency signs in the expression \eqref{Yneghi} have again imply that the exponential factor carries the largest frequency.
\end{proof}

\section{Nonlinear estimates}\label{SEC:ests}
Given $\dl>0$ sufficiently small, we fix $b':=\frac 12-2\dl$ and $b:=\frac 12+\dl$.  Note that the following estimates hold for all $\dl>0$ sufficiently small.

\subsection{Bounds for $v$, $w$, and $z$}

The main point here is that the main trilinear forms for the $v$ and $w$ equations, in \eqref{veq} and \eqref{weq}, have a good sign structure which means that there is always at least two input frequencies which control the derivative. In particular, this allows us to additionally prove a nonlinear smoothing estimate for these terms.

\begin{lemma}[$v$ and $w$ bounds]\label{LEM:vwXsb}
For $0<\dl\ll 1$, let $s\geq s_0> 10\dl$, $0\leq \g<\min(\frac 12,s_0)-10\dl$, and $0<T\leq 1$. Then,
\begin{align}
\| \P_{\pm, \textup{hi}}[f_1 \P_{\mp}\dx( \cj{f_2} f_3)]\|_{X^{s+\g,-\frac 12+\dl}_{T}} \les T^{\dl} \max_{\s\in S_3}  \|f_{\s(1)}\|_{X^{s,b}_{T}}  \|f_{\s(2)}\|_{X^{s_0,b}_{T}}\|f_{\s(3)}\|_{X^{s_0,b}_{T}}. \label{vtrilin}
\end{align}
\end{lemma}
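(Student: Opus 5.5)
The plan is to argue by duality and Littlewood--Paley decomposition. By the time localization \eqref{timeloc} (gaining $T^{\dl}$ by lowering the modulation exponent of the dual from $\frac12-\dl$ to $\frac12-2\dl=b'$), it suffices to bound
\[
\Big|\int_0^T\!\!\int_\R \P_{N_0}\cj{g}\,\P_{N_1}f_1\,\P_{N_{23}}\P_{\mp}\dx(\cj{\P_{N_2}f_2}\,\P_{N_3}f_3)\,dx\,dt\Big|
\]
for $g\in X^{-s-\g,b'}$, summed over dyadic $N_0\ges 1$ and $N_1,N_2,N_3,N_{23}$. We treat the $+$ case; the $-$ case is symmetric. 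The sign structure is the key input. The output has frequency $\xi_0>0$ and $\xi_{23}<0$, so $\xi_1=\xi_0-\xi_{23}$ satisfies $\xi_1\geq|\xi_{23}|$ and $\xi_1\geq\xi_0$. Hence $N_1\ges N_0\vee N_{23}$, and the derivative $N_{23}$ is always dominated by $N_1$ and by $\max(N_2,N_3)$. Thus at least two input frequencies control the derivative, which is the reason for the smoothing gain $\g$.

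Next, we split into cases according to the relative sizes of $N_{23}$ and $N_2,N_3$.

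\emph{Case A}: $N_{23}\ll N_2\sim N_3$, the high-high-to-low resonant interaction in $\cj{f_2}f_3$. We pair $\P_{N_0}\cj g\cdot\P_{N_1}f_1$ and $\P_{N_{23}}(\cj{f_2}f_3)$ in $L^2_{t,x}$.
\begin{itemize}
\item For the first pair, if $N_1\gg N_0$ we apply \eqref{bilin3}, gaining $N_1^{-\frac12+10\dl}$. If $N_1\sim N_0$, then $\P_{N_0}\cj g\,\P_{N_1}f_1$ has output frequency $\xi_{23}\sim N_{23}$, and \eqref{bilin1} gains $N_{23}^{-\frac12+10\dl}$.
\item For the second pair, \eqref{bilin1} gives $N_{23}^{-\frac12+10\dl}$.
\end{itemize}
Altogether the derivative $N_{23}$ is (almost) fully compensated, up to $N_{23}^{20\dl}$ or a factor $N_1^{-\frac12}N_{23}^{\frac12}$. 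The weights are $N_0^{s+\g}N_1^{-s}N_2^{-s_0}N_3^{-s_0}$, with $N_0\les N_1$. These leave the factor $N_0^{\g}N_2^{-2s_0}N_{23}^{20\dl}$, which is summable since $\g<s_0-10\dl$ and $N_{23}\les N_2$. The losses from using $b'$ on the dual side are exactly the $10\dl$'s in Lemma~\ref{LEM:bilin}.

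\emph{Case B}: $N_{23}\sim\max(N_2,N_3)$. Say $N_3\geq N_2$. If $N_3\gg N_2$, we pair $\P_{N_3}f_3\cdot\cj{\P_{N_2}f_2}$ with \eqref{bilin3}, gaining $N_3^{-\frac12+}$, and pair $g$ with $f_1$ as in Case A. Otherwise we use the $L^4$ Strichartz estimate \eqref{L4} with $b'$ (valid since $3b'\cdot\frac14<b'$) on $g,f_1$ and the bilinear estimate on the pair $(f_2,f_3)$. The derivative $N_{23}\les N_1\wedge N_3$ is then split as $N_{23}^{\frac12}N_{23}^{\frac12}$, each half absorbed by one bilinear gain. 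Any remaining power is covered by $N_3^{-s_0}$ and $N_1^{-s}$, using $\g<\frac12-10\dl$ to absorb $N_0^{\g}\les N_1^{\g}$ against the leftover $N_1^{-\frac12+10\dl}$ when $N_1\gg N_0$.

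The permutation maximum over $\s\in S_3$ accounts for which factor carries the $s$-weight. When the highest input frequency is $N_2$ or $N_3$ rather than $N_1$, we transfer the $s$-weight to that factor.

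The main obstacle is the subcase $N_1\sim N_0\gg N_{23}$ with $N_2\sim N_3$ not comparable to $N_{23}$, where only one bilinear gain seems available per pair. Here I would check carefully that the paired frequencies are genuinely separated, and otherwise fall back on the trilinear Strichartz estimate \eqref{trilin}. That estimate gives $(N_{23}/N_2)^{\frac12-10\dl}$-type gains and absorbs the remaining power via $s_0>10\dl$. The dyadic summations, especially the sum over $N_0\sim N_1$, are handled by Cauchy--Schwarz using the $\ell^2$-orthogonality of the Littlewood--Paley pieces.
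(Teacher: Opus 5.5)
Your setup matches the paper: duality against $X^{0,b'}$ after gaining $T^{\dl}$, the sign observation $N_1\ges N_0\vee N_{23}$, and the double bilinear Strichartz pairing of $(g,f_1)$ with $(f_2,f_3)$. This closes whenever $N_0\les N_2\vee N_3$, and in general when $\g=0$.

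The gap is the high$\times$low$\times$low$\to$high regime $N_0\sim N_1\gg N_2\vee N_3$. It lies inside both your Case A and Case B, and includes $N_{23}\les 1$.

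\begin{itemize}
\item In this regime the pair $\cj{g}f_1$ has output frequency $\sim N_{23}$, and the pair $\cj{f_2}f_3$ only sees $N_{23}$ or $N_2\vee N_3$. So all of your gains are in the low frequencies. The weight $N_0^{s+\g}N_1^{-s}\sim N_0^{\g}$, however, sits at the top frequency.
\item Your claim that $N_0^{\g}N_2^{-2s_0}N_{23}^{20\dl}$ is summable tacitly uses $N_0\les N_2$, which fails here. For $\g>0$ the bound cannot be summed over $N_0\sim N_1$; that would require $f_1\in X^{s+\g,b}$.
\item The proposed fallback does not help. \eqref{trilin} needs a conjugated input whose frequency dominates both the output and the unconjugated input. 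Here the two top frequencies are $N_0$ (the conjugated dual) and $N_1$ (unconjugated), with $N_0\sim N_1$, so no assignment of roles meets its hypothesis.
\item In any case, a gain like $(N_{23}/N_2)^{\frac12}$ involves only low frequencies and could not absorb $N_0^{\g}$.
\end{itemize}

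What is missing is a source of decay in $N_0$ itself. The paper obtains it in two ways.

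\begin{itemize}
\item For $N_{23}\gg 1$ it uses the resonance identity \eqref{mods}. Since $|\xi-\xi_1|\sim N_{23}$ and $|\xi-\xi_3|\sim N_0$, one has $\s_{\max}\ges N_{23}N_0=:K$. Splitting the factors into $\Q_{\ges K}$ and $\Q_{\ll K}$ parts then gains $K^{-\frac12+}$.
\item For $N_{23}\les 1$ the phase may vanish. The paper instead places $g$ and $f_1$ in $L^4_xL^2_t$ via the local smoothing bound \eqref{L4locsmooth}, gaining $(N_0N_1)^{-\frac14}$. The low-frequency factor goes in $L^2_xL^\infty_t$ via \eqref{maximalproj} and the maximal estimate. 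This is one place where $\g<\frac12$ is needed.
\end{itemize}

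An alternative repair avoids modulations. The operator $\P_{N_{23}}\P_{\mp}\dx$ is convolution with a kernel of $L^1$-norm $\les N_{23}$ (resp.\ $\les 1$ when $N_{23}\les 1$), and $X^{s,b}$-norms are translation invariant. So after freezing the translation you can pair the frequency-separated products $\cj{g}\,\cj{f_2}$ and $f_1 f_3$, and apply \eqref{bilin3} to each. This gains $N_0^{-1+10\dl}$, which together with $N_2^{-s_0}N_3^{-s_0}$ and $N_{23}\les N_2\vee N_3\ll N_0$ leaves a negative power of $N_0$ in the stated range of $\g$.

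Without one of these ingredients the smoothing estimate does not close.
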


\begin{remark}\rm
When the frequency of the derivative is large, the argument in \eqref{doublebilin} suffices to close the estimate \eqref{vtrilin} when $\g=0$. However, in order to obtain the nonlinear smoothing (\eqref{vtrilin} with some $\g>0$), in the case high$\times$low$\times$low $\to$ high,  we need to see additional smoothing which requires the use of the phase function in \eqref{mods}.
\end{remark}

\begin{proof}
By Lemma~\ref{LEM:linXsb}, we reduce to estimating
\begin{align*}
\I : =\int_{0}^{T}\int_{\R} \jb{\dx}^{s+\g} \cj{g} \cdot \P_{\pm, \text{hi}}[(\P_{\pm}f_1) \P_{\mp}\dx( \cj{f_2} f_3)] dx dt
\end{align*}
for any $g\in X^{0,b'}_{T}$ with $\|g\|_{X^{0,b'}_{T}}\leq 1$. We let $F_{j}$ be any extensions of $f_j$ on $[0,T]$, $j=1,2,3$. and define $g_{T} := \ind_{[0,T)}g$. 
Then, by Cauchy-Schwarz, we have 
\begin{align*}
\I &=\sum_{N_{23}} \int_{\R}\int_{\R} \jb{\dx}^{s+\g}  \P_{\pm, \text{hi}} [\cj{g_{T}}]\cdot F_1  \P_{\mp}\P_{N_{23}} \dx ( \cj{F_2} F_{3})dx dt  \\
& = \sum_{ \substack{ N_{23},N_0,N_1,N_2,N_3 \\ N_{23} \les N_2 \vee N_3}}  \int_{\R}\int_{\R} \jb{\dx}^{s+\g}  \P_{\pm, \text{hi}} [\cj{\P_{N_0}g_{T}}]\cdot  \P_{N_1} F_{1} \cdot  \P_{\mp}\P_{N_{23}} \dx ( \cj{\P_{N_2}F_2} \cdot \P_{N_3}F_3)dx dt \\
& = :\sum_{ \substack{ N_{23},N_0,N_1,N_2,N_3 \\ N_{23} \les N_2 \vee N_3}} \I_{\cj{N}}.
\end{align*}
Note that we always have $N_{(1)}\sim N_{(2)}$.

\medskip
\noi
\underline{$\bullet$ \textbf{Case 1:} $N_{23}\les 1$}

\medskip
\noi
\underline{$\bullet$ \textbf{Case 1.1:} $N_0 \leq N_{(3)}$}

\smallskip
\noi
 By H\"{o}lder and Bernstein's inequality and \eqref{L4} we get
\begin{align}
 \sum_{ \substack{ N_{23},N_0,N_1,N_2,N_3 \\ N_{23} \les N_2 \vee N_3}} |\I_{\cj{N}}| & \les  \sum_{ \substack{ N_{23},N_0,N_1,N_2,N_3 \\ N_{23} \les N_2 \vee N_3}}  N_0^{s+\g} \| \P_{N_0}g_{T}\|_{L^4_{t,x}} \| \P_{N_1}F_1\|_{L^{4}_{t,x}} 
\|\P_{\mp} \P_{N_{23}} \dx ( \cj{\P_{N_2}F_2} \cdot \P_{N_3}F_3)\|_{L^{2}_{t,x}} \notag \\
& \les  \sum_{ \substack{ N_{23},N_0,N_1,N_2,N_3 \\ N_{23} \les N_2 \vee N_3}}  N_0^{s+\g} \| \P_{N_0}g_{T}\|_{L^4_{t,x}} \prod_{j=1}^{3} \| \P_{N_j}F_j\|_{L^{4}_{t,x}} \notag  \\
& \les \|g_{T}\|_{X^{0,b'}}  \sum_{ \substack{ N_{23},N_0,N_1,N_2,N_3 \\ N_{23} \les N_2 \vee N_3}}  N_0^{s+\g}  N_{(1)}^{-s}N_{(2)}^{-s_0} \max_{\s \in S_3} \| F_{\s(1)}\|_{X^{s,b}} \prod_{j=2}^3 \|F_{\s(j)}\|_{X^{s_0,b}}  \notag  \\
& \les \|g_{T}\|_{X^{0,b'}} \max_{\s \in S_3} \| F_{\s(1)}\|_{X^{s,b}} \prod_{j=2}^3 \|F_{\s(j)}\|_{X^{s_0,b}}, \label{L4tri}
\end{align}
where in the last inequality we used that $\g<s_0$.

\medskip
\noi
\underline{$\bullet$ \textbf{Case 1.2:} $N_0 \geq  N_{(2)}$}

\smallskip
\noi
First,  we note that by interpolating between \eqref{localsmooth0} and the trivial estimate $\|\P_{N}f\|_{L^{2}_{t,x}}\les \|f\|_{X^{0,0}}$, we obtain
\begin{align}
\| \P_{N}f\|_{L^{4}_{x}L^{2}_{t}}\les N^{-\frac 14} \|\P_{N}f\|_{X^{0,\frac 14+}}. \label{L4locsmooth}
\end{align}
Then we consider two subcases: (i) $N_2 \vee N_3 \sim N_{(1)}$ or (ii) $N_1 \sim N_{(1)}$.
Consider first case (i). Assuming that $N_2\geq N_3$, we use H\"{o}lder's inequality, \eqref{L4locsmooth}, \eqref{localsmooth0}, and \eqref{maximal} to find
\begin{align*}
|\I_{\cj{N}}|& \les \| \jb{\dx}^{s+\g} \P_{\pm, \text{hi}}\P_{N_0}g_{T}\|_{L^{4}_{x}L^{2}_{t}} \|\P_{N_1}F_1\|_{L^{4}_{x}L^{\infty}_{t}} \| \P_{\mp}\P_{N_{23}}\dx( \cj{\P_{N_2}F_2} \cdot \P_{N_3}F_3)\|_{L^{2}_{t,x}} \\
& \les N_0^{s+\g-\frac 14} N_{1}^{\frac 14-s_0+}  \|g_{T}\|_{X^{0,b'}} \|F_1\|_{X^{s_0,b}} \|\P_{N_2}F_2\|_{L^{\infty}_{x}L^{2}_{t}} \|\P_{N_3}F_{3}\|_{L^{2}_{x}L^{\infty}_{t}} \\
&\les N_0^{s+\g-\frac 14} N_{1}^{\frac 14-s_0+}  N_{2}^{-\frac 12-s}N_{3}^{\frac 12-s_0+}  \|g_{T}\|_{X^{0,b'}}\|F_2\|_{X^{s,b}}\|F_1\|_{X^{s_0,b}}\|F_{3}\|_{X^{s_0,b}}  \\
&\les N_{(1)}^{\g-2s_0+}  \|g_{T}\|_{X^{0,b'}}\|F_2\|_{X^{s,b}}\|F_1\|_{X^{s_0,b}}\|F_{3}\|_{X^{s_0,b}} 
\end{align*}
which is a negative power of $N_{(1)}$ provided that $\g<\min(\frac34, 2s_0)$. The case when $N_3>N_2$ follows by swapping the roles of $N_2$ and $N_3$.

Now we consider case (ii) where $N_1\sim N_0\gg N_2\vee N_3$. We again use H\"{o}lder's inequality and \eqref{L4locsmooth} to find
\begin{align}
|\I_{\cj{N}}|& \les \| \jb{\dx}^{s+\g}\P_{\pm, \text{hi}}\P_{N_0}g_{T}\|_{L^{4}_{x}L^{2}_{t}} \|\P_{N_1}F_1\|_{L^{4}_{x}L^{2}_{t}} \| \P_{\mp}\P_{N_{23}}\dx( \cj{\P_{N_2}F_2} \cdot \P_{N_3}F_3)\|_{L^{2}_{x}L^{\infty}_{t}}  \notag \\
&\les N_{(1)}^{\g-\frac 12}\|g_{T}\|_{X^{0,b'}}\|F_1\|_{X^{s,b}} \| \P_{\mp}\P_{N_{23}}\dx( \cj{\P_{N_2}F_2} \cdot \P_{N_3}F_3)\|_{L^{2}_{x}L^{\infty}_{t}}. \label{vcaseii}
\end{align}
In order to control the last quantity, we note that 
\begin{align}
\| \P_{\mp}\dx \PbLO f\|_{L^{2}_{x}L^{\infty}_{t}} \les \|f\|_{L^{2}_{x}L^{\infty}_{t}}. 
\label{maximalproj}
\end{align}
Indeed, the operator $\P_{\mp}\dx \PbLO$ has integral kernel $k(x)  = \pm \int_{\mp \infty}^{0} i\xi \eta(\xi)e^{ix\xi} d\xi$
which by two rounds of integration by parts (using that $\xi\eta(\xi)\vert_{\xi=0}=0$)  gives the bound $|k(x)|\les \jb{x}^{-2}$, so $k\in L^1_x (\R)$ and \eqref{maximalproj} then follows from Young's inequality.
Then, \eqref{maximalproj}, Cauchy-Schwarz, and \eqref{maximal} imply
\begin{align*}
\| \P_{\mp}\P_{N_{23}}\dx( \cj{\P_{N_2}F_2} \cdot \P_{N_3}F_3)\|_{L^{2}_{x}L^{\infty}_{t}} & \les \| \P_{N_2}F_2\|_{L^{4}_{x}L^{\infty}_{t}} \|\P_{N_3}F_3\|_{L^{4}_{x}L^{\infty}_{t}} \\
&\les (N_2 N_3)^{\frac 14-s_0+} \|F_2\|_{X^{s_0,b}}\|F_3\|_{X^{s_0,b}}.
\end{align*}
Inserting this bound into \eqref{vcaseii} we find the final dyadic factor $N_{(1)}^{\g-2s_0+}$ which is a negative power for $\g<2s_0$.

\medskip
\noi
\underline{$\bullet$ \textbf{Case 2:} $N_{23}\gg 1$}

\smallskip
\noi
 In view of the signs of the frequencies, we have $N_1 \ges N_{23}\vee N_0$. 
 Moreover, $N_2 \vee N_3 \ges N_{23}$. 
 We first assume that $N_1 \les  N_{2}\vee N_3$.
 Without loss of generality, we suppose that $N_3 = N_2 \lor N_3$.
By Cauchy-Schwarz and the bilinear Strichartz estimate \eqref{bilin1} with $0<\dl<s_0$, we have 
\begin{align}
\begin{split}
|\I_{\cj{N}}| & \les N_{23} \| \P_{N_{23}}[  \jb{\dx}^{s+\g}\cj{ \P_{\mp,\text{hi}}\P_{N_0}g_{T}} \P_{N_1}F_1]\|_{L^2_{t,x}} \|\P_{N_{23}}[  \cj{\P_{N_2}F_2} \P_{N_3}F_{3}]\|_{L^{2}_{t,x}} \\
& \les N_{23}N_{23}^{-1+\dl} N_0^{s+\g} \|\P_{N_0}g_{T}\|_{X^{0,b'}} \|\P_{N_1}F_1\|_{X^{0,b}} \|\P_{N_2}F_2\|_{X^{0,b}} \| \P_{N_3}F_3\|_{X^{0,b}} \\
& \les N_{23}^{\dl} N_{0}^{s+\g} N_{1}^{-s}N_{3}^{-s_0} \|\P_{N_0}g_{T}\|_{X^{0,b'}} \|\P_{N_1}F_1\|_{X^{s,b}} \|\P_{N_2}F_2\|_{X^{0,b}} \| \P_{N_3}F_3\|_{X^{s_0,b}} \\
& \les N_{3}^{\g-s_0+\dl}  \|g_{T}\|_{X^{0,b'}} \|F_1\|_{X^{s,b}} \|F_2\|_{X^{s_0,b}} \| F_3\|_{X^{s_0,b}} .
\end{split} \label{doublebilin}
\end{align}
We then have a negative power of the maximum frequency provided that $\g<s_0-\dl$. We can then perform the dyadic summations. 

Now we assume that $N_1 \gg N_{2}\vee N_3$ and hence $N_1 \sim N_0$. 
We need to rely on the phase function to obtain additional smoothing.
Given $\tau, \tau_j, \xi,\xi_j \in \R$, with $\s := \tau - \xi^2$ and $\s_j := \tau_j - \xi_j^2$, $j=1,2,3$, satisfying $\tau = \tau_1 - \tau_2 + \tau_3$ and $\xi = \xi_1 - \xi_2+\xi_3$, we have the following resonance identity
\begin{align}
\s_1-\s_2 + \s_3 -\s= \xi_{1}^{2}-\xi_{2}^{2}+\xi_{3}^{2} -\xi^{2} = -2(\xi-\xi_1)(\xi-\xi_3) =: \Phi(\cj{\xi}). \label{mods}
\end{align}

\noi 
Note that under $\xi_1-\xi_2+\xi_3=\xi$, we have $\xi-\xi_1=\xi_3-\xi_2$.
Thus, for $|\xi_3-\xi_2| \sim N_{23} \gg 1$, we have
\begin{align}
\s_{\max}:=\max( |\s_1|,|\s_2|,|\s_3|,|\s|) \ges N_{23}|\xi-\xi_3|.  \label{nonres}
\end{align}
As $N_1\sim N_0\gg N_3$, \eqref{nonres} becomes $\s_{\max}\ges N_{23}N_0=:K$. We then write 
\begin{align*}
\I_{\cj{N}} =&  \int_{\R}\int_{\R} \jb{\dx}^{s+\g}  \P_{\pm, \text{hi}} [ \cj{ \Q_{\ges K}\P_{N_0}g_{T}}]\cdot  \P_{N_1} F_{1} \cdot  \P_{\mp}\P_{N_{23}} \dx ( \cj{\P_{N_2}F_2} \cdot \P_{N_3}F_3)dx dt  \\
& +\int_{\R}\int_{\R} \jb{\dx}^{s+\g}  \P_{\pm, \text{hi}} [ \cj{ \Q_{\ll K}\P_{N_0}g_{T}}]\cdot  \Q_{\ges K}\P_{N_1} F_{1} \cdot  \P_{\mp}\P_{N_{23}} \dx ( \cj{\P_{N_2}F_2} \cdot \P_{N_3}F_3)dx dt \\
&+ \int_{\R}\int_{\R} \jb{\dx}^{s+\g}  \P_{\pm, \text{hi}} [ \cj{ \Q_{\ll K}\P_{N_0}g_{T}}]\cdot  \Q_{\ll K}\P_{N_1} F_{1} \cdot  \P_{\mp}\P_{N_{23}} \dx ( \cj{ \Q_{\ges K}\P_{N_2}F_2} \cdot \P_{N_3}F_3)dx dt\\
& + \int_{\R}\int_{\R} \jb{\dx}^{s+\g}  \P_{\pm, \text{hi}} [ \cj{ \Q_{\ll K}\P_{N_0}g_{T}}]\cdot  \Q_{\ll K}\P_{N_1} F_{1} \cdot  \P_{\mp}\P_{N_{23}} \dx ( \cj{\Q_{\ll K}\P_{N_2}F_2} \cdot \Q_{\ges K}\P_{N_3}F_3)dx dt \\
& =: \I_{\cj{N}}^{(0)}+\I_{\cj{N}} ^{(1)}+\I_{\cj{N}} ^{(2)}+\I_{\cj{N}} ^{(3)}, 
\end{align*}
recalling the definition of $\Q_{\ges K}, \Q_{\ll K}$ in \eqref{Qpro}. 
To estimate $\I_{\cj{N}}^{(0)}$ we will use an auxiliary estimate, which we now derive. 
By H\"{o}lder's inequality, \eqref{L4}, and \eqref{L6hi} we have
\begin{align*}
\| \P_{N_{23}}[ \cj{\P_{N_2}F_2}\cdot \P_{N_3}F_3]\|_{L^{6}_{t,x}} \les  (N_2 \wedge N_3)^{\frac 12} \|\P_{N_2}F_2\|_{X^{0,b}}\|\P_{N_3}F_3\|_{X^{0,b}}.
\end{align*}
Interpolating this with \eqref{bilin1} (with $\dl=0$), we find
\begin{align}
\| \P_{N_{23}}[ \cj{\P_{N_2}} \P_{N_3}F_3]\|_{L^{3}_{t,x}}   \les N_{23}^{-\frac 14} (N_2 \wedge N_3)^{\frac 14} \|\P_{N_2}F_2\|_{X^{0,b}}\|\P_{N_3}F_3\|_{X^{0,b}} \label{L3est}.
\end{align}
Then, by H\"{o}lder's inequality, \eqref{L4}, and \eqref{L3est}, we have
\begin{align*}
|\I_{\cj{N}}^{(0)}| & \les \|\jb{\dx}^{s+\g}  \Q_{\ges K}\P_{N_0}g_{T}\|_{L^{2}_{t,x}} \|\P_{N_1}F_1\|_{L^{6}_{t,x}} \| \P_{N_{23}} \dx ( \cj{\P_{N_2}F_2} \cdot \P_{N_3}F_3)\|_{L^{3}_{t,x}} \\
& \les N_{0}^{s+\g}K^{-\frac 12+2\dl} N_1^{-s}N_{23}^{\frac 34}(N_2 \wedge N_3)^{\frac 14-s_0} (N_2\vee N_3)^{-s_0} \|g_{T}\|_{X^{0,b'}} \|F_1\|_{X^{s,b}}\|F_2\|_{X^{s_0,b}}\|F_{3}\|_{X^{s_0,b}} \\
& \les N_{0}^{\g-\frac 12+4\dl +2\max(\frac 14-s_0,0)}  \|g_{T}\|_{X^{0,b'}} \|F_1\|_{X^{s,b}}\|F_2\|_{X^{s_0,b}}\|F_{3}\|_{X^{s_0,b}} 
\end{align*}
which is a negative power of $N_{(1)}$ provided that $\g< \frac 12-2\max(\frac 14-s_0,0) -4\dl$.
In a similar manner, we estimate $\I_{\cj{N}}^{(1)}$ by placing $\Q_{\ges K}\P_{N_1}F_1$ into $L^{2}_{t,x}$ and gaining the factor $K^{-\frac 12-\dl}$, the product $\cj{\P_{N_2}F_2}\cdot \P_{N_3}F_3$ goes into $L^{3}_{t,x}$ and we use \eqref{L3est} again, while for the dual function, we place it into $L^{6}_{t,x}$ and use \eqref{L4}:
\begin{align*}
\| \Q_{\ll K}\P_{N_0}g_{T}\|_{L^{6}_{t,x}} \les \| \Q_{\ll K}\P_{N_0}g_{T}\|_{X^{0,\frac 12+}} \les K^{2\dl+}\|\P_{N_0}g_{T}\|_{X^{0,b'}}.
\end{align*}
Thus, overall we at least gain the factor $K^{-\frac 12+\dl+}$ which is sufficient to control all of the dyadic factors as we did for  $\I_{\cj{N}}^{(0)}$. 
For $\I_{\cj{N}}^{(2)}$, we use \eqref{bilin1} and Bernstein's inequality:
\begin{align*}
|I_{\cj{N}}^{(2)}| &\les N_{23} \| \P_{N_{23}}\big[  \P_{\pm, \text{hi}} [ \cj{ \jb{\dx}^{s+\g}\Q_{\ll K}\P_{N_0}g_{T}}]\cdot  \Q_{\ll K}\P_{N_1} F_{1} ]\|_{L^{2}_{t,x}} \|\Q_{\ges K}\P_{N_2}F_2\|_{L^{2}_{t,x}} \|\P_{N_3}F_3\|_{L^{\infty}_{t,x}} \\
& \les  N_{23}^{\frac 12+10\dl} N_{0}^{\g} K^{-\frac 12-\dl} N_3^{\frac 12-s_0} \|g_{T}\|_{X^{0,b'}}\|F_1\|_{X^{s,b}}\|F_2\|_{X^{0,b}}\|F_{3}\|_{X^{s_0,b}} \\
& \les N_0^{\g-\frac 12+10\dl +\max(\frac 12-s_0,0)}  \|g_{T}\|_{X^{0,b'}}\|F_1\|_{X^{s,b}}\|F_2\|_{X^{0,b}}\|F_{3}\|_{X^{s_0,b}},
\end{align*}
which is a negative power of $N_{(1)}$ provided that $\g<\frac 12-\max(\frac 12-s_0,0)-10\dl$. Finally we estimate $\I_{\cj{N}}^{(3)}$ the same way as we did for $\I_{\cj{N}}^{(2)}$ which amounts to interchanging the roles of $N_2$ and $N_3$. We omit the similar details. 
Collating the cases we find that 
\begin{align*}
\g<\min( s_0, \tfrac 12-2\max(\tfrac 14-s_0,0)-4\dl, \tfrac 12-\max(\tfrac 12-s_0,0)-10\dl)=\min(\tfrac12 ,s_0)-10\dl
\end{align*}
and $\g>0$ provided that $s_0>10\dl$ and $\dl<\frac{1}{20}$.
This completes the proof of \eqref{vtrilin}.
\end{proof}

\begin{lemma}[$z$ bound] \label{LEM:Zbd}
Let $\s\geq s_0>0$ and $0<T\leq 1$. Then,
\begin{align}
\| \Pblo[f_1 \P_{-}\dx( \cj{f_2} f_3)]\|_{X^{\s,-\frac 12+2\dl}_{T}} \les  \|f_{1}\|_{X^{s_0,b}_{T}}  \|f_{2}\|_{X^{s_0,b}_{T}}  \|f_{3}\|_{X^{s_0,b}_{T}}\label{ztrilin}
\end{align}
\end{lemma}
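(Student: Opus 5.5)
By duality and Lemma~\ref{LEM:linXsb}, the plan is to bound
\[
\I:=\int_0^T\!\!\int_{\R} \jb{\dx}^{\s}\cj{\Pblo g}\cdot f_1\, \P_{-}\dx(\cj{f_2}f_3)\,dx\,dt
\]
for $g\in X^{0,\frac12-2\dl}_T$ with unit norm, using extensions $F_j$ of $f_j$ and $g_T=\ind_{[0,T)}g$ exactly as in the proof of Lemma~\ref{LEM:vwXsb}. The key simplification is that the output frequency is $|\xi|\les 1$, so $\jb{\dx}^{\s}\Pblo$ is a bounded multiplier and $\s$ plays no role. I then decompose dyadically, $F_j\mapsto \P_{N_j}F_j$, $g_T\mapsto \P_{N_0}g_T$ with $N_0\les 1$, and $\dx(\cj{F_2}F_3)\mapsto \P_{N_{23}}\dx(\cj{F_2}F_3)$. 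Since the output is at frequency $\les 1$, we need $N_1\sim N_{23}$ whenever $N_{23}\gg1$.

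\textbf{Case $N_{23}\les 1$.} Here the derivative is harmless. By H\"older, $\|\P_{N_0}g_T\|_{L^4_{t,x}}\|\P_{N_1}F_1\|_{L^4_{t,x}}\|\P_{N_{23}}(\cj{\P_{N_2}F_2}\P_{N_3}F_3)\|_{L^2_{t,x}}$ together with \eqref{L4} gives a bound by $\prod_j\|\P_{N_j}F_j\|_{X^{0,b}}$. Note that \eqref{L4} for $g$ needs only modulation $\frac38+$, which is below $\frac12-2\dl$. The remaining task is dyadic summation. Since $N_1\les N_2\vee N_3$ and $N_{(1)}\sim N_{(2)}$ among $N_1,N_2,N_3$ (up to $O(1)$ frequencies), the factor $N_{(1)}^{-s_0}N_{(2)}^{-s_0}$ coming from $X^{s_0,b}$ sums over the two largest frequencies. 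The third sum (the lowest input frequency) is the borderline one. I plan to handle it by Cauchy--Schwarz in that variable, or by spending a further $N^{-s_0}$ on it, using that its norm is $\ell^2$-summable and that its frequency is bounded by $N_{(2)}$, so at most $\log N_{(2)}$ terms appear, which $N_{(2)}^{-s_0}$ absorbs.

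\textbf{Case $N_{23}\gg1$.} Now $N_1\sim N_{23}$ and $N_2\vee N_3\ges N_{23}$, so at least two inputs have frequency $\ges N_{23}$. I apply the double bilinear Strichartz argument of \eqref{doublebilin}:
\[
|\I_{\cj N}|\les N_{23}\,\|\P_{N_{23}}[\cj{\P_{N_0}g_T}\,\P_{N_1}F_1]\|_{L^2_{t,x}}\,\|\P_{N_{23}}[\cj{\P_{N_2}F_2}\,\P_{N_3}F_3]\|_{L^2_{t,x}}.
\]
Using \eqref{bilin3} on the first factor (since $N_1\gg N_0$) and \eqref{bilin1} on the second, each with $\dl$-losses, gives $N_{23}^{\,1-1+20\dl}=N_{23}^{20\dl}$. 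The bilinear estimates are stated with $X^{0,\frac12-2\dl}$ norms, which $g$ satisfies, so no extra loss arises from the dual function. Placing $F_1$ and the larger of $F_2,F_3$ in $X^{s_0,b}$ yields $N_{23}^{20\dl-2s_0}$, and the remaining small frequency is handled as in the first case.

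The main obstacle is the requirement $s_0>10\dl$, which may not hold since $s_0>0$ is arbitrary. If it fails, I would take $\dl$ in Lemma~\ref{LEM:bilin} smaller than $s_0/20$ independently of the global $\dl$. This is legitimate because $g$ has modulation $\frac12-2\dl\ge\frac12-2\dl'$ for any $\dl'\ge\dl$, so I can fix the auxiliary parameter freely. The other delicate point is the summation over the lowest frequency in both cases. If the logarithmic argument is too crude, I would instead use the $\ell^2$ orthogonality of $\P_{N}F$ in that index.
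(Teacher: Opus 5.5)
Your main argument is the paper's argument. You use duality, with the output localized to $|\xi|\les 1$ so that $\jb{\dx}^{\s}\Pblo$ is harmless. When $N_{23}\les 1$ you use the $L^4$-Strichartz bound \eqref{L4}. When $N_{23}\gg 1$ you use the double bilinear Strichartz argument of \eqref{doublebilin}, where $N_1\sim N_{23}\les N_2\vee N_3$ supplies two high frequencies to pay for the loss. The first case simplifies: $N_0,N_{23}\les 1$ force $N_1\les 1$ and $N_2\sim N_3$, so the sum is a direct Cauchy--Schwarz in $N_2\sim N_3$ and no logarithmic counting is needed.

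The step that fails is your fallback for $s_0\le 10\dl$.
\begin{itemize}
\item The inclusion $X^{0,\frac12-2\dl}\subset X^{0,\frac12-2\dl'}$ holds only for $\dl'\ge\dl$, and then \eqref{bilin1} costs $N_{23}^{10\dl'}\ge N_{23}^{10\dl}$, which is worse, not better.
\item To run \eqref{bilin1} with $\dl'<s_0/20<\dl$ you would need $g\in X^{0,\frac12-2\dl'}$. That is strictly more than duality with $X^{\s,-\frac12+2\dl}$ gives you.
\item The loss on the pair containing $g_T$ is tied to the modulation exponent $\frac12-2\dl$ of the target space and cannot be decoupled from it. Already the bilinear $L^2$ bound with one factor in $X^{0,\frac12-2\dl}$ loses a power $N^{c\dl}$: take that factor localized to a time interval of length $N^{-1}$.
\end{itemize}

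The correct resolution is the paper's standing convention that the estimates of Section~\ref{SEC:ests} hold for $\dl$ sufficiently small depending on $s_0$ (in the application $s\ge 50\dl$). This is exactly what the paper's remark, that the $\dl$-loss is compensated by the two frequencies $N_1$ and $N_2\vee N_3$, presupposes.

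Also, only the pair containing $g_T$ incurs a loss. Since $F_2,F_3\in X^{0,b}$ with $b>\frac12$, \eqref{bilin1} applies to that pair with $\dl=0$, as the paper does elsewhere. The dyadic factor is then $N_{23}^{10\dl}N_1^{-s_0}N_2^{-s_0}N_3^{-s_0}$ rather than your $N_{23}^{20\dl-\cdots}$, and a condition like $s_0>5\dl$ already suffices to sum.
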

\begin{proof}
In view of the $\Pblo$ projection, we have 
\begin{align*}
\| \Pblo[f_1 \P_{-}\dx( \cj{f_2} f_3)]\|_{X^{\s,-b'}} \les \| \Pblo[f_1 \P_{-}\dx( \cj{f_2} f_3)]\|_{X^{0,-b'}_{T}}.
\end{align*}
We then apply Lemma~\ref{LEM:linXsb}, and reduce to controlling 
\begin{align*}
\I_{\cj{N}}^{\lo}:= \int_{\R} \int_{\R}  \cj{\Pblo g_{T}} \cdot  \P_{N_1}F_1 \cdot  \P_{N_{23}}\P_{-}\dx ( \cj{\P_{N_2}F_2} \cdot \P_{N_3}F_3) dx dt
\end{align*}
for $g\in X^{0,b'}_{T}$ such that $\|g\|_{X^{0,b'}_{T}}\leq 1$, $g_T = \ind_{[0,T]} g$, and $F_j$ an extension of $f_j$ on $[0,T]$, $j=1,2,3$. If $N_{23}\les 1$, then we argue as in \eqref{L4tri} using the $L^4$-Strichartz estimate and the embedding \eqref{L4}. If instead $N_{23}\gg 1$, then we in fact have $N_1 \sim N_{23}\les N_2 \vee N_3$ and we can apply the argument in \eqref{doublebilin} using the bilinear Strichartz estimate twice. Note that the loss $\dl>0$ is compensated for by using the fact that there are two frequencies $N_1$ and $N_2 \vee N_3$ which control $N_{23}$. We omit the similar details.
\end{proof}

\subsection{Bounds for $y$}

\begin{lemma}[$\NN_1$ estimate] \label{LEM:N1est}
Let $s\geq s_0>13\dl$ and $0<T\leq 1$. 
Then,
\begin{align}
\begin{split}
\big\|\NN_{1}(f_1,f_2, g  )\big\|_{X^{s,-b'}_{T}}  \les \max\big( &\|f_1\|_{X^{s,b}_T}\|f_2\|_{X^{s_0,b}}\|g\|_{X^{s_0,b}_T},  
\|f_1\|_{X^{s_0,b}_T}\|f_2\|_{X^{s,b}_T}\|g\|_{X^{s_0,b}_T}, \\
&   \|f_1\|_{X^{s_0,b}_T}\|f_2\|_{X^{s_0,b}}\|g\|_{X^{s+11\dl,b}_T} \big),  \label{N1est} 
\end{split}
\end{align}
where $\NN_1$ is as in \eqref{N1}.
\end{lemma}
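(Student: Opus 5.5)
We argue by duality as in Lemma~\ref{LEM:vwXsb}. By Lemma~\ref{LEM:linXsb}, it suffices to bound
\[
\I_{\cj N}:=\int_\R\int_\R \jb{\dx}^{s}\cj{\P_{N_0}\P_{-,\HI}g_T}\cdot \P_{N_1}F_1\cdot \P_{N_{23}}\P_{-,\HI}\dx\big(\cj{\P_{N_2}F_2}\,\P_{N_3}G\big)\,dx\,dt
\]
for $\|g\|_{X^{0,b'}_T}\le 1$ and extensions $F_1,F_2,G$. The dyadic sizes satisfy $N_0,N_{23}\ge 2^3$ and $N_{23}\les N_2\vee N_3$. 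Since $N_{23}\gg1$ throughout, the only gain available is the bilinear Strichartz estimate, used twice. The difficulty relative to \eqref{vtrilin} is that both the output and the derivative have negative frequency, so $N_1$ need not control $N_{23}$. In particular, the high$\times$low$\to$high configuration $N_0\sim N_3\gg N_1\vee N_2$ can occur.

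\textbf{Case A: $N_{23}\les N_1\vee N_2$.} Here two input frequencies control the derivative, and we pair as in \eqref{doublebilin}:
\[
|\I_{\cj N}|\les N_{23}\,\big\|\P_{N_{23}}[\jb{\dx}^s\cj{\P_{N_0}g_T}\,\P_{N_1}F_1]\big\|_{L^2_{t,x}}\,\big\|\P_{N_{23}}[\cj{\P_{N_2}F_2}\,\P_{N_3}G]\big\|_{L^2_{t,x}}.
\]
Applying \eqref{bilin1} to each factor gains $N_{23}^{-1+10\dl}$ (the loss comes from the dual function at modulation $b'$). The $s$-weight on $N_0$ is moved onto whichever input has the largest frequency, which yields one of the three terms in the maximum. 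The residual $N_{23}^{10\dl}$ and the dyadic summation are absorbed using $s_0\gg\dl$ on a second comparable frequency. If some pair is not separated in frequency, we instead use \eqref{bilin3} or the $L^4$ estimate \eqref{L4}, as in \eqref{L4tri}.

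\textbf{Case B: $N_3\sim N_{23}\gg N_1\vee N_2$.} This is the case I expect to be the main obstacle. The output then satisfies $N_0\les N_3$. The key observation is that $\cj{\P_{N_2}F_2}\,\P_{N_3}G$ is a genuinely separated product, so \eqref{bilin3} still gives $N_3^{-\frac12+10\dl}$. For the other factor, I would use the trilinear estimate Lemma~\ref{LEM:trilin} with the roles $u_1\leftrightarrow G$ at the high frequency, or alternatively pair $g$ with $F_1$ via \eqref{bilin3} when $N_0\sim N_3\gg N_1$. Together these recover about $N_3^{-1+20\dl}$ against the derivative $N_{23}\sim N_3$. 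The remaining loss $N_3^{O(\dl)}$ together with the weight $N_0^s\les N_3^s$ is exactly what forces the stronger norm $\|G\|_{X^{s+11\dl,b}}$ in the third term of \eqref{N1est}.

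When $N_0\ll N_3$, the product $\cj{g}F_1$ at frequency $N_{23}\sim N_3$ forces $N_0\vee N_1\sim N_3$, and we return to Case A. It remains to check that the dyadic sums converge using $s_0>13\dl$, and that the factor $N_{(3)}^{15\dl}$ from Lemma~\ref{LEM:trilin} is dominated by the $N_{(3)}^{-s_0}$ decay of the low-frequency inputs.
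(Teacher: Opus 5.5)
Your proof has a genuine gap in Case A. The condition $N_{23}\les N_1\vee N_2$ does not supply two input frequencies controlling the derivative. Take $N_0\sim N_2\gg N_1\vee N_3$, so that $N_{23}\sim N_2$. This configuration lies in your Case A and genuinely occurs: in $\NN_1(v+y+z,v,v+z)$, a high positive frequency of $\cj{v}$ feeds a high negative output while $f_1$ and $g$ sit at frequency $O(1)$.

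Here the only high input is $f_2$, and it must pay for the weight $N_0^{s}$. Your double bilinear Strichartz bound then gives
\[
N_0^{s}\,N_{23}\cdot N_{23}^{-\frac12+10\dl}\cdot N_{23}^{-\frac12}\cdot N_2^{-s}N_1^{-s_0}N_3^{-s_0}\sim N_0^{10\dl}(N_1N_3)^{-s_0},
\]
which grows with $N_0$. The $10\dl$ loss is intrinsic, because the dual function lies only in $X^{0,\frac12-2\dl}$. It also cannot be absorbed:
\begin{itemize}
\item $N_1$ and $N_3$ may be $O(1)$;
\item \eqref{N1est} grants the extra $11\dl$ of regularity only to $g$, not to $f_2$.
\end{itemize}
Other Strichartz pairings do worse; for instance, pairing $\cj{f_2}$ with the conjugated dual function leaves roughly $N_0^{\frac12}$.

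So your premise that ``the only gain available is the bilinear Strichartz estimate'' is exactly what fails. The missing ingredient is nonresonance. By \eqref{mods}, $|\Phi(\cj{\xi})|=2|\xi-\xi_1||\xi-\xi_3|\ges N_{23}N_0=:K\sim N_0^2$, so one of the four factors has modulation $\ges K$. Decompose each factor with $\Q_{\ges K}$ and $\Q_{\ll K}$.
\begin{itemize}
\item If the dual function, $f_1$, or $f_2$ carries the large modulation, the gain $K^{-\frac12+}$ together with one bilinear estimate closes the bound.
\item If the low-frequency $g$ carries it, you must distribute $\dx$ by the product rule. The part with $\dx$ on $g$ is harmless.
\item For the part with $\dx$ on $f_2$, you cannot place $\P_{N_2}f_2$ in $L^\infty_{t,x}$. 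The paper instead applies Lemma~\ref{LEM:trilin} to the dual function at frequency $N_0$, $\dx\P_{N_2}f_2$, and $\P_{N_1}f_1$, with output frequency $N_3$, and puts $\Q_{\ges K}\P_{N_3}g$ in $L^2_{t,x}$.
\end{itemize}
This last step is where the trilinear estimate and the threshold $s_0>13\dl$ actually enter.

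By contrast, your Case B ($N_0\sim N_3\gg N_1\vee N_2$) needs no trilinear estimate. Note that $N_0\ll N_3$ is impossible there, so your final reduction is vacuous. Bilinear Strichartz used twice, with the loss $N_0^{s}N_{23}^{10\dl}$ absorbed by $\|g\|_{X^{s+11\dl,b}}$, is exactly the paper's argument. Your explanation of why the third term in \eqref{N1est} carries $s+11\dl$ is correct.
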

\begin{proof}
We use Lemma~\ref{LEM:linXsb} to reduce to estimating 
\begin{align*}
\II:=\int_{\R}\int_{0}^{T} \jb{\dx}^{s}\cj{h} \cdot\NN_{1}(f_1,f_2,g)dx dt,
\end{align*}
for $h\in X^{0,b'}_{T}$ satisfying $\|h\|_{X^{0,b'}_{T}}\leq 1$.
We take extensions $F_1,F_2,G$ of $f_1,f_2, g$ on $[0,T]$, respectively. We then further dyadically decompose 
\begin{align}
\II =&\sum_{\substack{N_1,N_2,N_3 \\ N_{23},N_0}} \iint_{\R^2}\jb{\dx}^{s}\cj{\P_{N_0}h_{T}} \cdot\NN_1( \P_{N_1}F_1, \P_{N_2}F_2, \P_{N_3}G )dx dt =:\sum_{\substack{N_1,N_2,N_3 \\ N_{23},N_0}} \II_{\cj{N}}, 
\notag
% \label{decompI}
\end{align}
with $h_T : = \ind_{[0,T]} h$. 
In view of the $\P_{\text{HI}}$ on the derivative in \eqref{N1}, we have $N_{23}\gg 1$. Moreover, $N_{23}\les N_2 \vee N_3$. 
We first consider the cases when
\begin{align}
N_{(2)}\sim N_{(3)}\qquad  \text{or}  \qquad N_0 \leq  N_{(3)}. \label{goodcase}
\end{align}
In these cases, we can apply the argument in \eqref{doublebilin} where we always have at least two large frequencies which are not the output frequency $N_{0}$ to absorb $N_0^s$ and the $N_{23}^{10\dl}$ loss from the bilinear Strichartz used on $h_{T}$ and $\P_{N_1}F_1$.  We omit the similar details.

Thus, we may now assume that:
\begin{align}
N_{(2)} \gg N_{(3)} \quad \text{and} \quad N_0 \geq N_{(2)},  \label{badcase}
\end{align}
and consider different cases depending on which $N_j$ satisfies $N_j \sim N_0$, $j=1,2,3$.

 \smallskip
\noi
\underline{$\bullet$ \textbf{Case 1:} $N_2 \gg N_3 \vee N_1$}

\smallskip
\noi 
In this case, we need the phase function in \eqref{mods}, and recall the notation from the proof of Lemma~\ref{LEM:vwXsb}.
The key point is that, whilst we are in the high$\times$low$\times$$\to$high regime, the phase function is very large. 
Indeed, from the frequency assumption and \eqref{nonres}, we see that $\s_{\max}\ges N_{23}N_0=:K$. 
We will prove that 
\begin{align}
| \II_{\cj{N}}| \les  N_{(1)}^{-\ta} \| h_{T}\|_{X^{0,b'}}    \|F_{2}\|_{X^{s,b}} \|F_{1}\|_{X^{s_0,b}}\|G\|_{X^{s_0,b}}, \label{IIbd}
\end{align}
for some $\ta>0$. The negative power of $N_{(1)}$ allows us to perform all of the dyadic summations. 
We write 
\begin{align*}
\II_{\cj{N}}  =&  \iint_{\R^2} \jb{\dx}^{s}\P_{-,\text{HI}}\cj{\Q_{\ges K}\P_{N_0}h_T} \cdot \P_{N_1}F_1  \cdot\P_{N_{23}}\P_{-,\text{HI}}\dx( \cj{\P_{N_2}F_2} \P_{N_3}G) dt dx \\
&  +  \iint_{\R^2} \jb{\dx}^{s}\P_{-,\text{HI}}\cj{\Q_{\ll K}\P_{N_0}h_T} \cdot \P_{N_1}\Q_{\ges K} F_1  \cdot\P_{N_{23}}\P_{-,\text{HI}}\dx( \cj{\P_{N_2}F_2} \P_{N_3}G) dt dx \\
& +  \iint_{\R^2} \jb{\dx}^{s}\P_{-,\text{HI}}\cj{\Q_{\ll K}\P_{N_0}h_T} \cdot \P_{N_1}\Q_{\ll K} F_1  \cdot\P_{N_{23}}\P_{-,\text{HI}}\dx( \cj{\Q_{\ges K} \P_{N_2}F_2} \P_{N_3}G) dt dx \\
&+   \iint_{\R^2} \jb{\dx}^{s}\P_{-,\text{HI}}\cj{\Q_{\ll K}\P_{N_0}h_T} \cdot \P_{N_1}\Q_{\ll K} F_1  \cdot\P_{N_{23}}\P_{-,\text{HI}}\dx( \cj{\Q_{\ll K} \P_{N_2}F_2} \Q_{\ges K}\P_{N_3}G) dt dx \\
& =: \II_{\cj{N}} ^{0}+\II_{\cj{N}}^{1} + \II_{\cj{N}}^{2}+ \II_{\cj{N}}^{3}.
\end{align*}
Using H\"{o}lder, Bernstein, and the bilinear Strichartz inequality \eqref{bilin1} (with $\dl=0$), we have 
\begin{align*}
|\II_{\cj{N}} ^{0}| & \les N_{0}^{s}N_{23} \|\Q_{\ges K}\P_{N_0}h_{T}\|_{L^{2}_{t,x}} \|\P_{N_1}F_1\|_{L^{\infty}_{t,x}} \|\P_{N_{23}}[  \cj{\P_{N_2}F_2} \cdot \P_{N_3}G]\|_{L^{2}_{t,x}} \\
& \les  N_0^{s} K^{-\frac 12+2\dl} N_{23}^{\frac 12} N_{1}^{\frac 12-s_0} N_2^{-s}N_3^{-s_0}  \|\P_{N_0}h_{T}\|_{X^{0,b'}}\|\P_{N_1}F_1\|_{X^{s_0,b}} \|\P_{N_2}F_2\|_{X^{s,b}} \|\P_{N_3}G\|_{X^{s_0,b}} \\
& \les  N_{(1)}^{-\frac 12+4\dl+\max(\frac 12-s_0,0)} \|\P_{N_0}h_{T}\|_{X^{0,b'}} \|\P_{N_1}F_1\|_{X^{s_0,b}} \|\P_{N_2}F_2\|_{X^{s,b}} \|\P_{N_3}G\|_{X^{s_0,b}},
\end{align*}
which shows \eqref{IIbd} as long as $s_0>4\dl$.

For $\II_{\cj{N}}^{1}$, we argue similarly but first using H\"{o}lder and Bernstein (in $x$), followed by \eqref{YsCTHs},
\begin{align*}
|\II_{\cj{N}} ^{1}| & \les N_{0}^{s}N_{23} \|\Q_{\ll K}\P_{N_0}h_{T}\|_{L^{\infty}_{t}L^{\frac{1}{4\dl}}_{x}} \|\Q_{\ges K}\P_{N_1}F_1\|_{L^{2}_{t}L^{ \frac{2}{1-8\dl}}_{x}  } \|\P_{N_{23}}[  \cj{\P_{N_2}f_2} \P_{N_3}G]\|_{L^{2}_{t,x}} \\
&  \les N_{0}^{\frac 12-4\dl} N_{23}^{\frac 12} N_{1}^{4\dl} N_3^{-s_0} \|\Q_{\ll K}\P_{N_0}h_{T}\|_{L^{\infty}_{t}L^{2}_{x}} \|\Q_{\ges K}\P_{N_1}F_1\|_{L^{2}_{t,x}  }\|\P_{N_2}F_2\|_{X^{s,b}} \|\P_{N_3}G\|_{X^{s_0,b}} \\
& \les N_0^{\frac 12}N_{23}^{\frac 12}  K^{2\dl-b+}   (N_1 N_3)^{-(s_0-4\dl)}   \|\P_{N_0}h_{T}\|_{X^{0,b'}}\|\P_{N_1}F_1\|_{X^{s_0,b}} \|\P_{N_2}F_2\|_{X^{s,b}} \|\P_{N_3}G\|_{X^{s_0,b}}\\
& \les  N_0^{-2\dl+} \|\P_{N_0}h_{T}\|_{X^{0,b'}}\|\P_{N_1}F_1\|_{X^{s_0,b}} \|\P_{N_2}F_2\|_{X^{s,b}} \|\P_{N_3}G\|_{X^{s_0,b}},
\end{align*}
since $s_0>4\dl$,
which establishes \eqref{IIbd}.

For $\II_{\cj{N}}^{2}$, we now use the bilinear Strichartz for the first two factors $h_{T}$ and $F_1$, and argue similarly to $\II_{\cj{N}}^{1}$ for the remaining two factors, placing $\P_{N_2}F_{2}$ into $L^{2}_{t,x}$ and $\P_{N_3}G$ into $L^{\infty}_{t,x}$. We end up with the dyadic factor:
\begin{align*}
N_0^{s} N_{23} N_{23}^{-\frac 12+10\dl} N_1^{-s_0}  K^{-\frac 12-\dl} N_{2}^{-s}N_{3}^{\frac 12-s_0} 
\les  N_{0}^{-\frac 12+8\dl}N_{3}^{\frac 12} (N_1 N_3)^{-s_0} 
\les N_{(3)}^{-\frac 12+\max(\frac 12-s_0,0)+8\dl},
\end{align*}
which is a negative power of $N_{(3)}$ provided that $s_0>8\dl$.

Lastly, for $\II_{\cj{N}}^{3}$, we need to argue more carefully since mimicking the argument for $\II_{\cj{N}}^{1}$ would involve placing $\P_{N_2}F_2$ into $L^{\infty}_{t,x}$ which loses in the high frequency and is too much to recoup. Instead, we distribute the derivative and write 
\begin{align*}
&\II_{\cj{N}}^{3} \\
& = \iint_{\R^2}  \Big\{\P_{N_{23}}\P_{-,\text{HI}}\Big[ \jb{\dx}^{s}\P_{-,\text{HI}}\cj{\Q_{\ll K}\P_{N_0}h_{T}} \cdot \P_{N_1}\Q_{\ll K} F_1 \Big] \cj{\Q_{\ll K}\dx  \P_{N_2}F_2} \Big\} \Q_{\ges K}\P_{N_3}G dt dx \\
& + \iint_{\R^2} \jb{\dx}^{s}\P_{-,\text{HI}}\cj{\Q_{\ll K}\P_{N_0}h_{T}} \cdot \P_{N_1}\Q_{\ll K} F_1  \cdot\P_{N_{23}}\P_{-,\text{HI}}( \cj{\Q_{\ll K} \P_{N_2}F_2} \dx \Q_{\ges K}\P_{N_3}G) dt dx \\
& =:\II_{\cj{N}}^{3,1}+ \II_{\cj{N}}^{3,2}.
\end{align*}
For $\II_{\cj{N}}^{3,2}$, we can use the argument of $\II_{\cj{N}}^{1}$ using the bilinear Strichartz estimate with distant supports \eqref{bilin3} (since $N_0\gg N_1$):
\begin{align*}
|\II_{\cj{N}}^{3,2}|& \les N_{0}^{s} \|\Q_{\ll K}\P_{N_0}h_{T} \cdot \Q_{\ll K}\P_{N_1}F_1\|_{L^{2}_{t,x}} \|\Q_{\ll K}\P_{N_2}F_2\|_{L^{\infty}_{t,x}} \| \dx \Q_{\ges K} \P_{N_3}G\|_{L^{2}_{t,x}} \\
& \les N_{0}^{s} N_{0}^{-\frac 12+10\dl} N_{2}^{\frac 12-s} (N_1 N_3)^{-s_0} N_{3} K^{-\frac 12-\dl} \|\P_{N_0}h_{T}\|_{X^{0,b'}}  \|\P_{N_1}F_1\|_{X^{s_0,b}} \|\P_{N_2}F_2\|_{X^{s,b}}\|\P_{N_3}G\|_{X^{s_0,b}}\\
& \les N_{(1)}^{-1+8\dl+\max(1-s_0,0)}  \|\P_{N_0}h_{T}\|_{X^{0,b'}}  \|\P_{N_1}F_1\|_{X^{s_0,b}} \|\P_{N_2}F_2\|_{X^{s,b}}\|\P_{N_3}G\|_{X^{s_0,b}}\,
\end{align*}
which is acceptable provided that $s_0>8\dl$.

For $\II_{\cj{N}}^{3,2}$, we use Cauchy-Schwarz followed by \eqref{trilin} (with $N_0\gg N_3\vee N_1$) to obtain
\begin{align*}
\II_{\cj{N}}^{3,2} & \les \Big\| \P_{N_3}\Big\{\P_{N_{23}}\P_{-,\text{HI}}\Big[ \jb{\dx}^{s}\P_{-,\text{HI}}\cj{\Q_{\ll K}\P_{N_0}h_T} \cdot \P_{N_1}\Q_{\ll K} F_1 \Big] \cj{\Q_{\ll K}\dx  \P_{N_2}F_2} \Big\} \Big\|_{L^{2}_{t,x}}  \\
&\quad  \times \| \Q_{\ges K} \P_{N_3}G\|_{L^{2}_{t,x}} \\
&\les N_{1}^{15\dl}\bigg( \frac{N_1}{N_0} \bigg)^{\frac 12-10\dl}  N_{2} N_1^{-s_0} K^{-\frac 12-\dl} \| \P_{N_0}h_T\|_{X^{0,b'}} \|\P_{N_1}F_1\|_{X^{s_0,b}}\|\P_{N_2}F_2\|_{X^{s,b}} \|\P_{N_3}G\|_{X^{0,b}} \\
& \les N_{(1)}^{-\frac 12+13\dl+\max(\frac 12-s_0,0)} \| \P_{N_0}h_T\|_{X^{0,b'}} \|\P_{N_1}F_1\|_{X^{s_0,b}}\|\P_{N_2}F_2\|_{X^{s,b}} \|\P_{N_3}G\|_{X^{0,b}}
\end{align*}
which is again acceptable provided that $s_0>13\dl$.

 \medskip
\noi
\underline{$\bullet$ \textbf{Case 2:} $N_2 \les N_3 \vee N_1$}

\smallskip
\noi
Due to \eqref{badcase}, we must have either (i) $N_0\sim N_1\gg N_2 \vee N_3$ or (ii) $N_0\sim N_3 \gg N_2 \vee N_1$. 
If (i) $N_0\sim N_1$ holds, then we can simply apply Cauchy-Schwarz and the bilinear Strichartz estimate \eqref{bilin1} twice with $(N_0,N_1)$ and $(N_2,N_3)$ using $(N_2 \vee N_3)^{-s_0}$ to counteract the slight loss $N_{23}^{10\dl}$ coming from the pair $(N_0,N_1)$. Namely, we do not need to use the phase function in this case. 
We end up with the estimate
\begin{align}
| \II_{\cj{N}}| \les  N_{(3)}^{-\ta} \|\P_{N_0} h_{T}\|_{X^{0,b'}}    \|F_{2}\|_{X^{s,b}} \|\P_{N_1} F_{1}\|_{X^{s_0,b}}\|F_{3}\|_{X^{s_0,b}}, %\label{IIbd}
\end{align}
for some $\ta>0$.  
The negative power of $N_{(3)}$ allows us to perform the dyadic summations over $(N_2,N_3)$ and using Cauchy-Schwarz and that $N_0\sim N_1$, we can sum over $(N_0,N_1)$ since
\begin{align*}
\sum_{N_0\sim N_1} \| \P_{N_0}h_{T}\|_{X^{0,b'}}    \|\P_{N_1}F_{1}\|_{X^{s,b}} \les \| h_{T}\|_{X^{0,b'}}    \|F_{1}\|_{X^{s,b}}.
\end{align*}

If (ii) $N_0 \sim N_3 \gg N_1 \lor N_2$, we proceed as above, applying bilinear twice, to obtain 
\begin{align*}
    | \II_{\cj{N}}| & \les N_0^s N_{23}^{10\dl} \|\P_{N_0} h_{T}\|_{X^{0,b'}}    \|F_{2}\|_{X^{0,b}} \|F_{1}\|_{X^{0,b}}\| \P_{N_3} F_{3}\|_{X^{0,b}}
    \\
    &
    \les N_{(1)}^{-\dl} \| h_{T}\|_{X^{0,b'}}    \|F_{2}\|_{X^{0,b}} \|F_{1}\|_{X^{0,b}}\| F_{3}\|_{X^{s+11\dl ,b}}, 
\end{align*}
where we can use the negative power of the largest frequency to handle the sums. 
 This completes the proof of \eqref{N1est}.
\end{proof}

\begin{lemma}[$\NN_2^{\perp}$ estimate]
\label{LEM:N2perp}
Let $s\geq s_0>13\dl$ and $0<T\leq 1$. 
Then,
\begin{align}
\begin{split}
\big\|\NN_{2}^{\perp}(f_1,f_2, g )\big\|_{X^{s,-b'}_{T}}  \les \max\big( &\|f_1\|_{X^{s,b}_T}\|f_2\|_{X^{s_0,b}_T}\|g\|_{X^{s_0,b}_T},   \\
&  \|f_1\|_{X^{s_0,b}_T}\|f_2\|_{X^{s,b}_T}\|g\|_{X^{s_0,b}_T},  \|f_1\|_{X^{s_0,b}_T}\|f_2\|_{X^{s_0,b}_T}\|g\|_{X^{s,b}_T} \big),  \label{N2est} 
\end{split}
\end{align}
where $\NN_2^\perp$ is as in \eqref{NN2perp}. 
\end{lemma}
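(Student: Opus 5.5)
The estimate follows the scheme of Lemma~\ref{LEM:N1est}, using the extra restriction $N_0\le 2^{10}(N_1\vee N_2)$ built into \eqref{NN2perp}. By Lemma~\ref{LEM:linXsb} we reduce to bounding
\[
\int_{\R}\int_0^T \jb{\dx}^s\cj{h}\cdot \NN_2^{\perp}(f_1,f_2,g)\,dx\,dt
\]
for $\|h\|_{X^{0,b'}_T}\le 1$. We take extensions $F_1,F_2,G$ and $h_T=\ind_{[0,T]}h$, and decompose dyadically into pieces $\II_{\cj N}$ indexed by $N_0,N_1,N_2,N_3,N_{23}$. As before, the $\P_{-,\textup{HI}}$ on the derivative forces $N_{23}\gg1$ and $N_{23}\les N_2\vee N_3$. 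The new information is $N_0\les N_1\vee N_2$. Since the output is never much larger than both $N_1$ and $N_2$, the factor $N_0^s$ can always be absorbed by $N_1^s$ or $N_2^s$, placing $f_1$ or $f_2$ in $X^{s,b}$. This is why, unlike \eqref{N1est}, no $11\dl$ loss on $g$ appears.

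First we treat the ``good'' configurations \eqref{goodcase}, where $N_{(2)}\sim N_{(3)}$ or $N_0\le N_{(3)}$. Here we copy \eqref{doublebilin}: apply Cauchy--Schwarz, then the bilinear Strichartz estimate \eqref{bilin1} once to the pair $(h_T,F_1)$ at frequency $N_{23}$, and once to $(F_2,G)$. This recovers the full factor $N_{23}$. The loss $N_{23}^{10\dl}$ and $N_0^s$ are then paid by two large frequencies among $N_1,N_2,N_3$ carrying $s$ and $s_0$ regularity, and the condition $s_0>13\dl$ leaves a negative power of $N_{(1)}$ for summation.

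Next we treat the remaining configurations \eqref{badcase}, where $N_{(2)}\gg N_{(3)}$ and $N_0\ge N_{(2)}$. Combined with $N_0\les N_1\vee N_2$, this leaves only two possibilities.

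\emph{Case $N_0\sim N_1$.} If also $N_1\gg N_2\vee N_3$, we argue exactly as in Case~2(i) of Lemma~\ref{LEM:N1est}: bilinear Strichartz is applied twice, on $(N_0,N_1)$ and on $(N_2,N_3)$. The factor $N_0^s$ is cancelled by $\|\P_{N_1}F_1\|_{X^{s,b}}$, the loss $N_{23}^{10\dl}$ is absorbed by $(N_2\vee N_3)^{-s_0}$, and the sum over $N_0\sim N_1$ is handled by Cauchy--Schwarz.

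\emph{Case $N_0\sim N_2\gg N_1\vee N_3$.} This is exactly Case~1 of Lemma~\ref{LEM:N1est}: high$\times$low$\to$high with the derivative at frequency $N_{23}\sim N_2$. Here the resonance \eqref{mods}--\eqref{nonres} gives $\s_{\max}\ges N_{23}N_0$. We split into $\II^0,\dots,\II^3$ according to which factor carries the high modulation. The terms $\II^0,\II^1,\II^2$ are handled as before. For $\II^3$, the modulation sits on $G$; we distribute the derivative and use \eqref{bilin3} together with the trilinear estimate \eqref{trilin}, with $\P_{N_2}F_2\in X^{s,b}$ absorbing $N_0^s$. A possible third subcase, $N_0\sim N_3$ with $N_3\gg N_1\vee N_2$, cannot occur here because $N_0\les N_1\vee N_2$.

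The main obstacle is the $\II^3$ term in the case $N_0\sim N_2$. This is the only place where the careful redistribution of the derivative and the trilinear Strichartz estimate \eqref{trilin} are needed. The restriction $s_0>13\dl$ arises precisely from this term.
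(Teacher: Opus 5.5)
Your proposal is correct and follows essentially the same route as the paper. The paper reduces by duality, handles the configurations \eqref{goodcase} with two applications of \eqref{bilin1}, and splits the configurations \eqref{badcase} into $N_0\sim N_1$ (double bilinear Strichartz, as in Case~2(i) of Lemma~\ref{LEM:N1est}) and $N_0\sim N_2$ (the modulation argument of Case~1 of Lemma~\ref{LEM:N1est}), with $N_0\les N_1\vee N_2$ excluding the $N_0\sim N_3$ case that causes the $11\dl$ loss in \eqref{N1est}.
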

\begin{proof}
We adapt much of the argument for $\NN_{1}$ in the proof of Lemma~\ref{LEM:N1est}. By duality and a dyadic decomposition in the last function, we reduce to bounding
\begin{align}
\sum_{\substack{ N_0, N_1,N_2,N_{23},N_3 \\ N_0 \les N_1\vee N_2  }} \iint_{\R^2} \jb{\dx}^{s}\P_{-,\text{HI}}\cj{\P_{N_0}h_{T}} \cdot \P_{N_1}F_1 \cdot  \P_{N_{23}}\P_{-,\text{HI}} \dx( \cj{\P_{N_2}F_2} \cdot \P_{N_3}G) dx dt \label{N1bd1}
\end{align}
where $F_1,F_2,G$ are any extensions of $f_1,f_2,g$, respectively. 

If \eqref{goodcase} is satisfied, then we estimate \eqref{N1bd1} exactly as we did in the proof of \eqref{N1est}, using the bilinear Strichartz estimate \eqref{bilin1} twice on $(\P_{N_0}h_T, \P_{N_1}F_1)$ and on $(\P_{N_2}F_2, \P_{N_3}G)$ and using that we have two high input frequencies which can control the $N_{23}^{10\dl}$ loss from \eqref{bilin1}.

Thus, we may now assume that additionally \eqref{badcase} holds within the support of the dyadic summation in \eqref{N1bd1}. In view of the second condition in \eqref{badcase}, we have two remaining cases. 
 \smallskip
\noi
\underline{$\bullet$ \textbf{Case 1:} $N_0\sim N_1 \gg N_2\vee N_3$}

\smallskip
\noi
In this case, we follow the argument from Case 2 in the proof of Lemma~\ref{LEM:N1est} where we used \eqref{bilin1} twice and the fact that $s_0>10\dl$ to overcome the slight loss $N_{23}^{10\dl}$ from applying \eqref{bilin1} with the pair $(\P_{N_0}h_{T}, \P_{N_1}F_1)$. We omit the similar details.

 \smallskip
\noi
\underline{$\bullet$ \textbf{Case 2:} $N_0\sim N_2 \gg N_1\vee N_3$}

\smallskip
\noi
In this case, we have the large phase gain from $\s_{\max}\ges N_{23}N_0\sim N_{(1)}^{2}$ and we can follow the proof of \eqref{IIbd} from Case 1 in the proof of Lemma~\ref{LEM:N1est} based on gaining extra smoothing from the phase function.

This completes the proof of \eqref{N2est}.
\end{proof}

We now estimate the operator $\NN_2^{\textup{pos}}$ in \eqref{NN2pos}, where we rely on the extra spatial smoothing of $\Y^{\text{pos}}$ to overcome the bad frequency case in the supports of $\NN_2^{\text{pos}}$. 

\begin{lemma}[$\NN^{\textup{pos}}_{2}$ bound]
\label{LEM:N2pos}
Let $6\dl< s_0\leq s<1$, with the additional condition $s + 6\dl < 2s_0 + \frac12$ when $s_0 \le \frac12 \le s < 1$, and $0< T \le1 $. Then, 
\begin{align}
\big\| \NN_{2}^{\textup{pos}}( &f_1,f_2, f_3)\big\|_{X^{s,-b'}_{T}}
\leq  \jb{\be} C_1(\|f_1\|_{X^{s_0,b}_{T}}, \|f_2\|_{X^{s_0,b}_{T}}, \|f_3\|_{X^{0,b}_{T}}) \| f_3 \|_{X^{s,b}_{T}}  
,
\label{Yposest0} \\
\big\| \NN_{2}^{\textup{pos}}( &f_1,f_2, f_3)- \NN_{2}^{\textup{pos}}( f_1,f_2,f_4)\big\|_{X^{s,-b'}_{T}}\notag\\ 
&\leq \jb{\be}^2 C_2(\|f_1\|_{X^{s,b}_{T}}, \|f_{2}\|_{X^{s,b}_{T}}, \|f_3\|_{X^{s,b}_{T}},\|f_4\|_{X^{s,b}_{T}}) \| f_3 -f_4 \|_{X^{s,b}_{T}}  
, 
\label{Yposest} 
\end{align}
where $\NN_2^\textup{pos}$ is as in \eqref{NN2pos} and 
for some non-negative polynomials $C_1: \R^3 \to \R_+$ and $C_2: \R^4 \to \R_+$. 
\end{lemma}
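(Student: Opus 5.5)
By duality and Lemma~\ref{LEM:linXsb}, we reduce \eqref{Yposest0} to bounding
\begin{align*}
\sum_{\substack{N_0,N_1,N_2,N_{23},N_3\\ N_0>2^{10}(N_1\vee N_2)}} \iint_{\R^2} \jb{\dx}^{s}\P_{-,\textup{HI}}\cj{\P_{N_0}h_T}\cdot \P_{N_1}F_1\cdot \P_{N_{23}}\P_{-,\textup{HI}}\dx\big(\cj{\P_{N_2}F_2}\cdot \P_{N_3}\Y^{\pos}[f_3]\big)\,dx\,dt
\end{align*}
with $\|h\|_{X^{0,b'}_T}\le 1$. The support condition in \eqref{NN2} forces $N_3\sim N_0\sim N_{23}\gg N_1\vee N_2$. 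This is exactly the high$\times$low$\times$low$\to$high configuration where neither the phase function nor the bilinear estimate on the $\Y^{\pos}$ factor is available, since $\Y^{\pos}[f_3]$ is not an $X^{s,b}$ function. The plan is to spend the extra smoothness of $\Y^{\pos}$ from Lemma~\ref{LEM:ysharp} instead.

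Concretely, I would use H\"older with exponents $(3,6,\infty\text{-ish},\tfrac32)$ arranged as follows. The bilinear factor $\cj{\P_{N_0}h_T}\cdot\P_{N_1}F_1$ goes into $L^{2}_{t,x}$ via \eqref{bilin3}, gaining $N_0^{-\frac12+10\dl}$ (using $N_0\gg N_1$). Alternatively, and more likely what closes, I would put $\P_{N_0}h_T$ and $\P_{N_1}F_1$, $\P_{N_2}F_2$ into Strichartz/Bernstein spaces and place $\P_{N_3}\Y^{\pos}[f_3]$ into $L^{\frac32}_{T,x}$ using \eqref{YPOS1}, which gives $N_3^{-\frac32-\min(s,\frac12)+}$. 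The dual factor then needs $L^{3}$-type control: $\P_{N_0}h_T\in L^{6}$ costs $K$-free but modulation $b'<\frac12$, so I would use \eqref{L4} with $p$ slightly below $6$, or interpolate to absorb the $b'$ loss as in Lemma~\ref{LEM:trilin}. The product of the two low factors $\P_{N_1}F_1\,\cj{\P_{N_2}F_2}$ goes into $L^{\infty}_{t,x}$ or $L^{6}$ via Bernstein, costing $(N_1N_2)^{\frac12-s_0}$ at worst. Counting powers gives roughly
\[
N_0^{s}\cdot N_{23}\cdot N_3^{-\frac32-\min(s,\frac12)+}\cdot (N_1 N_2)^{\max(\frac12-s_0,0)}\cdot N_0^{O(\dl)} .
\]
For $s\le\frac12$ the high factors cancel to $N_0^{-\frac12+O(\dl)}$, and the low losses are dominated since $N_1,N_2\ll N_0$, provided $(\frac12-s_0)\cdot 2<\frac12$ after balancing. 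For the low losses I expect to need \eqref{L6bilin2} on $\cj{F_2}F_1$ instead of crude Bernstein, giving $(N_1\vee N_2)^{\max(\frac12-s_0,0+)}\le N_0^{\max(\frac12-s_0,0)}$. For $s\ge\frac12$ the gain is only $N_0^{s-1}$, and the low loss forces $s-1+\frac12-s_0+O(\dl)<s_0-\ldots$. This is where the extra hypothesis $s+6\dl<2s_0+\frac12$ must enter, and I would tune the H\"older exponents so that the numerology matches it exactly. Keeping the $L^{3/2}$ bound for $\Y^{\pos}$ in \eqref{YPOS1} in terms of $X^{s,b}$ of $f_3$ and $X^{0,b}$ norms yields the polynomial $C_1$ in the form claimed.

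The difference estimate \eqref{Yposest} follows identically, since $\NN_2^{\pos}$ is linear in the slot of $\Y^{\pos}$: replace \eqref{YPOS1} by \eqref{YPOS2}. The main obstacle is the dual function $h_T$, which lies only in $X^{0,\frac12-2\dl}$, so Strichartz-type bounds on it lose $N_0^{O(\dl)}$. Together with the $\min(s,\frac12)$ cap in $A_N$, this is what produces both the $6\dl<s_0$ condition and the restriction when $s\ge\frac12$. I would first try \eqref{L4} with $p=3$ for $h_T$ paired against $L^{\frac32}$ for $\Y^{\pos}$, with $L^{\infty}_{t,x}$ for the low product. I would fall back to $L^{6-}$ interpolation if the $\dl$-bookkeeping fails.
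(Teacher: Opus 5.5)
Your architecture matches the paper's: duality, $N_3\sim N_{23}\sim N_0\gg N_1\vee N_2$, and spending the $L^{\frac32}_{T,x}$ bound \eqref{YPOS1}/\eqref{YPOS2} on $\wt{\P}_{N_0}\Y^{\pos}$ while the other three factors go into Strichartz-type spaces. The gap is that you never fix a H\"older assignment that closes, and the ones you commit to do not.

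Your first choice, $h_T\in L^3_{t,x}$ with both low factors in $L^\infty_{t,x}$, costs $(N_1N_2)^{\frac12-s_0}$. For $s\le\frac12$ this leaves the dyadic factor $N_0^{-\frac12+}(N_1N_2)^{\frac12-s_0}$, which is of size $N_0^{\frac12-2s_0+}$ when $N_1\sim N_2\sim 2^{-10}N_0$. As your own condition ``$(\frac12-s_0)\cdot 2<\frac12$'' shows, this needs $s_0>\frac14$, so it fails exactly in the range the paper is about. That is a loss of half a derivative, not a $\dl$-bookkeeping issue. Your exponent list $(3,6,\infty,\frac32)$ is also inconsistent, since the reciprocals sum to $\frac76$.

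The proposed repair via \eqref{L6bilin2} on $\cj{F_2}F_1$ does not work as stated either. First, $\P_{N_1}F_1$ and $\cj{\P_{N_2}F_2}$ sit on opposite sides of the multiplier $\P_{N_{23}}\P_{-,\HI}\dx$, so that product is not formed without extra work. You would have to observe that this multiplier reduces to $\dx$ when $N_0\gg N_1\vee N_2$ and then integrate by parts. Second, even granting that, \eqref{L6bilin2} uses the $s_0$-regularity of only one low factor. For $s\ge\frac12$ you then get $N_0^{s-\frac12-s_0+}$, i.e.\ the condition $s<s_0+\frac12$, not the stated $s+6\dl<2s_0+\frac12$. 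The extra $s_0$ in your ``$<s_0-\ldots$'' has no source in your count.

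What is missing is the right grouping and exponents, which is what the paper does. Assume $N_1\ge N_2$ and use H\"older with $\frac{1}{p_\dl}+\frac{1}{r_\dl}+\frac{1}{\infty}+\frac23=1$:
\begin{itemize}
\item put $\P_{N_0}h_T$ in $L^{p_\dl}_{T,x}$ with $p_\dl<6$ close to $6$, which \eqref{L4} allows at modulation $\frac12-2\dl$;
\item put the \emph{larger} low factor $\P_{N_1}F_1$ in $L^{r_\dl}_{T,x}$ with $r_\dl>6$ close to $6$, which by \eqref{L6hi} costs only $N_1^{\frac{13\dl}{3+\dl}}$;
\item put only the smaller factor $\P_{N_2}F_2$ in $L^\infty_{T,x}$, at cost $N_2^{\frac12}$;
\item put $\wt{\P}_{N_0}\Y^{\pos}$ in $L^{\frac32}_{T,x}$.
\end{itemize}
The pairing of $(h_T,F_1)$ in $L^3$ against $(\cj{F_2},\Y^{\pos})$ in $L^{\frac32}$ is compatible with the multiplier, which is bounded on $L^{\frac32}$ with norm $\lesssim N_{23}$. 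The dyadic factor is then $N_0^{s-\frac12-\min(s,\frac12)+}N_1^{-s_0+O(\dl)}N_2^{\frac12-s_0}\lesssim N_0^{s-\frac12-\min(s,\frac12)+}N_1^{\max(\frac12-2s_0,-s_0)+O(\dl)}$. This is a negative power of $N_0$ under the stated hypotheses: $s_0>6\dl$, and $s+6\dl<2s_0+\frac12$ when $s_0\le\frac12\le s$. Placing the larger low factor in $L^{6+}$ rather than $L^\infty$ is exactly what recovers the half derivative your version loses. Your remark that the difference estimate follows by replacing \eqref{YPOS1} with \eqref{YPOS2} is fine.
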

Noticing from \eqref{NN2pos} that $\Y^{\pos}[0]=0$, putting $f_4 \equiv 0$ in \eqref{Yposest} implies a multilinear estimate for simply $\NN_{2}^{\textup{pos}}( f_1,f_2, f_3)$. Given $f_3$, the map $(f_1,f_2)\mapsto \NN_{2}^{\textup{pos}}( f_1,f_2, f_3)$ is bilinear so \eqref{Yposest} already implies difference estimates in these inputs. We still give \eqref{Yposest0} since they will be used later on in a persistence of regularity type argument. 
  
\begin{proof}
As the estimate \eqref{Yposest0} follows from \eqref{Yposest} with $f_4 \equiv0$, we only show the latter. By duality, we have 
\begin{align}
\begin{split}
\text{LHS} \eqref{Yposest} & =  \sup_{\| g\|_{X^{0,b'}_{T} }\leq 1} \bigg|  \sum_{\substack{ N_0, N_1,N_2,N_{23} \\  N_0 \gg N_1\vee N_2  }}\int_{\R}\int_{0}^{T} \jb{\dx}^{s} \P_{-,\text{HI}} \cj{\P_{N_0}g} \cdot \P_{N_1}f_1  \\
&\hphantom{XXXXXXXX} \times \P_{-,\text{HI}}\P_{N_{23}} \dx \big[ \cj{\P_{N_2}f_2} \cdot  ( \Y^{\pos}[f_3]-\Y^{\pos}[f_4])  \big] dx dt  \bigg| \\
& =: \sup_{\| g\|_{X^{0,b'}_{T} }\leq 1} \bigg| \sum_{\substack{ N_0, N_1,N_2,N_{23} \\  N_0\gg N_1\vee N_2  }} \III_{\cj{N}}[g]\bigg|
.
\end{split} 
\label{Yduality}
\end{align}
Note that in view of the summation condition, we may place a wider projection $\wt{\P}_{N_0}$ onto the term $ \Y^{\pos}[f_3]-\Y^{\pos}[f_4]$ for free.
Fix $g\in X^{0,b'}_{T}$ with $\|g\|_{X^{0,b'}_{T}}\leq 1$. Given $0<\dl\ll 1$, let $p_{\dl}=\frac{6+2\dl}{1+9\dl}=6-\frac{52\dl}{1+9\dl}$ such that \eqref{L4} with $b=\frac12 + \frac{\dl}{6}$ gives 
\begin{align}
\|u\|_{L^{p_{\dl}}_{T,x}} \les \|u\|_{X^{0,\frac 12-2\dl}_{T}} 
.
\label{pdl}
\end{align}
We also define $r_{\dl}=6+\frac{156\dl}{3-25\dl}$ which satisfies $\frac{1}{r_{\dl}}+\frac{1}{p_{\dl}}=\frac 13$. We assume that $N_1 \geq N_2$ as the remaining case $N_1<N_2$ can be dealt with in a similar by interchanging the roles of $N_1$ and $N_2$.
Then, by H\"{o}lder's inequality with $\frac{1}{p_{\dl}}+\frac{1}{r_{\dl}}+\frac{1}{\infty}+\frac{2}{3}=1$, \eqref{pdl}, \eqref{L6hi}, Bernstein's inequality, \eqref{YsCTHs}, and \eqref{YPOS2},
\begin{align*}
&|\III_{\cj{N}}[g]| \les N_{23} \| \jb{\dx}^{s}\P_{N_0}g\|_{L^{p_{\dl}}_{T,x}} \|\P_{N_1}f_{1}\|_{L^{r_{\dl}}_{T,x}} \|\P_{N_2}f_2\|_{L^{\infty}_{T,x}} \| \wt{\P}_{N_0}(\Y^{\pos}[f_3]-\Y^{\pos}[f_4])\|_{L^{\frac 32}_{T,x}} \\
&\les N_{0}^{1+s} N_{1}^{-s_0+\frac{13\dl}{3+\dl}} N_{2}^{\frac 12-s_0}N_{0}^{-\frac 32-\min(s,\frac 12)+} \|g\|_{X^{0,b'}_{T}} \|f_1\|_{X^{s_0,b}_{T}} \|f_2\|_{X^{s_0,b}_{T}}C(\|f_3\|_{X^{s,b}_{T}}, \|f_4\|_{X^{s,b}_{T}}) \|f_3-f_4\|_{X^{s,b}_{T}}.
\end{align*}
Considering the dyadic factor here, we have
\begin{align*}
N_{0}^{1+s} N_{1}^{-s_0+\frac{13\dl}{3+\dl}} N_{2}^{\frac 12-s_0}N_{0}^{-\frac 32-\min(s,\frac 12)+} 
&
\les N_0^{-\dl} 
N_1^{-\frac12 +s - s_0 - \min(s, \frac12) + \max(\frac12-s_0, 0) + \frac{13\dl}{3+\dl} + \dl + } \les N_0^{-\dl}
\end{align*}
given that $0<s < 1$ and $s_0 > 6\dl$, with the additional condition that $s + 6\dl < \frac12 + 2s_0$ when $s_0 < \frac12 \le s$. The negative power of the maximum dyadic allows us to sum over all of the dyadics. This proves \eqref{Yposest}, completing the proof.
\end{proof}

\begin{lemma}[Remaining $\NN_2$ bounds]
\label{LEM:N2rem}
Let $s \geq s_0>10\dl$, $s_1\geq s+11\dl$, and $0< T \le 1$. Then, 
\begin{align}
\big\| \NN_{2}(f_1,f_2,e^{i\be F[f_3]}g)\big\|_{X^{s,-b'}_{T}}  & \leq \jb{\be} (1+\|f_3\|_{X^{0,b}_{T}})^{4}  \|f_1\|_{X^{s_0,b}_{T}}\|f_2\|_{X^{s_0,b}_{T}} \|g\|_{X^{s_1,b}_{T}} \label{Ywbd1} \\
\big\| \NN_{2}(f_1,f_2,(e^{i\be F[f_3]}-e^{i\be F[f_4]})&g)\big\|_{X^{s,-b'}_{T}}  
 \leq \jb{\be}  \wt{C}_2(\|f_3\|_{X^{s,b}_{T}}, \|f_4\|_{X^{s,b}_{T}})    \|f_1\|_{X^{s,b}_{T}}\|f_2\|_{X^{s,b}_{T}}  \notag \\
&\qquad \times  \|g\|_{X^{s_1,b}_{T}} \|f_3-f_4\|_{X^{s,b}_{T}}, 
\label{Ywbd2}
\end{align}
where $\NN_2$ is as in \eqref{NN2}. 
\end{lemma}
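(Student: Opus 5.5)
The plan is to argue by duality, as in the proofs of Lemmas~\ref{LEM:N1est} and \ref{LEM:N2pos}. By Lemma~\ref{LEM:linXsb} it suffices to bound
\[
\sum_{\substack{N_0,N_1,N_2,N_{23},N_3\\ N_0> 2^{10}(N_1\vee N_2)}}\iint \jb{\dx}^s\P_{-,\textup{HI}}\cj{\P_{N_0}h_T}\cdot \P_{N_1}F_1\cdot \P_{N_{23}}\P_{-,\textup{HI}}\dx\big(\cj{\P_{N_2}F_2}\cdot \P_{N_3}[e^{i\be F[f_3]}g]\big)\,dx\,dt
\]
for $\|h\|_{X^{0,b'}_T}\le1$, where $F_1,F_2$ are extensions of $f_1,f_2$. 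The frequency constraint $N_0\gg N_1\vee N_2$ forces $N_3\sim N_0\sim N_{23}$, and this is the high$\times$low$\times$low$\to$high interaction. The point is that the last input $e^{i\be F}g$ has $11\dl$ more regularity than needed, via $g\in X^{s_1,b}$, so the crude estimate that lost $N_3^{11\dl}$ in Case 2(ii) of Lemma~\ref{LEM:N1est} is now affordable.

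The key steps are as follows.

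\textbf{Step 1.} Split $e^{i\be F}g=\Pblo(e^{i\be F})g+\Pbhi(e^{i\be F})g$. In the second piece, the relevant frequency of the exponential satisfies $\gtrsim N_0$ unless $g$ carries frequency $\sim N_0$. Here I would use the gain $N^{-1}$ from $\P_N e^{i\be F}=\be\dx^{-1}\P_N(|f_3|^2e^{i\be F})$, as in \eqref{L2pos2}.

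\textbf{Step 2.} For the main piece, where $g$ is at frequency $\sim N_0$, do not place $e^{i\be F}g$ in $X^{s,b}$; that is hard. Instead use only space-time Lebesgue norms. Write the bilinear Strichartz estimate \eqref{bilin3} for the pair $(\P_{N_0}h_T,\P_{N_1}F_1)$, with $N_0\gg N_1$, giving $N_0^{-\frac12+10\dl}$. For the remaining factor, bound $\|\cj{\P_{N_2}F_2}\,\wt\P_{N_0}(e^{i\be F}g)\|_{L^2_{t,x}}$ by $\|\P_{N_2}F_2\|_{L^\infty_{x}L^2_t}$, which gives $N_2^{-\frac12}$ via \eqref{localsmooth0}, combined with a maximal-function bound for $e^{i\be F}g$. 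Alternatively, use $L^4_{t,x}\times L^4_{t,x}$, with $\|e^{i\be F}\|_{L^\infty}\le1$ and the fractional Leibniz rule with \eqref{DeLinfty} to move $\jb{\dx}^{s_1}$ onto $g$.

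Counting powers gives roughly
\[
N_{23}\cdot N_0^{s}\cdot N_0^{-\frac12+10\dl}\cdot N_0^{-s_1}\cdot(\text{something in }N_2),
\]
which is not enough by itself. So I would again exploit the phase. Since $N_{23}\sim N_0$, \eqref{nonres} gives $\s_{\max}\gtrsim N_0^2=:K$. Decompose with $\Q_{\ges K}$ and $\Q_{\ll K}$ exactly as in Case 1 of Lemma~\ref{LEM:N1est}. When $h_T$, $F_1$ or $F_2$ carries the high modulation, gain $K^{-\frac12+2\dl}\sim N_0^{-1+4\dl}$, which overcomes the derivative. The remaining losses $N_0^{O(\dl)}$ are paid by $N_0^{s-s_1}\le N_0^{-11\dl}$.

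\textbf{Step 3.} The obstacle is the term where the high modulation falls on $e^{i\be F}g$, since the product does not lie in $X^{\cdot,b}$. To handle it I would transfer the modulation projection, either by taking $\Q_{\ges K}$ off via duality onto the other three factors (this fails) or, more concretely, by expanding $(\dt+i\dx^2)(e^{i\be F}g)$. Using the computation in Lemma~\ref{LEM:gaugeeqns}, $(\dt+i\dx^2)e^{i\be F}$ involves $\dx F$, $(\dx F)^2$ and $u\dx\cj u$-type terms. The piece $e^{i\be F}(\dt+i\dx^2)g$ is controlled by $\|g\|_{X^{s_1,b}}$, and the rest consists of products of $e^{i\be F}$, $|f_3|^2$ and $\dx g$, which can be estimated in $L^2$ by Strichartz with a loss of at most $N_0$. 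Since $K^{-1}N_0=N_0^{-1}$, the net effect is still a gain. If this is too costly, the fallback is to apply the trilinear estimate \eqref{trilin} to $(h_T,F_1,F_2)$ with $e^{i\be F}g$ in $L^2_{t,x}$, which yields $(N_1\vee N_2)^{\frac12}N_0^{-\frac12+10\dl}$, together with the spare $N_0^{-11\dl}$ and $s_0>10\dl$.

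Finally, the difference estimate \eqref{Ywbd2} follows from the same argument, using \eqref{Expdiffs} and \eqref{L2F2} in place of the bounds on $e^{i\be F}$.
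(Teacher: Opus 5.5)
Your opening diagnosis is right: this is the configuration of Case~2(ii) of Lemma~\ref{LEM:N1est}, and the spare $11\dl$ on $g$ pays for the loss. The argument you actually run, however, has a genuine gap.

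\textbf{The phase gain is not available.} Steps~2--3 rest on a false resonance bound. By \eqref{mods}, $\Phi=-2(\xi-\xi_1)(\xi-\xi_3)$. Here $\xi-\xi_3=\xi_1-\xi_2$ is the frequency of $f_1\cj{f_2}$, so \eqref{nonres} gives only $\s_{\max}\ges N_{23}|\xi_1-\xi_2|$. This is at most $N_0(N_1\vee N_2)$ and degenerates as $\xi_1\to\xi_2$. When the high frequency passes from the last slot straight to the output, the interaction is resonant; this is exactly why $\NN_2$ is the dangerous term. The phase $K\sim N_0^2$ in Case~1 of Lemma~\ref{LEM:N1est} comes from the high frequency sitting in $\cj{f_2}$ instead. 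So there is no $K^{-\frac12+2\dl}$ gain, and Step~3 is moot.

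\textbf{The fallback also fails.} The estimate \eqref{trilin} requires the output frequency to be much smaller than the top input frequency. Among $(h_T,F_1,F_2)$ only $h_T$ is high, and the paired frequency is $\sim N_0$, so the hypothesis does not hold. Even granting the bound you quote, $N_{23}\,N_0^{s-s_1}(N_1\vee N_2)^{\frac12}N_0^{-\frac12+10\dl}$ leaves $N_0^{\frac12-\dl}$ when $s_1=s+11\dl$. The derivative is not absorbed.

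\textbf{What works instead: split the exponential in frequency.} No phase is needed; the obstruction $e^{i\be F[f_3]}g\notin X^{0,b}$ disappears once you decompose $e^{i\be F[f_3]}$.
For $\Pblo e^{i\be F[f_3]}$ and $\Pbhi\P_{\ll N_0}e^{i\be F[f_3]}$:
\begin{itemize}
\item The function $g$ is forced to frequency $\sim N_0$.
\item Put the exponential in $L^\infty_{T,x}$, where it is $O(1)$.
\item Apply the bilinear Strichartz estimate \eqref{bilin3} to the pair $(\P_{N_2}f_2,\wt\P_{N_0}g)$. Both lie in $X^{0,b}$ and $N_0\gg N_2$, so you gain $N_0^{-\frac12}$ with no loss in $N_2$.
\item Combine with \eqref{bilin1} on $(k,\P_{N_1}f_1)$. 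This gives $N_{23}\,N_0^{s}\,N_{23}^{-\frac12+10\dl}\,N_0^{-\frac12-s_1}\les N_0^{s+10\dl-s_1}\le N_0^{-\dl}$.
\end{itemize}
Your Step~2 pairings discard precisely this $N_0^{-\frac12}$. Local smoothing on the low-frequency $F_2$ with the maximal function on the high-frequency factor goes the wrong way, and $L^4\times L^4$ gains nothing.

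\textbf{The remaining piece $\P_{\ges N_0}e^{i\be F[f_3]}$.}
\begin{itemize}
\item Place it in $L^3_{T,x}$ via \eqref{L2pos2}.
\item Put $\P_{N_2}f_2$ in $L^\infty_{T,x}$ by Bernstein and $g$ in $L^6_{T,x}$.
\item Use \eqref{bilin1} on $(k,f_1)$.
\end{itemize}
This gives $N_0^{-\frac12+10\dl}N_2^{\frac12-s_0}$, which is summable since $s_0>10\dl$. You need the full decay $N^{-2+\max(1-s,0)+}$ of \eqref{L2pos2} here. The bare $N^{-1}$ gain you mention leaves $N_0^{s-\frac12+10\dl}N_2^{\frac12-s_0}$, which is not summable for $s\ge s_0$.
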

\begin{proof}
We first consider \eqref{Ywbd1}.
As in \eqref{Yduality}, we reduce to estimating
\begin{align*}
\IV_{\cj{N}}:=\int_{\R}\int_{0}^{T} \jb{\dx}^{s} \P_{-,\text{HI}} \cj{\P_{N_0}k} \cdot \P_{N_1}f_1 \cdot \P_{-,\text{HI}}\P_{N_{23}} \dx \big[ \cj{\P_{N_2}f_2} \cdot (  e^{i\be F[f_3]}g  )  \big] dx dt  
\end{align*}
with $N_0 \gg N_1\vee N_2$, and $k\in X^{0,b'}_{T}$ with $\|k\|_{X^{0,b'}_{T}}\leq 1$. 
We further decompose $\IV_{\cj{N}}=\IV_{\cj{N}}^{(1)} +\IV_{\cj{N}}^{(2)}+\IV_{\cj{N}}^{(3)}$, where $e^{i\be F[f_3]}$ in $\IV_{\cj{N}}^{(1)}$ is replaced by $\Pblo e^{i\be F[f_3]}$, in $\IV_{\cj{N}}^{(2)}$, it is replaced by $\Pbhi \P_{\ll N_0}e^{i\be F[f_3]} $, and in $\IV_{\cj{N}}^{(3)}$ it is replaced by $\P_{\ges N_{0}}e^{i\be F[f_3]} $. 
We consider first the contribution $\IV_{\cj{N}}^{(1)}$. By impossible frequency interactions, we may place a widened projector $\wt{\P}_{N_0}$ onto $g$ for free. Then, by the bilinear Strichartz estimate \eqref{bilin1} and \eqref{DeLinfty}, we have
\begin{align*}
|\IV_{\cj{N}}^{(1)}| & \les N_{23}^{\frac 12+ 10\dl} N_{0}^{s} \|\P_{N_0}k\|_{X^{0,b'}_{T}} \|\P_{N_1}f_1\|_{X^{0,b}_{T}} \|\Pblo e^{i\be F[f_3]}\|_{L^{\infty}_{T,x}} \| \cj{\P_{N_2}f_2}  \cdot \wt{\P}_{N_0}g\|_{L^2_{T,x}}\\
& \les N_{0}^{s+10\dl-s_1}\|k\|_{X^{0,b'}_{T}} \|f_1\|_{X^{0,b}_{T}}  \|f_2\|_{X^{0,b}_{T}} \|g\|_{X^{s_1,b}_{T}}.
\end{align*}
Since $s_1\geq s+10\dl+\dl$, the dyadic factor is bounded by $N_{0}^{-\dl}$, which is a negative power of the maximum frequency allowing to sum over all of the dyadics. The same estimates apply to  $\IV_{\cj{N}}^{(2)}$ since $\| \Pbhi \P_{\ll N_0}e^{i\be F[f_3]}\|_{L^{\infty}_{T,x}}\les 1$. We now consider the remaining term $\IV_{\cj{N}}^{(3)}$. By H\"{o}lder's inequality, \eqref{bilin1}, \eqref{L2pos2}, Bernstein's inequality, \eqref{YsCTHs}, and \eqref{L4}, we find
\begin{align*}
|\IV_{\cj{N}}^{(3)}|& \les N_0^{s} N_{23}^{\frac 12 +10\dl} \|\P_{N_0}k\|_{X^{0,b'}_{T}}\|\P_{N_1}f_1\|_{X^{0,b}_{T}} \|\P_{\ges N_0} e^{i\be F[f_3]}\|_{L^{3}_{T,x}} \|\P_{N_2}f_2\|_{L^{\infty}_{T,x}} \|g\|_{L^{6}_{T,x}} \\
& \les N_{0}^{-\frac 12+10\dl} N_{2}^{\frac 12-s_0}  \| k\|_{X^{0,b'}_{T}}  (1+\|f_3\|_{X^{0,b}_{T}})^2 \|f_3\|_{X^{s,b}_{T}} \|f_1\|_{X^{0,b}_{T}}\|f_2\|_{X^{s_0,b}_{T}} \|g\|_{X^{0,b}_{T}}.
\end{align*}
Now since $N_{0}^{-\frac 12+10\dl} N_{2}^{\frac 12-s_0}\les N_{0}^{-\frac12 + 10 \dl + \max(\frac12 - s_0,0)}$ and $s_0>10\dl$, we have a negative power of the maximum dyadic which can be used to sum over all of the dyadics.

The estimate for \eqref{Ywbd2} is similar, instead using \eqref{DeLinfty2} and \eqref{L2posdiff}. We omit the details.
\end{proof}

\subsection{The lower order terms}

\begin{lemma} \label{LEM:cubics}
Let $2\dl < s_0 \le s \le 1$, $0\leq \g<\min(s_0,\frac 12)$, and $0< T \le 1$. Then,
\begin{align}
\| &e^{i\be F[g_1]}f_1 \QQ_{h}(\cj{f_2}f_3)\|_{X^{s+\g,-b'}_{T}}  \les \jb{\be} \big( \max_{p\in \{2,8\}}\|\QQ_{h}\|_{L^p\to L^p} \big)  C_1(\|g_1\|_{X^{0,b}_{T}}) \notag \\
&\qquad \times   \Big( \prod_{j=1}^{3} \|f_j\|_{X^{s_0,b}_{T}} \|g_1\|_{X^{s,b}_{T}}+\max_{\s\in S_3}   \|f_{\s(1)}\|_{X^{s,b}_{T}} \|f_{\s(2)}\|_{X^{s_0,b}_{T}} \|f_{\s(3)}\|_{X^{s_0,b}_{T}}\Big), 
\label{Gh0}\\
\| (&e^{i\be F[g_1]}-  e^{i\be F[g_2]} ) f_1 \QQ_{h}(\cj{f_2}f_3)\|_{X^{s+\g,-b'}_{T}} \notag  \\
& \les \jb{\be}^2 \big( \max_{p\in \{2,8\}}\|\QQ_{h}\|_{L^p\to L^p} \big)  C_{2}(\|g_1\|_{X^{s,b}_{T}}, \|g_2\|_{X^{s,b}_{T}}) \bigg(\prod_{j=1}^{3} \|f_j\|_{X^{s,b}_{T}} \bigg) \|g_1-g_2\|_{X^{s,b}_{T}}, 
 \label{Gh}
\end{align}
where $\QQ_h$ is as in \eqref{Qh}, for some non-negative polynomials $C_1:\R \to \R_+$ and $C_2: \R^2 \to \R_+$.
\end{lemma}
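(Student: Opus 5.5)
The plan is to argue by duality as in the proofs of Lemmas~\ref{LEM:vwXsb} and \ref{LEM:Zbd}. By Lemma~\ref{LEM:linXsb}, it suffices to bound
\[
\int_0^T\!\!\int_\R \jb{\dx}^{s+\g}\cj{k}\cdot e^{i\be F[g_1]}f_1\,\QQ_h(\cj{f_2}f_3)\,dx\,dt
\]
for $\|k\|_{X^{0,b'}_T}\le 1$. The key structural point is that $\QQ_h$ carries no derivative and is $L^p$-bounded. This cubic term is therefore subcritical, and Strichartz estimates together with the fractional Leibniz rule should suffice. No bilinear Strichartz or phase analysis should be needed.

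\textbf{Splitting the exponential.} First write $e^{i\be F[g_1]}=\Pblo e^{i\be F[g_1]}+\Pbhi e^{i\be F[g_1]}$.
\begin{itemize}[leftmargin=*]
\item For the low part, apply $J^{s+\g}$ via Lemma~\ref{LEM:leib}. By \eqref{DeLinfty}, $J^{s+\g}\Pblo e^{i\be F}$ is bounded in $L^\infty$ (using $s+\g<2$ after splitting off one derivative). This reduces matters to $\|J^{s+\g}(f_1\QQ_h(\cj{f_2}f_3))\|$.
\item For that product, decompose dyadically with output frequency $N_0$ and inputs $N_1,N_2,N_3$, and place the dual function in $L^4_{t,x}$ via \eqref{L4}, which costs $k\in X^{0,3b'/4}\subset X^{0,b'}$.
\item Put the highest-frequency input in $L^4_{t,x}$ with weight $N_{(1)}^{s}$. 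Put the remaining two in $L^8$-type Strichartz norms (here the $L^8$ boundedness of $\QQ_h$ is used), gaining $N^{-s_0}$ from each via Bernstein and \eqref{L6hi}, \eqref{L4}.
\item The extra $N_0^{\g}\le N_{(1)}^{\g}$ is absorbed by $N_{(2)}^{-s_0}$ when $N_0\lesssim N_{(2)}$, which works since $\g<s_0$.
\item When $N_0\sim N_{(1)}\gg N_{(2)}$, the top input itself pays $N_0^{s}$, and I would use the leftover smallness $N_{(2)}^{-s_0}$ only to sum. The genuinely missing $N_0^{\g}$ here is the obstacle discussed below.
\end{itemize}

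\textbf{Main obstacle: the high-high-low configuration.} I expect the hardest case to be $N_0\sim N_{(1)}\gg N_{(2)}$, where $\g$ must be gained without any derivative to spend. My plan is to exploit modulation as in Lemma~\ref{LEM:vwXsb}. Because the nonlinearity is non-derivative, I would instead try the cheaper route of \eqref{timeloc}. Failing that, I would use the local smoothing estimate \eqref{L4locsmooth} on both $k$ and the top input, gaining $N_0^{-1/2}$. The other two factors then go into $L^\infty_xL^\infty_t$ maximal-type norms via \eqref{maximal}, costing $N^{1/4+}$ each, which is why $\g<\frac12$ is required.

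\textbf{High part of the exponential.} For $\Pbhi e^{i\be F[g_1]}$, the exponential may carry the top frequency. Write $\P_N e^{i\be F}=\be\dx^{-1}\P_N(|g_1|^2e^{i\be F})$ and use \eqref{L2pos2}. This gives $\|\P_N e^{i\be F}\|_{L^3_{T,x}}\lesssim N^{-2+\max(1-s,0)+}$, i.e.\ decay at least $N^{-1-s+}$. That decay beats $N^{s+\g}$ with $\g<\frac12$ and produces the term $\prod_j\|f_j\|_{X^{s_0,b}}\|g_1\|_{X^{s,b}}$. The remaining factors are placed in $L^{6}$/$L^{9}$-type Strichartz norms together with the dual function.

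\textbf{Difference estimate.} Estimate \eqref{Gh} follows the same scheme, replacing $e^{i\be F}$ by $e^{i\be F[g_1]}-e^{i\be F[g_2]}$ and using \eqref{DeLinfty2}, \eqref{Expdiffs} and \eqref{L2posdiff} in place of \eqref{DeLinfty} and \eqref{L2pos2}. Since only the level $s$ is needed there, the polynomial $C_2$ can absorb all the norms.
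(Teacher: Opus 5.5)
Your skeleton matches the paper's proof:
\begin{itemize}
\item duality against $k\in X^{0,b'}_T$ and a dyadic decomposition;
\item a frequency splitting of $e^{i\be F[g_1]}$;
\item $L^4_{T,x}$ on all four factors when the output is not the unique top frequency (this is where $\g<s_0$ enters);
\item \eqref{L2pos2}/\eqref{L2posdiff} in $L^3_{T,x}$ when the exponential carries frequency $\gtrsim N_0$, with the other factors in Strichartz norms near $L^6$.
\end{itemize}

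\textbf{Where you differ: the high$\times$low$\times$low$\to$high case.} The paper uses \eqref{bilin3} on $\cj{f_2}f_3$ inside $\QQ_h$ when $N_0\sim N_2$ or $N_3$, and local smoothing on $\P_{N_1}f_1$ alone when $N_0\sim N_1$. You instead put $\P_{N_0}k$ and the top input in $L^4_xL^2_t$ via \eqref{L4locsmooth}, and the two low inputs in $L^4_xL^\infty_t$ via \eqref{maximal}. (You wrote $L^\infty_x$; it must be $L^4_x$ for H\"older to balance.) This is the scheme of Case~1.2(ii) of Lemma~\ref{LEM:vwXsb}. It gains $N_0^{-1/2}$, costs $N_j^{\frac14-s_0+}$ per low frequency, and closes for $\g<\min(\frac12,2s_0)$.
\begin{itemize}
\item When the top input is $f_2$ or $f_3$, you only need $\QQ_h$ on $L^2_{t,x}$.
\item When it is $f_1$, you need $\QQ_h$ bounded on $L^2_xL^\infty_t$. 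This does not follow from $\|\QQ_h\|_{L^p\to L^p}$, and you should say where it comes from. It does hold: $\GG_h$ has symbol $-2|\xi|/(e^{2h|\xi|}-1)$, hence an integrable kernel. This is the analogue of \eqref{maximalproj} in that lemma, but it means the constant is not literally the stated one.
\end{itemize}
The same input is needed on the paper's route. \eqref{localsmooth0} gains only in $L^\infty_xL^2_T$; in $L^2_xL^\infty_T$, which is the norm in the paper's display, \eqref{maximal} loses $N_1^{\frac12+}$. Pairing against $L^\infty_xL^2_T$ forces $k\cdot\QQ_h(\cj{f_2}f_3)$ into $L^1_xL^2_T$, which cannot be reached with both factors in pure $L^p_{T,x}$ norms.

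\textbf{Commit to the local-smoothing option.} The other two options you list fail.
\begin{itemize}
\item \eqref{timeloc} only trades modulation for powers of $T$ and gains no derivative.
\item A modulation argument is unavailable. $e^{i\be F[g_1]}$ has no dispersive structure, and even for $g_1=0$ the phase $-2(\xi_3-\xi_2)(\xi_0-\xi_3)$ vanishes on $\xi_2=\xi_3$.
\end{itemize}

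\textbf{Smaller repairs.}
\begin{itemize}
\item Fractional Leibniz is meaningless in $X^{s+\g,-b'}_T$ and unnecessary. Just place $\Pblo e^{i\be F[g_1]}$ in $L^\infty_{T,x}$ inside the dyadic pairing.
\item Your high-part paragraph omits the piece $\Pbhi\P_{\ll N_0}e^{i\be F[g_1]}$. It must be treated the same way, via its $L^\infty$ bound. Its decay from \eqref{L2pos2} sits at frequencies $\ll N_0$ and cannot pay $N_0^{s+\g}$.
\item Keep the Strichartz exponents at $L^4$ (resp.\ just above $L^6$). Your $L^4\cdot L^4\cdot L^8\cdot L^8$ split violates H\"older. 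By \eqref{L6hi}, $L^8$ or $L^9$ costs $N^{\frac18}$ or $N^{\frac16}$, which $N^{-s_0}$ cannot absorb for small $s_0$.
\end{itemize}
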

\begin{remark}\rm \label{RMK:Qbd}
Notice that by setting $g_1 =g_2 \equiv 0$ in \eqref{Gh}, we obtain an estimate for $f_1 \QQ_{h}(\cj{f_2}f_3)$ without any exponential factor. Similarly, by just setting $g_2\equiv 0$, we get an estimate with a single exponential factor $ e^{i\be F[g_1]} f_1 \QQ_{h}(\cj{f_2}f_3)$.
\end{remark}
\begin{proof}

To ease the notation, we set $E: = e^{i\be F[g_1]}-  e^{i\be F[g_2]}$. To show \eqref{Gh}, by duality, we reduce to bounding
\begin{align}
\int_{0}^{T} \int_{\R} \cj{\jb{\dx}^{s+\g} k}  \cdot E\cdot f_1 \QQ_{h}( \cj{f_2}f_3) dx dt \label{Gh1}
\end{align}
for $k\in X^{0,b'}_{T}$ with $\|k\|_{X^{0,b'}_{T}}\leq 1$. We then dyadically decompose
\begin{align*}
\eqref{Gh1} 
& = \sum_{N_0,N_1,N_2,N_3} \int_{0}^{T} \int_{\R} \cj{\jb{\dx}^{s+\g} \P_{N_0} k} \cdot   E \cdot \P_{N_1}f_1 \QQ_{h}( \cj{\P_{N_2}f_2} \P_{N_3}f_3) dxdt \\
& = \sum_{N_0,N_1,N_2,N_3} \big( \I_{\cj{N}}^{(1)}+\I_{\cj{N}}^{(2)}+\I_{\cj{N}}^{(3)}\big),
\end{align*}
where in $\I_{\cj{N}}^{(1)}$, $E$ is replaced by $\Pblo E$, in $\I_{\cj{N}}^{(2)}$ it is replaced by $\Pbhi \P_{\ll N_0}E$ and in $\I_{\cj{N}}^{(3)}$, it is replaced by $\P_{\ges N_0}E$.

Consider first the contribution from $\I_{\cj{N}}^{(1)}$. By impossible frequency interactions, we see that $N_{(1)}\sim N_{(2)}$ and $N_0 \les \max(N_1,N_2,N_3)$. If $N_0 \leq N_{(3)}$, then by \eqref{L4} we have
\begin{align*}
|\I_{\cj{N}}^{(1)}| &\leq \|\QQ_{h}\|_{\text{op}} \|E\|_{L^{\infty}_{T,x}} N_0^{s+\g}\|k\|_{L^{4}_{T,x}}\|\P_{N_1}f_1\|_{L^{4}_{T,x}} \|\P_{N_2}f_2\|_{L^{4}_{T,x}}  \|\P_{N_3}f_3\|_{L^{4}_{T,x}}  \\
& \les \|\QQ_{h}\|_{\text{op}} \|E\|_{L^{\infty}_{T,x}}  N_{(1)}^{\g-s_0} \|k\|_{X^{0,b'}_{T}} \max_{\s\in S_{3}} \|f_{\s(1)}\|_{X^{s,b}_{T}}\|f_{\s(2)}\|_{X^{s_0,b}_{T}}\|f_{\s(3)}\|_{X^{0,b}_{T}},
\end{align*}
where we obtain a negative power of $N_{(1)}$ as long as $\g<s_0$. We may now assume that $N_{0}\geq N_{(2)}$. If $N_{(3)} \ges N_{(2)}$, then the same argument as above applies as we have two functions with high frequency input.  Thus, it remains to consider when $N_{(3)}\ll N_{(2)}$. We have two further cases. First, if $N_0 \sim N_1$, then by Cauchy-Schwarz, H\"{o}lder's inequality with $\frac{1}{p_{\dl}}+\frac{1}{q_{\dl}}=\frac 12$ and $q_{\dl}:=\frac{3+\dl}{1-4\dl}$, \eqref{localsmooth0}, \eqref{L4}, and \eqref{L6hi}, we have 
\begin{align*}
|\I_{\cj{N}}^{(1)}| &\leq  \|E\|_{L^{\infty}_{T,x}} \|\P_{N_1}f_1\|_{L^{2}_x L^{\infty}_{T}} \| \jb{\dx}^{s+\g}\P_{N_0}k \cdot \QQ_h( \cj{\P_{N_2}f_2} \P_{N_3}f_3)\|_{L^2_x L^{1}_{T}} \\
& \les  \|E\|_{L^{\infty}_{T,x}} N_{1}^{-\frac 12+\g}\|f_1\|_{X^{s,b}_{T}} \|\P_{N_0}k\|_{L^{p_{\dl}}_{T,x}} \|\QQ_{h}( \cj{\P_{N_2}f_2} \P_{N_3}f_3)\|_{L^{q_{\dl}}_{T,x}} \\
&\les \|\QQ_{h}\|_{L^{q_{\dl}}\to L^{q_{\dl}}} \|E\|_{L^{\infty}_{T,x}} N_{1}^{-\frac 12+\g}\|f_1\|_{X^{s,b}_{T}} \|k\|_{X^{0,b'}_{T}} \|\P_{N_2}f_2\|_{L^{2q_{\dl}}_{T,x}}\|\P_{N_3}f_3\|_{L^{2q_{\dl}}_{T,x}} \\
& \les \|\QQ_{h}\|_{L^{q_{\dl}}\to L^{q_{\dl}}} \|E\|_{L^{\infty}_{T,x}} N_{1}^{-\frac 12+\g+\frac{13\dl}{3+\dl} }\|f_1\|_{X^{s,b}_{T}} \|f_2\|_{X^{0,b}_{T}} \|f_3\|_{X^{0,b}_{T}},
\end{align*}
which is a negative power of $N_{(1)}$ provided that $\g< \frac{1}{2}-\frac{13\dl}{3+\dl}$.
Now, we consider the case $N_0 \sim N_{2}\vee N_3$. Without loss of generality, we assume that $N_0 \sim N_2 \gg N_1 \vee N_3$. In this case by \eqref{L4} and \eqref{bilin1}, we have 
\begin{align*}
|\I_{\cj{N}}^{(1)}| & \les  N_0^{s+\g} \|E\|_{L^{\infty}_{T,x}}  \|\P_{N_0}k\|_{L^4_{T,x}}\|\P_{N_1}f_1\|_{L^{4}_{T,x}} \| \QQ_{h}( \cj{\P_{N_2}f_2} \P_{N_3}f_3)\|_{L^2_{T,x}} \\
& \les \|\QQ_{h}\|_{\text{op}} N_{0}^{-\frac 12+\g} \|k\|_{X^{0,b'}_{T}} \|f_1\|_{X^{0,b}_{T}} \|f_2\|_{X^{s,b}_{T}} \|f_3\|_{X^{0,b}_{T}},
\end{align*}
which is acceptable provided that $\g<\frac 12$. This completes the estimates for $\I_{\cj{N}}^{(1)}$.

For $\I_{\cj{N}}^{(2)}$, we follow the same ideas as we did for $\I_{\cj{N}}^{(1)}$ as we still have the conditions $N_0 \les \max(N_1,N_2,N_3)$ and $N_{(1)}\sim N_{(2)}$ and we always place 
$\Pbhi \P_{\ll N_0}E$ into $L^{\infty}_{T,x}$.

Lastly, we consider $\I_{\cj{N}}^{(3)}$. Let $\wt{r}_{\dl}:=\frac{6+2\dl}{1-\frac{23}{9}\dl}$ be such that $\frac{1}{p_{\dl}}+\frac{3}{\wt{r}_{\dl}}+\frac 13 = 1$. Then, by H\"{o}lder's inequality, \eqref{L2posdiff}, \eqref{L4} and \eqref{L6hi}, we have 
\begin{align*}
|\I_{\cj{N}}^{(3)}| & \les N_{0}^{s+\g} \| \QQ_h\|_{L^{ \frac{ \wt{r}_{\dl} }{2}   }   \to L^{ \frac{ \wt{r}_{\dl} }{2}}} \|\P_{\ges N_0}E\|_{L^{3}_{T,x}} \|\P_{N_0} k \|_{L^{p_{\dl}}_{T,x}} \|\P_{N_1}f_1\|_{L^{\wt{r}_{\dl}}_{T,x}}   \|\P_{N_2}f_2\|_{L^{\wt{r}_{\dl}}_{T,x}}  \|\P_{N_3}f_3\|_{L^{\wt{r}_{\dl}}_{T,x}}  \\
&\les N_{0}^{-1+\g} \| \QQ_h\|_{L^{ \frac{ \wt{r}_{\dl} }{2}   }   \to L^{ \frac{ \wt{r}_{\dl} }{2}}} \|k\|_{X^{0,b'}_{T}} C_{2}(\|g_1\|_{X^{s,b}_{T}}, \|g_2\|_{X^{s,b}_{T}}) \|g_1-g_2\|_{X^{s,b}_{T}} \\
& \hphantom{XXXX} \times (N_1 N_2 N_3)^{\frac{13\dl}{9+3\dl}-s_0} \|f_1\|_{X^{s_0,b}_{T}}\|f_2\|_{X^{s_0,b}_{T}} \|f_3\|_{X^{s_0,b}_{T}}.
\end{align*}
Now this bound is acceptable provided that $\g\leq 1$ and $s_0>\frac{13\dl}{9+3\dl}$.  

By examining the cases, we see that we need control on $\|\QQ_{h}\|_{L^p\to L^p}$ when $p\in \{2,p_{\dl}, \frac{\wt{r}_{\dl}}{2}\}$. To control these operator norms, it suffices, by interpolation, to control the $L^p\to L^p$ operator norms only for $p\in \{2,8\}$, with $8$ chosen to be large enough and independent of $s$ and $\dl$. This completes the proof of \eqref{Gh}.

The estimate for \eqref{Gh0} follows from the strategy above, with the only change being in the estimate of $\I^{(3)}_{\cj{N}}$, where we use \eqref{L2pos2} instead of \eqref{L2posdiff} for the $L^3_{T,x}$-norm of $e^{i \be F[g_1]}$. We omit details.
\end{proof}

\begin{lemma}
\label{LEM:N0}
Let $s\geq s_0>10\dl$, $0\leq \g<s$, and $0< T \le 1$. Then, it holds that
\begin{align}
\| \NN_{0}(f_1,f_2,f_3)\|_{X^{s+\g,-b'}_{T}} & \les  \max_{\s \in S_3}\|f_{\s(1)}\|_{X^{s,b}_{T}} \|f_{\s(2)}\|_{X^{s_0,b}_{T}}\|f_{\s(3)}\|_{X^{s_0,b}_{T}},  \label{N0est}
\end{align}
where $\NN_0$ is as in \eqref{N0}.
\end{lemma}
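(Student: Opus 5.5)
Since $\NN_0$ in \eqref{N0} is a difference of two trilinear forms, we first identify its support. Write $\P_{-,\textup{hi}} = \P_{-,\textup{HI}} + \P_{-}(\Pbhi-\PbHI)$ and $\P_{-} = \P_{-,\textup{HI}} + \P_{-}\PbLO$. Then
\[
\NN_0(f_1,f_2,f_3)=-2\be\,\P_{-}(\Pbhi-\PbHI)\big[f_1\P_-\dx(\cj{f_2}f_3)\big]-2\be\,\P_{-,\textup{HI}}\big[f_1\P_{-}\PbLO\dx(\cj{f_2}f_3)\big].
\]
The first piece has output frequency $N_0\les 1$. In the second piece the derivative carries frequency $N_{23}\les 1$. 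We estimate each piece by duality, using Lemma~\ref{LEM:linXsb}. We pair with $\jb{\dx}^{s+\g}\cj{k}$, where $\|k\|_{X^{0,b'}_T}\le 1$, pass to extensions, and dyadically decompose into frequencies $N_0,N_1,N_2,N_3,N_{23}$ exactly as in the proof of Lemma~\ref{LEM:vwXsb}. In all cases $N_{(1)}\sim N_{(2)}$.

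\emph{First piece ($N_0\les 1$).} The weight $\jb{\dx}^{s+\g}$ is harmless here. The derivative costs $N_{23}\les N_{(1)}$. If $N_{23}\les 1$, we apply H\"older with four $L^4_{t,x}$ factors and \eqref{L4}, exactly as in \eqref{L4tri}. This gives the dyadic factor $N_{(1)}^{-s}N_{(2)}^{-s_0}$ to sum. If $N_{23}\gg1$, the pairs $(k,f_1)$ and $(f_2,f_3)$ both live at frequency $N_{23}$ with output $\P_{N_{23}}$. We use Cauchy--Schwarz and apply the bilinear Strichartz estimate \eqref{bilin1} to both pairs, as in \eqref{doublebilin}. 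This recovers $N_{23}^{-1+10\dl}$, which cancels the derivative up to $N_{23}^{10\dl}$. Since $N_1\sim N_{23}$ and $N_2\vee N_3\ges N_{23}$, placing $s_0$ derivatives on two high-frequency inputs absorbs this loss because $s_0>10\dl$. Alternatively, when $N_0\les1$ and $N_1\sim N_{(1)}$, we can even use $s$ on $f_1$.

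\emph{Second piece ($N_{23}\les 1$).} The operator $\P_-\PbLO\dx$ is harmless. It is bounded on the relevant $L^p$ spaces, and on $L^2_xL^\infty_t$ by \eqref{maximalproj}. We follow Case 1 of the proof of Lemma~\ref{LEM:vwXsb}, which does not use the sign structure.

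If $N_0\le N_{(3)}$, we use four $L^4$ factors. We then need $N_0^{s+\g}N_{(1)}^{-s}N_{(2)}^{-s_0}\les N_{(1)}^{\g-s_0}$ summable, so some power is needed. The hypothesis $\g<s$ here plays the role of $\g<s_0$; we use $s$ on one of the top inputs and $s_0$ on the other.

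If $N_0\ge N_{(2)}$ and the top frequency sits on $f_2$ or $f_3$, we use \eqref{L4locsmooth}, \eqref{localsmooth0} and \eqref{maximal}. This gives a factor $N_{(1)}^{\g-2s_0+}$ up to the $s$-weighting.

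The delicate case is $N_0\sim N_1\gg N_2\vee N_3$. This is the high$\times$low$\times$low interaction where $\jb{\dx}^{s+\g}$ falls on the dual function while only $s$ derivatives sit on $f_1$. We handle it as in \eqref{vcaseii}: local smoothing \eqref{L4locsmooth} on both $k$ and $f_1$, and \eqref{maximalproj} with \eqref{maximal} on the low pair. This yields $N_{(1)}^{\g-\frac12}(N_2N_3)^{\frac14-s_0+}$.

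This case is the main obstacle. The gain $N_{(1)}^{-\frac12}$ must beat $N_{(1)}^{\g}$ while the low-frequency factors stay summable. The bound $\g<s$ alone does not suffice unless we extract extra decay from $N_2,N_3$. If $\g<\frac12$ is not ensured, I would supplement it with the modulation decomposition \eqref{mods}. However, $N_{23}\les1$ makes that resonance function weak. So I would rather check whether, in this configuration, the support forces $N_2\sim N_3$, so that summation over $N_2$ is controlled by $\min(s_0,\frac14)$ and the surplus power of $N_{(1)}$ closes the sum.
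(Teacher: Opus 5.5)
\textbf{Where you match the paper.} You split $\NN_0$ into a piece with output frequency $\lesssim 1$ and the piece $-2\be\P_{-,\textup{HI}}[f_1\P_-\PbLO\dx(\cj{f_2}f_3)]$. This is exactly the paper's split (i)/(ii), and your treatment of the first piece (bilinear Strichartz twice, as in \eqref{doublebilin}) is the paper's. For the second piece the paper does not rerun Case~1 of Lemma~\ref{LEM:vwXsb}. It treats $\P_-\PbLO\dx$ as an $L^p$-bounded multiplier (kernel $O(\jb{x}^{-2})$, cf.\ \eqref{maximalproj}) and applies \eqref{Gh} with Remark~\ref{RMK:Qbd}, i.e.\ Lemma~\ref{LEM:cubics} with this operator in place of $\QQ_h$. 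That is the same family of estimates you propose.

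\textbf{The gap.} Your argument never closes. You leave the high$\times$low$\times$low case as an open plan. Your claim that, when $N_0\le N_{(3)}$, the hypothesis $\g<s$ ``plays the role of'' $\g<s_0$ is also wrong. When all frequencies are $\sim N$, the right side of \eqref{N0est} supplies only $N^{s+2s_0}$ against $N^{s+\g}$ on the left, so $s$ cannot absorb $\g$ there.

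\textbf{Why no extra decay can rescue the stated range.} For $\be\neq0$, \eqref{N0est} fails whenever $\g>2s_0$, so neither extra decay from $N_2,N_3$ nor a modulation argument can help. Let $f_1,f_2,f_3$ be time-localized free Schr\"odinger waves with $L^2$-normalized data. Take the Fourier supports in $[-N-\frac1{10},-N+\frac1{10}]$ for $f_1,f_2$ and in $[-N-\frac{11}{10},-N-\frac{9}{10}]$ for $f_3$. Then:
\begin{itemize}
\item $\cj{f_2}f_3$ sits at frequency $\approx-1$, where the symbol of $\P_-\PbLO\dx$ has size $\sim1$.
\item The output sits at frequency $\approx-N-1$, so only the second piece contributes.
\item By \eqref{mods}, the output modulation is $O(1)$.
\item After a Galilean change of variables, $|\NN_0(f_1,f_2,f_3)|$ is independent of $N$, so its $L^2_{T,x}$-norm is $\sim1$.
\end{itemize}
Testing by duality against $\ind_{[0,T]}\NN_0(f_1,f_2,f_3)$ gives $\|\NN_0(f_1,f_2,f_3)\|_{X^{s+\g,-b'}_T}\gtrsim N^{s+\g}$, while the right side of \eqref{N0est} is $\lesssim N^{s+2s_0}$. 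So the configuration that breaks the stated range has all frequencies comparable; it is not the high$\times$low$\times$low one you singled out.

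\textbf{How to fix it.} The paper's proof routes (ii) through \eqref{Gh}, so it only covers $0\le\g<\min(s_0,\frac12)$, the hypothesis of Lemma~\ref{LEM:cubics}. Only $\g=0$ is ever used, since $\NN_0$ enters only the $y$-equation. The missing step is therefore to restrict to this range, not to hunt for extra decay. Once you do, your own bounds close:
\begin{itemize}
\item When $N_0\le N_{(3)}$, you get $N_{(1)}^{\g-s_0}$ up to logarithms.
\item When the top frequency sits on $f_2$ or $f_3$, you get $N_{(1)}^{\g-2s_0+}$.
\item In the high$\times$low$\times$low case, sum $N_{(1)}^{\g-\frac12}(N_2N_3)^{\frac14-s_0+}$ over $N_2\sim N_3\lesssim N_{(1)}$ (forced by $N_{23}\lesssim1$). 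This gives $N_{(1)}^{\g-\min(\frac12,2s_0)+}$, and Cauchy--Schwarz in $N_0\sim N_1$ finishes.
\end{itemize}
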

\begin{proof}
Note that the operator $\PbHI-\Pbhi$ is a frequency projection to frequencies $\{|\xi|\sim 1\}$.
From \eqref{N0}, we see that we have two kinds of contributions to $\NN_0$: (i) one with an outer factor of $\PbHI-\Pbhi$, and (ii) one with a factor $\Id - \P_\HI = \P_\LO$ on the derivative. For contribution (i), we use  $\PbHI-\Pbhi$ to control the outer derivatives $s+\g$ and then can apply the bilinear Strichartz estimate \eqref{bilin1} twice, similar to as we did in \eqref{doublebilin}, which weakens the derivative to $N_{23}^{10\dl}$ and which can be absorbed by $N_2\vee N_3$ and $s>10\dl$. For the latter contribution (ii), the projection $\P_\LO$ completely controls the derivative and we can simply apply \eqref{Gh} with Remark~\ref{RMK:Qbd}. 
\end{proof}

\section{Local well-posedness}\label{SEC:LWP}

We consider now the system of equations for $(v,y,z,w)$, which we write in the Duhamel formulation as:
\begin{align}
\begin{split}
 v(t) & = S(t) v_0 + \mathcal{I}\big[ \mathfrak{N}_{v}[v,v+y+z]\big] ,\\
  y(t) & = S(t) y_0 + \mathcal{I}\big[ \mathfrak{N}_{y}[v,y,z,w]\big] , \\
  z(t) & =S(t) z_0 + \mathcal{I}\big[ \mathfrak{N}_{z}[v+y+z]\big], \\
  w(t) & = S(t)w_0 +\mathcal{I}\big[ \mathfrak{N}_{w}[w,v+y+z]\big],
  \end{split} \label{system}
\end{align}
where $\mathfrak{N}_f$ is as in \eqref{veq}, \eqref{Yeq2}, \eqref{Zeq}, and \eqref{weq} for $f\in\{v,y,z,w\}$, and 
\begin{align}
\mathcal{I}[G](t) : = \int_{0}^{t} S(t-t')G(t')dt'.
\notag
\end{align}

\begin{proposition}\label{PROP:system}
Let $0<s <1$, 
$\dl>0$ be sufficiently small so that the results of Section~\ref{SEC:ests} all hold true and $50\dl \leq s$, and set $\s:=s-11\dl$.
Then, the system in \eqref{system} for $(v,y,z,w)$ is locally well posed in 
\begin{align*}
\mathfrak{H}^{s,\s}(\R): = H^{s}_{+}(\R)\times H^{\s}_{-}(\R)\times H^{s}(\R)\times H^{s}_{-}(\R)
.
\end{align*}
More precisely, given $(v_0,y_0, z_0,w_0)\in \mathfrak{H}^{s,\s}(\R)$, and
\begin{align}\notag
T=T(\|(v_0,y_0,z_0,w_0)\|_{\mathfrak{H}^{s,\s}(\R)}) \sim (1+ \|(v_0,y_0,z_0,w_0)\|_{\mathfrak{H}^{s,\s}(\R)})^{-A},
\end{align}
for some $A=A(\dl,s)\geq 1$,
there is a unique solution 
\begin{align*}
(v,y,z,w)\in \mathfrak{X}^{s,\s}_{T}:= X^{s,\frac 12+\dl}_{T}\times X^{\s,\frac 12+\dl}_{T}\times X^{s,\frac 12+\dl}_{T}\times X^{s,\frac{1}{2}+\dl}_{T} \embeds C([-T,T];\Hf^{s,\s}(\R))
\end{align*}
 to the system \eqref{system} such that $(v,y,z,w)\vert_{t=0}=(v_0,y_0,w_0,z_0)$ and for which
 the data-to-solution map $\Phi: (v_0,y_0,w_0,z_0)\mapsto (v,y,z,w)$ is locally Lipschitz continuous. Moreover, we have the following nonlinear smoothing properties, for any $s_0>0$:
 \begin{align}
 \begin{split}
\| v- S(t)v_0\|_{C_{T}H^{s+\dl}_x}+\| z- S(t)z_0\|_{C_{T}H^{s_0}_x}
+& \| w- S(t)w_0\|_{C_{T}H^{s+\dl}_x} \\
&  \leq C(T, s_0, \|(v_0,y_0,z_0,w_0)\|_{\Hf^{s,\s}(\R)})
.
\end{split} \label{nonlinsmooth}
\end{align}
 \end{proposition}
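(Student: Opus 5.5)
The plan is a contraction mapping argument for the map $\Gamma=(\Gamma_v,\Gamma_y,\Gamma_z,\Gamma_w)$ given by the right-hand side of \eqref{system}. We work on a ball $B_R\subset\mathfrak{X}^{s,\s}_T$ with $R\sim 2C_0\|(v_0,y_0,z_0,w_0)\|_{\Hf^{s,\s}}$. By Lemma~\ref{LEM:linXsb}(i), each linear term is bounded by $C_0$ times the data norm in the corresponding space, and each Duhamel term is controlled in $X^{\cdot,\frac12+\dl}_T$ by the $X^{\cdot,-\frac12+\dl}_T$-norm of its forcing. All multilinear estimates of Section~\ref{SEC:ests} are stated with $-b'=-\frac12+2\dl$, so \eqref{timeloc} converts them into the needed $X^{\cdot,-\frac12+\dl}_T$ bounds with a gain of $T^{\dl}$. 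This positive power of $T$ is what produces $T\sim(1+\|\text{data}\|)^{-A}$, with $A$ determined by the highest polynomial degree appearing (the polynomials $C_1,C_2$ and the exponential factors).

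For each component we apply the lemmas with $s_0:=\s=s-11\dl$. The condition $50\dl\le s$ guarantees $s_0>13\dl$ and the other smallness thresholds.
\begin{itemize}
\item[$v$, $w$:] The main cubic terms are handled by Lemma~\ref{LEM:vwXsb} with regularity $s$ on one factor and $s_0$ on the other two. Every factor $v+y+z$ lies in $X^{s_0,b}_T$, and the single factor needing $X^{s,b}$ can be taken to be $v$ (respectively $w$), or else one uses the $\max$ over permutations with $v+z\in X^{s,b}$. I expect to have to check this carefully, since $y$ is only at level $\s$; where $y$ carries the top frequency, I plan to use the $\g$-room in \eqref{vtrilin} by applying it at $s-11\dl$ with $\g=11\dl$. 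The $\QQ_h$ terms and the term with the exponential in \eqref{weq} are handled by Lemma~\ref{LEM:cubics} and Remark~\ref{RMK:Qbd}.
\item[$z$:] Lemma~\ref{LEM:Zbd} gives any regularity $\s$, and Lemma~\ref{LEM:cubics} together with the $\Pblo$ projection handles the $\QQ_h$ terms.
\item[$y$:] We estimate at level $\s$. Lemma~\ref{LEM:N0} handles $\NN_0$. Lemma~\ref{LEM:N1est} at level $\s$ handles $\NN_1(v+y+z,v,v+z)$, whose third slot needs $X^{\s+11\dl,b}=X^{s,b}$; this holds since $v+z\in X^{s,b}_T$, which is precisely why $\s=s-11\dl$. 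Lemma~\ref{LEM:N2perp} at level $\s$ handles $\NN_2^\perp$, and Lemma~\ref{LEM:N2pos} with $(s,s_0)\to(\s,\s)$ handles $\NN_2^{\pos}$. The term $\NN_2(\cdot,\cdot,e^{i\be F}w)$ is covered by Lemma~\ref{LEM:N2rem} with $s_1=s=\s+11\dl$, again using that $w\in X^{s,b}_T$.
\end{itemize}

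The difference estimates follow from multilinearity together with \eqref{Yposest}, \eqref{Ywbd2}, \eqref{Gh}, and the difference bounds for $e^{i\be F}$. These show that $\Gamma$ is a contraction on $B_R$ for $T$ small. This gives existence, uniqueness in the ball, and local Lipschitz continuity of $\Phi$ on bounded sets of data. Unconditional uniqueness in $\mathfrak{X}^{s,\s}_T$ then follows by the standard argument of shrinking $T$ around any two solutions. The embedding into $C([-T,T];\Hf^{s,\s})$ is \eqref{YsCTHs}.

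For the smoothing bound \eqref{nonlinsmooth}, we rerun the $v$ and $w$ estimates with $\g=\dl$, which is allowed since $\dl<\min(\frac12,s_0)-10\dl$. The nonlinearities for $v$ and $w$ then lie in $X^{s+\dl,-\frac12+\dl}_T$, and \eqref{YsCTHs} gives control of $v-S(t)v_0$ and $w-S(t)w_0$ in $C_TH^{s+\dl}$. For $z$, the $\Pblo$ projection lets us put any $s_0$ in Lemma~\ref{LEM:Zbd} and \eqref{Gh0}.

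The main obstacle is the bookkeeping of regularity: making sure that in every term the one factor required at level $s$ (or $s+11\dl$) is never $y$. The most delicate instance is the $v$ equation with $y$ at top frequency, where I will rely on the extra smoothing parameter $\g$.
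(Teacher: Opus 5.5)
Your proposal follows the paper's proof: a contraction in a ball of $\mathfrak{X}^{s,\s}_T$ built from the estimates of Section~\ref{SEC:ests}, the factor $T^{\dl}$ from \eqref{timeloc}, the standard extension of uniqueness to all of $\mathfrak{X}^{s,\s}_T$, and smoothing read off from the $\g>0$ versions of the $v$, $w$ estimates together with the $\Pblo$ projection for $z$. Your bookkeeping for the $y$-equation is exactly what forces $\s=s-11\dl$: the third slot of $\NN_1(v+y+z,v,v+z)$ must lie in $X^{\s+11\dl,b}=X^{s,b}$, and Lemma~\ref{LEM:N2rem} is used with $s_1=s$.

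\textbf{Correction for the $v$ and $w$ equations.} The right-hand sides of \eqref{vtrilin} and \eqref{Gh0} are maxima over all permutations. So they are finite only if every input lies in $X^{s,b}_T$, and you cannot designate $v$ (or $w$) as the high-regularity factor. Since $y$ is only in $X^{\s,b}_T$, applying these lemmas at base regularity $s$ does not close.

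\begin{itemize}
\item Your fallback is the right fix, but apply it to the whole nonlinearity, not only where $y$ has top frequency. Use Lemma~\ref{LEM:vwXsb} and Lemma~\ref{LEM:cubics} with base regularity $\s$, $s_0=\s$ and $\g=11\dl$. This is admissible because $\s\geq 39\dl$ gives $\min(\frac12,\s)>21\dl$.
\item The same objection applies to your smoothing step: $\g=\dl$ at base $s$ is not available. Instead take $\g=12\dl$ at base $\s$, which needs $\min(\frac12,\s)>22\dl$ and holds. This puts the $v$ and $w$ nonlinearities in $X^{s+\dl,-\frac12+\dl}_T$.
\item You omitted the term $\P_{-,\textup{hi}}[(v+y+z)\QQ_h(|v+y+z|^2)]$ in \eqref{Yeq2}. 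It is handled by \eqref{Gh0} with $g_1=0$ at level $\s$.
\end{itemize}
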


 \begin{proof}
 To show that the system \eqref{system} is locally well-posed, we run a contraction mapping argument in a ball in $\mathfrak{X}^{s,\s}_{T}$, combining the nonlinear estimates in Lemmas~\ref{LEM:vwXsb}, \ref{LEM:Zbd}, \ref{LEM:N1est}, \ref{LEM:N2perp}, \ref{LEM:N2pos}, \ref{LEM:N2rem}, \ref{LEM:cubics}, and \ref{LEM:N0}, 
 and we always use \eqref{timeloc} to gain a factor of $T^{\dl}$ in the nonlinear estimates.
 Notice that the uniqueness statement here is for functions in the given ball in $\mathfrak{X}^{s,\s}_{T}$. However, a standard argument then extends this uniqueness to the entire space $\mathfrak{X}^{s,\s}_{T}$.
 The nonlinear smoothing follows from the nonlinear smoothing estimates for $(v,z,w)$ in Lemma~\ref{LEM:vwXsb}, Lemma~\ref{LEM:Zbd}, Lemma~\ref{LEM:cubics}, and Remark~\ref{RMK:Qbd}. Note that the nonlinear smoothing for $z$ is trivial since the nonlinear term is compactly supported in frequency space.
 \end{proof}

\subsection{Proof of Theorem~\ref{THM:LWP}}

Fix $(\dl,s,\s)$ as in Proposition~\ref{PROP:system} and such that $s\leq \frac 12$, and let $u_0\in H^{s}(\R)$. Given $j\in \N$, we let $u_{0,j}:= \P_{\leq j}u_0$, which is a smooth projection onto frequencies $\{|\xi|\leq j\}$.  By the local well-posedness result for \eqref{INLS} in \cite{MP}, there exists  a maximal time of existence $T_{\max}(j)>0$ and a unique classical solution $u_{j}\in C([0,T_{\max}(j));H^{\infty}(\R))$ to \eqref{INLS} such that $u_{j}(0)=u_{0,j}$. 
Moreover, $T_{\max}(j)$ satisfies a blow-up criterion in the sense that if $T_{\max}(j)<\infty$, then 
\begin{align}
\lim_{t\to T_{\max}(j)}\|u_{j}(t)\|_{H^{\frac 12+9\dl}(\R)} =+\infty. \label{blowup}
\end{align} 
The first step is to show that $T_{\max}(j)$ can be bounded below uniformly in $j\in \N$. Note that this is necessary since we do not, in the focusing case, have smooth global solutions for large data.

For $j\in \N$ and for the smooth solutions $u_j$ as above on $[0,T_{\max}(j))$, we define 
\begin{align}
\begin{split}
\wt{v}_{j}(t) : = \Pbhip[ e^{i\be F[u_j(t)]}u_j(t)], &\quad  \wt{y}_{j}(t) : = \P_{-,\text{hi}}[ e^{i\be F[u_j(t)]}u_j(t)], \\
 \wt{z}_{j}(t):= \Pblo[ e^{i\be F[u_j(t)]}u_j(t)],   &\quad \wt{w}_{j}(t):=\P_{-,\text{hi}}u_j(t).
\end{split} 
\label{vjs}
\end{align}
so that 
\begin{align}
u_{j} = e^{-i\be F[\wt{v}_j+\wt{y}_j+\wt{z}_j]}(\wt{v}_j+\wt{y}_j+\wt{z}_j).
\label{ujformula}
\end{align}
 Note that $(\wt{v}_j,\wt{y}_j,\wt{z}_j,\wt{w}_j)$ are smooth functions and thus satisfy the equations \eqref{veq}, \eqref{Yeq}, \eqref{Zeq}, and \eqref{weq}, respectively.
Moreover, $\wt{y}_j$ also satisfies 
\begin{align}
\wt{y}_{j} = \Y^{\pos}[ \wt{v}_j+\wt{y}_j+\wt{z}_j] + \Y^{\negg}[\wt{v}_j+\wt{y}_j+\wt{z}_j, \wt{w}_j] +e^{i\be F[\wt{v}_j+\wt{y}_j+\wt{z}_j]}\wt{w}_{j}
\label{yjdecomp}
\end{align}
and the equation \eqref{Yeq2}.

Meanwhile, for $j\in \N\cup\{\infty\}$, we define $v_{0,j}=\Pbhip[ e^{i\be F[u_{0,j}]}u_{0,j}]$, $y_{0,j}:= \P_{-,\text{hi}}[ e^{i\be F[u_{0,j}]}u_{0,j}]$, $z_{0,j}:= \Pblo[ e^{i\be F[u_{0,j}]}u_{0,j}]$, and $w_{0,j}:= \P_{-,\text{hi}}u_{0,j}$. We use the shorthand $v_{0,\infty}:=v_{0}$ and similarly for $y_{0,\infty}$, $z_{0,\infty}$, and $w_{0,\infty}$.
 It follows from Lemma~\ref{LEM:JsL2} that
\begin{align}
\sup_{j\in \N\cup \{\infty\}}\| (v_{0,j}, y_{0,j}, z_{0,j}, w_{0,j})\|_{\Hf^{s,\s}} \les (1+\|u_0\|_{L^2})^{2}\|u_0\|_{H^{s}}. \label{unifdata}
\end{align}
By applying Proposition~\ref{PROP:system} and \eqref{unifdata}, we find that there is $T_{\ast}\sim (1+\|u_0\|_{H^{s}})^{-A}$, uniform in $j\in \N$, and a
 unique solution $(v_j,y_j,z_j,w_j)\in \mathfrak{X}^{s,\s}_{T_{\ast}}$ to \eqref{system} satisfying $(v_j,y_j,z_j,w_j)\vert_{t=0}= (v_{0,j}, y_{0,j}, z_{0,j}, w_{0,j})$ and is such that
\begin{align}
\| (v_j,y_j,z_j,w_j)\|_{\mathfrak{X}^{s,\s}_{T_{\ast}}}  & \les (1+\|u_0\|_{L^2})^{2} \|u_0\|_{H^{s}} ,
\label{LWPbd}\\
\| (v_j,y_j,z_j,w_j)- (v_{\infty},y_{\infty},z_{\infty},w_{\infty})\|_{\mathfrak{X}^{s,\s}_{T_{\ast}}} &\les \| (v_{0,j}, y_{0,j}, z_{0,j}, w_{0,j}) - (v_{0}, y_{0}, z_{0}, w_{0})\|_{\Hf^{s,\s}}, \label{Lipconv}
\end{align}
for any $j\in \N\cup\{\infty\}$,
and for which
 \begin{align}
 \begin{split}
\sup_{j\in \N\cup\{\infty\}} \Big( \| v_j- S(t)v_{0,j}\|_{C_{T_{\ast}}H^{s+\dl}_x}+\| z_{j}- S(t)&z_{0,j}\|_{C_{T_{\ast}}H^{s_0}_x}
+ \| w_{j}- S(t)w_{0,j}\|_{C_{T_{\ast}}H^{s+\dl}_x} \Big) \\
&  \leq C(T_{\ast}, s_0, \|u_0\|_{H^s}),
\end{split} %\label{nonlinsmooth}
\end{align}
for any $s_0>0$.

We now wish to show that in fact 
\begin{align}
(v_{j},y_j, z_j,w_j)= (\wt{v}_{j},\wt{y}_j, \wt{z}_j,\wt{w}_j) \quad \text{for}\quad j\in \N. \label{vequal}
\end{align}
By the uniqueness statement in $\mathfrak{X}^{s,\s}_{T}$ in Proposition~\ref{PROP:system}, \eqref{vequal} follows once we show that $(\wt{v}_{j},\wt{y}_j, \wt{z}_j,\wt{w}_j)\in \mathfrak{X}^{s,\s}_{T_{\max}(j)}$, which is the content of the next lemma.

\begin{lemma}\label{LEM:Xsbreg}
Let $s_1\geq s$ where $s$ is as in Proposition~\ref{PROP:system} and let $(\wt{v}_j,\wt{y}_j,\wt{z}_j,\wt{w}_j)$ be as defined in \eqref{vjs}. Then, it holds that 
\begin{align}
\| (\wt{v}_j,\wt{y}_j,\wt{z}_j,\wt{w}_j)\|_{\mathfrak{X}^{s_1,s_1}_{T_{\max}(j)}} \leq C(\|u_j\|_{L^{\infty}_{T_{\max}(j)}H^{s_1+2}_x}) <\infty. \label{Xsbvj}
\end{align}
\end{lemma}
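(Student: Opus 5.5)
The plan is to bypass the nonlinear estimates of Section~\ref{SEC:ests} altogether. Since $u_j$ is smooth, the cheap embedding $X^{s_1,1}_T\supset$ \{functions with one time derivative\} suffices. I work on $[0,T]$ with $T<T_{\max}(j)$ finite, or on $[0,T_{\max}(j)]$ when $T_{\max}(j)<\infty$; the bound will only depend on $\|u_j\|_{L^\infty_T H^{s_1+2}_x}$, so this is harmless. Since $b=\frac12+\dl\le 1$, we have $\|f\|_{X^{s_1,b}_T}\le \|f\|_{X^{s_1,1}_T}$. By Plancherel, for a function $U$ on $\R\times\R$,
\[
\|U\|_{X^{s_1,1}}\les \|U\|_{L^2_tH^{s_1}_x}+\|(\dt+i\dx^2)U\|_{L^2_tH^{s_1}_x}.
\]
For $f$ on $[0,T]$ I would extend it by free evolution: set $U(t)=S(t)f(0)$ for $t<0$, $U(t)=f(t)$ on $[0,T]$, and $U(t)=S(t-T)f(T)$ for $t>T$, and then multiply by a smooth time cutoff $\eta$ equal to $1$ on $[0,T]$. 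Since $f\in C([0,T];H^{s_1})$, $U$ is continuous across $t=0,T$. Hence $(\dt+i\dx^2)(\eta U)=\eta\ind_{[0,T]}(\dt+i\dx^2)f+\eta' U$, with no boundary Dirac masses. This gives
\[
\|f\|_{X^{s_1,b}_T}\les_T \|f\|_{L^\infty_TH^{s_1}_x}+\|(\dt+i\dx^2)f\|_{L^\infty_TH^{s_1}_x}.
\]

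It then remains to bound these two quantities for each of $\wt v_j,\wt y_j,\wt z_j,\wt w_j$. For the first quantity, the projections $\Pbhip,\P_{-,\hi},\Pblo$ are bounded on $H^{s_1}$, so I need $e^{i\be F[u_j]}u_j\in L^\infty_TH^{s_1}$. I would get this by writing $e^{i\be F}=\Pblo e^{i\be F}+\Pbhi e^{i\be F}$. The low part lies in $W^{k,\infty}$ for every $k$, since $\dx e^{i\be F}=i\be|u|^2e^{i\be F}$ and by \eqref{DeLinfty}. The high part is $\dx^{-1}\Pbhi(i\be|u|^2e^{i\be F})\in H^{k}$ for every $k\le s_1+2$, by iterating this identity with $u\in H^{s_1+2}\subset W^{s_1+1,\infty}$. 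The fractional Leibniz rule (Lemma~\ref{LEM:leib}) then gives a polynomial bound in $\|u_j\|_{L^\infty_TH^{s_1+2}}$; alternatively, Lemma~\ref{LEM:JsL2} can be used after replacing $s$ by $s_1$ and carrying more derivatives on $u$.

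For the second quantity, I would not use the closed system. Instead I would use the computation in the proof of Lemma~\ref{LEM:gaugeeqns}, which gives
\[
(\dt+i\dx^2)(e^{i\be F}u)=-2\be e^{i\be F}u\,\P_-\dx(|u|^2)+e^{i\be F}u\,\QQ_h(|u|^2),
\]
together with \eqref{INLS2} for $\wt w_j=\P_{-,\hi}u_j$. Each right-hand side is a product of $e^{i\be F}$, treated as above, with $u$, $\dx(|u|^2)\in H^{s_1+1}$, and $\QQ_h(|u|^2)$. Here $\QQ_h$ is bounded on every $H^k$ since it is a bounded Fourier multiplier (Lemma~\ref{LEM:GGh}), and $H^{s_1+1}$ is an algebra. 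Hence each of these terms lies in $L^\infty_TH^{s_1}$ with a polynomial bound in $\|u_j\|_{L^\infty_TH^{s_1+2}}$. This is where the two extra derivatives are spent: one on $\dx$ in the nonlinearity and one to keep $L^\infty$ control.

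The main (mild) obstacle is the exponential factor $e^{i\be F}$: it is not in $L^2$ and has a nonzero limit at $\pm\infty$, so $J^{s_1}(e^{i\be F}g)$ cannot be bounded by a naive Leibniz rule with $e^{i\be F}\in H^{s_1}$. I would handle it exactly via the low/high split described above: place $\Pblo e^{i\be F}$ in $L^\infty$ with all derivatives bounded, and place $\Pbhi e^{i\be F}$ in $H^{s_1+1}$. Summing the four components yields \eqref{Xsbvj}.
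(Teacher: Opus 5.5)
Your proposal is correct and follows essentially the same route as the paper: you bound each variable in $X^{s_1,1}_T\hookrightarrow X^{s_1,b}_T$ by its $L^\infty_TH^{s_1}$ norm plus the $H^{s_1}$ norm of $(\dt+i\dx^2)$ applied to it (the paper cites \cite[Proposition 3.2]{MP} for this step, while you prove it via the free-evolution extension), and then control the nonlinearities of Lemma~\ref{LEM:gaugeeqns} by the fractional Leibniz rule using $u_j\in L^\infty_TH^{s_1+2}$. Your nonlinearities are written in terms of $u$, but they coincide with the paper's because $e^{i\be F[u]}u=v+y+z$ and $|u|^2=|v+y+z|^2$. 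One caveat, which the paper's statement shares: take the bound on $[0,T]$ for finite $T<T_{\max}(j)$, with constants depending on $T$, rather than on $[0,T_{\max}(j)]$. When $T_{\max}(j)<\infty$, the right-hand side there is infinite by \eqref{blowup}, and the finite-$T$ version is all that is needed for \eqref{vequal}.
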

\noi
Note that the RHS of \eqref{Xsbvj} does not need to be uniform in $j$ in order to conclude \eqref{vequal}.

\begin{proof}
Since $(\wt{v}_j,\wt{y}_j,\wt{z}_j,\wt{w}_j)$ satisfy \eqref{veq}, \eqref{Yeq}, \eqref{Zeq}, and \eqref{weq}, by following the argument in \cite[Proposition 3.2]{MP}, we see that for any $0<T\leq T_{\max}(j)$, it holds that
\begin{align}
\begin{split}
\| \wt{v}_j\|_{X^{s_1,1}_{T}}& \les \|\wt{v}_j\|_{L^{\infty}_{T}H^{s_1}_x} + \|\mathfrak{N}_{v}[\wt{v}_j, \wt{v}_j+\wt{y}_j+\wt{z}_j]\|_{L^{2}_{T}H^{s_1}_x}, \\
\| \wt{y}_j\|_{X^{s_1,1}_{T}} &\les \|\wt{y}_j\|_{L^{\infty}_{T}H^{s_1}_x} + \|\mathfrak{N}^{(0)}_{y}[\wt{v}_j+\wt{y}_j+\wt{z}_j]\|_{L^{2}_{T}H^{s_1}_x},  \\
\| \wt{z}_j\|_{X^{s_1,1}_{T}}& \les \|\wt{v}_j\|_{L^{\infty}_{T}H^{s_1}_x} + \|\mathfrak{N}_{z}[\wt{v}_j+\wt{y}_j+\wt{z}_j]\|_{L^{2}_{T}H^{s_1}_x}, \\
\| \wt{w}_j\|_{X^{s_1,1}_{T}}& \les \|\wt{w}_j\|_{L^{\infty}_{T}H^{s_1}_x} + \|\mathfrak{N}_{w}[\wt{w}_j, \wt{v}_j+\wt{y}_j+\wt{z}_j]\|_{L^{2}_{T}H^{s_1}_x}.
\end{split} \label{Xs1bds}
\end{align} 
Now the bounds for the right-hand sides in \eqref{Xs1bds} follow simply from the fractional Leibniz rule and using the fact that $(\wt{v}_j,\wt{y}_j,\wt{z}_j,\wt{w}_j)$ are all sufficiently smooth functions at least belonging to $L^{\infty}_{T}H^{s_1+2}_x$, which is sufficient to absorb the derivative losses in $\mathfrak{N}_{v}$, $\mathfrak{N}_{y}^{(0)}$, $\mathfrak{N}_{z}$, and $\mathfrak{N}_{w}$.
We then obtain \eqref{Xsbvj} from the embedding $X^{s_1,1}_{T}\embeds X^{s_1,b}_{T}$.
\end{proof}

It now follows from \eqref{Xsbvj}, \eqref{ujformula}, and \eqref{yjdecomp} that 
\begin{align}
u_{j} &= e^{-i\be F[v_j+y_j+z_j]}(v_j+y_j+z_j), \label{ujformula2}\\
y_{j}& =\Y^{\pos}[v_j+y_j+z_j] + \Y^{\negg}[ v_j+y_j+z_j, w_j]+ e^{i\be F[v_j+y_j+z_j]}w_{j}. \label{yjdecomp2}
\end{align}

At this stage, the equality \eqref{vequal} only holds for the minimum amount of time $T_{\max}(j) \wedge T_{\ast}$,  still depending upon $j$. 
We now establish a bound on $u_j$ in $L^{\infty}_{T}H_x^{\frac 12+}$ for a short time which is uniform in $j$, at which point, \eqref{blowup} then allows us to show that $T_{\max}(j)$ is lower bounded by some strictly positive number which is independent of $j$; see \eqref{Tlowerbd} below.
First, we note that using the Duhamel formulation \eqref{system}, Lemma~\ref{LEM:linXsb}, and the estimates \eqref{LWPbd}, \eqref{vtrilin}, \eqref{ztrilin}, \eqref{N1est}, \eqref{N2est}, \eqref{Yposest0}, \eqref{Ywbd1}, \eqref{Gh0}, \eqref{N0est}, we have
\begin{align*}
\| (v_j,y_j,z_j,w_j)\|_{\mathfrak{X}^{\frac 12+20\dl,\frac 12+9\dl}_{T}} 
&\les \|(v_{0,j},y_{0,j},z_{0,j},w_{0,j})\|_{\Hf^{\frac 12+20\dl,\frac 12+9\dl}}  \\
&\qquad +C( \|u_0\|_{H^{s}}) T^{\dl} \| (v_j,y_j,z_j,w_j)\|_{\mathfrak{X}^{\frac 12+20\dl,\frac 12+9\dl}_{T}},
\end{align*} 
and thus by choosing $T_{\ast \ast}=T_{\ast \ast}(\|u_{0}\|_{H^s})>0$ 
sufficiently small it follows from \eqref{Xsbvj} that 
\begin{align*}
\| (v_j,y_j,z_j,w_j)\|_{\mathfrak{X}^{\frac 12+20\dl,\frac 12+9\dl}_{T_{\ast} \wedge T_{\ast \ast} \wedge T_{\max}(j) }} \les  \|(v_{0,j},y_{0,j},z_{0,j},w_{0,j})\|_{\Hf^{\frac 12+20\dl,\frac 12+9\dl}}.
\end{align*}
Consequently, together with \eqref{ujformula2}, \eqref{L2F1}, \eqref{YsCTHs}, and \eqref{L2F2}, we deduce that
\begin{align*}
\| u_{j}\|_{L^{\infty}_{T_{\ast} \wedge T_{\ast \ast} \wedge T_{\max}(j) } H_x^{\frac 12+9\dl}}  \les \|(v_{0,j},y_{0,j},z_{0,j},w_{0,j})\|_{\Hf^{\frac 12+20\dl,\frac 12+9\dl}}.
\end{align*} 
Comparing this with \eqref{blowup} we then conclude that 
\begin{align}
\inf_{j\in \N}T_{\max}(j) \geq T_{\ast} \wedge T_{\ast \ast} =: T_{0}, \label{Tlowerbd}
\end{align}
with $T_0$ only depending on $\|u_0\|_{H^{s}}$ and not on $j\in \N$.

Now we can pass to the limit in $j$ on the time interval $[0,T_0]$ and construct low-regularity solutions to \eqref{INLS}.
We define 
\begin{align}
u :  =e^{-i\be F[v_{\infty}+y_{\infty}+z_{\infty}]}(v_{\infty}+y_{\infty}+z_{\infty}), \quad u(0)=e^{-i\be F[v_{0}+y_{0}+z_{0}]}(v_{0}+y_{0}+z_{0})=u_0. \label{udefn}
\end{align}
By using \eqref{ujformula2}, \eqref{L2F2}, and \eqref{Lipconv}, we deduce that
\begin{align}\notag
\| u_{j} - u\|_{L^{\infty}_{T_0}H^{\s}_x} \les  \| (v_{0,j}, y_{0,j}, z_{0,j}, w_{0,j}) - (v_{0}, y_{0}, z_{0}, w_{0})\|_{\Hf^{s,\s}}  
\end{align}
and hence $u_j \to u$ in $C_{T_0}H^{\s}_x$. Moreover, since $\P_{-,\text{hi}}u_j=w_{j}$, it follows that $\P_{-,\text{hi}}u=w$ with the convergence taking place in the more regular space $C_{T_0}H^{s}_{x}$. In the following, we ease the notation by defining $(v,y,z,w): = (v_{\infty}, y_{\infty}, z_{\infty}, w_{\infty})$.

We now upgrade the regularity and convergence property of $u$ to $C_{T_0}H^{s}_x$. Notice that this reduces to showing that $\P_{+}u\in L^{\infty}_{T_0}H^s_x$ and that $\P_{+}u_j$ converges to $\P_+ u$ in $L^{\infty}_{T_0}H^s_x$. It is clear that we need only justify these statements for $\P_{+,\text{HI}}u$.
For this purpose, we need to ensure that the structural decomposition \eqref{yjdecomp} holds in the limit.
By using \eqref{Lipconv} with \eqref{YPOS2} and \eqref{YNEG22}, we may pass to the limit (say within the norm $L^{\frac 32}_{T_0,x}$) in both sides of \eqref{yjdecomp2} to find that  $y$ satisfies
\begin{align}
y& =\Y^{\pos}[v+y+z] + \Y^{\negg}[ v+y+z, w]+ e^{i\be F[v+y+z]}w.  \label{yjdecomp3}
\end{align}

Inserting \eqref{yjdecomp3} into \eqref{udefn} we find
\begin{align*}
u   =e^{-i\be F[v+y+z]}(v+\Y^{\pos}[v+y+z] + \Y^{\negg}[ v+y+z, w]+z)  + w.
\end{align*}
Applying $\PbHIp$ to both sides and using that $\P_{+}\P_{-}=0$ we find
\begin{align}
\PbHIp u = \PbHIp\big[ e^{-i\be F[v+y+z]}(v+\Y^{\pos}[v+y+z] + \Y^{\negg}[w, v+y+z]+z)\big]. \label{P+u}
\end{align}
Since $u_0\in H^s$, we have that $y_{0}\in H^{s}_{-}$. Then, by \eqref{YposLTHs1} and \eqref{YnegLTHs1}, we have 
\begin{align}
& 
\|\Y^{\pos}[v+y+z]\|_{L^{\infty}_{T_0}H^s_x} +\|\Y^{\negg}[v+y+z,w]\|_{L^{\infty}_{T_0}H^s_x}  
\notag
\\
&\qquad \les (1+\|u\|_{L^{\infty}_{T_0}L^2_x})^2 \big\{ \|v+y+z\|_{L^{\infty}_{T_0}H^{\s}_x}^3+ \|w\|_{L^{\infty}_{T_0}H^{s}_x}
\big\}
.
\label{uHsup1}
\end{align}
Therefore, by \eqref{P+u}, \eqref{L2F1}, and \eqref{uHsup1}, we have
\begin{align*}
\| \PbHIp u\|_{L^{\infty}_{T_0}H^s_x} & \les C(\|v+y+z\|_{L^{\infty}_{T_0}H^s_x}) \big[ \|v\|_{L^{\infty}_{T_0}H^{s}_x} + \|\Y^{\pos}[v+y+z]\|_{L^{\infty}_{T_0}H^s_x} 
\\
&
\hspace{4cm} + \|\Y^{\negg}[v+y+z,w]\|_{L^{\infty}_{T_0}H^s_x} +\|z\|_{L^{\infty}_{T_0}L^2_x}\big]
 <\infty,
\end{align*}
for some positive polynomial $C$, 
showing that in fact $u\in L^{\infty}_{T_0}H^{s}_x$.  Similar arguments justify that $\PbHIp u_j\to \PbHIp u$ in $C_{T_0}H^s_x$.

We now show that $u$ defined in \eqref{udefn}  solves \eqref{INLS}.
More precisely, we will show that $u\in C([0,T_0];H^{s}(\R))$ satisfies the Duhamel formulation
\begin{align}
u(t) = S(t)u_0 + \int_{0}^{t} S(t-t') [ 2\be u\P_{+}\dx(| u|^2) + u\QQ_{h}(|u|^2)] dt' \label{distDuh}
\end{align}
in the space $X^{-3,b}_{T}$. We know that 
\begin{align}\notag
u_j(t) = S(t)u_{0,j} + \int_{0}^{t} S(t-t') [ 2\be u_j\P_{+}\dx(| u_j|^2) + u_j\QQ_{h}(|u_j|^2)] dt'
\end{align}
and we will pass to the limit as $j\to \infty$ in $X^{0,b}_{T}$.
Indeed, by \eqref{ujformula2}, \eqref{udefn}, \eqref{Expdiffs}, \eqref{L4}, and writing $F_{j}:= F[v_j+y_j+z_j]$ and $F:= F[v+y+z]$, we have
\begin{align}
\big\| u_j &\QQ_{h}( |u_j|^2) - u\QQ_{h}( |u|)^2\big\|_{L^2_{T_0,x}}
\notag\\
& = \big\|  e^{i\be F_j}(v_j + y_j+z_j) \QQ_{h}( |v_j+y_j+z_j|^2) -e^{i\be F}(v+y+z)\QQ_{h}( |v+y+z|^2)\big\|_{L^2_{T_0,x}} 
\notag\\
& \les \|e^{i\be F_j}-e^{i\be F}\|_{L^{\infty}_{T_0,x}} \big( \|v_j+y_j+z_j\|_{L^{6}_{T_0,x}} + \|v+y+z\|_{L^{6}_{T_0,x}}\big)^{3} 
\notag\\
& \hphantom{XX} + \big(\|v_j+y_j+z_j\|_{L^{6}_{T_0,x}}+ \|v+y+z\|_{L^{6}_{T_0,x}}\big)^{2}   \|v_j+y_j+z_j-v-y-z\|_{L^6_{T_0,x}} 
\notag\\
& \to 0,
\label{cubicdist}
\end{align}
as $j\to \infty$ by the strong convergences of $v_j,y_j,z_j$ to $v,y,z$ in $X^{\s,b}_{T_0}$.
Therefore, by Lemma~\ref{LEM:linXsb}, we have
\begin{align*}
\int_{0}^{t}S(t-t') [ u_{j}\QQ_{h}(|u_j|^2)dt' \to \int_{0}^{t}S(t-t') [ u\QQ_{h}(|u|^2)dt' 
\end{align*}
in $X^{0,b}_{T}$ as $j\to\infty$.

For the derivative nonlinear term in \eqref{INLS2}, we write
\begin{align}
u_j \P_+ \dx( |u_j|^2)-u \P_{+}\dx(|u|^2) =&  (\P_{+}u_j)\P_+ \dx( |u_j|^2)-(\P_{+}u)\P_+ \dx( |u|^2) \label{dist1}\\
&+ (\Pblo u_j)\P_+ \dx( |u_j|^2)- (\Pblo u)\P_{+}\dx(|u|^2) \label{dist2} \\
& + w_j \P_{+}\dx(|v_j+y_j+z_j|^2) - w\P_{+}\dx(|v+y+z|^2). \label{dist3}
\end{align}
First, the estimate 
\begin{align*}
\| (\P_{+}f_1)\P_{+}\dx( f_2 f_3)\|_{H^{-3}_{x}} \les \|f_1\|_{L^2_x}\|f_2\|_{L^2_x}\|f_3\|_{L^2_x}
\end{align*}
combined with the convergence $u_j\to u$ in $L^{\infty}_{T_0}H^s_x$ implies that the term \eqref{dist1} converges to zero in $L^{2}_{T_0}H^{-3}_x$.
Next, for \eqref{dist2}, we have
\begin{align*}
(\Pblo u) \P_{+}\dx( |u|^2) = \dx\big[ (\Pblo u) \P_{+}( |u|^2)\big] - (\dx \Pblo u) \P_{+}( |u|^2), 
\end{align*} 
and we write the same for the $u_j$-terms. For the first term, the derivative is absorbed in $X^{-3,b}_{T}$ and the convergence claim reduces to similar computations as in \eqref{cubicdist}. Similarly the contributions from the second term are handled similarly as there is no longer a derivative loss to overcome. 
Thus, the term \eqref{dist2} converges to zero in $X^{-3,b}_{T}$. Lastly, we consider the term \eqref{dist3}. 
We focus on the contribution when $\P_{+}$ is replaced by $\P_\hi$ since otherwise a similar argument as in \eqref{cubicdist} establishes the convergence in that case. We then use the following estimate: 
\begin{align}
\| f_1 \Pbhip \dx( \cj{f_2}f_3)\|_{X^{0,-b'}_{T}} \les \| f_1\|_{X^{\s,b}_{T}}\|f_2\|_{X^{\s,b}_{T}}\|f_3\|_{X^{\s,b}_{T}}. \label{cubicdist2}
\end{align}
The estimate \eqref{cubicdist2} follows from similar arguments as in \eqref{doublebilin} using the bilinear Strichartz estimate \eqref{bilin1} twice and using the derivatives $\s$ to overcome the loss and to perform the dyadic summations. This completes the verification of \eqref{distDuh}.

We now prove the local Lipschitz continuity of the map
\begin{align}
u_0 \in H^s (\R) \mapsto \Phi_{t}(u_0)=: u(t) \in C([0,T];H^s(\R)). \label{solmap}
\end{align}
Let $u_0^{(1)}$ and $u_{0}^{(2)}$ be in $H^s(\R)$ and consider the corresponding solutions $\Phi_{t}(u_0^{(k)})$, $k=1,2$, which belong to $C_{T}H^{s}_x$ for some common $T>0$. 
We have the decomposition
\begin{align*}
u^{(k)} = e^{i\be F[ v^{(k)}+y^{(k)}+z^{(k)}]} (v^{(k)}+\Y^{\pos}[v^{(k)}+y^{(k)}+z^{(k)}]+\Y^{\negg}[v^{(k)}+y^{(k)}+z^{(k)}, w^{(k)}]+z^{(k)})+w^{(k)}
\end{align*}
where $(v^{(k)}, y^{(k)}, z^{(k)}, w^{(k)})\in \mathfrak{X}^{s,\s}_{T}$, $y^{(k)}$ satisfies \eqref{yjdecomp3}, and
\begin{align*}
\|(v^{(k)}, y^{(k)}, z^{(k)}, w^{(k)})\|_{\mathfrak{X}^{s,\s}_{T}} \leq K
\end{align*} 
for some $K>0$, and
satisfy \eqref{nonlinsmooth}, for $k=1,2$. Then, by \eqref{L2F1}, \eqref{L2F2}, \eqref{YposLTHs1}, \eqref{YnegLTHs1}, we have 
\begin{align*}
\|& u^{(1)}-u^{(2)}\|_{C_{T}H^{s}_x}  \\
& \leq C(K) 
\Big[ \| v^{(1)}- v^{(2)}\|_{X^{s,b}_{T}} + \| \Y^{\pos}[v^{(1)}+y^{(1)}+z^{(1)}]-\Y^{\pos}[v^{(2)}+y^{(2)}+z^{(2)}]\|_{L^{\infty}_{T}H^{s}_x} \\
&\hphantom{XXXXXX}+ \| \Y^{\negg}[v^{(1)}+y^{(1)}+z^{(1)}, w^{(1)}]-\Y^{\pos}[v^{(2)}+y^{(2)}+z^{(2)},w^{(2)}]\|_{L^{\infty}_{T}H^{s}_x} \\
&\hphantom{XXXXXX}+  \| z^{(1)}-z^{(2)}\|_{X^{s,b}_{T}} + \|w^{(1)}-w^{(2)}\|_{X^{s,b}_{T}}\Big] \\
& \leq C(K,T) \| u_0^{(1)}-u_{0}^{(2)}\|_{H^{s}_x}.
\end{align*}
This establishes the local Lipschitz continuity of the solution map \eqref{solmap}. This completes the proof of Theorem~\ref{THM:LWP}.

\begin{remark}\rm \label{RMK:smoothness}
The solution map \eqref{solmap} can be decomposed as the composition
\begin{align*}
    u_0 \stackrel{(\textup{i})}{\longmapsto} (v_0,y_0,z_0,w_0) \stackrel{(\textup{ii})}{\longmapsto} (v,y,z,w) \stackrel{(\textup{iii})}{\longmapsto} u,
\end{align*}
where $\text{(i)}$ is given in \eqref{systdata}, $\text{(ii)}$ is constructed in Proposition~\ref{PROP:system}, and $\text{(iii)}$ is the reconstruction by \eqref{udefn}. The maps  $\text{(i)}$ and $\text{(iii)}$ are smooth, and the map $\text{(ii)}$ is analytic as a consequence of the contraction mapping argument used in Proposition~\ref{PROP:system}.
\end{remark}

\section{Conservation laws}
\label{s:consv}

The goal of this section is to prove that the series \eqref{A series} converges and is conserved under the flow of \eqref{INLSG}. 
To this end, we decompose
\begin{equation}
\Pih f = J_h f + K_hf , 
\label{Pihdecomp}
\end{equation}
where $J_{h}$ and $K_{h}$ are the integral operators
\begin{align}
J_{h}f(x) &= \frac{i}{4h} \text{p.v.} \int_{\R} \sgn(x-y) f(y)dy,
\label{Jh} \\
K_{h}f(x) &= \frac{1}{2}f(x) + \frac{i}{4h} \text{p.v.} \int_{\R} \bigg[ \coth\bigg( \frac{\pi(x-y)}{2h} \bigg) - \sgn(x-y)\bigg] f(y)dy.
\notag 
% \label{Kh} 
\end{align}

The next two lemmas contain useful Hilbert--Schmidt and operator-norm estimates, respectively, involving these operators.
Given a dyadic number $N$, it will be convenient in the following to use the shorthand notation $u_{N}:=\P_{N}u$.

\begin{lemma}
For $u,v\in L^2$, $0<h\leq \infty$, $0 < \kappa < \infty$, and $M,N\in 2^{\Z}$, we have
\begin{align}
\big\| u J_h v \big\|_{\hs} 
&\leq \tfrac{1}{4} h^{-1} \|u\|_{L^2} \|v\|_{L^2} ,
\label{hs J} \\
\big\| (\kk-\dx)^{-1} u \big\|_{\hs} = \big\| \cj{u} (\kappa-\dx)^{-1}\big\|_{\hs} 
&= \tfrac12 \kappa^{-\frac12} \|u\|_{L^2} ,
\label{hs} \\
\big\|\cj{u} (\kk-\dx)^{-1} v \big\|_{\hs} &\leq \|u\|_{L^2}\|v\|_{L^2} , 
\label{hs 3} \\
\big\|\cj{u}_M (\kk-\dx)^{-1} v_N \big\|_{\hs} &\lesssim \min\Big( 1, \sqrt{ \tfrac{M}{\kk} }, \sqrt{ \tfrac{N}{\kk} } \Big) \|u_M\|_{L^2}\|v_N\|_{L^2} .
\label{hs 2} 
\end{align}
\end{lemma}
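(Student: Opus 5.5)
The plan is to compute the integral kernels of each operator explicitly and then either evaluate or bound their $L^2_{x,y}$ norms, using the identity $\|K\|_{\hs}=\|k\|_{L^2_{x,y}}$ recalled in the notation section.

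For \eqref{hs J}, the operator $uJ_hv$ has kernel $\frac{i}{4h}u(x)\sgn(x-y)v(y)$. Its $L^2_{x,y}$ norm is exactly $\frac{1}{4h}\|u\|_{L^2}\|v\|_{L^2}$, since $|\sgn|\le 1$. When $h=\infty$ the operator $J_\infty$ vanishes, so there is nothing to prove.

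For \eqref{hs}, note that $(\kk-\dx)^{-1}$ has Fourier multiplier $(\kk-i\xi)^{-1}$. It is therefore convolution with $g(x)=e^{\kk x}\ind_{\{x<0\}}$, possibly up to a sign or reflection convention which we will fix by checking $(\kk-\dx)g=\delta$. The kernel of $(\kk-\dx)^{-1}u$ is then $g(x-y)u(y)$. Its squared $L^2_{x,y}$ norm is $\|g\|_{L^2}^2\|u\|_{L^2}^2=\frac{1}{2\kk}\|u\|_{L^2}^2$, which gives exactly $\frac12\kk^{-1/2}\|u\|_{L^2}$ up to the $\sqrt{2\pi}$ normalization (to be checked). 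The second operator $\cj{u}(\kk-\dx)^{-1}$ has kernel $\cj{u(x)}g(x-y)$, and the same computation applies.

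For \eqref{hs 3}, the kernel is $\cj{u(x)}g(x-y)v(y)$. Since $|g|\le1$, its $L^2_{x,y}$ norm is at most $\|u\|_{L^2}\|v\|_{L^2}$. This also gives the bound $1$ in the minimum of \eqref{hs 2}.

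The remaining factors $\sqrt{M/\kk}$ and $\sqrt{N/\kk}$ in \eqref{hs 2} are the main step. By symmetry, I will prove the $\sqrt{N/\kk}$ bound; the $M$ version follows by taking adjoints, which swaps the roles of $u$ and $v$ and $\kk-\dx$ becomes $\kk+\dx$ with the same kernel estimates. The plan is to use $\|\cj{u}_M R\|_{\hs}\le \|u_M\|_{L^2}\sup_x\|R(x,\cdot)\|_{L^2_y}$ with $R=(\kk-\dx)^{-1}v_N$, whose kernel is $g(x-y)v_N(y)$. Then
\[
\sup_x\int |g(x-y)|^2|v_N(y)|^2dy\le \|v_N\|_{L^\infty}^2\|g\|_{L^2}^2\lesssim N\kk^{-1}\|v_N\|_{L^2}^2 ,
\]
using Bernstein's inequality $\|v_N\|_{L^\infty}\lesssim N^{1/2}\|v_N\|_{L^2}$ and $\|g\|_{L^2}^2=\frac1{2\kk}$. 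This yields $\|\cj{u}_M(\kk-\dx)^{-1}v_N\|_{\hs}\lesssim \sqrt{N/\kk}\,\|u_M\|_{L^2}\|v_N\|_{L^2}$. Symmetrically, placing $u_M$ in $L^\infty$ gives $\sqrt{M/\kk}$, and taking the minimum with \eqref{hs 3} concludes. The only delicate points are the normalization constants in \eqref{hs}; no genuine obstacle is expected.
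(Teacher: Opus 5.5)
\textbf{Assessment.} Your proposal is correct, with one numerical slip in the constant of \eqref{hs}, discussed at the end. For \eqref{hs J}, \eqref{hs} and \eqref{hs 3} you do exactly what the paper does. You write down the physical-space kernel, using the same kernel $e^{-\kappa|x-y|}\ind_{x<y}$ for $(\kk-\dx)^{-1}$, and compute or bound its $L^2_{x,y}$ norm.

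\textbf{The estimate \eqref{hs 2}.} Here your route is different. The paper expands $\|\cj{u}_M(\kk-\dx)^{-1}v_N\|_{\hs}^2=\tr\{\cj{v}_N(\kk+\dx)^{-1}u_M\cj{u}_M(\kk-\dx)^{-1}v_N\}$ on the Fourier side. It integrates out one frequency exactly, which produces the multiplier $(2\kk+i(\eta_2+\eta_3))^{-1}$. It then applies Cauchy--Schwarz using the support condition $|\eta_2|,|\eta_3|\sim M$. For the $N$-version it substitutes $\eta_2+\eta_3=(\eta_1+\eta_2+\eta_3)-\eta_1$ with $|\eta_1|\sim N$.

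You stay in physical space and use Fubini, $\|g\|_{L^2}^2=(2\kk)^{-1}$, and Bernstein. The two arguments compute the same quantity, since $\|\cj{u}(\kk-\dx)^{-1}v\|_{\hs}^2=\langle |u|^2,(2\kk-\dx)^{-1}|v|^2\rangle$. Yours is shorter, and it proves the frequency-free bound $\|\cj{u}(\kk-\dx)^{-1}v\|_{\hs}\le(2\kk)^{-1/2}\min(\|u\|_{L^\infty}\|v\|_{L^2},\|u\|_{L^2}\|v\|_{L^\infty})$. Estimate \eqref{hs 2} then follows immediately from Bernstein. The Fourier route gives no extra gain here, because the frequency $\eta_2+\eta_3$ of $|u_M|^2$ can be near zero. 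Your adjoint argument and the direct choice of putting $u_M$ in $L^\infty$ both work for the $M$-factor.

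\textbf{Correction to the constant in \eqref{hs}.} From $\|g\|_{L^2}^2=\frac{1}{2\kk}$ you get $\|(\kk-\dx)^{-1}u\|_{\hs}=(2\kk)^{-1/2}\|u\|_{L^2}$, not $\frac12\kk^{-1/2}\|u\|_{L^2}$. No $\sqrt{2\pi}$ can repair this, because you verified $(\kk-\dx)g=\delta$ directly in physical space and no Fourier transform enters. The paper's own proof also arrives at $\frac12\kk^{-1}\|u\|_{L^2}^2$ for the squared norm. So the $\frac12$ in the statement of \eqref{hs} is a misprint for $2^{-1/2}$; you should not try to match it. The misprint is harmless, since \eqref{hs} is only ever used later as an upper bound up to constants.
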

\begin{proof}
The estimate \eqref{hs J} is elementary, once one realizes that the operator in question is an integral operator with a square-integrable kernel:
\[
\big\| u J_h v \big\|_{\hs}^2
= \iint \big| u(x) \tfrac{i}{4h} \sgn( x-y ) v(y) \big|^2\,dx\,dy
\leq \tfrac{1}{16} h^{-2} \|u\|_{L^2}^2 \|v\|_{L^2}^2 . 
\]

Next, we recall that $(\kk\pm\dx)^{-1}$ is also an integral operator, given by
\begin{equation}
[(\kk\pm\dx)^{-1}f](x) 
= \int e^{-\kappa|x-y|}\ind_{\pm (x-y) > 0} f(y)\,dy . 
\label{R kernel}
\end{equation}
A direct computation then shows that
\begin{align*}
\big\| (\kk-\dx)^{-1} u \big\|_{\hs}^2
&= \tr\{ \cj{u} (\kk+\dx)^{-1} (\kk-\dx)^{-1} u \} \\
&= \iint |u(x)|^2 e^{-2\kappa|x-y|} \ind_{x>y} \,dx\,dy \\
&= \tfrac12\kappa^{-1}\|u\|_{L^2}^2 .
\end{align*}
This establishes the first term of \eqref{hs}, and the second term follows a parallel argument.

The operator in \eqref{hs 2} also has a square-integrable kernel, with
\begin{equation*}
\big\| \cj{u} (\kappa-\dx)^{-1} u \big\|_{\hs}^2
=\iint \big| \cj{u}(x) e^{-\kappa|x-y|}\ind_{ x < y} u(y) \big|^2\,dx\,dy
\leq \|u\|_{L^2}^4 .
\end{equation*}
This proves \eqref{hs 3}, as well as \eqref{hs 2} when the minimum on the right-hand side is equal to $1$.

In a similar vein, consideration of the kernel in Fourier variables leads us to compute
\begin{align*}
\big\|\cj{u}_M &(\kk-\dx)^{-1} v_N \big\|_{\hs}^2 
= \tr\{ \cj{v}_N (\kk+\dx)^{-1} u_M\cj{u}_M (\kk-\dx)^{-1} v_N \} \\
&= \frac{1}{(2\pi)^2} \int_{\R^4} \frac{\ft{\cj{v}}_N(\xi_1-\xi_4) \ft{u}_M(\xi_4-\xi_3) \ft{\cj{u}}_M(\xi_3-\xi_2) \ft{v}_N(\xi_2-\xi_1)}{(\kk + i\xi_4) (\kk-i\xi_2)}\,d\xi_1 d\xi_2  d\xi_3 d\xi_4 \\
&= \frac{1}{(2\pi)^2} \int_{\R^4} \frac{\cj{\ft{v}}_N(-\eta_1-\eta_2-\eta_3) \ft{u}_M(\eta_3) \cj{\ft{u}}_M(-\eta_2) \ft{v}_N(\eta_1)}{(\kk - i\eta_4) (\kk+i(\eta_2+\eta_3+\eta_4))}\,d\eta_1 d\eta_2 d\eta_3 d\eta_4 \\
&= \frac{1}{2\pi} \int_{\R^3} \frac{\cj{\ft{v}}_N(-\eta_1-\eta_2-\eta_3) \ft{u}_M(\eta_3) \cj{\ft{u}}_M(-\eta_2) \ft{v}_N(\eta_1)}{2\kk + i(\eta_2+\eta_3)}\,d\eta_1 d\eta_2 d\eta_3 \\
&\lesssim \|u_M\|_{L^2}^2 \| v_N\|_{L^2}  \bigg( \int_{\R^3} \frac{|\ft{v}_N(-\eta_1-\eta_2-\eta_3)|^2}{4\kk^2 + (\eta_2+\eta_3)^2}\ind_{|\eta_2|\sim M\sim|\eta_3|}\,d\eta_1 d\eta_2 d\eta_3 \bigg)^{\frac12} \\
&\lesssim \tfrac{M}{\kappa} \|u_M\|_{L^2}^2 \| v_N\|_{L^2}^2 .
\end{align*}
In passing to the second to last line, we used Cauchy--Schwarz.  This yields the second term in the minimum in RHS\eqref{hs 2}.  To obtain the third term, we replace $\eta_2 + \eta_3 = (\eta_1+\eta_2+\eta_3) - \eta_1$ in the computation above and observe that the integrand is supported in the region $|\eta_1+\eta_2+\eta_3| \sim N \sim |\eta_1|$.
\end{proof}

\begin{lemma}
Fix $0<r<\frac12$ and set $p_r = \frac{2}{1-2r}$.
Then for any $0<h\leq \infty$, $0<\kappa<\infty$, and $M,N\in 2^{\Z}$, we have
\begin{align}
\| K_h\|_{\op} &\lesssim 1,
\label{op K} \\
\big\| (\kappa-\dx)^{-1} u_{N} \big\|_{\op}
\leq \big\| (\kappa-\dx)^{-1} u_{ N} \big\|_{\mathfrak{I}_{p_r}}
&\lesssim  \kappa^{-\frac12-r}N^r \|u_{ N}\|_{L^2} ,
\label{op 3} \\
\big\|\cj{u}_M (\kk-\dx)^{-1} v_N \big\|_{\op} &\lesssim \min\big( \tfrac{M^r}{\kk^r},1 \big) \min\big( \tfrac{N^r}{\kk^r},1 \big) \|u_M\|_{L^2}\|v_N\|_{L^2},
\label{op 2} \\
\big\|\cj{u} (\kk-\dx)^{-1} u \big\|_{\op} &\lesssim \kappa^{-2r} N^{2r} \|u_{\leq N}\|_{L^2}^2 + \|u_{>N}\|_{L^2}^2 .
\label{op 6}
\end{align}
\end{lemma}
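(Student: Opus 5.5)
We prove the four bounds in turn. Each comes either from an explicit kernel computation or from interpolating the Hilbert--Schmidt bounds of the previous lemma against trivial operator-norm bounds.

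For \eqref{op K}, observe that $K_h = \Pih - J_h$ is a convolution operator, so it suffices to show its Fourier multiplier is bounded. The multiplier of $\Pih$ is bounded: $\mathcal{T}_h$ has symbol $-i\coth(h\xi)$ up to a $\tfrac1h$-term, and the difference $\coth(h\xi)-\sgn(\xi)$ is singular only like $(h\xi)^{-1}$ near the origin. That singularity is exactly the symbol of $J_h$, since $\sgn$ convolution has symbol $\frac{2}{i\xi}$ up to normalisation. Subtracting, the multiplier of $K_h$ is $\frac12 + \frac12\big(\coth(h\xi) - \frac{1}{h\xi}\big)$ up to constants and signs, and $|\coth t - 1/t|\le 1$ uniformly. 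This gives $\|K_h\|_{\op}\les 1$ uniformly in $h$ (and trivially for $h=\infty$).

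For \eqref{op 3}, the operator norm is always dominated by the Schatten norm. For the $\mathfrak{I}_{p_r}$ bound we use the Kato--Seiler--Simon inequality $\|f(-i\dx) g(x)\|_{\mathfrak I_p}\les \|f\|_{L^p}\|g\|_{L^p}$ for $p\ge2$, with $f(\xi)=(\kk-i\xi)^{-1}$. This gives $\|f\|_{L^{p_r}}\sim \kk^{-1+1/p_r}=\kk^{-\frac12-r}$, and by Bernstein $\|u_N\|_{L^{p_r}}\les N^{\frac12-\frac1{p_r}}\|u_N\|_{L^2}=N^r\|u_N\|_{L^2}$.

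For \eqref{op 2}, factor
\[
\cj u_M(\kk-\dx)^{-1}v_N=\big[\cj u_M(\kk-\dx)^{-1/2}\big]\big[(\kk-\dx)^{-1/2}v_N\big],
\]
using that $(\kk-\dx)^{-1}$ is normal with a square root. The operator norm of each factor is controlled as in \eqref{op 3}: Kato--Seiler--Simon with $f=(\kk-i\xi)^{-1/2}$ in $L^p$ for $p>2$ gives $\kk^{-\frac12+\frac1p}$, and Bernstein gives $N^{\frac12-\frac1p}$. Choosing $\frac12-\frac1p=r$, each factor is bounded by $(N/\kk)^r\|v_N\|_{L^2}$. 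We also need the trivial bound $\|(\kk-\dx)^{-1/2}v\|_{\op}\les\|v\|_{L^2}$, uniformly in $\kk$; it is not quite trivial at $p=2$, where we only get the Hilbert--Schmidt norm with a $\kk$-dependence. For it, use \eqref{hs 3} squared: $\|(\kk-\dx)^{-1/2}v\|_{\op}^2=\|\cj v(\kk^2-\dx^2)^{-1/2}\ldots\|$. A cleaner route is
\[
\|(\kk-\dx)^{-1/2}v\|_{\op}^2=\|\cj v\,\mathrm{Re}(\kk-\dx)^{-1}v\|_{\op}\le\|\cj v(\kk-\dx)^{-1}v\|_{\op}\le\|v\|_{L^2}^2
\]
by \eqref{hs 3}, where we used $\mathrm{Re}\,(\kk-\dx)^{-1}=\kk(\kk^2-\dx^2)^{-1}=|(\kk-\dx)^{-1/2}|^2$. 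Combining the two bounds yields the $\min$ in each factor. We expect this step, obtaining uniform-in-$\kk$ control at the endpoint $r=0$ and thus both minima simultaneously, to be the main technical point.

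Finally, for \eqref{op 6}, write $u=u_{\le N}+u_{>N}$ and apply the factorisation from the previous step. The $u_{\le N}$ factor contributes $\big(\sum_{M\le N}(M/\kk)^r\|u_M\|_{L^2}\big)$, which Cauchy--Schwarz over dyadics bounds by $(N/\kk)^r\|u_{\le N}\|_{L^2}$. The $u_{>N}$ factor is bounded by $\|u_{>N}\|_{L^2}$ via the endpoint bound. Expanding the product and using $2ab\le a^2+b^2$ on the cross terms gives $\kk^{-2r}N^{2r}\|u_{\le N}\|_{L^2}^2+\|u_{>N}\|_{L^2}^2$.
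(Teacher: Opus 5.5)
Your arguments for \eqref{op K} and \eqref{op 3} are essentially the paper's. One slip in \eqref{op K}: the multiplier of $\Pih$ is \emph{not} bounded. It is $\frac12(1+\coth(h\xi))$, which blows up like $\frac{1}{2h\xi}$ at the origin, and that is exactly the singularity removed by $J_h$. Your final symbol $\frac12(1+\coth(h\xi)-\frac{1}{h\xi})$ for $K_h$ is correct.

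The genuine gap is the endpoint bound $\|(\kk-\dx)^{-1/2}v\|_{\op}\les\|v\|_{L^2}$. You use it to produce the ``$1$'' in the minima of \eqref{op 2} and the $u_{>N}$ factor in \eqref{op 6}.
\begin{itemize}
\item The identity behind your ``cleaner route'' is false. The operator $\big((\kk-\dx)^{-1/2}\big)^*(\kk-\dx)^{-1/2}$ has symbol $|\kk-i\xi|^{-1}=(\kk^2+\xi^2)^{-1/2}$, whereas $\mathrm{Re}\,(\kk-\dx)^{-1}$ has symbol $\kk(\kk^2+\xi^2)^{-1}$. Thus $\|(\kk-\dx)^{-1/2}v\|_{\op}^2=\|\cj v(\kk^2-\dx^2)^{-1/2}v\|_{\op}$, which is the expression you first wrote, and \eqref{hs 3} does not control it.
\item The bound itself is false. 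The products $vf$ with $\|v\|_{L^2}\|f\|_{L^2}\le 1$ fill the unit ball of $L^1$. So the bound would make $(\kk^2-\dx^2)^{-1/4}$ bounded from $L^1$ to $L^2$, i.e.\ by duality $H^{1/2}(\R)\embeds L^\infty(\R)$.
\item Frequency localization does not rescue it. Take $\ft{v}_N\ge 0$ a bump adapted to $[N,2N]$ with $N\gg\kk$, and $f=\cj v_N$. Then $\widehat{|v_N|^2}\gtrsim\|v_N\|_{L^2}^2$ on $|\xi|\le cN$, which forces $\|(\kk-\dx)^{-1/2}v_N\|_{\op}^2\gtrsim\log(N/\kk)\,\|v_N\|_{L^2}^2$.
\end{itemize}
Hence the symmetric splitting of the resolvent loses a logarithm in the mixed cases of \eqref{op 2} (e.g.\ $M\sim\kk\ll N$). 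Likewise, the factor $\|(\kk-\dx)^{-1/2}u_{>N}\|_{\op}$ you use for the cross terms of \eqref{op 6} is not bounded by any multiple of $\|u_{>N}\|_{L^2}$.

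Only the mixed cases need repair. When $M,N>\kk$, apply \eqref{hs 3} to the whole operator; when $M,N\le\kk$, your $\mathfrak{I}_{p_r}$ bounds suffice. The paper handles the mixed cases without splitting the resolvent. It uses the kernel bound $\|e^{-\kk|x|}\ind_{x<0}\|_{L^p}\les\kk^{-1/p}$ with Young's inequality, plus H\"older and Bernstein; e.g.\ for $M\le\kk<N$,
\[
\|\cj u_M(\kk-\dx)^{-1}(v_Nf)\|_{L^2}\le\|u_M\|_{L^{p_r}}\|(\kk-\dx)^{-1}(v_Nf)\|_{L^{1/r}}\les\kk^{-r}M^r\|u_M\|_{L^2}\|v_N\|_{L^2}\|f\|_{L^2}.
\]
Then \eqref{op 6} follows from the four analogous bounds with $u_{\le N}$, $u_{>N}$ in place of $u_M$, $v_N$.

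If you prefer to stay with Schatten norms, split asymmetrically: $(\kk-\dx)^{-1}=(\kk-\dx)^{-a}(\kk-\dx)^{-(1-a)}$ with $\frac12-r<a<\frac12$.
\begin{itemize}
\item Place $(\kk-\dx)^{-(1-a)}$ next to the high-frequency function and bound it in $\hs$ by $\kk^{a-\frac12}\|\cdot\|_{L^2}$. This is finite since $2(1-a)>1$, and needs no frequency localization.
\item Place $(\kk-\dx)^{-a}$ next to the low-frequency function, say $\cj u_M$, and bound it in $\mathfrak{I}_{p_r}$ by Kato--Seiler--Simon and Bernstein, giving $\les\kk^{-a+\frac12-r}M^r\|u_M\|_{L^2}$. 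This is finite since $ap_r>1$.
\end{itemize}
The product is $(M/\kk)^r\|u_M\|_{L^2}\|v_N\|_{L^2}$, as required.
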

\begin{proof}
The estimate \eqref{op K} is trivial once one considers the Fourier symbol of $K_h$, since
\[
\sup_{\xi\in\R} \tfrac{1}{2}\big|  1+ \coth(h\xi) - \tfrac{1}{h\xi} \big| = \sup_{\xi\in\R} \tfrac{1}{2}\big|  1+ \coth\xi - \tfrac{1}{\xi} \big| <\infty 
\]
for all $h>0$.

The first inequality in \eqref{op 3} follows immediately from the embedding $\ell^{p_r} \subseteq\ell^\infty$.  As $2\leq p_r\leq \infty$, then by \cite[Theorem 4.1]{Simon} we have
\[
\big\| (\kappa-\dx)^{-1} u_{ N} \big\|_{\mathfrak{I}_{p_r}}
\lesssim \big\| (\kappa-i\xi)^{-1} \big\|_{L^{p_r}_{\xi}} \| u_{ N} \|_{L^{p_r}} 
\lesssim \kappa^{\frac{1}{p_r} - 1} N^{\frac12 - \frac{1}{p_r}} \|u_{ N} \|_{L^2} .
\]
Inserting $p_r = \frac{2}{1-2r}$ then yields \eqref{op 3}.

Next, we turn to \eqref{op 2}.  When both $M>\kk$ and $N> \kk$, we use \eqref{hs 3} to bound
\begin{align*}
\big\|\cj{u}_{M} (\kk-\dx)^{-1} v_N \big\|_{\op} 
\leq \|u_M\|_{L^2} \|v_N\|_{L^2} .
\end{align*}

Recall that $(\kappa-\dx)^{-1}$ is an integral operator for the kernel~\eqref{R kernel}.  A direct computation shows that for any $1\leq p\leq\infty$, we have
\begin{equation}
\big\| e^{-\kappa|x|}\ind_{x<0} \big\|_{L^{p}}\lesssim_p \kappa^{-\frac1p}  .
\label{R Lp}
\end{equation}

In the case $M\leq \kk$ and $N>\kk$, we apply \eqref{R Lp} to $p=\frac1r$ and use Young's inequality to bound
\[
\big\| (\kappa-\dx)^{-1} v_N f \big\|_{L^{\frac1r}} 
\lesssim \kappa^{-r} \| v_N f \|_{L^1}
\lesssim \kappa^{-r} \|v_N\|_{L^2} \|f\|_{L^2} ,
\]
Together with the H\"older and Bernstein inequalities, this yields
\begin{align*}
\big\| \cj{u}_{M} (\kk-\dx)^{-1} v_N f \big\|_{L^2}
&\leq \| u_{M} \|_{L^{\frac{2}{1-2r}}} \big\| (\kappa-\dx)^{-1} v_N f \big\|_{L^{\frac1r}}  \\
&\lesssim \kappa^{-r} M^r \|u_{M} \|_{L^2} \|v_N\|_{L^2} \|f\|_{L^2} .
\end{align*}

Similarly, for the case $M>\kk$ and $N\leq \kk$ we have
\[
\big\| (\kappa-\dx)^{-1} v_N f \big\|_{L^{\infty}} 
\lesssim \kappa^{-r} \| v_N f\|_{L^{\frac{1}{1-r}}}
\lesssim \kappa^{-r} \|v_N\|_{L^{\frac{2}{1-2r}}} \|f\|_{L^2} ,
\]
and thus
\begin{align*}
\big\| \cj{u}_{M} (\kk-\dx)^{-1} v_N f \big\|_{L^2}
&\leq \| u_{M} \|_{L^{2}} \big\| (\kappa-\dx)^{-1} v_N f \big\|_{L^{\infty}}  \\
&\lesssim \kappa^{-r} N^r \|u_{M} \|_{L^2} \|v_N\|_{L^2} \|f\|_{L^2} .
\end{align*}

Finally, when $M,N\leq \kk$, we employ \eqref{R Lp} with $p=\frac{1}{2r}$ to obtain
\[
\big\| (\kappa-\dx)^{-1} v_N f \big\|_{L^{\frac1r}} 
\lesssim \kappa^{-2r} \| v_N f \|_{L^{\frac{1}{1-r}}}
\lesssim \kappa^{-2r} \|v_N\|_{L^{\frac{2}{1-2r}}} \|f\|_{L^2} ,
\]
from whence
\begin{align*}
\big\| \cj{u}_{M} (\kk-\dx)^{-1} v_N f \big\|_{L^2}
&\leq \| u_{M} \|_{L^{\frac{2}{1-2r}}} \big\| (\kappa-\dx)^{-1} v_N f \big\|_{L^{\frac1r}}  \\
&\lesssim \kappa^{-2r} \|u_{M} \|_{L^{\frac{2}{1-2r}}} \|v_N \|_{L^{\frac{2}{1-2r}}} \|f\|_{L^2} \\
&\lesssim \kappa^{-2r} M^r N^{r} \|u_{M} \|_{L^2} \|v_N\|_{L^2} \|f\|_{L^2} .
\end{align*}

Collecting the previous steps, we conclude that \eqref{op 2} holds in all cases.

Lastly, we turn to \eqref{op 6}.  Decomposing each $u = u_{\leq N} + u_{>N}$ produces four terms, which follow arguments parallel to the previous four cases:
\begin{align*}
\big\| \cj{u}_{>N} (\kk-\dx)^{-1} u_{>N} \big\|_{\op}
&\leq \| u_{>N}\|_{L^2}^2 ,\\
\big\| \cj{u}_{\leq N} (\kk-\dx)^{-1} u_{> N} \big\|_{\op}
&\lesssim \kappa^{-r}N^r \|u_{\leq N}\|_{L^2} \|u_{>N}\|_{L^2} ,\\
\big\| \cj{u}_{> N} (\kk-\dx)^{-1} u_{\leq N} \big\|_{\op}
&\lesssim \kappa^{-r}N^r \|u_{> N}\|_{L^2} \|u_{\leq N}\|_{L^2} ,\\
\big\| \cj{u}_{\leq N} (\kk-\dx)^{-1} u_{\leq N} \big\|_{\op}
&\lesssim \kappa^{-2r}N^{2r} \|u_{\leq N}\|_{L^2}^2 .
\end{align*}
Adding these together yields \eqref{op 6}.
\end{proof}

Unfortunately, we cannot make sense of the first term of the series \eqref{A series} when $u\in L^2$.  Specifically, the contribution of $J_h$:
\[
(\kappa-\dx)^{-1} uJ_h\cj{u}(\kappa-\dx)^{-1}
\]
is a trace-class operator if, say, $u\in L^1$ (see \eqref{tr J} below for details).  However, for $u\in L^2$, it is unclear if this operator belongs to the trace class.

Instead, we define the terms of $A$ that are quadratic in $u$ via the following lemma.  This formulation has the advantage that it continues to make sense for $u\in L^2$.
\begin{lemma}[Quadratic energy]
For $0<h\leq \infty$ and $0 < \kappa < \infty$, define
\begin{equation}
A^{[2]}(\kappa;u,h) := 
\int F(\xi;\kappa,h) |\ft{u}(\xi)|^2\,d\xi 
+ \int \cj{u}(x) (G*u)(x)\,dx,
\label{A F}
\end{equation}
where 
\begin{align}
F(\xi;\kappa,h) &:= \begin{cases} 
\displaystyle \tfrac{\beta}{4\pi} \int \tfrac{1+\coth(h\eta)-\frac{1}{h\eta}}{(\kappa + i(\xi+\eta))^2} \,d\eta & \text{if }h<\infty, \\
\displaystyle \tfrac{\beta}{2\pi} \tfrac{1}{i\kappa-\xi} & \text{if }h=\infty ,
\end{cases}
\notag 
% \label{F} 
\\
G(x;\kappa,h) &:= \begin{cases} 
\frac{i\beta}{4h} xe^{-\kappa|x|} \ind_{x>0} & \text{if }h<\infty, \\
0 & \text{if }h=\infty .
\end{cases}
\notag 
% \label{G}
\end{align}
Then we have
\begin{align}
\bigg| F(\xi;\kappa,h) - \frac{\beta}{2\pi} \frac{1}{i\kappa-\xi} \bigg| 
&\lesssim h^{-1}\kappa^{-2} ,
\label{F 2} \\
\bigg| \int \cj{u}(x) (G*u)(x)\,dx \bigg|
&\leq \tfrac14 h^{-1} \kappa^{-2} \|u\|_{L^2}^2
\label{G 2}
\end{align}
uniformly for $\xi\in\R$ and $h,\kappa>0$.
As a consequence,
\begin{align*}
|A^{[2]}(\kappa;u,h) | 
&\lesssim (\kappa^{-1}+h^{-1}\kappa^{-2}) \|u\|_{L^2}^2 .
\end{align*}
Additionally, when $u\in L^1\cap L^2$,
\begin{equation}
A^{[2]}(\kappa;u,h) = \beta \tr\{ (\kappa-\dx)^{-1}u\Pih\cj{u}(\kappa-\dx)^{-1} \} .
\label{A2}
\end{equation}
\end{lemma}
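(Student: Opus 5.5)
The plan is to compute the Fourier symbol of $\Pih$ and treat the two pieces of the decomposition \eqref{Pihdecomp} separately. From the formula for $K_h$ used in the proof of \eqref{op K}, and since $\frac{i}{4h}\sgn$ has Fourier symbol (up to the normalisation constant) $\frac{1}{2h\xi}$, the operator $\Pih$ has symbol $\frac12 m(h\xi)$ with
\[
m(t):=1+\coth t-\tfrac1t \ \text{(the $K_h$ part)} \; + \; \tfrac1t \ \text{(the $J_h$ part)} = 1+\coth t .
\]
Thus $F$ is exactly the contribution of $K_h$, and $G$ should encode the contribution of $J_h$.

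\emph{Bound \eqref{G 2}.} This is immediate from Cauchy--Schwarz and Young's inequality:
\[
\|G\|_{L^1}=\tfrac{|\be|}{4h}\int_0^\infty xe^{-\kappa x}\,dx=\tfrac{|\be|}{4h}\kappa^{-2}.
\]

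\emph{Bound \eqref{F 2}.} For $h=\infty$ there is nothing to prove. For $h<\infty$, set $g(t):=1+\coth t-\frac1t$, which is smooth with $g(0)=1$. Since
\[
(\kappa+i(\xi+\eta))^{-2} = i\,\partial_\eta (\kappa+i(\xi+\eta))^{-1},
\]
I integrate by parts. The boundary terms vanish, because $g$ is bounded and the resolvent factor decays. This gives
\[
\frac{\be}{4\pi}\,i\int h\,q(h\eta)\,\frac{d\eta}{\kappa+i\xi+i\eta}, \qquad q(t):=g'(t)=\tfrac1{t^2}-\operatorname{csch}^2 t .
\]
Here $q$ is even, nonnegative and has total mass $\int q=2$, so replacing the integrand by $1/(\kappa+i\xi)$ reproduces exactly $\frac{\be}{2\pi}\frac1{i\kappa-\xi}$. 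The error is
\[
\frac{\be}{4\pi}\,i\int h\,q(h\eta)\Big[\frac1{a+i\eta}-\frac1a\Big]\,d\eta, \qquad a=\kappa+i\xi,\ |a|\geq\kappa.
\]
A crude bound loses a logarithm, since $q(t)\sim t^{-2}$. The key step is to use evenness of $q$ to symmetrise in $\eta$, which replaces the bracket by
\[
-\frac{\eta^2}{a(a+i\eta)(a-i\eta)}, \qquad |a\pm i\eta|\geq\kappa .
\]
I then split the $\eta$-integral at $|\eta|\sim|a|$:
\begin{itemize}
\item for $|\eta|\gtrsim |a|$, the symmetrised bracket is $\lesssim |a|^{-1}$ and $h\,q(h\eta)\lesssim h^{-1}\eta^{-2}$, giving $\lesssim h^{-1}\kappa^{-1}|a|^{-1}\leq h^{-1}\kappa^{-2}$;
\item for $|\eta|\lesssim|a|$, the bracket is $\lesssim \eta^2\kappa^{-2}|a|^{-1}$, and $\int h\,q(h\eta)\eta^2\,d\eta$ over $|\eta|\lesssim|a|$ is $\lesssim |a|h^{-1}$.
\end{itemize}
Both pieces are $\lesssim h^{-1}\kappa^{-2}$. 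I expect this cancellation step to be the main obstacle, and would check it carefully.

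\emph{Bound on $A^{[2]}$.} This follows from Plancherel, $|F|\lesssim \kappa^{-1}+h^{-1}\kappa^{-2}$ (by \eqref{F 2}) and \eqref{G 2}.

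\emph{Identity \eqref{A2}.} For $u\in L^1\cap L^2$, I split $\Pih=K_h+J_h$.
\begin{itemize}
\item The $K_h$ term is trace class as a product of the two Hilbert--Schmidt operators $(\kappa-\dx)^{-1}u$ and $K_h\cj u(\kappa-\dx)^{-1}$, using \eqref{hs} and \eqref{op K}. I compute its trace in Fourier variables: pairing the kernels via $\tr(AB)=\iint a(\xi,\eta)b(\eta,\xi)$ produces
\[
\tfrac1{4\pi}\int|\ft u(\xi)|^2\int \frac{g(h\eta)}{(\kappa+i(\xi+\eta))^2}\,d\eta\,d\xi ,
\]
which is the $F$ term after relabelling (signs and conjugation conventions to be tracked).
\item For the $J_h$ term I work in physical space with the kernel \eqref{R kernel}. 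The operator $u J_h\cj u$ has the bounded kernel $\frac{i}{4h}u(y)\sgn(y-z)\cj u(z)$. Using $u\in L^1$, I factor the operator so as to show it is trace class (e.g.\ writing $\sgn$ as a difference of Heaviside pieces). The trace is then $\int k(x,x)\,dx$, i.e.
\[
\frac{i}{4h}\iiint e^{-\kappa(x-y)}\ind_{x>y}\,u(y)\sgn(y-z)\cj u(z)\,e^{-\kappa(z-x)}\ind_{z>x}\,dx\,dy\,dz .
\]
The indicators force $z>x>y$, so $\sgn(y-z)=-1$; integrating out $x$ gives the factor $(z-y)e^{-\kappa(z-y)}$. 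This yields $\int\cj u\,(G*u)$, with the stated sign and orientation of $G$ to be confirmed.
\end{itemize}
Fubini is justified by $u\in L^1\cap L^2$ and the exponential decay in $x$.
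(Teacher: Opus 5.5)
Your arguments for \eqref{G 2} and for the bound on $A^{[2]}$ are the same as the paper's. Your proof of \eqref{F 2} is genuinely different, and it repairs a real flaw in the paper's argument. The paper bounds $|\kappa+i(\xi+\eta)|^{-2}\le\kappa^{-2}$ and asserts $\int|\coth(h\eta)-\frac{1}{h\eta}-\sgn(\eta)|\,d\eta\lesssim h^{-1}$. But $\coth t-\sgn t$ decays exponentially, so this integrand is $\sim\frac{1}{h|\eta|}$ at infinity and the integral diverges. Any bound that takes absolute values before using the oddness of $\coth t-\frac1t-\sgn t$ (equivalently, the evenness of your $q$) loses a factor $\log(2+h\kappa)$, as you anticipated. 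Your integration by parts followed by symmetrisation supplies exactly the needed cancellation.

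Two small fixes are needed:
\begin{itemize}
\item The integration by parts produces $-i\int hq(h\eta)(a+i\eta)^{-1}\,d\eta$, not $+i$. With $+i$ the main term would come out as $-\frac{\beta}{2\pi}\frac{1}{i\kappa-\xi}$.
\item The outer region should be $|\eta|\ge2|a|$. For $|\xi|\gg\kappa$, one of $|a\pm i\eta|$ is still only $\approx\kappa$ when $|\eta|$ is slightly larger than $|a|$. Your inner estimate uses only $|a\pm i\eta|\ge\kappa$, so it covers $|\eta|\le2|a|$ unchanged.
\end{itemize}

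The genuine problem is in \eqref{A2}. There you follow the paper's route but defer exactly the signs that matter. The kernel $e^{-\kappa(x-y)}\ind_{x>y}$ you use belongs to $(\kappa+\dx)^{-1}$. By \eqref{R kernel}, $(\kappa-\dx)^{-1}$ has kernel $e^{-\kappa(y-x)}\ind_{y>x}$, so the indicators force $z<x<y$ and
\[
\tr\big\{(\kappa-\dx)^{-1}uJ_h\cj{u}(\kappa-\dx)^{-1}\big\}=\tfrac{i}{4h}\iint_{y>z}u(y)\,\cj{u}(z)\,(y-z)e^{-\kappa(y-z)}\,dy\,dz=\beta^{-1}\!\int u\,(G*\cj{u})\,dx .
\]
So $u$, not $\cj u$, sits at the larger point. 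Since $G$ is $i$ times a real function, this equals $-\beta^{-1}\overline{\int\cj u\,(G*u)\,dx}$, whose real part is the negative of that of $\beta^{-1}\int\cj u\,(G*u)\,dx$.

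Similarly, $(\kappa-\dx)^{-1}$ has symbol $(\kappa-i\xi)^{-1}$, as used in the proof of \eqref{hs 2}. Your Fourier pairing therefore gives $(\kappa-i(\xi+\eta))^{-2}$, i.e.\ $\int\cj F\,|\ft u|^2\,d\xi$. Concretely, for $h=\infty$ and $\ft u$ concentrated near $\xi_0$, one has $\tr\{(\kappa-\dx)^{-1}u\P_+\cj u(\kappa-\dx)^{-1}\}\approx\frac{1}{2\pi}\frac{i}{\kappa-i\xi_0}\|u\|_{L^2}^2$. This is the complex conjugate of $\beta^{-1}F(\xi_0;\kappa,\infty)\|u\|_{L^2}^2$.

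So careful bookkeeping establishes \eqref{A2} only after replacing $F$ by $\cj F$ and $\int\cj u\,(G*u)$ by $\int u\,(G*\cj u)$. The paper's proof makes the same slip: it writes $(\kappa+i\xi)^{-2}$, and its own $J_h$ expression has $u(y)$ with $y>z$. The bounds \eqref{F 2}, \eqref{G 2} and the bound on $A^{[2]}$ are insensitive to this, but the leading part of $\Im A^{[2]}$ changes sign.

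Finally, writing $\sgn$ as a difference of Heaviside functions does not by itself make the $J_h$ term trace class. What works is the paper's factorisation into three Hilbert--Schmidt operators: $(\kappa-\dx)^{-1}\sqrt{|u|}$, $\frac{u}{\sqrt{|u|}}J_h\sqrt{|u|}$ and $\frac{\cj u}{\sqrt{|u|}}(\kappa-\dx)^{-1}$. This is precisely where $u\in L^1$ enters, via \eqref{hs} and \eqref{hs J}.
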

\begin{proof}
Let us begin with \eqref{A2}, in order to motivate the definition \eqref{A F}.
Considering the integral kernel of the operator in Fourier variables, we find
\begin{align*}
\tr\{ (\kappa-\dx)^{-1}uK_h\cj{u}(\kappa-\dx)^{-1} \}
&= \frac{1}{4\pi} \iint \frac{1+\coth(h\eta)-\frac{1}{h\eta}}{(\kappa+i\xi)^2} \ft{u}(\xi-\eta) \ft{\cj{u}}(\eta-\xi)\,d\eta\,d\xi \\
&= \frac{1}{4\pi} \iint \frac{1+\coth(h\eta)-\frac{1}{h\eta}}{(\kappa+i(\xi+\eta))^2} |\ft{u}(\xi)|^2\,d\eta\,d\xi .
\end{align*}
This explains the contribution of $F$ to \eqref{A2}.

Next, we turn to the contribution of $G$.  Let $k(x) = e^{-\kappa |x|} \ind_{x<0}$
denote the convolution kernel of $(\kk-\dx)^{-1}$, so that
\[
[(\kk-\dx)^{-1}f](x) = \int k(x-y)f(y)\,dy.
\]
Similarly, let $j_h(x)$ denote the convolution kernel of $J_h$ in \eqref{Jh}.
We compute
\begin{align*}
\tr\{(\kk-\dx)^{-1}u J_{h}\cj{u}(\kk-\dx)^{-1} \} 
= \int k(x-y)u(y)j_{h}(y-z) \cj{u}(z) k(z-x)\,dx\,dy\,dz ,
\end{align*}
where
\begin{align*}
\int  k(x-y)k(z-x)\,dx 
= \int e^{-\kappa(y-x)-\kappa(x-z)}\ind_{z<x<y}\,dx
=  (y-z) e^{-\kappa(y-z)} \ind_{y>z}.
\end{align*}
Inserting this and the kernel~\eqref{Jh} for $J_h$, we find
\begin{align*}
\tr\{ (\kk-\dx)^{-1}u J_{h}\cj{u}(\kk-\dx)^{-1} \} 
= \frac{i}{4h} \iint u(y)\cj{u}(z) \ind_{y>z} (y-z) e^{-\kappa(y-z)} \,dy\,dz .
\end{align*}
This accounts for the contribution of $G$, and finishes the proof of \eqref{A2}.  Note that the assumption $u\in L^1$ guarantees that the left-hand side above is well-defined by \eqref{hs} and \eqref{hs J}:
\begin{align}
\big| \tr\{ (\kk-\dx)^{-1}u J_{h}\cj{u}(\kk-\dx)^{-1} \} \big|
&\leq \big\| (\kk-\dx)^{-1} \sqrt{|u|} \big\|_{\hs} \big\| \tfrac{u}{\sqrt{|u|}} J_h \sqrt{|u|} \big\|_{\hs} \big\| \tfrac{\cj{u}}{\sqrt{|u|}}(\kk-\dx)^{-1} \big\|_{\hs} 
\nonumber \\
&\lesssim h^{-1}\kappa^{-1} \|u\|_{L^1}^2 .
\label{tr J}
\end{align}

Next, we turn to \eqref{G 2}.  By Cauchy--Schwarz and Young's inequality,
\begin{align*}
\bigg| \int \cj{u}(x) (G*u)(x)\,dx \bigg|
&\leq \frac{1}{4h} \iint |u(x)||u(y)| (x-y) e^{-\kappa(x-y)} \ind_{x>y} \,dx\,dy \\
&\leq  \frac{1}{4h} \| u\|_{L^2} \Big\| \int u(y) (x-y)e^{-\kk (x-y)} \ind_{x>y}dy \Big\|_{L^2_x} \\
& \leq  \frac{1}{4h} \| u\|_{L^2}^{2} \| xe^{-\kk x}\ind_{x>0}\|_{L^1_x} = \frac{1}{4h\kappa^2}\|u\|_{L^2}^{2} .
\end{align*}

Lastly, we turn to \eqref{F 2}.  When $h=\infty$, $\coth(h\xi)$ is replaced by $\sgn(\xi)$, and we can compute $F$ exactly:
\[
F(\xi;\kappa,\infty) 
= \frac{\beta}{2\pi} \int_0^\infty \frac{1}{(\kappa + i(\xi+\eta))^2} \,d\eta 
= \frac{\beta}{2\pi} \frac{1}{i\kappa-\xi} .
\]
This corresponds to the left-hand side of \eqref{F 2}.

In the case $h<\infty$, $F$ can still be computed exactly (although rather tediously) using residues and a semicircular contour.  Ultimately, however, we do not find this formula as useful as direct estimation:
\begin{align*}
\bigg| \int \frac{1}{(\kappa + i(\xi+\eta))^2} [\coth(h\eta) -\tfrac{1}{h\eta} - \sgn(\eta)]\,d\eta \bigg|
&\lesssim \kappa^{-2} \int \big| \coth(h\eta) -\tfrac{1}{h\eta} - \sgn(\eta)\big|\,d\eta \lesssim \frac{1}{h \kk^2} .
\end{align*}
This proves \eqref{F 2}.
\end{proof}

We are now equipped to establish the convergence of the quartic-and-higher terms of the series \eqref{A series}:
\begin{proposition}\label{t:A series}
Fix $h>0$.  If $Q\subseteq L^2$ is bounded and equicontinuous, then there exists $\kappa_0 = \kappa_0(Q)\geq 1$ so that for $\kappa\geq\kappa_0$ the series 
\begin{equation}
A^{[\geq 4]}(\kappa;u,h) := -i \sum_{n\geq 2} (i\beta)^n \tr\big\{ \big[ (\kappa-\dx)^{-1} u\Pih\cj{u} \big]^n (\kappa-\dx)^{-1} \big\} 
\label{A4}
\end{equation}
converges uniformly for $u\in Q$.
\end{proposition}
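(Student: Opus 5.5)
The plan is to bound each term of \eqref{A4} by a geometric sequence whose ratio is small, uniformly over $u\in Q$. I will not put the whole resolvent on one side. Instead I will cyclically regroup each trace so that the $n$-th term becomes a product of $n$ ``sandwiched'' operators $\cj{u}(\kappa-\dx)^{-1}u$, with the operators $\Pih$ sitting in between. Cyclicity of the trace gives
\[
\tr\big\{ [(\kappa-\dx)^{-1}u\Pih\cj{u}]^n(\kappa-\dx)^{-1}\big\}
=\tr\big\{ \Pih\,\cj{u}(\kappa-\dx)^{-2}u\,\Pih\,\cj{u}(\kappa-\dx)^{-1}u\cdots \Pih\,\cj{u}(\kappa-\dx)^{-1}u\big\}.
\]
In this expression exactly one factor carries $(\kappa-\dx)^{-2}=(\kappa-\dx)^{-1}(\kappa-\dx)^{-1}$; I split it as $\cj{u}(\kappa-\dx)^{-1}\cdot(\kappa-\dx)^{-1}u$. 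Each $\Pih$ is bounded on $L^2$, since $\|\Pih\|_{\op}\le 1$ (its symbol is bounded by one; equivalently one can use \eqref{Pihdecomp} with \eqref{op K} and \eqref{hs J}).

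For the trace norm I use H\"older in Schatten classes. The factors $\cj{u}(\kappa-\dx)^{-1}$ and $(\kappa-\dx)^{-1}u$ are Hilbert--Schmidt with norm $\tfrac12\kappa^{-1/2}\|u\|_{L^2}$ by \eqref{hs}. For $n\ge 2$ the remaining $n-1$ factors $\cj{u}(\kappa-\dx)^{-1}u$ are placed in operator norm. This gives
\[
|n\text{-th term}|\lesssim \kappa^{-1}\|u\|_{L^2}^2\,\big\|\cj{u}(\kappa-\dx)^{-1}u\big\|_{\op}^{\,n-1}.
\]
The series therefore converges uniformly on $Q$ as soon as $|\beta|\sup_{u\in Q}\|\cj{u}(\kappa-\dx)^{-1}u\|_{\op}\le\tfrac12$ for $\kappa\ge\kappa_0$. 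Uniform convergence then follows from the Weierstrass M-test, with the terms bounded by $C\kappa^{-1}\|u\|^2_{L^2}2^{-n}$.

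The key and only delicate step is making this operator norm small uniformly in $u\in Q$, for arbitrarily large $L^2$ data. Here \eqref{op 6} does the work, with a fixed $r\in(0,\frac12)$:
\[
\|\cj{u}(\kappa-\dx)^{-1}u\|_{\op}\lesssim \kappa^{-2r}N^{2r}\|u\|_{L^2}^2+\|u_{>N}\|_{L^2}^2 .
\]
Equicontinuity in the form \eqref{equicty 2} lets me first choose the dyadic $N$ so that $\sup_{u\in Q}\|u_{>N}\|_{L^2}^2$ is below a small multiple of $|\beta|^{-1}$ (a little care is needed since $\P_{>N}$ is a smooth cutoff, but its symbol vanishes for $|\xi|\le N$, so the tail bound applies). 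With $N$ fixed, I then choose $\kappa_0$ so large that $\kappa_0^{-2r}N^{2r}\sup_Q\|u\|_{L^2}^2$ is also small. Both choices depend only on $Q$, which gives $\kappa_0=\kappa_0(Q)$.

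I expect the main obstacle to be exactly this smallness. The Hilbert--Schmidt bound \eqref{hs 3} alone gives only $\|u\|^2_{L^2}$, with no decay in $\kappa$, and that suffices only for small data. The frequency-splitting estimate \eqref{op 6} combined with tightness of $\ft u$ is what removes the small-data restriction. A minor point to check is that the trace is well defined (the product is trace class by the Hilbert--Schmidt factors) and that the cyclic rearrangement is legitimate. Both hold because at least two Hilbert--Schmidt factors are present whenever $n\ge 2$.
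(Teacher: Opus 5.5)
There is a genuine gap: your claim $\|\Pih\|_{\op}\le 1$ is false for $h<\infty$, and your argument rests on it. The symbol of $\mathcal{T}_h$ is $-i\coth(h\xi)$, so $\Pih=\frac12(1+i\mathcal{T}_h)$ has symbol $\frac12(1+\coth(h\xi))$. This blows up like $(2h\xi)^{-1}$ as $\xi\to 0$.

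In the splitting \eqref{Pihdecomp} only $K_h$ is bounded: its symbol is $\frac12(1+\coth(h\xi)-\frac{1}{h\xi})$, which is what \eqref{op K} uses. By contrast, $J_h$ is convolution with the non-decaying kernel $\frac{i}{4h}\sgn(x)$, with symbol $(2h\xi)^{-1}$, so it is unbounded on $L^2$. Your fallback to \eqref{hs J} does not help, because that estimate controls the sandwich $uJ_hv$ in $\hs$ and never $J_h$ on its own. This low-frequency singularity of $\mathcal{T}_h$ is exactly the difficulty the paper points to; it is also why the $n=1$ term is treated separately.

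As a result, your regrouping isolates each $\Pih$ between blocks $\cj{u}(\kk-\dx)^{-1}u$ and bounds it in operator norm, and that step fails. So does your justification that each term of \eqref{A4} is trace class. (For $h=\infty$, where $\Pih=\P_+$, your argument is fine.)

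The repair is the paper's proof. Expand every $\Pih=J_h+K_h$, and keep each $J_h$ attached to its neighbouring $u$ and $\cj{u}$:
\begin{itemize}
\item bound $\|uJ_h\cj{u}\|_{\hs}\le\frac14h^{-1}\|u\|_{L^2}^2$, and the adjacent resolvents by $\|(\kk-\dx)^{-1}\|_{\op}\le\kk^{-1}$;
\item treat each maximal run of $K_h$'s as you did: $(\kk-\dx)^{-1}u$ and $\cj{u}(\kk-\dx)^{-1}$ at the two ends go in $\hs$ via \eqref{hs}, and the interior blocks $\cj{u}(\kk-\dx)^{-1}u$ go in operator norm.
\end{itemize}
The term containing only $J_h$'s is trace class solely because $n\ge 2$ provides two $\hs$ factors. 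This gives the bound $(\kk^{-1}+h^{-1}\kk^{-2})\|u\|_{L^2}^2\big(h^{-1}\kk^{-1}\|u\|_{L^2}^2+C\|\cj{u}(\kk-\dx)^{-1}u\|_{\op}\big)^{n-1}$.

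From there your smallness step (equicontinuity plus \eqref{op 6}, choosing the frequency cutoff first and then $\kk_0$) goes through unchanged. The extra ratio term $h^{-1}\kk^{-1}\|u\|_{L^2}^2$ is harmless, since $Q$ is bounded in $L^2$ and the term decays in $\kk$.
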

\begin{proof}
For $n\geq 2$, we will bound
\[
\big| \tr\big\{ \big[ (\kappa-\dx)^{-1} u\Pih\cj{u} \big]^n (\kappa-\dx)^{-1} \big\} \big| .
\]
We insert the $\Pih$ decomposition~\eqref{Pihdecomp} and expand.  

For the term with $n$ copies of $J_h$, we use \eqref{hs J} to bound
\begin{align*}
\big| \tr\big\{ \big[ (\kappa-\dx)^{-1} u J_h\cj{u} \big]^n (\kappa-\dx)^{-1} \big\} \big|  
&\leq \| uJ_h\cj{u} \|_{\hs}^{n} \|(\kk-\dx)^{-1} \|_{\op}^{n+1} \\
&\leq \kappa^{-1} ( h^{-1} \kk^{-1} \|u\|_{L^2}^2 )^n .
\end{align*}
Note that this requires $n\geq 2$.

The remaining contributions have at least one factor of $K_h$.  When there is just one isolated copy of $K_h$, we use \eqref{hs} and \eqref{op K} to bound
\[
\big\| (\kk-\dx)^{-1} uK_h \cj{u} (\kk-\dx)^{-1} \|_{\tc}
\leq \big\| (\kk-\dx)^{-1} u \big\|_{\hs} \|K_h\|_{\op} \big\| \cj{u} (\kk-\dx)^{-1} \big\|_{\hs}
\lesssim \kappa^{-1} \|u\|_{L^2}^2 .
\]

When two or more copies of $K_h$ appear consecutively, we bound
\begin{align}
\big\| [(\kk-\dx)^{-1} uK_h \cj{u} ]^m (\kk-\dx)^{-1}  \|_{\tc} 
&\leq \big\| (\kk-\dx)^{-1} u \big\|_{\hs}^2 \|K_h\|_{\op}^m \|\cj{u} (\kk-\dx)^{-1} u\|_{\op}^{m-1} 
\nonumber \\
&\lesssim \kappa^{-1} \|u\|_{L^2}^2 \|\cj{u} (\kk-\dx)^{-1} u\|_{\op}^{m-1} .
\label{K run}
\end{align}
In other words, for each run of $K_h$, the left- and right-most copies of $(\kk-\dx)^{-1}u$ and $\cj{u} (\kk-\dx)^{-1}$ are estimated in $\hs$-norm, and everything else is estimated in operator norm.  In particular, as there is at least one copy of $K_h$, there are at least two operators in $\hs$, and so the composition is in $\tc$.  The remaining copies of $J_h$ are then simply dealt with using \eqref{hs J}; e.g.,
\[
\big\| (\kk-\dx)^{-1} uJ_h \cj{u}  \big\|_{\op} + \big\| uJ_h \cj{u} (\kk-\dx)^{-1} \big\|_{\op}
\leq h^{-1} \kk^{-1} \|u\|_{L^2}^2 .
\]

Altogether, this yields
\begin{align}
\big| \tr&\big\{ \big[ (\kappa-\dx)^{-1} u\Pih\cj{u} \big]^n (\kappa-\dx)^{-1} \big\} \big| 
\nonumber \\
&\lesssim (\kappa^{-1} + h^{-1}\kappa^{-2}) \|u\|_{L^2}^2 \big( h^{-1}\kappa^{-1}\|u\|_{L^2}^2 + C \|\cj{u} (\kk-\dx)^{-1} u\|_{\op} \big)^{n-1} 
\label{A4 2}
\end{align}
uniformly for $n\geq 2$, for some constant $C>0$.   
For a parameter $0<\eta<1$ to be chosen, we apply \eqref{op 6} with $r=\frac14$ and $N=\eta\kappa$ to obtain
\begin{align}
\|\cj{u} (\kk-\dx)^{-1} u\|_{\op} 
\lesssim \sqrt{\eta} \|u\|_{L^2}^2 + \|u_{>\eta\kappa}\|_{L^2}^2 .
\label{equicty 3}
\end{align}
Therefore, given a bounded and equicontinuous set $Q\subseteq L^2$, we may choose $\eta>0$ small and then $\kappa_0 \geq 1$ large enough so that
\[
h^{-1}\kappa^{-1}\|u\|_{L^2}^2 + C \|\cj{u} (\kk-\dx)^{-1} u\|_{\op} \leq \tfrac12
\]
for all $\kappa\geq\kappa_0$ and $u\in Q$.  Consequently, the right-hand side of \eqref{A4 2} can be summed in $n$ for all such $\kappa$ and $u$.
\end{proof}

We now arrive at the climax of this section, where we combine the two pieces $A^{[2]}$ and $A^{[\geq 4]}$ to construct our key quantity:

\begin{proposition}[Conservation laws]\label{t:A dot}
Define the quantity
\[
A(\kappa;u,h) := A^{[2]}(\kappa;u,h) + A^{[\geq 4]}(\kappa;u,h) .
\]
If $u(t)$ is an $H^\infty(\R)$ solution to \eqref{INLSG} with initial data $u_0$ satisfying $\langle x \rangle u_0\in L^2(\R)$, then for any time $t$ we have
\begin{equation}
\tfrac{d}{dt} A(\kappa;u(t)) = 0 
\label{A dot}
\end{equation}
for all $\kappa$ sufficiently large.
\end{proposition}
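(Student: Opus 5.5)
My plan is to first work with data $u\in H^\infty\cap L^1$, where every term of \eqref{A series}, including $n=1$, is a genuine trace. I would then differentiate term by term and show that the derivatives telescope. The decay assumption $\langle x\rangle u_0\in L^2$ enters here: it gives $u_0\in L^1$. I would also check that $\langle x\rangle u(t)\in L^2$ persists along the $H^\infty$ flow. This follows from a standard virial-type computation of $\frac{d}{dt}\|xu\|_{L^2}^2$ using \eqref{INLSG}, which gives a Gronwall bound in terms of $\|u\|_{H^1}$ and $\|u\|_{W^{1,\infty}}$. Hence $u(t)\in L^1\cap L^2$ for all $t$. By \eqref{A2}, $A^{[2]}$ then coincides with the $n=1$ trace. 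So $A(\kappa;u(t))$ equals the full series \eqref{A series}, and by Proposition~\ref{t:A series} (applied to the compact, hence equicontinuous, set $\{u(t):t\in[t_0-1,t_0+1]\}$) the series converges uniformly near any fixed time for $\kappa$ large.

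Next I differentiate. Set $R=(\kappa-\dx)^{-1}$ and $\Lambda=u\Pih\cj{u}$, so that the $n$-th term is $\tr\{(R\Lambda)^nR\}$. Using \eqref{INLSG}, $\dt\Lambda=(\dt u)\Pih\cj{u}+u\Pih\cj{\dt u}$, with $\dt u=-i\dx^2u+2\be u\Pih\dx(|u|^2)$. By cyclicity of the trace,
\[
\tfrac{d}{dt}\tr\{(R\Lambda)^nR\}=n\,\tr\{R^2\Lambda(R\Lambda)^{n-1}\cdot\}\text{-type terms},
\]
namely $n\tr\{(\dt\Lambda)(R\Lambda)^{n-1}R^2\}$ up to ordering. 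The goal is the algebraic identity that the dispersive part of $\dt\Lambda$ in the $n$-th term cancels the nonlinear part in the $(n-1)$-th term. This mirrors the Lax equation \eqref{lax}: formally $\dt\lax=[\peter,\lax]$ implies $\frac{d}{dt}\tr\{(\lax+i\kappa)^{-1}-(\lax_0+i\kappa)^{-1}\}=0$. To execute it rigorously term by term, I write $-i\dx^2u\,\Pih\cj{u}+u\Pih(i\dx^2\cj{u})$ as a commutator $[-i\dx^2,\Lambda]$ plus corrections. Then $-i\dx^2=-i(\dx-\kappa)(\dx+\kappa)-i\kappa^2$, so commutators with $R^{-1}$ collapse resolvents. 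The nonlinear pieces $2\be u\Pih\dx(|u|^2)\Pih\cj{u}$ combine with these collapsed terms from the neighbouring order to cancel; here I use that $\Pih$ is a Fourier multiplier commuting with $\dx$, and that $\Pih+\Pi_{-,h}=1$. I would organise this in the spirit of \cite{KVZ}, verifying the cancellation first for $n=1$ against $n=2$, and then for general $n$ by an induction on the word structure.

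Finally, I would justify exchanging $\frac{d}{dt}$ with the sum. The bounds \eqref{A4 2} and \eqref{equicty 3}, applied with the $H^\infty$ control on $\dt u$, give geometric decay of the differentiated terms uniformly on a time interval for $\kappa\ge\kappa_0$. The main obstacle will be exactly this bookkeeping. Each cancelled pair must consist of individually trace class operators: terms like $R\,\dx^2u\,\Pih\cj{u}R$ require $u\in L^1$ and smoothness, via \eqref{tr J} for the $J_h$ part. This is why the $L^1$ condition, and hence $\langle x\rangle u_0\in L^2$, is needed even though the final quantity \eqref{A F} makes sense on $L^2$. I expect the hardest point to be confirming that the telescoping leaves no boundary term at $n\to\infty$, which follows from the geometric decay above.
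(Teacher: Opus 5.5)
Your outer framework is the paper's:

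\begin{itemize}
\item persistence of $\langle x\rangle u\in L^2$ by a weighted-mass Gronwall argument, so that $u(t)\in L^1$ and $A^{[2]}$ is the $n=1$ trace by \eqref{A2};
\item uniform convergence from Proposition~\ref{t:A series}, and term-by-term differentiation;
\item pairing the dispersive part of the order-$(n+1)$ term with the nonlinear part of the order-$n$ term.
\end{itemize}

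\textbf{The gap is the cancellation itself.} You attribute it to $\Pih$ commuting with $\dx$ and to $\Pih+\Pi_{-,h}=1$. Every Fourier multiplier has these properties, and they do not produce it. The nonlinear part of $\dt(u\Pih\cj u)$ is $2\be\big[u(|u|^2)'_{+,h}\Pih\cj u+u\Pih\cj u(|u|^2)'_{-,h}\big]$. Both $\dt u$ and $\dt\cj u$ contribute, the latter through $\Pi_{-,h}$; this is not the single term you wrote. On the other side, collapsing resolvents in the dispersive terms (this is \eqref{comm}) leaves, besides commutators, $-2u\Pih(|u|^2)'\Pih\cj u$ at each interior position. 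Cancelling these requires the operator identity
\[
\Pih(|u|^2)'\Pih\cj u=(|u|^2)'_{+,h}\Pih\cj u+\Pih\cj u(|u|^2)'_{-,h}.
\]
This is a genuinely bilinear property of $\TT_h$. The paper derives it from the Cotlar-type identity $f_{\TT_h}g_{\TT_h}=fg+(f_{\TT_h}g+fg_{\TT_h})_{\TT_h}+\frac{1}{4h^2}\int f\int g$, whose last term drops because $\int(|u|^2)'\,dx=0$. It does not follow from the linear facts you cite: replace $\Pih$ by $\frac12$, and the two sides become $\frac14(|u|^2)'\cj u$ and $\frac12(|u|^2)'\cj u$. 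You must either prove this identity or extract it from the degree-four part of the Lax equation \eqref{lax}. This is exactly where integrability enters; without it your plan does not establish \eqref{A dot}.

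\textbf{The remaining bookkeeping also needs correcting.}

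\begin{itemize}
\item The $n=1$ dispersive term has no lower-order partner, so it must vanish by itself. Write $u''=[\dx,u']$ and cycle the trace, as in \eqref{A dot 1}.
\item Your product-rule formula $n\,\tr\{\dt\Lambda(R\Lambda)^{n-1}R^2\}$ is false, because $R=(\kk-\dx)^{-1}$ and $\Lambda$ do not commute. The positions must be kept separate. It is the sum over positions of the commutators $[2\kk\dx-\dx^2,\cj uRu]$ that combines by the Leibniz rule. Together with the dispersive terms at the two ends, it leaves $-2\tr\{\Pih[\dx,\cj uRu](\Pih\cj uRu)^n\}$, which vanishes by cyclicity.
\item Each order then cancels exactly. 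The passage $n\to\infty$ is routine once the differentiated series converges absolutely; it is not the hard point of the proof.
\end{itemize}
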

\begin{proof}
Let $u(t)$ be an $H^\infty$ solution to \eqref{INLSG} with initial data $u_0$ satisfying $\langle x \rangle u_0\in L^2$.  First, we claim that the condition $\langle x\rangle u \in L^2$ is preserved under the dynamics.  Given a smooth weight $w\in L^\infty$, integration by parts shows that
\[
\frac{d}{dt} \int w(x) |u(t,x)|^2\,dx
= \int w'(x) \big[ 2\Im (\cj{u}u') - \beta |u|^4 \big](t,x)\,dx .
\]
As $u(t)$ is in $H^1$, a standard Gronwall argument involving a sequence of bounded weights converging pointwise to $x^2$ shows that $\langle x\rangle u(t)$ belongs to $L^2$ for all time.  In particular, $u(t)$ is in $L^1\cap L^2$, and so the representation \eqref{A2} of $A^{[2]}$ as a trace holds.

Fix a time $t_0\in\R$, at which we will verify \eqref{A dot}.  On any bounded time interval containing $t_0$, the functions $u(t)$ are bounded in $H^1$, and so are bounded and equicontinuous in $L^2$.  Therefore, by Proposition~\ref{t:A series}, we may choose $\kappa$ large enough so that the series \eqref{A4} converges uniformly on this time interval.  Uniform convergence guarantees that $t\mapsto A(\kappa;u(t))$ is differentiable at $t_0$, and that the series differentiated term-by-term converges to $\frac{d}{dt} A(\kappa;u(t))$.

Differentiating \eqref{A2} and \eqref{A4}, one sees that it suffices to show
\begin{equation}
\tr\big\{ (\kk-\dx)^{-1} \big[ u''\Pih\cj{u} - u\Pih\cj{u}'' \big] (\kk-\dx)^{-1} \big\} = 0,
\label{A dot 1} 
\end{equation}
and that for each $n\geq 1$,
\begin{equation}
\begin{aligned}
&\sum_{j=0}^{n} \tr\Big\{ \big[ (\kk-\dx)^{-1}u\Pih\cj{u} \big]^j (\kk-\dx)^{-1} \big[ u''\Pih\cj{u} - u\Pih\cj{u}'' \big] \\[-10pt]
&\qquad\qquad\qquad\qquad\qquad\qquad\qquad\qquad\qquad\qquad\circ \big[ (\kk-\dx)^{-1}u\Pih\cj{u} \big]^{n-j} (\kk-\dx)^{-1} \Big\} \\
&+2\sum_{j=0}^{n-1} \tr\Big\{ \big[ (\kk-\dx)^{-1}u\Pih\cj{u} \big]^j (\kk-\dx)^{-1} \big[ u\big(|u|^2\big)'_{+,h}\Pih\cj{u} + u\Pih\cj{u}\big(|u|^2\big)'_{-,h} \big] \\[-10pt]
&\qquad\qquad\qquad\qquad\qquad\qquad\qquad\qquad\qquad\quad\circ \big[ (\kk-\dx)^{-1}u\Pih\cj{u} \big]^{n-j-1} (\kk-\dx)^{-1} \Big\} \\
&=0 .
\end{aligned}
\label{A dot 2} 
\end{equation}
Here and throughout the proof, we use the shorthand
\[
f' = \dx (f)
\quad\text{and}\quad
f_{\pm,h} = \Pi_{\pm,h}(f) .
\]
For the definition of $\Pi_{\pm,h}$, see \eqref{Pi+h}.  This notation has the advantage that it allows us to easily distinguish the operator corresponding to multiplication by $f'$ from the operator $\dx f$ which maps $g\mapsto \dx(fg)$.

We begin with \eqref{A dot 1}.  Writing $u'' = [\dx,u']$, cycling the trace (i.e.\ $\tr(AB) = \tr(BA)$), and using the commutativity of Fourier multipliers, we find
\begin{align*}
\tr\big\{ &(\kk-\dx)^{-1} \big[ u''\Pih\cj{u} - u\Pih\cj{u}'' \big] (\kk-\dx)^{-1} \big\} \\
&= - \tr\big\{ (\kk-\dx)^{-1} \big[u'\Pih\cj{u}' - u'\Pih\cj{u}' \big] (\kk-\dx)^{-1} \big\} \\
&= 0 .
\end{align*}
This proves \eqref{A dot 1}.  Note that the left-hand side is finite as $u\in H^\infty$ and $u\in L^1$; for example, by \eqref{hs} and \eqref{hs J},
\begin{align*}
\big| \tr\{ (\kk-\dx)^{-1}u'' J_{h}\cj{u}(\kk-\dx)^{-1} \} \big|
&\leq \big\| (\kk-\dx)^{-1} \big\|_{\op} \big\| u'' J_h \sqrt{|u|} \big\|_{\hs} \big\| \tfrac{\cj{u}}{\sqrt{|u|}}(\kk-\dx)^{-1} \big\|_{\hs} \\
&\lesssim h^{-1}\kappa^{-\frac32} \|u\|_{L^1} \|u''\|_{L^2} .
\end{align*}

Next, we turn to \eqref{A dot 2}.  Arguing as in Proposition~\ref{t:A series}, it is clear that any trace containing two free $\Pih$ operators (i.e.\ not counting $(|u|^2)'_{+,h}$) is well-defined.  The only term which does not fit this description is
\begin{align*}
\tr\Big\{ (\kk-\dx)^{-1} \big[ u\big(|u|^2\big)'_{+,h}\Pih\cj{u} + u\Pih\cj{u}\big(|u|^2\big)'_{-,h} \big] (\kk-\dx)^{-1} \Big\} .
\end{align*}
However, this is well-defined for $u\in H^\infty$ by \eqref{hs}, \eqref{hs J}, and \eqref{op K}; for example,
\begin{align*}
\big| \tr\{ (\kk-\dx)^{-1}  u\big(|u|^2\big)'_{+,h} J_{h}\cj{u}(\kk-\dx)^{-1} \} \big|
&\leq \big\| (\kk-\dx)^{-1} u \big\|_{\hs} \big\| \big(|u|^2\big)'_{+,h} J_h \cj{u} \big\|_{\hs} \big\| (\kk-\dx)^{-1} \big\|_{\op} \\
&\lesssim h^{-1}\kappa^{-\frac32} \|u\|_{L^2}^2 \|u\|_{H^1}^2 .
\end{align*}
To prove \eqref{A dot 2}, we will employ the operator identity
\begin{equation}
\cj{u}(\kk-\dx)^{-1} u'' - \cj{u}'' (\kk-\dx)^{-1} u
= - 2\big(|u|^2\big)' + \big[ 2\kk\dx - \dx^2,\cj{u}(\kk-\dx)^{-1}u \big] .
\label{comm}
\end{equation}

When we insert \eqref{comm} into \eqref{A dot 2}, it will be the first term in RHS\eqref{comm} that exhibits the desired cancellation.  Specifically, we claim that
\begin{equation}
\Pih\big(|u|^2\big)'\Pih \cj{u} = \big(|u|^2\big)'_{+,h} \Pih\cj{u} + \Pih\cj{u}\big(|u|^2\big)'_{-,h}
\notag 
% \label{cotlar 2}
\end{equation}
as operators on $L^2$.  Indeed, using the definition \eqref{Pi+h} of $\Pi_{\pm,h}$, we find
\begin{align*}
4\Pih&\big(|u|^2\big)'\Pih \cj{u} - 4\big(|u|^2\big)'_{+,h} \Pih\cj{u} - 4 \Pih\cj{u}\big(|u|^2\big)'_{-,h} \\
&= \big(|u|^2\big)'_{\TT_h} \TT_h \cj{u} - \big(|u|^2\big)'\cj{u} - \TT_h \big(|u|^2\big)'_{\TT_h}\cj{u} - \TT_h \big(|u|^2\big)' \TT_h \cj{u}  = 0 .
\end{align*}
In the last step, we used a Cotlar-type identity for the operator $\TT_h$: for any $f,g\in H^{\infty}\cap L^1$,
\[
f_{\TT_h}\cdot g_{\TT_h} = fg + \big( f_{\TT_h}\cdot g + f\cdot g_{\TT_h} \big)_{\TT_h} + \tfrac{1}{4h^2} {\textstyle\int}f\cdot {\textstyle\int}g .
\]

The contributions from the second term in RHS\eqref{comm} to \eqref{A dot 2} can be combined as follows:
\begin{align*}
&\sum_{\ell=0}^{n-1} \tr\Big\{ \big( (\kk-\dx)^{-1}u\Pih\cj{u} \big)^{\ell} (\kk-\dx)^{-1} u\Pih \big[ 2\kk\dx - \dx^2 ,\, \cj{u}(\kk-\dx)^{-1}u\big] \Pih\cj{u} \\[-10pt]
&\qquad\qquad\qquad\qquad\qquad\qquad\qquad\qquad\qquad\circ\big( (\kk-\dx)^{-1}u\Pih\cj{u} \big)^{n-1-\ell} (\kk-\dx)^{-1} \Big\} \\
&= \tr\Big\{ (\kk-\dx)^{-1}u \Big[ 2\kk\dx - \dx^2 ,\, \big( \Pih\cj{u}(\kk-\dx)^{-1} u  \big)^{n} \Big] \Pih\cj{u} (\kk-\dx)^{-1} \Big\} \\
&= - \tr\Big\{ \Pih \big[ 2\kk\dx - \dx^2 ,\, \cj{u}(\kk-\dx)^{-2}u \big]  \big( \Pih\cj{u}(\kk-\dx)^{-1} u  \big)^{n}  \Big\} .
\end{align*}
In passing to the last line, we cycled the trace.  Each term above is still well-defined for $u\in H^\infty\cap L^1$, even when $n=1$.  For example,
\begin{align*}
\big| \tr\{ &u J_{h}\dx^2 \cj{u}(\kk-\dx)^{-2} u J_h \cj{u} (\kk-\dx)^{-1} \} \big| \\
&\leq \|u\|_{L^\infty} \big\| J_{h}\dx^2 \cj{u}(\kk-\dx)^{-1} \big\|_{\op}  \big\| (\kk-\dx)^{-1} \sqrt{|u|} \big\|_{\hs}  \big\| \tfrac{u}{\sqrt{|u|}} J_h \cj{u} \big\|_{\hs} \big\| (\kk-\dx)^{-1} \big\|_{\op} \\
&\lesssim h^{-2}\kappa^{-\frac32} \|u\|_{L^1} \|u\|_{L^2}  \|u\|_{H^1}^2 .
\end{align*}

Collecting the previous steps, we arrive at
\begin{align*}
\text{LHS}\eqref{A dot 2}
={} &\tr\Big\{ \Pih \cj{u} (\kk-\dx)^{-2} u'' \big( \Pih\cj{u}(\kk-\dx)^{-1} u  \big)^{n} \Big\} \\
&- \tr\Big\{ \Pih \cj{u}'' (\kk-\dx)^{-2} u \big( \Pih\cj{u}(\kk-\dx)^{-1} u  \big)^{n} \Big\} \\
&- \tr\Big\{ \Pih \big[ 2\kk\dx - \dx^2 ,\, \cj{u}(\kk-\dx)^{-2}u \big]  \big( \Pih\cj{u}(\kk-\dx)^{-1} u  \big)^{n}  \Big\} .
\end{align*}
Employing the operator identity
\[
\cj{u} (\kk-\dx)^{-2} u'' - \cj{u}'' (\kk-\dx)^{-2} u - \big[ 2\kk\dx - \dx^2 ,\, \cj{u}(\kk-\dx)^{-2}u \big]
= - 2\big[ \dx, \cj{u}(\kk-\dx)^{-1}u \big] ,
\]
we conclude
\begin{align*}
\text{LHS}\eqref{A dot 2}
&=- 2\tr\Big\{ \Pih \big[ \dx, \cj{u}(\kk-\dx)^{-1}u \big] \big( \Pih\cj{u}(\kk-\dx)^{-1} u  \big)^{n} \Big\} = 0 ,
\end{align*}
as desired.  In passing to the last line, we again cycled the trace.
\end{proof}

\section{A-priori estimates}
\label{s:a-priori}

In this section, we will prove Theorems~\ref{t:equicty} and \ref{t:a-priori}, beginning with the former.

Our key quantity for measuring equicontinuity is
\[
\alpha(\kappa;u,h) := \|u\|_{L^2}^2 + 2\pi\beta\kappa \Im A(\kappa;u,h) .
\]
As $A$ is conserved by Proposition~\ref{t:A dot}, then so too will be any combination of it and the mass.  We select this particular combination because by \eqref{F 2} and \eqref{G 2} the quadratic term is
\begin{equation*}
\alpha^{[2]}(\kappa;u,h) 
:= \|u\|_{L^2}^2 + 2\pi\beta\kappa \Im A^{[2]}(\kappa;u,h)
= \int \frac{\xi^2}{\kappa^2+\xi^2} |\ft{u}(\xi)|^2\,d\xi + O\big( h^{-1}\kappa^{-1} \|u\|_{L^2}^2 \big) .
\end{equation*}
The Fourier multiplier appearing above acts as a smooth cutoff to frequencies $|\xi| \gtrsim \kappa$, and so detects equicontinuity via \eqref{equicty 2}.

\begin{proof}[Proof of Theorem~\ref{t:equicty}]
Initially, we require $\delta\leq 1$ is chosen small enough so that, by \eqref{hs 3},
\[
h^{-1}\kappa^{-1}\|u\|_{L^2}^2 + C \|\cj{u} (\kk-\dx)^{-1} u\|_{\op} 
\leq h^{-1}\kappa^{-1}\delta^2 + C\delta^2 
\leq \tfrac12
\]
for all $u\in Q^*$ and $\kappa\geq \kappa_0 := h^{-1}$.  Here, $C>0$ is the constant appearing in \eqref{A4 2}, so that \eqref{A4 2} implies
\begin{align*}
|\alpha(\kappa;u,h) - \alpha^{[2]}(\kappa;u,h)| \lesssim \|u\|_{L^2}^2 \big( h^{-1}\kappa^{-1} \|u\|_{L^2}^2 + \| \cj{u}(\kappa-\dx)^{-1} u\|_{\op} \big) 
\end{align*}
uniformly for $u\in Q^*$ and $\kappa\geq \kappa_0$. 

Let $0<\eta<1$ be a small parameter, to be chosen later.  Applying \eqref{equicty 3} with this $\eta$, we obtain
\begin{align*}
|\alpha(\kappa;u,h) - \alpha^{[2]}(\kappa;u,h)| \lesssim \|u\|_{L^2}^2 \Big\{ ( h^{-1}\kappa^{-1} + \sqrt{\eta} ) \|u\|_{L^2}^2 +  \|u_{>\eta\kappa}\|_{L^2}^2  \Big\} .
\end{align*}

As $\alpha$ is conserved, then for each $u(0)\in Q$ we have
\[
\alpha^{[2]}(\kappa;u(t)) \leq \alpha^{[2]}(\kappa;u(0)) + 2\sup_{s\in[0,t]}|\alpha(\kappa;u(s)) - \alpha^{[2]}(\kappa;u(s))| .
\]
This yields
\[
\sup_{u\in Q^*} \alpha^{[2]}(\kappa;u,h) \leq \sup_{u\in Q} \alpha^{[2]}(\kappa;u,h) + C\delta^2  \Big\{ ( h^{-1}\kappa^{-1} + \sqrt{\eta} ) \delta^2 + \sup_{u\in Q^*} \|u_{>\eta\kappa}\|_{L^2}^2  \Big\}
\]
uniformly for $\kappa\geq \kappa_0$, for some constant $C\geq 1$ independent of $h$, $\kappa$, and $\delta$.

First, we choose $0<\delta\leq 1$ so that
\[
C \delta^2 \leq \tfrac{1}{8} .
\]
Then, given an arbitrary $0<\epsilon \leq 1$, we set $\eta=\frac14 \epsilon^2$, and choose $\kappa_0$ larger if necessary so that
\[
\sup_{u\in Q} \alpha^{[2]}(\kappa;u,h) \leq \tfrac{1}{8}\epsilon
\quad\text{and}\quad
h^{-1}\kappa^{-1} \leq \tfrac12 \epsilon
\]
for all $\kappa\geq \kappa_0$.
Altogether, this yields
\[
\sup_{u\in Q^*} \alpha^{[2]}(\kappa;u,h) \leq \tfrac{1}{4} \epsilon + \tfrac{1}{8} \sup_{u\in Q^*} \|u_{>\eta\kappa}\|_{L^2}^2 .
\]

Define $e_{\kk}(\xi):= 2\pi \be \kk \Im ( F(\xi;\kk,h)-\frac{\be}{2\pi}\frac{1}{i\kk-\xi})$ which by \eqref{F 2} satisfies $|e_{\kk}(\xi)|\les h^{-1}\kk^{-1}$ for all $\xi\in \R$. Then, by \eqref{F 2}, we have
\begin{align*}
\alpha^{[2]}(\kappa;u,h)
&= \int |\ft{u}(\xi)|^2\bigg( \frac{\xi^2}{\kappa^2+\xi^2} + e_{\kk}(\xi) \bigg) \,d\xi + 2\pi \be \kk \Im\int \cj{u} (G \ast u) dx \\
& \geq  \int_{|\xi|>\kk} |\ft{u}(\xi)|^2 \frac{\xi^2}{\kappa^2+\xi^2}  \,d\xi +\int |\ft{u}(\xi)|^2 e_{\kk}(\xi)\,d\xi  + 2\pi \be \kk \Im\int \cj{u} (G \ast u) dx.
\end{align*}
It then follows by \eqref{G 2} that
\begin{align}
 \tfrac12\| u_{>\kappa} \|_{L^2}^2 \leq \alpha^{[2]}(\kappa;u,h) + ch^{-1}\kk^{-1} \|u\|_{L^2}^2,  \label{higha2}
\end{align}
for some universal constant $c>0$.

Using the conservation of mass and increasing $\kappa_0$ further if necessary, \eqref{higha2} implies
\begin{align*}
    \tfrac12 \| u_{>\kappa} \|_{L^2}^2 \leq \alpha^{[2]}(\kappa;u,h) + \tfrac{1}{4}\eps. 
\end{align*}
and so
\[
\sup_{u\in Q^*} \|u_{>\kappa}\|_{L^2}^2 \leq \epsilon + \tfrac14 \sup_{u\in Q^*} \|u_{>\eta\kappa}\|_{L^2}^2 
\]
for all $\kappa\geq\kappa_0$.  

Consider the function $E:(0,\infty)\to[0,\infty)$ defined by
\[
E(\kappa):=  \sup_{u\in Q^*} \|u_{>\kappa}\|_{L^2}^2 .
\]
This function is decreasing and bounded above by $\delta^2$.  To finish the proof, it suffices to show that $E(\kappa)\to 0$ as $\kappa\to \infty$.  So far, we have shown that for any $0 < \epsilon\leq 1$, there exist $\kappa_0\geq 1$ and $R = \eta^{-1} > 1$ so that
\[
E(R\kappa) \leq \epsilon + \tfrac14 E(\kappa) \quad\text{for all }\kappa\geq \kappa_0. 
\]
Iterating this inequality $n$ times, we obtain
\[
E(R^n\kappa_0)
\leq \epsilon \sum_{k=0}^{n-1} (\tfrac14)^k + (\tfrac14)^n E(\kappa_0) .
\]
Taking $n\to\infty$, we deduce
\[
\limsup_{\kappa\to\infty} E(\kappa) \leq 2\epsilon .
\]
As $\epsilon>0$ can be arbitrarily small, we conclude $E(\kappa)\to 0$ as $\kappa\to \infty$, as desired.
\end{proof}

The remainder of this section is devoted to the proof of Theorem~\ref{t:a-priori}.  In order to iterate our local well-posedness result, it suffices to show that for $0<s<\frac12$, the $H^s$-norm of solutions remains bounded globally in time.
Our main tool to do so is the following weighted combination of the quantities used in the previous proof:
\[
\alpha_s(\kappa;u,h) := \int_{\kappa}^\infty \alpha(\varkappa;u,h) \varkappa^{2s-1}\,d\varkappa .
\]
This particular weight is chosen so that the quadratic term is controlled by
\begin{align}
\alpha_s^{[2]}(\kappa;u,h) 
:=& \int_{\kappa}^\infty \alpha^{[2]}(\varkappa;u,h) \varkappa^{2s-1}\,d\varkappa
\nonumber \\
=& \int  \bigg( \int_{\kk}^{\infty} \varkappa^{2s-1}\frac{\xi^2}{\varkappa^2 +\xi^2} d\varkappa\bigg) |\ft{u}(\xi)|^2\,d\xi + O\big(h^{-1}\kappa^{2s-1}\|u\|_{L^2}^2 \big).
\label{a2s}
\end{align}
In view of the estimate
\begin{align}
  \int_{\kk}^{\infty} \varkappa^{2s-1}\frac{\xi^2}{\varkappa^2 +\xi^2} d\varkappa \sim_{s}  \frac{\xi^2}{(\kappa^2+\xi^2)^{1-s}}, \label{sintegral}
\end{align}
we see that for $\kappa$ large, \eqref{a2s} measures the $H^s$-norm of $u$ at frequencies $|\xi|\gtrsim\kappa$.
Indeed, by following similar arguments leading to \eqref{higha2}, \eqref{a2s} and \eqref{sintegral} imply
\begin{align}
     \|u_{>a\kk}\|_{H^{s}}^{2} \les_{s} |\alpha_s^{[2]}(\kappa;u,h)|+h^{-1}\kappa^{2s-1}\|u\|_{L^2}^2
     \label{a2shigh}
\end{align}
for any $a> \frac{1}{10}$, with implicit constant uniform in $\kk$ and $a$.

\begin{proof}[Proof of Theorem~\ref{t:a-priori}]
Fix $0<s<\frac12$ and $T>0$.  
As $Q$ is bounded in $H^s$, it is also bounded and equicontinuous in $L^2$. By definition of $M^*$, \eqref{M*} guarantees $Q^*_T$ is also bounded and equicontinuous in $L^2$.  Therefore, by Proposition~\ref{t:A series}, we may choose $\kappa_0\geq 1$ so that the series for $A(\kappa;u,h)$ converges for all $\kappa\geq\kappa_0$ and $u\in Q^*_T$.  Moreover, after increasing $\kappa_0$ if necessary, \eqref{equicty 3} ensures that 
\begin{equation}
\big\| \cj{u}(\kappa-\dx)^{-1} u\big\|_{\op} \leq \tfrac12 
\label{op 4}
\end{equation}
for all $\kappa\geq\kappa_0$ and $u\in Q^*_T$.

We turn now to the key computation, which is to control the error of $\al_{s}(\kk;u,h)$ from its quadratic terms. For $u\in Q^*_T$ and $\kappa\geq\kappa_0$, we have
\[
\big|\alpha_s(\kappa;u,h) - \alpha_s^{[2]}(\kappa;u,h)\big| \lesssim \sum_{n\geq 2} \int_\kappa^\infty \varkappa^{2s} \big|\tr\big\{ \big[ (\varkappa-\dx)^{-1} u\Pih\cj{u} \big]^n (\varkappa-\dx)^{-1} \big\} \big|\,d\varkappa.
\]
Arguing as in Proposition~\ref{t:A series}, we then decompose each $\Pih$ according to~\eqref{Pihdecomp}

When there is at least one copy of $J_h$, we use \eqref{hs J}.  The remaining copies of $K_h$ are then estimated using \eqref{K run}.  For example, the terms with exactly one copy of $J_h$ at the beginning or end (which yield the least gain in $\varkappa$) are bounded by
\begin{align*}
\big\| uJ_h\cj{u} \big\|_{\hs} \big\| (\varkappa-\dx)^{-1} \big\|_{\op} \big\| (\varkappa-\dx)^{-1} u \big\|_{\hs}^2 \|K_h\|_{\op}^{n-1} \|\cj{u} (\varkappa-\dx)^{-1} u\|_{\op}^{n-2} 
\lesssim \varkappa^{-2}\|u\|_{L^2}^4 \big( \tfrac12 \big)^{n-2} .
\end{align*}
This yields the contribution
\[
\|u\|_{L^2}^4 \sum_{n\geq 2}\big( \tfrac12 \big)^{n-2} \int_\kappa^\infty \varkappa^{2s-2} \,d\varkappa
\lesssim_s \kappa^{2s-1} \|u\|_{L^2}^4.
\]

It remains to estimate the terms with only $K_h$.  Let us begin with the case $n\geq 3$:  
\[
\sum_{n\geq 3} \int_\kappa^\infty \varkappa^{2s} \big|\tr\big\{ \big[ (\varkappa-\dx)^{-1} uK_h\cj{u} \big]^n (\varkappa-\dx)^{-1} \big\} \big|\,d\varkappa .
\]
We decompose the first and last copy of $u$ as $u = u_{\leq \eta\kappa} + u_{>\eta\kappa}$ for a parameter $0<\eta<1$ to be chosen later.

For the term with two copies of $u_{\leq\eta\kappa}$, we use \eqref{hs 3}, \eqref{op K}, and \eqref{op 4} to bound
\begin{align*}
\sum_{n\geq 3}&\int_\kappa^\infty \varkappa^{2s} \big|\tr\big\{ (\varkappa-\dx)^{-1} u_{\leq\eta\kappa} \big[ K_h\cj{u}(\varkappa-\dx)^{-1}u \big]^{n-1} K_h\cj{u}_{\leq\eta\kappa}(\varkappa-\dx)^{-1} \big\} \big|\,d\varkappa \\
& \lesssim \sum_{n\geq 3}(\tfrac12)^{n-3}\int_\kappa^\infty  \big\|(\varkappa-\dx)^{-1}\big\|_{\op}^2 \| u_{\leq\eta\kappa}\|_{L^\infty}^2 \big\| \cj{u} (\varkappa-\dx)^{-1} u \big\|_{\hs}^{2}\varkappa^{2s}\,d\varkappa \\
& \lesssim  \eta\kappa \|u\|_{L^2}^6 \int_\kappa^\infty\varkappa^{2s-2}\,d\varkappa \\
& \lesssim_s \eta\kappa^{2s} \|u\|_{L^2}^6 .
\end{align*}

The remaining terms have at least one copy of $u_{>\eta\kappa}$, which we will estimate using \eqref{hs}.
Applying \eqref{op 6} to $r=s$ and $N=\kappa$, we find
\begin{equation*}
\big\| \cj{u}(\kk-\dx)^{-1} u\big\|_{\op} 
\lesssim \kk^{-2s} \|u_{\leq N}\|_{H^s}^2 + N^{-2s} \|u_{>N}\|_{H^s}^2
\lesssim \kk^{-2s} \|u\|_{H^s}^2 .
\end{equation*}
On the other hand, by \eqref{a2shigh}, we see that
\begin{equation}
\|u\|_{H^s}^2 
\lesssim \kappa^{2s} \|u_{\leq \kappa}\|_{L^2}^2 + \|u_{>\kappa}\|_{H^s}^2
\lesssim \kappa^{2s} \|u\|_{L^2}^2 + |\alpha^{[2]}_s(\kappa;u,h)|
\label{a2s 2}
\end{equation}
for all $\kappa\geq\kappa_0$, after increasing $\kappa_0 \geq h^{-1}$ if necessary.
Combining these observations with \eqref{hs} and \eqref{op 4}, we obtain
\begin{align*}
\sum_{n\geq 3} &\int_\kappa^\infty \varkappa^{2s} \big\| (\varkappa-\dx)^{-1} u_{>\eta\kappa} \big\|_{\hs} \big\| (\varkappa-\dx)^{-1} u \big\|_{\hs} \big\| \cj{u} (\varkappa-\dx)^{-1} u \big\|_{\op}^{n-1}\,d\varkappa \\
& \lesssim \sum_{n\geq 3}(\tfrac12)^{n-3} \|u\|_{L^2} \|u_{>\eta\kappa}\|_{L^2} \|u\|_{H^s}^4 \int_\kappa^\infty \varkappa^{-1-2s}\,d\varkappa \\
& \lesssim_{s} \kappa^{-2s} \|u\|_{L^2} \|u_{>\eta\kappa}\|_{L^2} \|u\|_{H^s}^4 \\
& \lesssim \kappa^{2s} \|u\|_{L^2}^5 \|u_{>\eta\kappa}\|_{L^2} + \kappa^{-2s} \|u\|_{L^2} \|u_{>\eta\kappa}\|_{L^2} \big(\alpha^{[2]}_s(\kappa;u,h)\big)^2 .
\end{align*}

Lastly, it remains to estimate the $n=2$ term with two copies of $K_h$:
\[
\int_\kappa^\infty \varkappa^{2s} \big|\tr\big\{ (\varkappa-\dx)^{-1} uK_h\cj{u} (\varkappa-\dx)^{-1} uK_h\cj{u} (\varkappa-\dx)^{-1} \big\} \big|\,d\varkappa.
\]
We decompose each $u = \sum u_{N_j}$ into its dyadic Littlewood--Paley pieces for $j=1,2,3,4$.  After swapping indices if necessary, we may always assume that $N_1\sim N_2 \geq N_3\geq N_4$.  Unfortunately, however, we must account for every possible permutation of the functions $u_{N_j}$ due to the lack of symmetry.

For a small parameter $0<\eta<1$ to be chosen later, we split the sum into five regions:
\begin{enumerate}
\item $N_2\leq \kappa$,
\item $\kappa < N_2\leq\varkappa$ and $N_3 \leq \eta\kappa$,
\item $\kappa < N_2\leq\varkappa$ and $N_3 > \eta\kappa$,
\item $N_2>\varkappa$ and $N_3\leq \eta\kappa$,
\item $N_2>\varkappa$ and $N_3 > \eta\kappa$.
\end{enumerate}

\noi
\underline{$\bullet$ \textbf{Case 1:} $N_2\leq \kappa$}

\smallskip
\noi
For most permutations, we can use \eqref{hs} to estimate the two highest frequency contributions in Hilbert--Schmidt norm.  Specifically, as long as neither of the operators
\[
\cj{u}_{N_1} (\varkappa-\dx)^{-1} u_{N_2}
\quad\text{nor}\quad
\cj{u}_{N_2} (\varkappa-\dx)^{-1} u_{N_1}
\]
appear, then \eqref{hs}, \eqref{op K}, and Bernstein's inequality yield
\begin{align}
&\sum_{\sigma\in S_1} \big|\tr\big\{ (\varkappa-\dx)^{-1} u_{N_{\sigma(1)}} K_h\cj{u}_{N_{\sigma(2)}} (\varkappa-\dx)^{-1} u_{N_{\sigma(3)}} K_h\cj{u}_{N_{\sigma(4)}} (\varkappa-\dx)^{-1} \big\} \big|
\nonumber \\
&\qquad\lesssim \big\| (\varkappa-\dx)^{-1} u_{N_1} \big\|_{\hs} \big\| (\varkappa-\dx)^{-1} u_{N_2} \big\|_{\hs}  \|u_{N_3}\|_{L^\infty} \|u_{N_4}\|_{L^\infty} \big\| (\varkappa-\dx)^{-1} \big\|_{\op}
\nonumber \\
&\qquad\lesssim \varkappa^{-2} N_3^{\frac12} N_4^{\frac12} \|u_{N_1}\|_{L^2} \|u_{N_2}\|_{L^2} \|u_{N_3}\|_{L^2} \|u_{N_4}\|_{L^2}.
\label{perm 1}
\end{align}
Here, $S_1$ is the subset of permutations of $\{ 1,2,3,4 \}$ given by
\[
S_1 := \big\{ \sigma : \{ 1,2,3,4 \}\to\{ 1,2,3,4 \} \text{ bijective such that } \sigma(\{2,3\}) \neq \{1,2\} \big\} .
\]

For the remaining permutations
\[
S_2 := \big\{ \sigma : \{ 1,2,3,4 \}\to\{ 1,2,3,4 \} \text{ bijective such that } \sigma(\{2,3\}) = \{1,2\} \big\} ,
\]
we use \eqref{op 3} with $r=\frac14$ and \eqref{hs 2} to bound
\begin{align}
&\sum_{\sigma\in S_2} \big|\tr\big\{ (\varkappa-\dx)^{-1} u_{N_{\sigma(1)}} K_h\cj{u}_{N_{\sigma(2)}} (\varkappa-\dx)^{-1} u_{N_{\sigma(3)}} K_h\cj{u}_{N_{\sigma(4)}} (\varkappa-\dx)^{-1} \big\} \big|
\nonumber \\
&\qquad\lesssim \big\| (\varkappa-\dx)^{-1} u_{N_{\sigma(1)}} \big\|_{\mathfrak{I}_4} \big\| \cj{u}_{N_{\sigma(2)}} (\varkappa-\dx)^{-1} u_{N_{\sigma(3)}} \big\|_{\hs} \big\| \cj{u}_{N_{\sigma(4)}} (\varkappa-\dx)^{-1} \big\|_{\mathfrak{I}_4} 
\nonumber \\
&\qquad\lesssim \varkappa^{-2} N_2^{\frac12} N_3^{\frac14} N_4^{\frac14} \|u_{N_1}\|_{L^2} \|u_{N_2}\|_{L^2} \|u_{N_3}\|_{L^2} \|u_{N_4}\|_{L^2} .
\label{perm 2}
\end{align}
Comparing this to \eqref{perm 1} and recalling that $N_4\leq N_3\leq N_2$, we see that the above bound holds for all permutations.

Combining \eqref{perm 1} and \eqref{perm 2} yields the contribution
\begin{align*}
\sum_{N_4\leq N_3\leq N_2 \sim N_1 \lesssim \kappa} &N_2^{\frac12} N_3^{\frac14} N_4^{\frac14} \|u_{N_1}\|_{L^2} \|u_{N_2}\|_{L^2} \|u_{N_3}\|_{L^2} \|u_{N_4}\|_{L^2}  \int_\kappa^\infty \varkappa^{2s-2}\,d\varkappa \\
& \lesssim_s \kappa^{2s-1} \|u\|_{L^2}^2 \sum_{N_2 \sim N_1 \lesssim \kappa} N_2 \|u_{N_1}\|_{L^2} \|u_{N_2}\|_{L^2}  \lesssim \kappa^{2s} \|u\|_{L^2}^4.
\end{align*}

\noi
\underline{$\bullet$ \textbf{Case 2:} $\kappa < N_2\leq \varkappa$ and $N_3 \leq \eta\kappa$}

\smallskip
\noi
Combining \eqref{perm 1} and \eqref{perm 2} again and employing \eqref{a2shigh}, we find
\begin{align*}
\sum_{\substack{ N_1\sim N_2>\kappa \\ N_4\leq N_3\leq \eta\kappa }} &\sum_{\sigma} \int_{N_2}^\infty \varkappa^{2s} \big|\tr\big\{ (\varkappa-\dx)^{-1} u_{N_{\sigma(1)}} K_h\cj{u}_{N_{\sigma(2)}} (\varkappa-\dx)^{-1} u_{N_{\sigma(3)}} K_h\cj{u}_{N_{\sigma(4)}} (\varkappa-\dx)^{-1} \big\} \big| \,d\varkappa \\
& \lesssim  \sum_{\substack{ N_1\sim N_2>\kappa \\ N_4\leq N_3\leq \eta\kappa }} N_2^{\frac12} N_3^{\frac14} N_4^{\frac14}  \|u_{N_1}\|_{L^2} \|u_{N_2}\|_{L^2} \|u_{N_3}\|_{L^2} \|u_{N_4}\|_{L^2} \int_{N_2}^\infty \varkappa^{2s-2} \,d\varkappa \\
&\lesssim_s \eta^{\frac12} \|u\|_{L^2}^2 \sum_{N_1\sim N_2>\kappa} N_2^{2s} \|u_{N_1}\|_{L^2} \|u_{N_2}\|_{L^2} \\
&\lesssim_s \eta^{\frac12} \|u\|_{L^2}^2 |\alpha^{[2]}_s(\kappa;u,h)| + \eta^{\frac 12}h^{-1}\kk^{2s-1}\|u\|_{L^2}^{4}.
\end{align*}
In the first line, the inner sum is over all permutations $\sigma$ of $\{1,2,3,4\}$.

\medskip
\noi
\underline{$\bullet$ \textbf{Case 3:} $\kappa < N_2\leq \varkappa$ and $N_3 > \eta\kappa$}

\smallskip
\noi
Applying \eqref{perm 1}, \eqref{perm 2}, and \eqref{a2shigh} yields
\begin{align*}
\sum_{\substack{ N_1\sim N_2>\kappa \\ N_4\leq N_3 \leq N_2 \\ N_3>\eta\kappa }} &\sum_{\sigma} \int_{N_2}^\infty \varkappa^{2s} \big|\tr\big\{ (\varkappa-\dx)^{-1} u_{N_{\sigma(1)}} K_h\cj{u}_{N_{\sigma(2)}} (\varkappa-\dx)^{-1} u_{N_{\sigma(3)}} K_h\cj{u}_{N_{\sigma(4)}} (\varkappa-\dx)^{-1} \big\} \big| \,d\varkappa \\
& \lesssim \sum_{\substack{ N_1\sim N_2>\kappa \\ N_4\leq N_3\leq N_2 \\ N_3>\eta\kappa }} N_2^{\frac12} N_3^{\frac14} N_4^{\frac14} \|u_{N_1}\|_{L^2} \|u_{N_2}\|_{L^2} \|u_{N_3}\|_{L^2} \|u_{N_4}\|_{L^2} \int_{N_2}^\infty \varkappa^{2s-2}\,d\varkappa \\
& \lesssim_s \|u\|_{L^2} \|u_{>\eta\kappa}\|_{L^2} \sum_{N_1\sim N_2>\kappa} N_2^{2s} \|u_{N_1}\|_{L^2} \|u_{N_2}\|_{L^2} \\
& \lesssim_s \|u\|_{L^2} \|u_{>\eta\kappa}\|_{L^2} |\alpha^{[2]}_s(\kappa;u,h)|+ h^{-1}\kk^{2s-1} \|u\|_{L^2}^{3} \|u_{>\eta\kappa}\|_{L^2} .
\end{align*}

\noi
\underline{$\bullet$ \textbf{Case 4:} $N_2>\varkappa$ and $N_3 \leq \eta\kappa$}

\smallskip
\noi
Unlike the previous cases, we no longer rely on \eqref{perm 1} and \eqref{perm 2}.
We estimate the term containing $u_{N_4}$ in operator norm using either \eqref{op 3} or \eqref{op 2} for some $0<r<\frac12$, and we estimate the remaining terms in Hilbert--Schmidt norm using \eqref{hs} and \eqref{hs 2}:
\begin{align}
\sum_{\sigma} &\big|\tr\big\{ (\varkappa-\dx)^{-1} u_{N_{\sigma(1)}} K_h\cj{u}_{N_{\sigma(2)}} (\varkappa-\dx)^{-1} u_{N_{\sigma(3)}} K_h\cj{u}_{N_{\sigma(4)}} (\varkappa-\dx)^{-1} \big\} \big|
\nonumber \\
&\lesssim_r \varkappa^{-1-r} N_4^{r} \|u_{N_1}\|_{L^2} \|u_{N_2}\|_{L^2} \|u_{N_3}\|_{L^2} \|u_{N_4}\|_{L^2} .
\label{perm 3}
\end{align}
We employ this with $r=\frac14$.  By \eqref{higha2} and \eqref{a2s}, we obtain
\begin{align*}
\int_{\kappa}^\infty &\varkappa^{2s-\frac54} \sum_{\substack{ N_1\sim N_2>\varkappa \\ N_4\leq N_3 \leq\eta\kappa }} N_4^{\frac14} \|u_{N_1}\|_{L^2} \|u_{N_2}\|_{L^2} \|u_{N_3}\|_{L^2} \|u_{N_4}\|_{L^2}  \,d\varkappa\\
&  \les    \eta^{\frac14} \|u\|_{L^2}^2 \int_{\kappa}^\infty \varkappa^{2s-1}  \|u_{>\varkappa}\|_{L^2}^2 \,d\varkappa   \\
&\lesssim_s \eta^{\frac14} \|u\|_{L^2}^2 \int_{\kappa}^\infty \varkappa^{2s-1} \alpha^{[2]}(\varkappa;u,h) \,d\varkappa + \eta^{\frac 14}h^{-1}\kk^{2s-1}\|u\|_{L^2}^4 \\
& \lesssim \eta^{\frac14}\|u\|_{L^2}^2 |\alpha^{[2]}_s(\kappa;u,h)| +\eta^{\frac 14}h^{-1}\kk^{2s-1}\|u\|_{L^2}^4 .
\end{align*}

\noi
\underline{$\bullet$ \textbf{Case 5:} $N_2>\varkappa$ and $N_3 > \eta\kappa$}  

\smallskip
\noi
Using \eqref{perm 3} with $r=s$ and \eqref{a2shigh} yields
\begin{align*}
\sum_{\substack{ N_1\sim N_2>\kappa \\ N_4\leq N_3 \leq N_2 \\ N_3>\eta\kappa }}&  N_4^{s} \|u_{N_1}\|_{L^2} \|u_{N_2}\|_{L^2} \|u_{N_3}\|_{L^2} \|u_{N_4}\|_{L^2} \int_\kappa^{N_2} \varkappa^{s-1}\,d\varkappa  \\
& \lesssim_s \|u\|_{L^2}\|u_{>\eta\kappa}\|_{L^2} \sum_{ N_1\sim N_2>\kappa } N_2^{2s} \|u_{N_1}\|_{L^2} \|u_{N_2}\|_{L^2}  \\
& \lesssim_s \|u\|_{L^2} \|u_{>\eta\kappa}\|_{L^2} |\alpha^{[2]}_s(\kappa;u,h)|+ h^{-1}\kk^{2s-1} \|u\|_{L^2}^{3} \|u_{>\eta\kappa}\|_{L^2} .
\end{align*}

Collecting all of our estimates, we conclude that
\begin{align*}
\big|\alpha_s(\kappa;u,h) - \alpha^{[2]}_s(\kappa;u,&h)\big|
\lesssim \kappa^{2s} R^4 \big( 1 + \eta^{\frac 14} R^2 + R\|u_{>\eta\kappa}\|_{L^2} \big) \\
&+ \big( \eta^{\frac14} R^2 + R \|u_{>\eta\kappa}\|_{L^2} \big) \alpha^{[2]}_s(\kappa;u,h) + \kappa^{-2s} R\|u_{>\eta\kappa}\|_{L^2} \big(\alpha^{[2]}_s(\kappa;u,h)\big)^2
\end{align*}
uniformly for $u\in Q^*_T$ and $\kappa\geq\kappa_0\geq \max(1,h^{-1})$, where $R := \sup_{u\in Q} \jb{\|u\|_{L^2}}$.

Recall that $Q^*_T$ is bounded and equicontinuous in $L^2$ (see the discussion surrounding \eqref{op 4} for details).  
Therefore, we may choose $\eta>0$ small and then $\kappa_1\geq\kappa_0$ large to render
\[
\eta^{\frac14} R^2 + R\sup_{u \in Q^{\ast}_{T}}\|u_{>\eta\kappa}\|_{L^2} \ll 1 ,
\]
so that for any $t\in [0,T]$,
\[
|\alpha^{[2]}_s(\kappa;u(t),h)|\lesssim \sup_{u\in Q} |\alpha^{[2]}_s(\kappa;u,h)| + \kappa^{2s} R^4 + \kappa^{-2s} R \Big( \sup_{u\in Q^*_T} \|u_{>\eta\kappa}\|_{L^2} \Big) \Big(  \alpha^{[2]}_s(\kappa;u(t),h)\Big)^2 
\]
uniformly for $\kappa\geq \kappa_1$.  As $Q$ is bounded in $H^s$, then by \eqref{a2s} and \eqref{sintegral} we may choose $\kappa_2\geq\kappa_1$ so that
\begin{align}
    \sup_{u\in Q} |\alpha^{[2]}_s(\kappa;u,h)| \lesssim_s \sup_{u\in Q} \|u\|_{H^s}^2 + h^{-1}\kk^{2s-1}R^{2} \leq \kappa^{2s} R^4 
    \label{boot 0}
\end{align}
uniformly for $\kappa\geq \kappa_2$, and so
\begin{equation}
|\alpha^{[2]}_s(\kappa;u(t),h)| \leq C \kappa^{2s} R^4 + C \kappa^{-2s} R \Big( \sup_{u\in Q^*_T} \|u_{>\eta\kappa}\|_{L^2} \Big) \Big( \alpha^{[2]}_s(\kappa;u(t),h)\Big)^2
\label{boot 1}
\end{equation}
for some constant $C\geq 1$.

Again, as $Q^*_T$ is equicontinuous in $L^2$, we may choose $\kappa = \kappa_3\geq \kappa_2$ larger if necessary so that 
\[
\sup_{u\in Q^*_T} \|u_{>\eta\kappa_3}\|_{L^2} \leq \tfrac{1}{8} C^{-2} R^{-5} .
\]
Therefore, from \eqref{boot 1} we have
\begin{equation}
|\alpha^{[2]}_s(\kappa_3;u(t),h)| \leq C \kappa_3^{2s} R^4 + \tfrac{1}{8}C^{-1} \kappa_3^{-2s} R^{-4} \Big( \alpha^{[2]}_s(\kappa_3;u(t),h)\Big)^2 .
\label{boot 2}
\end{equation}
Now we invoke a bootstrap argument. Let 
\begin{align*}
    T^{\ast} : = \sup\{ t\in [0,T] \,: \, |\alpha^{[2]}_s(\kappa_3;u(t),h)| \leq 2C\kappa_3^{2s}R^{4}\}.
\end{align*}
By \eqref{boot 0}, we see that $T^{\ast}>0$.  We will show that in fact $T^{\ast}=T$. Assume instead that $T^{\ast}<T$.  As the map $t\mapsto |\alpha^{[2]}_s(\kappa_3;u(t),h)|$ is continuous, then we must have $|\alpha^{[2]}_s(\kappa_3;u(T^{\ast}),h)|=2C\kappa_3^{2s}R^4$. Evaluating \eqref{boot 2} at $t=T^{\ast}$ then gives 
\begin{align*}
    2C\kappa_3^{2s}R^4 \leq C\kappa_3^{2s}R^4 +  \tfrac{1}{8}C^{-1} \kappa_3^{-2s} R^{-4} \Big(  2C\kappa_3^{2s}R^4  \Big)^2 = \tfrac{3}{2}C\kappa_3^{2s}R^4,
\end{align*}
which is absurd. Therefore $T^{\ast}=T$, and we conclude that
\begin{align*}
    \sup_{t\in [0,T]} |\alpha^{[2]}_s(\kappa_3;u(t),h)| \leq 2 C \kappa_3^{2s} R^4.
\end{align*}
As this is true for any $u(0)\in Q$, we then have 
\begin{align*}
    \sup_{u \in Q_{T}^{\ast}} |\alpha^{[2]}_s(\kappa_3;u,h)| \leq 2 C \kappa_3^{2s} R^4.
\end{align*}
Combining this with~\eqref{a2s 2}, we deduce that $Q^*_T$ is bounded in $H^s$ as desired.
\end{proof}

\begin{ackno}\rm 
J.F. was partially supported by the ARC project FT230100588.
T.L. was supported by an AMS-Simons Travel Grant. 
The authors are grateful for the hospitality and
support provided by the Institut Henri Poincaré during the thematic program
 ``Dispersive integrable equations: Pathfinders in Hamiltonian PDE" in June, 2026.
\end{ackno}


\begin{thebibliography}{99}

\bibitem{ABIK}
T. Akahori, R. Baddredine, S. Ibrahim, N. Kishimoto,
{\it Global well-posedness for the generalized intermediate NLS with a nonvanishing condition at infinity},
arXiv:2512.18998 [math.AP] (2025).

\bibitem{Rana1}
R. Badreddine, 
{\it On the global well-posedness of the Calogero-Sutherland derivative nonlinear Schrödinger equation}, 
Pure Appl. Anal. {6} (2024), no. 2, 379--414.

\bibitem{BdMS}
V. Barros, R.~P. de Moura, G. Santos,
{\it Local well-posedness for the nonlocal derivative nonlinear Schrödinger equation in Besov spaces}, 
Nonlinear Anal. 187 (2019), 320--338.

\bibitem{BO98}
J. Bourgain,
{\it Refinements of Strichartz’ inequality and applications to 2D-NLS
 with critical nonlinearity},
Int. Math. Res. Not. 1998, No. 5, 253--283 (1998).

\bibitem{BL}
J.~Bourgain, D.~Li, 
{\it On an endpoint Kato-Ponce inequality},
 Differential Integral Equations 27 (2014), no. 11-12, 1037--1072.


\bibitem{BP}
N.~Burq, F.~Planchon, 
{\it On well-posedness for the Benjamin-Ono equation}, 
Math. Ann. 340 (2008), no. 3, 497--542. 

\bibitem{CFL1}
A. Chapouto, J. Forlano, T. Laurens,
{\it On the well-posedness of the intermediate nonlinear Schrödinger equation on the line},
	arXiv:2511.00302 [math.AP] (2025).
	
\bibitem{CFL2}
A.~Chapouto, J.~Forlano, T.~Laurens, 
{\it  Well-posedness for the periodic intermediate nonlinear Schr\"{o}dinger equation},
	arXiv:2605.30657 [math.AP] (2026).

\bibitem{Chen}
X.~Chen,
{\it The defocusing Calogero–Moser derivative nonlinear Schr\"odinger equation with a non-vanishing condition at infinity},
SIAM Journal on Mathematical Analysis, Vol. 58, Iss. 1 (2026).


\bibitem{Chen2}
X.~Chen,
{\it Scattering of the defocusing Calogero–Moser derivative nonlinear Schr\"{o}dinger equation}
arXiv:2511.06432v6 [math.AP] (2026).

\bibitem{ChenLenz}
X. Chen, E. Lenzmann,
{\it Finite-time blow-up solutions for the Calogero--Sutherland derivative NLS},
arXiv:2605.28789 [math.AP] (2026).


\bibitem{CW}
M.~Christ, M.~Weinstein,
{\it Dispersion of small amplitude solutions of the generalized Korteweg-de Vries equation},
J. Funct. Anal.
100 (1991), 87--109.

\bibitem{demoura1}
R.~P. de Moura, 
{\it Well-posedness for the nonlocal nonlinear Schrödinger equation},
J. Math. Anal. Appl. 326 (2007), no. 2, 1254--1267.

\bibitem{PMP}
R.~P. de Moura, D. Pilod,
{\it Local well-posedness for the nonlocal nonlinear Schrödinger equation below the energy space},
Adv. Differential Equations 15 (2010), no. 9-10, 925--952.

\bibitem{ET}
M. B. Erdoǧan, N. Tzirakis,
{\it Dispersive partial differential equations. Wellposedness and applications}, 
London Mathematical Society Student Texts 86. Cambridge: Cambridge University Press (ISBN 978-1-316-60293-5/pbk; 978-1-107-14904-5/hbk; 978-1-316-56326-7/ebook). xvi, 186 p. (2016).

 \bibitem{GL}
 P. Gérard, E. Lenzmann, 
{\it The Calogero-Moser derivative nonlinear Schr\"{o}dinger equation},
Commun. Pure Appl. Math. 77, No. 10, 4008--4062 (2024).
 
 


\bibitem{GO}
L.~Grafakos, S.~Oh, 
{\it The Kato-Ponce inequality},
 Comm. Partial Differential Equations 39 (2014), no. 6, 1128--1157.




\bibitem{GLM}
Z. Guo, Y. Lin, L. Molinet,
{\it Well-posedness in energy space for the periodic modified Benjamin-Ono equation}, J. Differ. Equations 256, No. 8, 2778--2806 (2014).

\bibitem{Hadama1}
S.~Hadama,
{\it Well-posedness in the full scaling-subcritical range for a class of nonlocal NLS on the line}
	arXiv:2603.28055 [math.AP] (2026).

\bibitem{Hadama2}
S.~Hadama,
{\it Small-data $L^2$ theory for the intermediate NLS and the Calogero--Moser derivative NLS}
	arXiv:2608.01138 [math.AP] (2026).

\bibitem{HoganKowalski}
J.~Hogan, M.~Kowalski, 
{\it Turbulent threshold for continuum Calogero-Moser models},
Pure Appl. Anal. 6 (2024), no.~4, 941--954.

\bibitem{IK}
A.~Ionescu, C.~Kenig, 
{\it Global well-posedness of the Benjamin-Ono equation in low-regularity spaces}, 
J. Amer. Math. Soc. 20 (2007), no. 3, 753--798. 

\bibitem{KP}
T. Kato, G. Ponce, {\it Commutator estimates and the Euler and Navier-Stokes equations}, Comm. Pure Appl.
Math. 41 (1988) 891–907.



\bibitem{KLV}
R.~Killip, T.~Laurens, M.~Vi\c{s}an, 
{\it Sharp well-posedness for the Benjamin--Ono equation}, 
Invent. Math. 236 (2024), no. 3, 999--1054.

\bibitem{KLV2}
R.~Killip, T.~Laurens, M.~Vi\c{s}an, 
{\it Scaling-critical well-posedness for continuum Calogero-Moser models on the line}, 
Commun.~Am.~Math.~Soc.~5 (2025), 284--320.

\bibitem{KMV}
R.~Killip, K.~Marsden, M.~Vi\c{s}an, 
{\it The Hamiltonian formulation of continuum Calogero-Moser models},
	arXiv:2604.09479 [math.AP] (2026).

\bibitem{KNV}
R.~Killip, M.~Ntekoume, M.~Vi\c{s}an,
{\it On the well-posedness problem for the derivative nonlinear Schr\"odinger equation},
Anal.~PDE 16 (2023), no.~5, 1245--1270.

\bibitem{KVZ}
R.~Killip, M.~Vi\c{s}an, X.~Zhang,
{\it Low regularity conservation laws for integrable PDE},
Geom.~Funct.~Anal.~28 (2018), no.~4, 1062--1090.

\bibitem{KKK2}
K. Kim, T. Kim, S. Kwon, 
{\it Construction of smooth chiral finite-time blow-up solutions to Calogero–Moser derivative nonlinear Schrödinger equation}, 
to appear in Mem. Amer. Math. Soc.

\bibitem{KKK1}
T. Kim, S. Kwon, {\it Soliton resolution for Calogero–Moser derivative nonlinear Schrödinger equation},
to appear in J. Eur. Math. Soc.
% arXiv:2408.12843 [math.AP] (2024).

\bibitem{MP}
L.~Molinet, D.~Pilod, 
{\it The Cauchy problem for the Benjamin-Ono equation in $L^2$ revisited},
 Anal. PDE 5 (2012), no. 2, 365--395. 

\bibitem{MR}
L.~Molinet, F.~Ribaud, {\it 
Well-posedness in $H^1$ for generalized Benjamin-Ono equation on the circle}, 
 Discrete and Continuous Dynamical Systems-Series S, 23(4), 1295-1311.


\bibitem{OT}
T. Ozawa, Y. Tsutsumi,
{\it Space-time estimates for null gauge forms and nonlinear Schrödinger equations}, Differ. Integral Equ. 11, No. 2, 201--222 (1998).
 
 
 \bibitem{PdM}
 J.~A. Pava, R.~P. de Moura, 
{\it Ill-posedness and the nonexistence of standing-waves solutions for the nonlocal nonlinear Schrödinger equation},
Differential Integral Equations 20 (2007), no. 10, 1107--1130.
 
  \bibitem{Pel1}
 D.~E. Pelinovsky, {\it Intermediate nonlinear Schr\"{o}dinger equation for internal waves in
a fluid of finite depth}, Phys. Lett. A, 197 (1995), 401--406.


\bibitem{Pel3}
D.~E. Pelinovsky, R.~H.~J. Grimshaw, {\it A spectral transform for the intermediate
nonlinear Schr\"{o}dinger equation}, 
J. Math. Phys., 36 (1995), 4203--4219.


\bibitem{Pel2}
D. E. Pelinovsky, R.~H.~J. Grimshaw, {\it Nonlocal models for envelope waves in a
stratified fluid}, 
Stud. Appl. Math. 97 (1996), no. 4, 369--391.
 
\bibitem{Simon}
B.~Simon,
{\it Trace ideals and their applications}, 
Mathematical Surveys and Monographs, vol. 120, American Mathematical Society, Providence, RI, 2005.

\bibitem{TAO04}
T.~Tao, 
{\it Global well-posedness of the Benjamin-Ono equation in $H^1(\mathbf R)$}, 
J. Hyperbolic Differ. Equ. 1 (2004), no. 1, 27--49. 




\end{thebibliography}
\end{document}